\documentclass[11pt]{amsart}
\usepackage[top=1in, bottom=1in, left=1in, right=1in]{geometry}  % Page margins
\usepackage{amsmath,amssymb,amsfonts,amscd,amsxtra,amsthm}  % AMS mathematical packages
\usepackage{mathtools,mathrsfs}  % Enhanced math tools and script fonts
\usepackage{graphicx,subfig}  % Graphics and sub-figures
\usepackage{latexsym}  % LaTeX symbols
\usepackage{xcolor,color}  % Color support
\usepackage{epstopdf}  % EPS to PDF conversion
\usepackage{tikz, tikz-cd}  % TikZ graphics and commutative diagrams
\usepackage{bm, bbm}  % Bold math and blackboard bold fonts
\usepackage{enumitem}  % Customized lists
\usepackage{cite}  % Bibliography management
\usepackage{caption}  % Figure and table captions
\usepackage{multirow}  % Table multirow cells
\usepackage{algorithmic}  % Algorithmic environment
\usepackage{algorithm}  % Algorithm floating environment
\usepackage{rotating}  % Rotated content (sideways tables/figures)

\usepackage[toc,page]{appendix}  % Appendix support
\usepackage{placeins}  % Keep floats near their sections
\usepackage[colorlinks=true, linkcolor=blue, citecolor=magenta]{hyperref}  % Hyperlinks

\numberwithin{equation}{section}  % Number equations within sections

\usepackage{cleveref}  % Cross-referencing with lowercase labels

\newtheorem{theorem}{Theorem}[section]
\newtheorem{lemma}{Lemma}[section]
\newtheorem{corollary}{Corollary}[section]
\newtheorem{proposition}{Proposition}[section]
\newtheorem{definition}{Definition}[section]
\newtheorem{assumption}{Assumption}[section]
\newtheorem{remark}{Remark}[section]

\crefname{theorem}{Theorem}{Theorems}
\Crefname{theorem}{Theorem}{Theorems}

\crefname{lemma}{Lemma}{Lemmas}
\Crefname{lemma}{Lemma}{Lemmas}

\crefname{corollary}{Corollary}{Corollaries}
\Crefname{corollary}{Corollary}{Corollaries}

\crefname{proposition}{Proposition}{Propositions}
\Crefname{proposition}{Proposition}{Propositions}

\crefname{definition}{Definition}{Definitions}
\Crefname{definition}{Definition}{Definitions}

\crefname{assumption}{Assumption}{Assumptions}
\Crefname{assumption}{Assumption}{Assumptions}

\crefname{remark}{Remark}{Remarks}
\Crefname{remark}{Remark}{Remarks}

\crefname{example}{Example}{Examples}
\Crefname{example}{Example}{Examples}

\crefname{section}{section}{sections}
\Crefname{section}{Section}{Sections}
\graphicspath{{pics/}} % Folder for images (ensure 'pics' directory exists)

\title[Nonlinear subwavelength resonances and BICs]{Nonlinear subwavelength resonances and bound states in the continuum in metascreens}
\thanks{This work is partially supported by the Swiss National Science Foundation grant number 200021-236472.}
\author{Habib Ammari}
\address{(H. Ammari) Department of Mathematics, ETH Z\"urich, R\"amistrasse 101, 8092 Z\"urich, Switzerland}
\email{habib.ammari@math.ethz.ch}

\author{Yu Gao}
\address{(Y. Gao) Department of Mathematics, ETH Z\"urich, R\"amistrasse 101, 8092 Z\"urich, Switzerland}
\email{yugaoy@ethz.ch, yugaomath@gmail.com}

\begin{document}

\begin{abstract}

This paper establishes a mathematical framework for nonlinear subwavelength resonances and bound states in the continuum (BIC) in an acoustic metascreen with a cubic Kerr nonlinearity. We first use the quasiperiodic
Dirichlet-to-Neumann operator to reduce the open resonance problem to an
interior nonlinear variational problem. We then decompose the function space in which the variational problem is posed as the direct sum of two spaces and project the variational problem onto these two subspaces. Solving the projected equations successively yields a finite-dimensional nonlinear resonance equation with controlled remainders. We next apply the implicit function theorem near simple capacitance modes. This proves the existence and asymptotic expansions of linear subwavelength resonance branches and their small-amplitude nonlinear continuations. Finally, reflection symmetry gives a classification of the subwavelength branches.
We characterize the symmetric resonance branches and prove that antisymmetric branches are exact BICs in both the linear problem and the nonlinear problem.

%There are
%$n_{\pi}^{\mathrm f}+n_{\pi}^{\mathrm p}$ symmetric resonance branches, while
%the $n_{\pi}^{\mathrm p}$ antisymmetric branches are exact BICs in both the
%linear problem and the nonlinear problem.
\end{abstract}

\maketitle

\medskip
\textbf{Keywords}: subwavelength resonances, bound states in the continuum,
nonlinear Helmholtz equation, high-contrast resonators, reflection symmetry,
Lyapunov--Schmidt reduction, discrete approximation, quasiperiodic Dirichlet-to-Neumann operator, capacitance matrix.

\bigskip

\noindent \textbf{AMS Subject classifications.}
35P25, 35B25, 35B34 35C20.
\tableofcontents

%%%%%%%%%%%%%%%%%%%%%%%%%%%%%%%%%%%%%%%%%%%%%%%%%%%%%%%%%%%%%%%%%%

\section{Introduction}

\subsection{Background}

Controlling waves at subwavelength scales is a central objective in acoustic, elastic, and electromagnetic wave physics. Metamaterials provide a natural platform for this purpose: their effective response is shaped not only by the material parameters of their constituents, but also by geometry, contrast, and resonance at scales that are much smaller than the typical size of their building blocks.
Subwavelength resonant metamaterials have been realized in both acoustics and
optics, with applications including wave manipulation, field enhancement, and
sensing (see, e.g.,
\cite{liu2000locally,cummer2016controlling,ma2016acoustic,
evlyukhin2012demonstration,kuznetsov2016optically,tzarouchis2018light}).
In this work, we focus on high-contrast acoustic resonators.

For finite systems of high-contrast acoustic resonators, air bubbles in water
provide a canonical example. The classical starting point is Minnaert's study
of the sound emitted by air bubbles in water
\cite{minnaert1933musical}. Such bubbles resonate at wavelengths much larger
than their physical size. A rigorous mathematical analysis of this
subwavelength resonance was developed in \cite{ammari2018minnaert}; see also \cite{cbms}. In the single-resonator case, the leading resonant frequency is governed by a capacitance-to-volume ratio. For multiple resonators, the continuous resonance problem admits a finite-dimensional leading-order reduction governed by a
capacitance matrix; its eigenvalues and eigenvectors determine the leading
subwavelength resonant frequencies and modes \cite{feppon2022modal,ammari2024functional,ammari2019fully}. This
capacitance approach has also led to effective medium theories for bubbly
fluids near the Minnaert frequency \cite{ammari2017effective} and to
subwavelength resonance models for auditory signal filtering
\cite{ammari2019fully}.

In periodic high-contrast systems, the same principle leads to quasiperiodic
capacitance matrices depending on the Bloch parameter. In the linear
subwavelength regime, these matrices capture the leading behaviour of
subwavelength band functions, Bloch modes, or resonance branches, depending on
the geometry of the periodic structure
\cite{ammari2024functional,ammari2021bound}. Related capacitance-based methods
have also been used to study exceptional points in parity--time-symmetric
subwavelength metamaterials \cite{ammari2022exceptional}. However, open periodic arrays also possess radiation channels. At real frequencies in the radiation
continuum, propagating Rayleigh--Bloch modes can carry energy away from the
array. In the linear setting and away from Rayleigh thresholds, scattering
resonances can be characterized as poles of the outgoing resolvent or scattering
matrix, equivalently as nontrivial outgoing solutions of the homogeneous
problem. Such leaky resonances lie in the lower half of the complex-frequency plane, unless a decoupling mechanism is present.

A particularly important decoupling mechanism is symmetry. Special periodic
structures can support real-frequency modes inside the radiation continuum whose
propagating Rayleigh--Bloch coefficients vanish. These embedded eigenfunctions
are localized in the open direction although radiation channels are available;
in the physics literature, they are known as bound states in the continuum, or
BICs \cite{friedrich1985interfering,hsu2016bound}. When the protecting symmetry
is broken, the embedded eigenvalue typically turns into a nearby leaky
resonance, and its interference with the non-resonant background can produce an
asymmetric Fano line shape \cite{fano1961effects}. This BIC-to-Fano mechanism
has been studied for periodic gratings and slabs
\cite{bonnet1994guided,shipman2005resonant,shipman2007guided,
shipman2010resonant,shipman2012total},
for narrow-slit gratings \cite{lin2020mathematical,lin2021fano}, for
periodically repeated high-contrast dimers \cite{ammari2021bound}, and for
all-dielectric Maxwell metasurfaces \cite{ammari2024fano}.

The preceding discussion concerns the linear regime. In Kerr-type media, the
material response depends on the local field intensity
\cite{boyd2008nonlinear}. In acoustics, such a response can be modeled through
an intensity-dependent compressibility or bulk modulus, while in optics it is
often modeled through an intensity-dependent refractive index. Since
subwavelength resonators strongly confine fields, weak nonlinearities can lead
to amplitude-dependent resonant frequencies and mode profiles.
More broadly, nonlinear metamaterials enable intensity-dependent wave control,
enhanced wave--medium interactions, and localized states; see, e.g.,
\cite{kivshar2018all,koshelev2020subwavelength,rev-acoustics,bryn2019}.

The mathematical theory of nonlinear subwavelength resonances remains less
developed than its linear counterpart. For small high-contrast scatterers,
nonlinear resonances have been analyzed through asymptotic expansions
in the volume and the contrast \cite{meklachi2018asymptotic}. For finite
acoustic high-contrast resonator systems, nonlinear capacitance-type reductions
have been derived, revealing amplitude-dependent resonances and additional
nonlinear modes \cite{ammari2025nonlinear}. In \cite{clemens2026}, a perturbative cascade expands the resonant frequency and field in powers of the square root of the material contrast. The two-way correspondence with a finite discrete nonlinear capacitance system is rigorously proved. For periodic nonlinear resonator crystals, capacitance-operator and tight-binding approximations have been used to construct subwavelength localized states \cite{ammari2025resonator}. In the related dielectric setting, small-amplitude nonlinear resonances have been shown to bifurcate from linear high-index resonances, with symmetry-breaking branches in symmetric dimers \cite{ammari2025dielectric}.

\subsection{Main results}

We study nonlinear subwavelength resonances in an open periodic array of
multiple acoustic high-contrast resonators with a cubic Kerr nonlinearity. The
main purpose is to derive a finite-dimensional nonlinear resonance equation
and to identify the symmetry mechanism that turns such branches into exact bound
states in the continuum.

In the linear medium, boundary-integral formulations provide an effective tool
to describe resonance problems and obtain discrete approximations of subwavelength resonances \cite{ammari2024functional}. The nonlinear problem is different: the resonance equation is no longer described directly by a boundary integral equation. We therefore eliminate the exterior field through the Dirichlet-to-Neumann (DtN)
operator, following the strategy for nonlinear Helmholtz resonance problems
\cite{ammari2025nonlinear,ammari2025resonator}. This gives the interior
nonlinear resonance problem
\eqref{eq:interior-nonlinear-resonance-problem} and its variational form
\eqref{eq:def-nonlinear-variational-form}. The DtN operator and the
capacitance expansion both require the inverse of the
single-layer operator. Previous works used this inverse implicitly
\cite{ammari2021bound,ammari2022exceptional}, and explicit first-order
capacitance computations were carried out for the symmetric dimer geometries
considered there. We prove a new and explicit inverse formula for the truncated
operator $\widehat{\mathcal S}_D^{\alpha,k}$ where $D$ is the set of resonators in the unit cell, $\alpha$ is the quasiperiodicity, and $k$ is the wave number; see
\cref{lem:H0-isomorphism,prop:truncated-single-layer-inverse}. This yields
computable formulas for the leading capacitance matrix $C^0$ and the first
radiative correction $C^1$ for general multiple-resonator configurations.

We then analyze the interior variational problem through the decomposition
$H^1(D)=\mathcal X(D)\oplus\mathcal Z(D)$ in
\eqref{eq:XZ-decomposition}. The space $\mathcal X(D)$ consists of functions
that are constant on each resonator, and the space $\mathcal Z(D)$ consists of zero-average functions on the
resonator components. Then we project the variational form itself onto
$\mathcal X(D)$ and $\mathcal Z(D)$. This differs from the
reductions in \cite{ammari2025nonlinear,ammari2025resonator}, where an
additional variational problem is introduced before passing to a
finite-dimensional system. For fixed reduced amplitudes and a frequency
parameter, we first solve the $\mathcal Z(D)$-projected equation and then
substitute the correction into the $\mathcal X(D)$-projection; see
\cref{rem:lyapunov-schmidt-strategy}. This gives the projected system
\eqref{eq:projected-Z-nonlinear-form}--\eqref{eq:projected-X-finite-dimensional-system}.
The linear zero-average equation is solved by Lax--Milgram, while the
nonlinear one is solved by a contraction argument. In both cases, we obtain
estimates for the zero-average correction and for the remainder in the
finite-dimensional reduced equation; see
\cref{prop:linear-projected-reduction,prop:nonlinear-projected-reduction}. We
then apply the finite-dimensional implicit function theorem near a simple
capacitance eigenmode. This gives the linear subwavelength
resonance expansion and its locally unique small-amplitude nonlinear
continuation, including the amplitude-dependent frequency shift, the leading
radiative correction, and higher-order remainders; see
\cref{thm:linear-simple-resonance-branch,thm:nonlinear-modal-continuation}.

Finally, we identify when these resonance branches become exact BICs. The
preceding expansions show that the radiative correction generally creates an
imaginary part in the resonance frequency. At the $\Gamma$ point, that is, the center of the Brillouin zone, the
subwavelength regime has a single propagating Rayleigh--Bloch channel. We
impose reflection symmetry in the periodic direction. The reflection induces a
component permutation with fixed components and reflected pairs, and it
decomposes both the reduced amplitude space and the function space into
symmetric and antisymmetric subspaces. The capacitance problem respects this
decomposition. Its antisymmetric subspace has dimension $n_{\pi}^{\mathrm p}$,
the number of reflected pairs, and the first radiative correction satisfies
$C^1q=0$ for every antisymmetric reduced vector; see
\cref{prop:reduced-reflection-decomposition,lem:antisymmetric-C1-vanishing,rem:antisymmetric-candidates}.
Thus, antisymmetric capacitance modes are the reduced candidates for embedded
eigenvalues. We then restrict the exact linear and nonlinear resonance problems
to the real antisymmetric function space and prove that solutions of the
restricted problems solve the full outgoing problems. Combining this lifting
with resonance expansions gives the symmetry classifications in
\cref{thm:linear-bic-symmetry-classification,thm:nonlinear-bic-symmetry-classification}.
Under the corresponding simplicity assumptions, the $n_{\pi}^{\mathrm p}$
antisymmetric capacitance modes generate $n_{\pi}^{\mathrm p}$ exact linear BIC
branches and, for reflection-symmetric Kerr coefficients, $n_{\pi}^{\mathrm p}$
nonlinear BIC branches in the subwavelength regime. Along these branches the
frequencies are real and the propagating Rayleigh coefficient vanishes.

The reduction developed here can also be adapted to real-frequency scattering
problems. After the exterior field is eliminated by the DtN map, an incident
wave enters the interior formulation through a boundary forcing term, and the
same projection leads to a forced finite-dimensional amplitude equation. For
fixed geometry and incident channel, the linear problem gives scattering
coefficients that depend on frequency but not on incident intensity. In
contrast, in the Kerr case, the reduced equation is nonlinear in the modal
amplitudes, so the reflection and transmission coefficients may depend on the
incident intensity. This distinction provides a reduced framework for studying nonlinear frequency shifts, amplitude-dependent transmission, and multiple steady-state scattering responses near subwavelength resonances. It also gives a scattering
interpretation of the symmetry-protected BICs. When the symmetry protection is
weakly broken, either by a geometric perturbation or by detuning the
quasiperiodicity from the symmetry point, an exact BIC is expected to become a
high-$Q$ quasi-BIC resonance. In the linear case, this produces a narrow
Fano-type scattering profile, whereas Kerr nonlinearity may shift the resonance,
deform the line shape, and induce switching between coexisting scattering
states.

\subsection{Outline}

This paper is organized as follows. In \cref{section:problem-setting}, we
formulate the nonlinear periodic transmission problem and introduce the
quasiperiodic Green function, layer potentials, and capacitance matrices.
In \cref{section:general-subwavelength-resonances}, we use the DtN operator to reduce the
resonance problem to a projected system on
$\mathcal X(D)\oplus\mathcal Z(D)$, derive the linear subwavelength expansion,
and prove the small-amplitude nonlinear continuation. In
\cref{section:symmetric-configurations}, we impose reflection symmetry at the
$\Gamma$ point and prove the existence and geometric count of linear and
nonlinear BICs in the antisymmetric subspace. Finally,
\cref{section:numerical-experiments} validates the theoretical results through numerical experiments.

\subsection{Notation}

Throughout the paper, we work in $\mathbb R^d$, with $d\in\{2,3\}$, and write each point as $x=(x_\ell,x_d)$. Here, $x_\ell\in\mathbb R^{d-1}$ denotes the coordinates along the periodic directions, while $x_d\in\mathbb R$ is the transverse coordinate. Periodicity is taken with respect to the $(d-1)$-dimensional lattice
\[
\Lambda:=L\mathbb Z^{d-1}, \qquad \Lambda^*:=(2\pi/L)\mathbb Z^{d-1},
\]
where $\Lambda$ is the direct lattice and $\Lambda^*$ is the reciprocal lattice. The periodicity cell, its measure, and the first Brillouin zone are denoted by
\[
Y:=\bigl[-L/2,L/2\bigr]^{d-1}, \qquad
|Y|=L^{d-1}, \qquad
Y^*:=\bigl[-\pi/L,\pi/L\bigr]^{d-1}.
\]
The corresponding infinite periodic strip and its truncation at height $h>0$ are
\[
\Omega:=Y\times\mathbb R, \qquad
\Omega_h:=Y\times(-h,h).
\]
For a Bloch parameter $\alpha\in Y^*$ and a reciprocal lattice vector $\eta\in\Lambda^*$, we set
\begin{align*}
    \alpha_\eta:=\alpha+\eta,
    \qquad
    \beta_\eta(k):=\sqrt{k^2-|\alpha_\eta|^2}.
\end{align*}
The square root is chosen on the outgoing branch.
In particular, for real $k>0$ away from Rayleigh thresholds, $\beta_\eta(k)>0$ when $|\alpha_\eta|<k$, whereas $\operatorname{Im}\beta_\eta(k)>0$ when $|\alpha_\eta|>k$.

The structure in one period consists of $N$ resonators $D_1,\ldots,D_N\subset\Omega$. They are assumed to be pairwise disjoint, connected, and to have $C^2$ boundaries. We write
\begin{align}\label{eq:resonator-domain-volume-matrix}
    D:=\bigcup_{j=1}^N D_j, \qquad
    V:=\operatorname{diag}\bigl(|D_1|,\ldots,|D_N|\bigr),
\end{align}
where $V$ is the diagonal volume matrix. If $E\subset D$ is measurable or if $\Gamma\subset\partial D$ is surface-measurable, then $\chi_E$ and $\chi_\Gamma$ denote the corresponding characteristic functions. When confusion cannot arise, we also use $1$ for the constant function on the relevant set. Whenever the denominators are nonzero, we define the average-normalized characteristic functions by
\[
\chi_E^{\mathrm{av}}:=\frac{\chi_E}{|E|}, \qquad
\chi_\Gamma^{\mathrm{av}}:=\frac{\chi_\Gamma}{|\Gamma|}.
\]

We employ the following standard function spaces:
\[
L^2(D):=\bigl\{ u:D\to\mathbb C \bigm| \int_D |u|^2\,dx<\infty \bigr\},
\qquad
H^1(D):=\bigl\{ u\in L^2(D) \bigm| \nabla u\in L^2(D)^d \bigr\}.
\]
The boundary space $L^2(\partial D)$ is defined analogously with respect to the surface measure $d\sigma$.
All spaces are taken to be complex-valued unless otherwise stated.
The $L^2$ inner product on $D$ and the duality pairing between $H^{-1/2}(\partial D)$ and $H^{1/2}(\partial D)$ are defined to be linear in the first entry and conjugate-linear in the second:
\[
(u,v)_D:=\int_D u\,\overline v\,dx,\qquad
\langle \phi,\psi\rangle_{\partial D}:=\int_{\partial D}\phi\,\overline\psi\,d\sigma.
\]
Restrictions to each connected component $D_j$ are denoted analogously:
\[
(u,v)_{D_j}:=\int_{D_j}u\,\overline v\,dx,\qquad
\langle \phi,\psi\rangle_{\partial D_j}:=\int_{\partial D_j}\phi\,\overline\psi\,d\sigma.
\]
When both arguments are square integrable, the duality pairing coincides with the $L^2(\partial D)$ inner product. The symbol $\nu$ denotes the outward unit normal to $\partial D$. The subscripts $\pm$ indicate the traces taken from the exterior and interior of $D$, respectively. The jump $[\cdot]$ across $\partial D$ is defined as the exterior trace minus the interior trace.

We shall frequently use the finite-dimensional space of componentwise constant functions
\[
\mathcal X(D) := \operatorname{span}\{\chi_{D_1},\ldots,\chi_{D_N}\},
\]
together with its zero-average complement
\[ \mathcal Z(D) := \bigl\{ v\in H^1(D): (v,\chi_{D_j})_D=0,\ 1\le j\le N \bigr\}.
\]
Then we have the decomposition
\begin{equation}\label{eq:XZ-decomposition}
H^1(D)=\mathcal X(D)\oplus\mathcal Z(D).
\end{equation}
For $q=(q_1,\ldots,q_N)^\top\in\mathbb C^N$, we denote its piecewise constant lift by
\begin{equation}\label{eq:piecewise-constant-lift}
u_q:=\sum_{j=1}^N q_j\chi_{D_j}\in\mathcal X(D).
\end{equation}
Accordingly, every $u\in H^1(D)$ can be written uniquely as
\[
u=u_q+z, \qquad u_q\in \mathcal X(D),\qquad z\in\mathcal Z(D),
\]
where $q_j=(u,\chi^{\rm av}_{D_j})_D$, $1\le j\le N$. Since $u_q$ is constant on each component and $z$ has zero average on each component, the decomposition is orthogonal with respect to the standard
$H^1(D)$ inner product: the $L^2$ cross term vanishes by construction and the
gradient cross term vanishes because $\nabla u_q=0$ in each $D_j$. Hence,
\[
    \|u\|_{H^1(D)}^2
    =
    \|u_q\|_{L^2(D)}^2
    +
    \|z\|_{H^1(D)}^2
    =
    q^*Vq+\|z\|_{H^1(D)}^2 .
\]
Equivalently, with $\|q\|_V:=(q^*Vq)^{1/2}=\|u_q\|_{L^2(D)}$, we have $\|u\|_{H^1(D)}^2
    =
    \|q\|_V^2+\|z\|_{H^1(D)}^2$.
Since $V$ is positive definite, $\|\cdot\|_V$ is equivalent to the Euclidean norm
on $\mathbb C^N$. Consequently,
\[
    \|u\|_{H^1(D)}
    \asymp
    \|q\|+\|z\|_{H^1(D)},
\]
with constants depending only on the fixed geometry of $D$.

\section{Problem setting and preliminaries}
\label{section:problem-setting}

This section introduces the nonlinear periodic transmission problem and the
layer-potential framework used in the following. We first formulate the scattering and
resonance problems, then recall the quasiperiodic Green function and the
associated boundary integral operators, and finally introduce the capacitance
matrices governing the subwavelength reduction.

\subsection{Problem formulation}

The set of resonators $D$ and the surrounding medium are characterized by their mass
densities and bulk moduli, denoted by $(\rho_b,\kappa_b)$ in $D$ and
$(\rho_m,\kappa_m)$ in $\Omega\setminus\overline D$, respectively. All four
parameters are positive constants. The corresponding wave speeds, wavenumbers,
and density contrast are
\[
    c_b:=\sqrt{\kappa_b/\rho_b},
    \qquad
    c_m:=\sqrt{\kappa_m/\rho_m},
    \qquad
    k_b:=\omega/c_b,
    \qquad
    k_m:=\omega/c_m,
    \qquad
    \delta:=\rho_b/\rho_m .
\]
We work in the high-contrast regime
\begin{equation}\label{eq:high-contrast-regime}
    \delta\ll 1,
    \qquad
    c_b,c_m=\mathcal O(1).
\end{equation}
The material coefficients are
\[
    \rho(x):=\rho_b\chi_D(x)
    +\rho_m\chi_{\Omega\setminus\overline D}(x),
    \qquad
    \kappa(x):=\kappa_b\chi_D(x)
    +\kappa_m\chi_{\Omega\setminus\overline D}(x).
\]
The Kerr coefficient may vary between resonators, but is zero in the surrounding medium:
\[
    \sigma_D(x):=\sum_{j=1}^N\sigma_j\chi_{D_j}(x),
    \qquad
    \sigma_j\in\mathbb R .
\]
Accordingly, we define the cubic Kerr nonlinearity by
\begin{align}\label{eq:def-Nsigma}
    \mathcal N_\sigma[u](x):=\sigma_D(x)|u(x)|^2u(x).
\end{align}

For a fixed Bloch parameter $\alpha\in Y^*$ and a prescribed
$\alpha$-quasiperiodic incident field $u^i$, the nonlinear scattering problem is
to find an $\alpha$-quasiperiodic field $u$ such that
\begin{equation}\label{eq:periodic-transmission-problem}
    \begin{cases}
        \displaystyle
        \nabla\!\cdot\!\bigl(\frac{1}{\rho}\,\nabla u\bigr)
        +\dfrac{\omega^2}{\kappa}
        \bigl(u+\mathcal N_\sigma[u]\bigr)=0
        & \text{in } \Omega\setminus\partial D,\\[2mm]
        \left.u\right|_+=\left.u\right|_-
        & \text{on } \partial D,\\[2mm]
        \displaystyle
        \frac{1}{\rho_m}\left.\partial_\nu u\right|_+
        =
        \frac{1}{\rho_b}\left.\partial_\nu u\right|_-
        & \text{on } \partial D,\\[3mm]
        u(x_\ell+\zeta,x_d)
        =
        e^{\mathrm i\alpha\cdot\zeta}u(x_\ell,x_d)
        & \text{for } \zeta\in\Lambda .
    \end{cases}
\end{equation}
The first equation is understood separately in $D$ and in
$\Omega\setminus\overline D$. Equivalently,
\[
    \Delta u+k_b^2\bigl(u+\sigma_j|u|^2u\bigr)=0
    \quad \text{in }D_j,\quad 1\le j\le N,
    \qquad
    \Delta u+k_m^2u=0
    \quad \text{in }\Omega\setminus\overline D .
\]
Together with the two interface conditions, this gives the nonlinear Helmholtz
equation inside the resonators and the linear Helmholtz equation in the exterior. It remains to impose the outgoing condition at infinity. The scattered field $u-u^i$ is required to be
outgoing as $|x_d|\to\infty$. More precisely, an outgoing
$\alpha$-quasiperiodic field $w$ admits the Rayleigh--Bloch expansion
\cite{ammari2018mathematical}
\begin{equation}\label{eq:outgoing-radiation-condition}
    w(x_\ell,x_d)
    =
    \begin{cases}
        \displaystyle
        \sum_{\eta\in\Lambda^*}
        w^\alpha_\eta(+h)\,
        e^{\mathrm i\alpha_\eta\cdot x_\ell}
        e^{+\mathrm i\beta_\eta(k_m)(x_d-h)},
        & x_d\ge +h,\\[3mm]
        \displaystyle
        \sum_{\eta\in\Lambda^*}
        w^\alpha_\eta(-h)\,
        e^{\mathrm i\alpha_\eta\cdot x_\ell}
        e^{-\mathrm i\beta_\eta(k_m)(x_d+h)},
        & x_d\le -h ,
    \end{cases}
\end{equation}
where $h>0$ is chosen such that $\overline D\subset\Omega_h$. Here, the Fourier coefficients are
\begin{equation}\label{eq:rayleigh-coefficients}
    w^\alpha_\eta(\pm h)
    :=
    \frac{1}{|Y|}
    \int_Y
        w(x_\ell,\pm h)\,
        e^{-\mathrm i\alpha_\eta\cdot x_\ell}
    \,dx_\ell,
    \qquad
    \eta\in\Lambda^* .
\end{equation}
For the scattering problem, we set $w=u-u^i$. The homogeneous case $u^i=0$
leads to the nonlinear resonance problem, where the unknowns are nontrivial
outgoing pairs $(\omega,u)$. In what follows, we study this problem in the
subwavelength regime of \cref{ass:subwavelength-low-frequency}.

\begin{assumption}
\label{ass:subwavelength-low-frequency}
Fix a vector $a\in\mathbb R^{d-1}$ satisfying $|a|<c_m^{-1}$. We consider
\[
    \alpha=\omega a,
    \qquad
    0<|\omega|<\omega_{\rm sw},
    \qquad
    \omega_{\rm sw}:=
    \frac{2\pi c_m}{L(1+c_m|a|)} .
\]
\end{assumption}

\begin{remark}
For $0<|\omega|<\omega_{\rm sw}$ and every
$\eta\in\Lambda^*\setminus\{0\}$, the reverse triangle inequality gives
\[
    |\alpha+\eta|
    \ge |\eta|-|\alpha|
    \ge 2\pi/L-|\omega| |a|
    >|\omega|/c_m
    = |k_m|.
\]
Thus, no nonzero diffraction order reaches a Rayleigh threshold in the
subwavelength regime. Moreover, if $\omega>0$ is real, then $|\alpha|=\omega|a|<\omega/c_m=k_m$,
so the order $\eta=0$ is the unique propagating order, while all
$\eta\ne0$ orders are evanescent.
\end{remark}

\subsection{Green's function}

We recall the quasiperiodic Green function associated with the lattice
$\Lambda$ \cite{ammari2018mathematical}. For a Bloch parameter $\alpha$ and a wavenumber $k$, the outgoing $\alpha$-quasiperiodic Green
function for the Helmholtz operator admits the spectral representation
\begin{equation}\label{eq:def-Gak}
    G^{\alpha,k}(x)
    =
    \sum_{\eta\in\Lambda^*}
    \frac{
        e^{\mathrm i\alpha_\eta\cdot x_\ell}
        e^{\mathrm i\beta_\eta(k)|x_d|}
    }{
        2|Y|\,\mathrm i\,\beta_\eta(k)
    },
    \qquad
    x\notin \Lambda\times\{0\},
\end{equation}
which satisfies, in the sense of distributions, the following equation:
\[
    \bigl(\Delta+k^2\bigr)G^{\alpha,k}
    =
    \sum_{\zeta\in\Lambda}
    e^{\mathrm i\alpha\cdot\zeta}\,
    \delta\bigl(x-(\zeta,0)\bigr).
\]
For the degenerate case $\alpha=k=0$, we denote by $G^{0,0}$ the periodic Laplace
Green function
\begin{equation}\label{eq:def-G00}
    G^{0,0}(x)
    =
    \frac{|x_d|}{2|Y|}
    -
    \sum_{\eta\in\Lambda^*\setminus\{0\}}
    \frac{
        e^{\mathrm i\eta\cdot x_\ell}
        e^{-|\eta||x_d|}
    }{
        2|Y|\,|\eta|
    } .
\end{equation}

We work in the low-frequency scaling $\alpha=\omega a$ and $k=\omega/c$,
with $|a|<c^{-1}$.
We define the positive constant $\tau := \sqrt{c^{-2} - |a|^2} > 0$.
In the applications below, the exterior Green function is obtained by taking $c=c_m$ and $\tau := \sqrt{c_m^{-2} - |a|^2}$.
As $\omega\to0$, the Green function admits the following asymptotic expansion
(see, e.g., \cite{ammari2017mathematical}):
\begin{equation}\label{eq:green-low-frequency-expansion}
G^{\alpha,k}(x)
=
\sum_{n=-1}^{\infty} \omega^n G_n^{a,c}(x),
\end{equation}
uniformly on compact subsets of $\mathbb R^d \setminus (\Lambda \times \{0\})$. The first three coefficients are given explicitly by
\begin{equation}\label{eq:green-coefficients}
\begin{aligned}
G_{-1}^{a,c}(x)
= \dfrac{1}{2\mathrm{i}\,\tau |Y|}, \quad
G_{0}^{a,c}(x)
= G^{0,0}(x) + \dfrac{a\cdot x_\ell}{2\tau |Y|}, \quad
G_{1}^{a,c}(x)
= \dfrac{\mathrm{i}\bigl(a\cdot x_\ell + \tau |x_d|\bigr)^2}{4\tau |Y|}
+ \mathrm{i}\, a\cdot \widehat G_1(x),
\end{aligned}
\end{equation}
where $\widehat G_1 : \mathbb R^d \to \mathbb R^{d-1}$ is a real-valued function, independent of $a$ and $c$, and satisfies
\[
\widehat G_1(-x_\ell,x_d) = -\widehat G_1(x_\ell,x_d),
\qquad
\widehat G_1(x_\ell,-x_d) = \widehat G_1(x_\ell,x_d).
\]
We decompose the first-order coefficient into its even and odd parts:
\begin{align}\label{eq:G1-parity-decomposition}
G_1^{a,c}(x)
=
\mathrm{i}
\left(
\dfrac{(a\cdot x_\ell)^2 + \tau^2 x_d^2}{4\tau |Y|}
\right)
+
\mathrm{i}
\left(
\dfrac{(a\cdot x_\ell)|x_d|}{2|Y|}
+ a\cdot \widehat G_1(x)
\right)
= \mathrm{i}\, G_{1,\mathrm{e}}^{a,c}(x)
+ \mathrm{i}\, G_{1,\mathrm{o}}^{a,c}(x).
\end{align}
Here, $G_{1,\mathrm{e}}^{a,c}$ is real-valued and even with respect to both $x_\ell$ and $x_d$, whereas $G_{1,\mathrm{o}}^{a,c}$ is real-valued, odd in $x_\ell$, and even in $x_d$. The higher-order coefficients $G_n^{a,c}$ satisfy the recurrence relations:
\begin{equation}\label{eq:green-coefficient-recursion}
\Delta G_n^{a,c}(x)
+
\frac{1}{c^2} G_{n-2}^{a,c}(x)
=
\sum_{\zeta\in\Lambda}
\frac{(\mathrm{i}\, a\cdot\zeta)^n}{n!}
\,\delta\bigl(x-(\zeta,0)\bigr),\qquad n \ge 1.
\end{equation}

\subsection{Layer potentials}

We define the layer potentials and boundary integral operators associated with
the quasiperiodic Green function. For $\psi\in H^{-1/2}(\partial D)$, the
single-layer potential is
\[
    \mathcal S_D^{\alpha,k}[\psi](x)
    :=
    \int_{\partial D}G^{\alpha,k}(x-y)\psi(y)\,d\sigma(y),
    \qquad x\in\mathbb R^d\setminus\partial D .
\]
The same notation is used for its trace on $\partial D$ as a bounded boundary operator. Denote by
\[
    (\mathcal K_D^{-\alpha,k})^*[\psi](x)
    :=
    \int_{\partial D}
    \frac{\partial G^{\alpha,k}(x-y)}{\partial\nu(x)}
    \psi(y)\,d\sigma(y),
    \qquad x\in\partial D .
\]
Away from Rayleigh thresholds, the standard mapping properties yield (see, e.g., \cite{ammari2018mathematical,mclean2000strongly})
\[
    \mathcal S_D^{\alpha,k}:
    H^{-1/2}(\partial D)\to H^{1/2}(\partial D),
    \qquad
    (\mathcal K_D^{-\alpha,k})^*:
    H^{-1/2}(\partial D)\to H^{-1/2}(\partial D).
\]
Moreover, the jump relation holds in $H^{-1/2}(\partial D)$:
\begin{equation}\label{eq:jump-formula}
    \left.
    \frac{\partial}{\partial\nu}\mathcal S_D^{\alpha,k}[\psi]
    \right|_{\pm}
    =
    \left(
        \pm\frac12\mathcal I
        +
        (\mathcal K_D^{-\alpha,k})^*
    \right)[\psi].
\end{equation}

The expansion \eqref{eq:green-low-frequency-expansion} yields the following
operator norm expansions for the boundary operators:
\begin{equation}\label{eq:boundary-operator-asymptotic}
    \mathcal S_D^{\alpha,k}
    =
    \sum_{n=-1}^{\infty}\omega^n\mathcal S_{D,n}^{a,c},
    \qquad
    (\mathcal K_D^{-\alpha,k})^*
    =
    \sum_{n=-1}^{\infty}\omega^n(\mathcal K_{D,n}^{-a,c})^*.
\end{equation}
Here, $\mathcal S_{D,n}^{a,c}$ and
$(\mathcal K_{D,n}^{-a,c})^*$ are defined by replacing the kernel
$G^{\alpha,k}$ by $G_n^{a,c}$. Since $G_{-1}^{a,c}$ is constant,
$(\mathcal K_{D,-1}^{-a,c})^*=0$. We shall use the truncated operators, retaining the singular- and zeroth-order terms,
\[
    \widehat{\mathcal S}_D^{\alpha,k}
    :=
    \omega^{-1}\mathcal S_{D,-1}^{a,c}
    +
    \mathcal S_{D,0}^{a,c},
    \qquad
    (\widehat{\mathcal K}_D^{-\alpha,k})^*
    :=
    (\mathcal K_{D,0}^{-a,c})^* .
\]
Consequently, the full boundary operators admit the following expansions as $\omega\to0$:
\begin{align}\label{eq:Sak-Ksak-expansion}
    \mathcal S_D^{\alpha,k}
    =
    \widehat{\mathcal S}_D^{\alpha,k}
    +
    \omega\mathcal S_{D,1}^{a,c}
    +
    \mathcal O(\omega^2),\quad
    (\mathcal K_D^{-\alpha,k})^*
    =
    (\widehat{\mathcal K}_D^{-\alpha,k})^*
    +
    \omega(\mathcal K_{D,1}^{-a,c})^*
    +
    \mathcal O(\omega^2).
\end{align}
The recurrence \eqref{eq:green-coefficient-recursion}, together with the divergence theorem, yields the following integral identities.

\begin{lemma}[{\cite[Lemma~3.1]{ammari2021bound}}]
\label{lem:ksn-integral}
For any $\psi\in H^{-1/2}(\partial D)$ and $1\le i\le N$,
\[
    \left\langle \left(-\frac12\mathcal I + (\widehat{\mathcal K}_D^{-\alpha,k})^*\right)[\psi], 1 \right\rangle_{\partial D_i}= 0,\quad
    \left\langle (\mathcal K_{D,n}^{-a,c})^*[\psi], 1 \right\rangle_{\partial D_i}
    =
    -\frac{1}{c^2}
    \left( \mathcal S_{D,n-2}^{a,c}[\psi], 1 \right)_{D_i},
    \quad n \ge 1.
\]
\end{lemma}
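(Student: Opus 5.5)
The plan is to apply the divergence theorem on each component $D_i$ to the functions $x\mapsto \mathcal S_{D,n}^{a,c}[\psi](x)$, whose Laplacians are read off from the recurrence \eqref{eq:green-coefficient-recursion}. The key observation is that for $x\in D_i$ and $y\in\partial D$, the difference $x-y$ never lies in $\Lambda\times\{0\}$ once the translates are separated. Indeed, $D\subset\Omega$ lies in a single period cell, with pairwise disjoint closures, and we may assume (as is implicit in the setting) that $\overline D$ lies in the interior of $\Omega$. Hence $D_i$ does not meet any lattice translate of $\partial D$ other than possibly $\partial D_i$ itself, and $x-y=(\zeta,0)$ with $x\in D_i$, $y\in\partial D$ forces $\zeta=0$ and $x=y\in\partial D_i$, which is excluded for interior $x$. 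Consequently, inside $D_i$ the Dirac sums in \eqref{eq:green-coefficient-recursion} give no contribution. This yields $\Delta \mathcal S_{D,n}^{a,c}[\psi]=-c^{-2}\mathcal S_{D,n-2}^{a,c}[\psi]$ in $D_i$ for $n\ge1$, with the convention $\mathcal S_{D,-2}^{a,c}=0$. Similarly, $\Delta\mathcal S_{D,0}^{a,c}[\psi]=0$ there, since the $n=0$ coefficient satisfies $\Delta G_0^{a,c}=\sum_\zeta\delta(x-(\zeta,0))$ (the extra term $a\cdot x_\ell/(2\tau|Y|)$ is harmonic).

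\textbf{First identity.} Since $G_{-1}^{a,c}$ is constant, the normal derivative of $\mathcal S_{D,-1}^{a,c}[\psi]$ vanishes. The interior trace of the normal derivative of $\widehat{\mathcal S}_D^{\alpha,k}[\psi]$ is therefore that of $\mathcal S_{D,0}^{a,c}[\psi]$. By the jump formula applied to the kernel $G_0^{a,c}$ (which has the same local singularity as the Laplace Green function, so \eqref{eq:jump-formula} holds with $\mathcal K$ replaced by $\mathcal K_{D,0}$), this trace equals $\bigl(-\tfrac12\mathcal I+(\widehat{\mathcal K}_D^{-\alpha,k})^*\bigr)[\psi]$. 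Green's formula on $D_i$ then gives
\[
\Bigl\langle \bigl(-\tfrac12\mathcal I+(\widehat{\mathcal K}_D^{-\alpha,k})^*\bigr)[\psi],1\Bigr\rangle_{\partial D_i}=\int_{D_i}\Delta\mathcal S_{D,0}^{a,c}[\psi]\,dx=0 .
\]

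\textbf{Second identity.} For $n\ge1$ the kernel $G_n^{a,c}$ is smoother, continuous across the diagonal up to the periodic Dirac terms; its singularity is at most like $|x|^{2n}\log|x|$ type, away from $n=0$. In any case we argue by density. First take $\psi\in L^2(\partial D)$ (or smooth); then $\partial_\nu\mathcal S_{D,n}^{a,c}[\psi]$ has no jump, and its trace is $(\mathcal K_{D,n}^{-a,c})^*[\psi]$. Integrating over $D_i$,
\[
\bigl\langle(\mathcal K_{D,n}^{-a,c})^*[\psi],1\bigr\rangle_{\partial D_i}=\int_{D_i}\Delta\mathcal S_{D,n}^{a,c}[\psi]\,dx=-\frac1{c^2}\bigl(\mathcal S_{D,n-2}^{a,c}[\psi],1\bigr)_{D_i}.
\]
Both sides are continuous in $\psi\in H^{-1/2}(\partial D)$, using the boundedness of the coefficient operators implied by \eqref{eq:boundary-operator-asymptotic}. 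The identity therefore extends to all $\psi$.

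\textbf{Main obstacle.} The delicate step is justifying that the Dirac masses in \eqref{eq:green-coefficient-recursion} do not contribute inside $D_i$. We also need the mild regularity of $G_n^{a,c}$ near the diagonal, so that Green's formula and the jump relation apply. The former follows from the geometric separation argument above; the latter from the fact that $G_n^{a,c}-\delta_{n0}G^{0,0}$ is locally smooth away from $\Lambda\times\{0\}$, for which I would cite the expansion in \cite{ammari2017mathematical}.
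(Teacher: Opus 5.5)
Your proposal is correct and follows the route the paper indicates: the paper defers to \cite[Lemma~3.1]{ammari2021bound} and notes that the recurrence \eqref{eq:green-coefficient-recursion} together with the divergence theorem on each $D_i$ yields both identities, and your argument does exactly this, including the check that the lattice Dirac masses do not meet the open set $D_i$. One small correction: near the origin the singular part of $G_n^{a,c}$ is of order $|x|^{n}\log|x|$ for even $n$ when $d=2$, and of order $|x|^{n-1}$ when $d=3$, not $|x|^{2n}\log|x|$. Also, the regularity that matters is at $x-y=0\in\Lambda\times\{0\}$, not away from the lattice. In all cases $\nabla G_n^{a,c}$ is bounded near $0$ for $n\ge1$, which is all your no-jump argument needs.
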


For $\psi\in H^{-1/2}(\partial D)$, define the total charge and the
periodic/transverse first moments by
\begin{equation}\label{eq:def-moment-functionals}
    \mathfrak m[\psi]
    :=
    \langle\psi,1\rangle_{\partial D},
    \qquad
    \mathfrak m_\ell[\psi]
    :=
    \langle\psi,y_\ell\rangle_{\partial D},
    \qquad
    \mathfrak m_d[\psi]
    :=
    \langle\psi,y_d\rangle_{\partial D}.
\end{equation}
These functionals $\mathfrak m,\mathfrak m_d:
    H^{-1/2}(\partial D)\to\mathbb C$, $\mathfrak m_\ell:
    H^{-1/2}(\partial D)\to\mathbb C^{d-1}$
are linear and bounded. Using the explicit form of $G_0^{a,c}$ in \eqref{eq:green-coefficients}, the truncated operators are rewritten as
\begin{gather}
    \widehat{\mathcal S}_D^{\alpha,k}[\psi]
    =
    -\frac{\mathrm i\,\mathfrak m[\psi]}{2\omega\tau|Y|}
    +
    \mathcal S_D^{0,0}[\psi]
    +
    \frac{
        (a\cdot x_\ell)\,\mathfrak m[\psi]
        -
        a\cdot\mathfrak m_\ell[\psi]
    }{2\tau|Y|},
    \label{eq:hatSak-formula}
    \\
    (\widehat{\mathcal K}_D^{-\alpha,k})^*[\psi]
    =
    (\mathcal K_D^{0,0})^*[\psi]
    +
    \frac{(a\cdot\nu_\ell)\,\mathfrak m[\psi]}{2\tau|Y|},\quad \nu=(\nu_\ell,\nu_d).
    \label{eq:hatKak-formula}
\end{gather}

To study the invertibility of $\widehat{\mathcal S}_D^{\alpha,k}$, we first establish the following auxiliary isomorphism, whose proof is deferred to Appendix \ref{app:auxiliary-proofs}.

\begin{lemma}\label{lem:H0-isomorphism}
Define
$\mathcal H : H^{-1/2}(\partial D)\times\mathbb C
\to H^{1/2}(\partial D)\times\mathbb C$ by
\[
    \mathcal H[\psi,s]
    :=
    \bigl(\mathcal S_D^{0,0}[\psi]+s,\mathfrak m[\psi]\bigr).
\]
Then $\mathcal H$ is a linear isomorphism. Set
$H^{-1/2}_0(\partial D):=\ker\mathfrak m$. The reduced map
\begin{equation}\label{eq:def-H0}
    \mathcal H_0:
    H^{-1/2}_0(\partial D)\times\mathbb C
    \to
    H^{1/2}(\partial D),
    \qquad
    \mathcal H_0[\psi,s]
    :=
    \mathcal S_D^{0,0}[\psi]+s,
\end{equation}
is also a linear isomorphism.
\end{lemma}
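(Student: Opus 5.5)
Since $\mathcal H$ is bounded, the plan is to show that it is Fredholm of index zero and injective, and then to deduce the statement for $\mathcal H_0$.

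\emph{Fredholm index zero.} Boundedness is clear. For the Fredholm property, we compare $\mathcal S_D^{0,0}$ with the free-space Laplace single layer $\mathcal S_D$, whose kernel is the fundamental solution $\Gamma$. The difference $G^{0,0}-\Gamma$ is smooth near the diagonal on $\partial D-\partial D$; here we use that $\overline D$ lies strictly inside one period cell. The one exception is the term $|x_d|/(2|Y|)$, which is only Lipschitz, but it still gives a compact operator $H^{-1/2}\to H^{1/2}$, since its kernel is $H^2$ in each variable up to a corner. It is standard that $\mathcal S_D$ is Fredholm of index zero from $H^{-1/2}(\partial D)$ to $H^{1/2}(\partial D)$ in $d=3$ (in $d=2$ it is index zero as well, possibly not injective). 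Hence $\mathcal S_D^{0,0}$ is Fredholm of index zero. The map $\mathcal H$ is the diagonal operator $(\psi,s)\mapsto(\mathcal S_D^{0,0}\psi,0)$ plus the finite-rank operator $(\psi,s)\mapsto(s,\mathfrak m[\psi])$. Therefore $\mathcal H$ is Fredholm of index zero, and it suffices to prove injectivity.

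\emph{Injectivity; the main step.} Suppose $\mathcal S_D^{0,0}[\psi]=-s$ on $\partial D$ and $\mathfrak m[\psi]=0$. Set $w:=\mathcal S_D^{0,0}[\psi]$ in $\Omega$. It is periodic, harmonic off $\partial D$, and continuous across $\partial D$, with $\partial_\nu w|_+-\partial_\nu w|_-=\psi$ by \eqref{eq:jump-formula}. Because $\mathfrak m[\psi]=0$, the $|x_d|$ term contributes only the constant $-\mathrm{sign}(x_d)\,\mathfrak m_d[\psi]/(2|Y|)$, and the $\eta\ne0$ modes decay exponentially. Hence $w\to c_\pm$ as $x_d\to\pm\infty$, with $c_\pm=\mp\mathfrak m_d[\psi]/(2|Y|)$ and exponentially decaying derivatives.

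Inside $D$, $w$ is harmonic and equals $-s$ on $\partial D$, so $w\equiv-s$ in $D$. Outside, apply Green's identity on $\Omega_h\setminus\overline D$ with $h\to\infty$. The lateral boundary contributions cancel by periodicity, and the top and bottom contributions tend to $0$ by exponential decay of $\partial_{x_d}w$. This gives
\[
\int_{\Omega\setminus\overline D}|\nabla w|^2\,dx=-\int_{\partial D}\partial_\nu w|_+\,\overline{w}\,d\sigma=-\int_{\partial D}\psi\,\overline{(-s)}\,d\sigma=\overline{s}\,\mathfrak m[\psi]=0,
\]
using $\partial_\nu w|_-=0$. Hence $w$ is constant in the connected set $\Omega\setminus\overline D$. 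By continuity across $\partial D$, this constant is $-s$, so $\psi=\partial_\nu w|_+-\partial_\nu w|_-=0$.

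It remains to show $s=0$. With $\psi=0$ we get $w\equiv0$, so the equation $w=-s$ on $\partial D$ forces $s=0$. This closes the argument and is where the constraint $\mathfrak m[\psi]=0$ is essential: without it the far-field constants $c_\pm$ and the linear growth could not be controlled, which is why the extra variable $s$ and the second component are needed.

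\emph{Passing to $\mathcal H_0$.} Injectivity of $\mathcal H_0$ follows directly from the above. For surjectivity, take $f\in H^{1/2}(\partial D)$ and solve $\mathcal H[\psi,s]=(f,0)$. The solution satisfies $\mathfrak m[\psi]=0$, so $\psi\in H^{-1/2}_0(\partial D)$ and $\mathcal H_0[\psi,s]=f$. Boundedness of the inverse follows from the open mapping theorem.

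The hardest part is the Fredholm claim for $\mathcal S_D^{0,0}$ (the compactness of the periodic correction, and the $d=2$ logarithmic subtlety). If needed, I would instead reduce that step to Gårding coercivity: $-\langle\mathcal S_D^{0,0}\psi,\psi\rangle$ controls $\|\psi\|_{H^{-1/2}}^2$ up to a compact term on $H^{-1/2}_0(\partial D)$, by the same energy identity.
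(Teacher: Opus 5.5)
Your proposal is correct and follows the paper's proof in all essentials. Injectivity comes from the same energy identity, with $\mathfrak m[\psi]=0$ giving bounded far-field behaviour and killing the term $\overline{s}\,\mathfrak m[\psi]$. Fredholm index zero comes from a compact perturbation of the free-space single layer, and $\mathcal H_0$ is handled by the same restriction argument. The only variation is that the paper compares $\mathcal H$ with the augmented free-space map $(\psi,s)\mapsto(\mathcal S_D[\psi]+s,\mathfrak m[\psi])$, which is known to be invertible, whereas you compare with $\mathcal S_D^{0,0}\oplus 0$ plus a finite-rank term.

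Two small corrections. First, $G^{0,0}-\Gamma$ is harmonic as a whole, hence smooth, near $\{x-y:x,y\in\overline D\}$: the kink of $|x_d|/(2|Y|)$ cancels against that of the evanescent series, so there is no Lipschitz exception to handle. Second, connectedness of $\Omega\setminus\overline D$ is neither assumed nor needed, since $\nabla w=0$ there already gives $\psi=0$.
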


The holomorphy of $(\widehat{\mathcal S}_D^{\alpha,k})^{-1}$ was established in
\cite{ammari2021bound,ammari2022exceptional}, where an implicit asymptotic
expansion was obtained. Here, we instead take a direct approach to obtain the
following explicit inverse formula for
$(\widehat{\mathcal S}_D^{\alpha,k})^{-1}$; the proof is given in
Appendix \ref{app:auxiliary-proofs}.

\begin{proposition}[Inverse truncated single-layer operator]
\label{prop:truncated-single-layer-inverse}
Define the auxiliary boundary function
\[
    f_D
    :=
    \mathcal S_D^{0,0}\bigl[\chi_{\partial D}^{\mathrm{av}}\bigr]
    +
    \frac{
        a\cdot x_\ell
        -
        a\cdot\mathfrak m_\ell
        \bigl[\chi_{\partial D}^{\mathrm{av}}\bigr]
    }{2\tau|Y|}
    \in H^{1/2}(\partial D).
\]
For each $f\in H^{1/2}(\partial D)$, define the pairs
$(\psi_f^0,s_f^0)$ and $(\psi_D^0,s_D^0)$ by
\[
    (\psi_f^0,s_f^0):=\mathcal H_0^{-1}[f],
    \qquad
    (\psi_D^0,s_D^0):=\mathcal H_0^{-1}[f_D].
\]
With these pairs, set
\begin{equation}\label{eq:mhatpsi1}
    s_f^1
    :=
    \mathrm i
    \bigl(
        a\cdot\mathfrak m_\ell[\psi_f^0]
        +
        2\tau|Y|\,s_f^0
    \bigr),
    \qquad
    s_D^1
    :=
    \mathrm i
    \bigl(
        a\cdot\mathfrak m_\ell[\psi_D^0]
        +
        2\tau|Y|\,s_D^0
    \bigr).
\end{equation}
Then, for $|\omega|<|s_D^1|^{-1}$, the operator
\[
    \widehat{\mathcal S}_D^{\alpha,k}:
    H^{-1/2}(\partial D)\to H^{1/2}(\partial D)
\]
is invertible and its inverse extends holomorphically to this disk. More
precisely,
\begin{equation}\label{eq:closed-form-inverse-hatSak}
\bigl(\widehat{\mathcal S}_D^{\alpha,k}\bigr)^{-1}[f]
=
\psi_f^0
+
\frac{\omega s_f^1}{1+\omega s_D^1}
\bigl(\chi_{\partial D}^{\mathrm{av}}-\psi_D^0\bigr).
\end{equation}
Consequently, we obtain the asymptotic expansion
\begin{equation}\label{eq:expansion-of-inverse-hatSak}
    \bigl(\widehat{\mathcal S}_D^{\alpha,k}\bigr)^{-1}
    =
    \mathcal S_0^{-1}
    +
    \omega\widehat{\mathcal S}_1^{-1}
    +
    \mathcal O(\omega^2),
    \qquad
    \mathcal S_0^{-1}[f]:=\psi_f^0,
    \qquad
    \widehat{\mathcal S}_1^{-1}[f]
    :=
    s_f^1
    \bigl(
        \chi_{\partial D}^{\mathrm{av}}
        -
        \psi_D^0
    \bigr).
\end{equation}
\end{proposition}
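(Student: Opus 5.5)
The plan is to construct the inverse by an explicit ansatz and then to prove uniqueness with the same bookkeeping. Formula \eqref{eq:hatSak-formula} shows that, up to $\mathcal S_D^{0,0}$, the operator $\widehat{\mathcal S}_D^{\alpha,k}$ differs from a constant-plus-linear perturbation only through the finitely many functionals $\mathfrak m$ and $\mathfrak m_\ell$. The key observation is that the only non-constant extra term, $(a\cdot x_\ell)\mathfrak m[\psi]/(2\tau|Y|)$, is proportional to the total charge $\mathfrak m[\psi]$. It can therefore be absorbed into the auxiliary function $f_D$.

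\textbf{Existence.} For $f\in H^{1/2}(\partial D)$, I will seek $\psi=\psi_f^0+\mu(\chi_{\partial D}^{\mathrm{av}}-\psi_D^0)$ with $\mu\in\mathbb C$ to be determined. Since $\psi_f^0,\psi_D^0\in H_0^{-1/2}(\partial D)$ by \cref{lem:H0-isomorphism}, we get $\mathfrak m[\psi]=\mu$. I then insert this into \eqref{eq:hatSak-formula} and use three identities: $\mathcal S_D^{0,0}[\psi_f^0]=f-s_f^0$, $\mathcal S_D^{0,0}[\psi_D^0]=f_D-s_D^0$, and the definition of $f_D$. All the $\mu$-dependent non-constant terms then cancel, namely $\mathcal S_D^{0,0}[\chi^{\mathrm{av}}_{\partial D}]$, $a\cdot x_\ell$ and $a\cdot\mathfrak m_\ell[\chi^{\mathrm{av}}_{\partial D}]$. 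What remains is
\[
\widehat{\mathcal S}_D^{\alpha,k}[\psi]=f+\frac{1}{2\tau|Y|}\Bigl(-\bigl(2\tau|Y|s_f^0+a\cdot\mathfrak m_\ell[\psi_f^0]\bigr)+\mu\bigl(2\tau|Y|s_D^0+a\cdot\mathfrak m_\ell[\psi_D^0]\bigr)-\frac{\mathrm i\mu}{\omega}\Bigr).
\]
By \eqref{eq:mhatpsi1}, the bracket equals $-\mathrm i s_f^1+\mathrm i\mu s_D^1+\mathrm i\mu\omega^{-1}\cdot(-1)\cdot(-1)$ up to the sign bookkeeping. Setting it to zero gives the scalar equation $\mu(1+\omega s_D^1)=\omega s_f^1$. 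This is solvable precisely when $1+\omega s_D^1\neq0$, which holds for $|\omega|<|s_D^1|^{-1}$, and it yields \eqref{eq:closed-form-inverse-hatSak}.

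\textbf{Uniqueness.} Suppose $\widehat{\mathcal S}_D^{\alpha,k}[\psi]=f$. I set $\mu:=\mathfrak m[\psi]$ and $\varphi:=\psi-\mu(\chi_{\partial D}^{\mathrm{av}}-\psi_D^0)\in H_0^{-1/2}(\partial D)$. The same computation, run backwards, shows that $\mathcal S_D^{0,0}[\varphi]+c=f$ for some constant $c$. Injectivity of $\mathcal H_0$ then forces $(\varphi,c)=(\psi_f^0,s_f^0)$. The scalar constraint again forces $\mu=\omega s_f^1/(1+\omega s_D^1)$. Hence the inverse is unique and bounded, being a composition of $\mathcal H_0^{-1}$ with bounded functionals.

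\textbf{Holomorphy and expansion.} The right-hand side of \eqref{eq:closed-form-inverse-hatSak} is $\omega$-independent except for the scalar factor $\omega/(1+\omega s_D^1)$. This factor is holomorphic on the disk, including at $\omega=0$, even though $\widehat{\mathcal S}_D^{\alpha,k}$ itself has a pole there. The map $f\mapsto s_f^1$ is a bounded linear functional. Expanding $\omega/(1+\omega s_D^1)=\omega+\mathcal O(\omega^2)$ in operator norm gives \eqref{eq:expansion-of-inverse-hatSak}.

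The main obstacle is only the careful sign and factor bookkeeping in the cancellation step, i.e.\ checking that the constant collects exactly into $s_f^1$ and $s_D^1$ with the factor $\mathrm i$. Everything else follows from \cref{lem:H0-isomorphism}.
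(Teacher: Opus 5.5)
Your proposal is correct, but it reaches the statement by a different route than the paper.

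\textbf{The paper's route.} It rescales the charge direction through $\Psi_\omega[\psi_0,s]=\psi_0+\omega s\,\chi_{\partial D}^{\mathrm{av}}$. This turns the singular family into a regular one, $\widehat{\mathcal S}_D^{\alpha,k}\Psi_\omega=A_0+\omega A_1$, where $A_0=\mathcal H_0\mathcal L$ is invertible and $A_1[\psi_0,s]=s f_D$ has rank one. The paper then inverts $A_0+\omega A_1$ by a Neumann series and sums it in closed form using $(A_0^{-1}A_1)^nA_0^{-1}[f]=s_f^1(s_D^1)^{n-1}(\psi_D^0,s_D^1)$.

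\textbf{Your route.} You make a one-parameter ansatz in the charge direction and reduce both existence and uniqueness to a single scalar equation. The computation checks out. One small correction: the bracket is exactly $\mathrm i\bigl(s_f^1-\mu s_D^1-\mu/\omega\bigr)$. What you wrote is its negative, which is harmless once it is set to zero, so the $(-1)\cdot(-1)$ fudge is unnecessary. The identity driving both directions of your argument is that $\widehat{\mathcal S}_D^{\alpha,k}\bigl[\chi_{\partial D}^{\mathrm{av}}-\psi_D^0\bigr]$ equals the constant $-\mathrm i(1+\omega s_D^1)/(2\omega\tau|Y|)$, while $\widehat{\mathcal S}_D^{\alpha,k}[\varphi]-\mathcal S_D^{0,0}[\varphi]$ is constant for every $\varphi\in H^{-1/2}_0(\partial D)$. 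It would be worth stating this identity explicitly.

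\textbf{What your route buys.}
\begin{itemize}
\item It is elementary and self-verifying.
\item It gives invertibility on the whole punctured disk $0<|\omega|<|s_D^1|^{-1}$ at once. You do not need to control convergence of an operator series beyond a small neighbourhood of $0$.
\item It shows the radius is sharp: at $\omega=-1/s_D^1$ the nonzero density $\chi_{\partial D}^{\mathrm{av}}-\psi_D^0$ lies in the kernel.
\end{itemize}

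\textbf{What the paper's route buys.}
\begin{itemize}
\item It explains structurally why the inverse is holomorphic at $\omega=0$: the $\omega^{-1}$ pole is absorbed by rescaling the total charge.
\item It casts the problem as a regular perturbation $A_0+\omega A_1$. That argument still works when $A_1$ is not rank one, for instance for higher-order truncations of $\mathcal S_D^{\alpha,k}$, where a closed-form one-parameter ansatz is no longer available.
\end{itemize}
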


Combining \cref{prop:truncated-single-layer-inverse} with \eqref{eq:Sak-Ksak-expansion}, and expanding the resulting perturbation via a Neumann series, yields the inverse expansion for the full single-layer operator.

\begin{corollary}[Inverse of the single-layer operator]
\label{cor:single-layer-inverse-expansion}
There exists $\omega_0>0$, with $\omega_0<\min\{\omega_{\rm sw},|s_D^1|^{-1}\}$,
such that, for $|\omega|<\omega_0$,
\[
    \mathcal S_D^{\alpha,k}:
    H^{-1/2}(\partial D)\to H^{1/2}(\partial D)
\]
is boundedly invertible. Its inverse extends holomorphically to
$|\omega|<\omega_0$, and
\begin{equation}\label{eq:expansion-of-inverse-Sak}
    \bigl(\mathcal S_D^{\alpha,k}\bigr)^{-1}
    =
    \mathcal S_0^{-1}
    +
    \omega\mathcal S_1^{-1}
    +
    \mathcal O(\omega^2),
\end{equation}
where the first-order correction is given by
\begin{equation}\label{eq:inverse-S1}
    \mathcal S_1^{-1}
    :=
    \widehat{\mathcal S}_1^{-1}
    -
    \mathcal S_0^{-1}
    \mathcal S_{D,1}^{a,c}
    \mathcal S_0^{-1}.
\end{equation}
\end{corollary}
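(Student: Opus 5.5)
The plan is to treat $\mathcal S_D^{\alpha,k}$ as a small perturbation of the truncated operator $\widehat{\mathcal S}_D^{\alpha,k}$ and invert it by a Neumann series around $(\widehat{\mathcal S}_D^{\alpha,k})^{-1}$. By \eqref{eq:boundary-operator-asymptotic} and \eqref{eq:Sak-Ksak-expansion}, write
\[
\mathcal S_D^{\alpha,k}=\widehat{\mathcal S}_D^{\alpha,k}+\omega\mathcal R(\omega),\qquad
\mathcal R(\omega):=\sum_{n\ge1}\omega^{n-1}\mathcal S_{D,n}^{a,c}.
\]
The remainder $\mathcal R(\omega)$ is a holomorphic family of bounded operators $H^{-1/2}(\partial D)\to H^{1/2}(\partial D)$ on some disk $|\omega|<r$, with $r<\omega_{\rm sw}$. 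Holomorphy follows from the convergence of the Green function expansion \eqref{eq:green-low-frequency-expansion} on a neighbourhood of $\overline{D}-\overline{D}$ away from the singular lattice points. The singular part at $x=0$ is $\omega$-independent and lives in $G^{0,0}$, so it sits entirely in $\mathcal S_{D,0}^{a,c}$. The coefficients $G_n^{a,c}$ for $n\ge1$ are therefore smooth enough near the diagonal, as their recurrence \eqref{eq:green-coefficient-recursion} shows, to give bounded operators with geometrically controlled norms. I expect this step to be the main technical point: making the operator-norm convergence of \eqref{eq:boundary-operator-asymptotic} uniform. I would cite the standard low-frequency analysis of the quasiperiodic Green function from \cite{ammari2017mathematical,ammari2018mathematical} for it.

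Next, by \cref{prop:truncated-single-layer-inverse}, $(\widehat{\mathcal S}_D^{\alpha,k})^{-1}$ is holomorphic on $|\omega|<|s_D^1|^{-1}$. Hence on any closed subdisk of radius $\rho<\min\{r,|s_D^1|^{-1}\}$ we have a uniform bound $\|(\widehat{\mathcal S}_D^{\alpha,k})^{-1}\|\le M$, and likewise $\|\mathcal R(\omega)\|\le M'$. Factor
\[
\mathcal S_D^{\alpha,k}=\widehat{\mathcal S}_D^{\alpha,k}\bigl(\mathcal I+\omega(\widehat{\mathcal S}_D^{\alpha,k})^{-1}\mathcal R(\omega)\bigr).
\]
Choose $\omega_0<\min\{\rho,(MM')^{-1}\}$. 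For $|\omega|<\omega_0$ the bracket is invertible on $H^{-1/2}(\partial D)$ by a Neumann series, which gives
\[
(\mathcal S_D^{\alpha,k})^{-1}=\sum_{m\ge0}(-\omega)^m\bigl((\widehat{\mathcal S}_D^{\alpha,k})^{-1}\mathcal R(\omega)\bigr)^m(\widehat{\mathcal S}_D^{\alpha,k})^{-1}.
\]
Each term is holomorphic and the series converges uniformly, so the inverse is holomorphic on $|\omega|<\omega_0$. Bounded invertibility for $\omega\neq0$ follows, and $\omega_0<\min\{\omega_{\rm sw},|s_D^1|^{-1}\}$ holds by construction.

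Finally, to identify the expansion, keep only the terms of order at most one. The $m=0$ term is $(\widehat{\mathcal S}_D^{\alpha,k})^{-1}=\mathcal S_0^{-1}+\omega\widehat{\mathcal S}_1^{-1}+\mathcal O(\omega^2)$ by \eqref{eq:expansion-of-inverse-hatSak}. The $m=1$ term is $-\omega\,\mathcal S_0^{-1}\mathcal S_{D,1}^{a,c}\mathcal S_0^{-1}+\mathcal O(\omega^2)$, using $\mathcal R(0)=\mathcal S_{D,1}^{a,c}$. The terms with $m\ge2$ are $\mathcal O(\omega^2)$ in operator norm, by the uniform bounds. Summing these gives \eqref{eq:expansion-of-inverse-Sak} with $\mathcal S_1^{-1}$ as in \eqref{eq:inverse-S1}.
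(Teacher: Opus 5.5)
Your proposal is correct and follows the paper's own route. The paper justifies this corollary in a single sentence (combine \cref{prop:truncated-single-layer-inverse} with \eqref{eq:Sak-Ksak-expansion} and sum a Neumann series), and your factorization $\mathcal S_D^{\alpha,k}=\widehat{\mathcal S}_D^{\alpha,k}\bigl(\mathcal I+\omega(\widehat{\mathcal S}_D^{\alpha,k})^{-1}\mathcal R(\omega)\bigr)$, the uniform bounds on a closed subdisk, and the bookkeeping of the $m=0$ and $m=1$ terms supply exactly the details left implicit there.

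One small inaccuracy in your side justification is harmless: not all $G_n^{a,c}$ with $n\ge1$ are smooth at $x=0$. The recurrence \eqref{eq:green-coefficient-recursion} gives $G_2^{a,c}$ a term proportional to $|x|$ when $d=3$ and to $|x|^2\log|x|$ when $d=2$. These weaker singularities still yield bounded operators, and in any case the paper takes the operator-norm expansion \eqref{eq:boundary-operator-asymptotic} as given.
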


\subsection{Capacitance matrices}

We next define the capacitance matrices and record the identities needed for
the subwavelength reduction.

\begin{definition}\label{def:capacitance-densities}
For $1\le j\le N$, define the pairs $(\psi_j^0,s_j^0)$ and the densities $\widehat\psi_j^1,\psi_j^1$ by
\begin{equation}\label{eq:def-capacitance-densities}
(\psi_j^0,s_j^0):=\mathcal H_0^{-1}[\chi_{\partial D_j}],\qquad
\widehat\psi_j^1:=\widehat{\mathcal S}_1^{-1}[\chi_{\partial D_j}],\qquad
\psi_j^1:=\mathcal S_1^{-1}[\chi_{\partial D_j}],
\end{equation}
where $\widehat{\mathcal S}_1^{-1}$ and $\mathcal S_1^{-1}$ are from \cref{prop:truncated-single-layer-inverse,cor:single-layer-inverse-expansion}. The entries of the leading capacitance matrix $C^0\in\mathbb C^{N\times N}$ and the first-order matrices $\widehat C^1,C^1\in\mathbb C^{N\times N}$ are given by
\begin{equation}\label{eq:capacitance-entries}
C^0_{ij}:=-\langle \psi_j^0, 1\rangle_{\partial D_i},\qquad
\widehat C^1_{ij}:=-\langle \widehat\psi_j^1, 1\rangle_{\partial D_i},\qquad
C^1_{ij}:=-\langle \psi_j^1, 1\rangle_{\partial D_i},\qquad 1\leq i,j\leq N.
\end{equation}
Let $s^0:=(s_1^0,\ldots,s_N^0)^\top\in\mathbb C^N$. We also define the moment vectors $m_\ell^a,m_d^\tau\in\mathbb C^N$ by
\[
m_{\ell,j}^a:=a\cdot\mathfrak m_\ell[\psi_j^0],\qquad
m_{d,j}^\tau:=\tau\,\mathfrak m_d[\psi_j^0],\qquad 1\le j\le N,
\]
where the superscripts denote the dependence on $a$ and $\tau$, respectively.
\end{definition}

We now collect the key properties of these objects.
\begin{lemma}\label{lem:C1-decomposition}
The following statements hold.

\emph{(i)} The vector $s^0$ and the matrix $C^0$ are real-valued. Moreover, for each $1\le j\le N$,
\begin{align}\label{eq:capacitance-density-expansion}
    \psi_j^0\in H^{-1/2}_0(\partial D;\mathbb R),\qquad
    \widehat\psi_j^1=s_j^1
    \bigl(\chi_{\partial D}^{\mathrm{av}}-\psi_D^0\bigr),\qquad
    \psi_j^1=\widehat\psi_j^1-\mathcal S_0^{-1}\mathcal S_{D,1}^{a,c}[\psi_j^0],
\end{align}
with $s_j^1:=s_{\chi_{\partial D_j}}^1=\mathrm i\bigl(a\cdot\mathfrak m_\ell[\psi_j^0]+2\tau|Y|\,s_j^0 \bigr)$ as defined in \eqref{eq:mhatpsi1}.

\emph{(ii)} Summing over all components gives
\[
    \sum_{j=1}^N\psi_j^0=0,
    \qquad
    \sum_{j=1}^N s_j^0=1,
    \qquad
    \sum_{j=1}^N C^0_{ij}=0 .
\]

\emph{(iii)} The first-order matrix splits as
$C^1=\widehat C^1+\widetilde C^1$, where
\[
\widetilde C^1_{ij}
:=
\bigl\langle \mathcal S_0^{-1}\mathcal S_{D,1}^{a,c}[\psi_j^0], 1\bigr\rangle_{\partial D_i}
=
\bigl\langle \mathcal S_{D,1}^{a,c}[\psi_j^0], \psi_i^0\bigr\rangle_{\partial D}
=
\mathrm{i}\,\widetilde C^{1,e}_{ij}+\mathrm{i}\,\widetilde C^{1,o}_{ij}.
\]
Here, the entries of $\widetilde C^{1,e}$ and $\widetilde C^{1,o}$ are defined by
\begin{align*}
    \widetilde C^{1,e}_{ij}
    &:=
    \int_{\partial D}\!\int_{\partial D}
        G_{1,e}^{a,c}(x-y)
        \psi_j^0(y)\psi_i^0(x)
    \,d\sigma(y)d\sigma(x),
    \\
    \widetilde C^{1,o}_{ij}
    &:=
    \int_{\partial D}\!\int_{\partial D}
        G_{1,o}^{a,c}(x-y)
        \psi_j^0(y)\psi_i^0(x)
    \,d\sigma(y)d\sigma(x)
\end{align*}
with $G_{1,e}^{a,c}$ and $G_{1,o}^{a,c}$ as in \eqref{eq:G1-parity-decomposition}. Moreover,
\[
    \widehat C^1
    =
    \frac{\mathrm i}{2\tau|Y|}
    \bigl(m_\ell^a-2\tau|Y|s^0\bigr)
    \bigl(m_\ell^a+2\tau|Y|s^0\bigr)^\top,\quad
    \widetilde C^1
    =
    -\frac{\mathrm i}{2\tau|Y|}
    \left(
        m_\ell^a(m_\ell^a)^\top
        +
        m_d^\tau(m_d^\tau)^\top
    \right)
    +
    \mathrm i\,\widetilde C^{1,o}.
\]
Consequently,
\begin{align}
    C^1
    =
    -\mathrm i
    \left(
        2\tau|Y|\,s^0(s^0)^\top
        +
        \frac{m_d^\tau(m_d^\tau)^\top}{2\tau|Y|}
    \right)
    +
    \mathrm i
    \left(
        m_\ell^a(s^0)^\top
        -
        s^0(m_\ell^a)^\top
        +
        \widetilde C^{1,o}
    \right).
    \label{eq:C1-formula}
\end{align}
\end{lemma}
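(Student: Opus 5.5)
We verify the three parts in order, working directly from the definitions of $\mathcal H_0$, the closed-form inverse \eqref{eq:closed-form-inverse-hatSak}, and the Green function coefficients \eqref{eq:green-coefficients}.

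\emph{Part (i).} Realness comes from $\mathcal S_D^{0,0}$ having the real kernel $G^{0,0}$, which is real because $G^{0,0}(x_\ell,x_d)$ and $G^{0,0}(-x_\ell,x_d)$ pair the $\pm\eta$ terms, and $G^{0,0}$ is even. Hence $\mathcal H_0$ maps real data to real data. The right-hand side $\chi_{\partial D_j}$ is real, so by uniqueness in \cref{lem:H0-isomorphism} the pair $(\psi_j^0,s_j^0)$ is real and $\psi_j^0\in\ker\mathfrak m$. Consequently $C^0_{ij}=-\langle\psi_j^0,1\rangle_{\partial D_i}$ is real. The formula for $\widehat\psi_j^1$ is just \eqref{eq:expansion-of-inverse-hatSak} with $f=\chi_{\partial D_j}$. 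Since $\mathcal S_0^{-1}[\chi_{\partial D_j}]=\psi_j^0$, the formula for $\psi_j^1$ follows from \eqref{eq:inverse-S1}.

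\emph{Part (ii).} By linearity of $\mathcal H_0^{-1}$, $\sum_j(\psi_j^0,s_j^0)=\mathcal H_0^{-1}[1]$. The pair $(0,1)$ satisfies $\mathcal H_0[0,1]=1$, so $\mathcal H_0^{-1}[1]=(0,1)$. This gives $\sum_j\psi_j^0=0$ and $\sum_j s_j^0=1$, and summing $C^0_{ij}$ over $j$ then gives $0$.

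\emph{Part (iii).} The splitting $C^1=\widehat C^1+\widetilde C^1$ is immediate from (i). For the second expression of $\widetilde C^1_{ij}$, I use a symmetry identity: for $\phi\in H^{1/2}$,
\[
\langle\mathcal S_0^{-1}\phi,\chi_{\partial D_i}\rangle=\langle\phi,\psi_i^0\rangle .
\]
To prove it, write $\chi_{\partial D_i}=\mathcal S_D^{0,0}\psi_i^0+s_i^0$ and set $\mu=\mathcal S_0^{-1}\phi$. Self-adjointness of $\mathcal S_D^{0,0}$ (its kernel is real and symmetric, since $G^{0,0}$ is even) together with $\mathfrak m[\mu]=0$ gives
\[
\langle\mu,\chi_{\partial D_i}\rangle=\langle\mathcal S_D^{0,0}\mu,\psi_i^0\rangle=\langle\phi-s_\phi^0,\psi_i^0\rangle=\langle\phi,\psi_i^0\rangle,
\]
where the last step uses $\mathfrak m[\psi_i^0]=0$. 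Inserting the decomposition \eqref{eq:G1-parity-decomposition} produces the $\mathrm i\widetilde C^{1,e}+\mathrm i\widetilde C^{1,o}$ split, since $\psi^0$ is real so no conjugation is involved.

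The step I expect to be the main obstacle is evaluating $\widetilde C^{1,e}$. Expand $(a\cdot(x_\ell-y_\ell))^2+\tau^2(x_d-y_d)^2$. Terms depending on $x$ alone or on $y$ alone vanish after integrating against the mean-zero densities. The cross terms leave
\[
-\frac{2}{4\tau|Y|}\bigl(m_{\ell,i}^am_{\ell,j}^a+m_{d,i}^\tau m_{d,j}^\tau\bigr),
\]
which is the stated $\widetilde C^1$.

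For $\widehat C^1$, compute
\[
-\langle s_j^1(\chi^{\rm av}_{\partial D}-\psi_D^0),1\rangle_{\partial D_i}=-s_j^1\Bigl(\tfrac{|\partial D_i|}{|\partial D|}-\langle\psi_D^0,1\rangle_{\partial D_i}\Bigr).
\]
The bracket must be identified with $\frac{1}{2\tau|Y|}(2\tau|Y|s_i^0-m_{\ell,i}^a)$. To do this, apply the symmetry identity to $\phi=f_D$ and use
\[
\langle f_D,\psi_i^0\rangle=\langle\mathcal S^{0,0}\chi^{\rm av},\psi_i^0\rangle+\frac{a\cdot\mathfrak m_\ell[\psi_i^0]}{2\tau|Y|},
\]
where $\langle\mathcal S^{0,0}\chi^{\rm av},\psi_i^0\rangle=\langle\chi^{\rm av},\chi_{\partial D_i}-s_i^0\rangle=|\partial D_i|/|\partial D|-s_i^0$. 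This yields the bracket. Combined with $s_j^1=\mathrm i(m^a_{\ell,j}+2\tau|Y|s_j^0)$, it gives the rank-one formula for $\widehat C^1$, up to sign bookkeeping that I will check carefully.

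Finally, \eqref{eq:C1-formula} follows by adding $\widehat C^1$ and $\widetilde C^1$ and expanding the product. The $m_\ell^a(m_\ell^a)^\top$ terms cancel, and the cross terms give $m_\ell^a(s^0)^\top-s^0(m_\ell^a)^\top$.
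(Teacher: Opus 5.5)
Your proposal is correct and follows essentially the same route as the paper. You obtain reality and the sum rules from uniqueness for $\mathcal H_0$, and the identity $\langle \mathcal S_0^{-1}\phi,\chi_{\partial D_i}\rangle_{\partial D}=\langle\phi,\psi_i^0\rangle_{\partial D}$ from the symmetry of the real, even kernel $G^{0,0}$ together with the zero-charge property; the paper uses this same identity inline for both $\widetilde C^1$ and $\langle\psi_D^0,1\rangle_{\partial D_i}$. You also evaluate $\widetilde C^{1,e}$ by expanding the quadratic kernel $G_{1,e}^{a,c}$ so that only the cross terms survive, exactly as the paper does.

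The sign bookkeeping you deferred does close. With the bracket $s_i^0-m_{\ell,i}^a/(2\tau|Y|)$ and $s_j^1=\mathrm i(m_{\ell,j}^a+2\tau|Y|s_j^0)$, one gets $\widehat C^1_{ij}=\frac{\mathrm i}{2\tau|Y|}(m_{\ell,i}^a-2\tau|Y|s_i^0)(m_{\ell,j}^a+2\tau|Y|s_j^0)$, which is exactly the stated rank-one formula.
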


\begin{proof}
\emph{(i)--(ii).}
The reality of $s^0$ and $C^0$ and the inclusion $\psi_j^0\in H^{-1/2}_0(\partial D)$ follow immediately from \eqref{eq:def-capacitance-densities} and the definition of $\mathcal H_0$.
The formulas for $\widehat\psi_j^1$ and $\psi_j^1$ follow from \eqref{eq:expansion-of-inverse-hatSak} and \eqref{eq:inverse-S1}, respectively.
For the summation identities, since $\mathcal H_0[0,1]=\chi_{\partial D}$, using injectivity gives $\sum_j\psi_j^0=0$ and $\sum_j s_j^0=1$;
the row-sum identity for $C^0$ then follows from \eqref{eq:capacitance-entries}.

\emph{(iii).}
We first compute $\widehat C^1$. From the explicit form of $\widehat\psi_j^1$ in \eqref{eq:capacitance-density-expansion},
\[
\widehat C^1_{ij}=-s_j^1\langle \chi_{\partial D}^{\mathrm{av}}-\psi_D^0,1\rangle_{\partial D_i}.
\]
Using the bilinear symmetry of $\mathcal S_D^{0,0}$, together with
\[
\mathcal S_D^{0,0}[\psi_i^0]+s_i^0=\chi_{\partial D_i},\qquad
\mathcal S_D^{0,0}[\psi_D^0]+s_D^0=f_D,
\]
we obtain
\[
\langle \psi_D^0,1\rangle_{\partial D_i}
=
\langle f_D-s_D^0,\psi_i^0\rangle_{\partial D}
=
\langle \chi_{\partial D}^{\mathrm{av}},1\rangle_{\partial D_i}
-
s_i^0
+
\frac{m_{\ell,i}^a}{2\tau|Y|},
\]
where we have used the zero-charge property of $\psi_i^0$ and $\psi_D^0$. Hence,
\[
\langle \chi_{\partial D}^{\mathrm{av}}-\psi_D^0,1\rangle_{\partial D_i}
=
s_i^0-\frac{m_{\ell,i}^a}{2\tau|Y|}
=
\frac{2\tau|Y|s_i^0-m_{\ell,i}^a}{2\tau|Y|}.
\]
Together with the explicit form of $s_j^1$, this gives the asserted formula for $\widehat C^1$.

It remains to compute $\widetilde C^1$. By the symmetry of $\mathcal S_D^{0,0}$,
\[
\widetilde C^1_{ij}
=
\bigl\langle \mathcal S_0^{-1}\mathcal S_{D,1}^{a,c}[\psi_j^0], \chi_{\partial D_i}\bigr\rangle_{\partial D}
=
\bigl\langle \mathcal S_{D,1}^{a,c}[\psi_j^0], \psi_i^0\bigr\rangle_{\partial D}
=
\int_{\partial D}\!\int_{\partial D}
        G_{1}^{a,c}(x-y)
        \psi_j^0(y)\psi_i^0(x)
    \,d\sigma(y)d\sigma(x),
\]
where $G_1^{a,c}=\mathrm{i}G_{1,e}^{a,c}+\mathrm{i}G_{1,o}^{a,c}$ from \eqref{eq:G1-parity-decomposition}. Expanding the kernel gives $\widetilde C^1=\mathrm{i}\widetilde C^{1,e}+\mathrm{i}\widetilde C^{1,o}$. Since $\psi_i^0,\psi_j^0\in H^{-1/2}_0(\partial D)$, the zero-charge property implies that
\begin{align*}
\int_{\partial D}\int_{\partial D}
(a\cdot x_\ell-a\cdot y_\ell)^2
\psi_j^0(y)\psi_i^0(x)\,d\sigma(y)d\sigma(x)
&=
-2m_{\ell,i}^a m_{\ell,j}^a,\\
\int_{\partial D}\int_{\partial D}
\tau^2(x_d-y_d)^2
\psi_j^0(y)\psi_i^0(x)\,d\sigma(y)d\sigma(x)
&=
-2m_{d,i}^\tau m_{d,j}^\tau .
\end{align*}
Consequently, we have
\[
\widetilde C^{1,e}_{ij}
=
-\frac{1}{2\tau|Y|}
\left(m_{\ell,i}^a m_{\ell,j}^a+m_{d,i}^\tau m_{d,j}^\tau\right),
\]
which yields the asserted formula for $\widetilde C^1$. Combining this with the formula for $\widehat C^1$ gives \eqref{eq:C1-formula}.
\end{proof}

The row-sum identity for $C^0$ in \cref{lem:C1-decomposition}, together with
the spectral theorem for symmetric generalized eigenvalue problems, gives the
following spectral normalization \cite{cbms}.

\begin{lemma}[Spectral normalization of the capacitance matrix]
\label{lem:spectral-capacitance}
The leading capacitance matrix $C^0$ is real symmetric and positive
semidefinite. Moreover,
\[
    C^0\mathbf 1=0,
    \qquad
    \ker C^0=\operatorname{span}\{\mathbf 1\},
\]
where $\mathbf 1:=(1,\ldots,1)^\top$. The generalized eigenvalue problem
$C^0p=\lambda Vp$ admits a $V$-orthonormal basis of real eigenvectors. We denote
the corresponding eigenpairs by $\{(\lambda_j,p_j)\}_{j=1}^N$, ordered so that
\begin{equation}\label{eq:eigenvalue-problem-of-C}
    C^0p_j=\lambda_jVp_j,
    \qquad
    p_i^\top Vp_j=\delta_{ij},
    \qquad
    0=\lambda_1\le\lambda_2\le\cdots\le\lambda_N .
\end{equation}
\end{lemma}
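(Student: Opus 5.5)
The plan is to realize $C^0$ as the Gram matrix of a real, symmetric, positive semidefinite bilinear form. Write $\langle\cdot,\cdot\rangle$ without conjugation, since all densities involved are real by \cref{lem:C1-decomposition}(i). The defining relation $\mathcal S_D^{0,0}[\psi_j^0]+s_j^0=\chi_{\partial D_j}$ gives
\[
C^0_{ij}=-\langle\psi_j^0,\chi_{\partial D_i}\rangle_{\partial D}=-\langle\psi_j^0,\mathcal S_D^{0,0}[\psi_i^0]+s_i^0\rangle_{\partial D}=-\langle\psi_j^0,\mathcal S_D^{0,0}[\psi_i^0]\rangle_{\partial D},
\]
because $\mathfrak m[\psi_j^0]=0$. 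The kernel $G^{0,0}$ in \eqref{eq:def-G00} is real and even, so $G^{0,0}(x-y)=G^{0,0}(y-x)$. This makes the bilinear form $\langle\psi,\mathcal S_D^{0,0}[\phi]\rangle$ symmetric, and hence $C^0$ is real symmetric.

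For semidefiniteness, I will show that $-\langle\psi,\mathcal S_D^{0,0}[\psi]\rangle\ge0$ for real $\psi$ with $\mathfrak m[\psi]=0$, with equality only if $\psi=0$. Set $u:=\mathcal S_D^{0,0}[\psi]$, viewed as a periodic harmonic function on $\Omega\setminus\partial D$. Since $\mathfrak m[\psi]=0$, the $|x_d|$ term contributes $\frac{1}{2|Y|}\int|x_d-y_d|\psi(y)\,d\sigma(y)$, which is constant in $x_d$ for $|x_d|$ large, while the $\eta\neq0$ modes decay exponentially. Thus $u$ tends to constants $u_\pm$ as $x_d\to\pm\infty$ and $\nabla u$ decays exponentially. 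Applying Green's identity on the truncated strip $\Omega_h$ minus $\partial D$, with periodic lateral faces cancelling, and using the jump relation \eqref{eq:jump-formula}, gives
\[
\int_{\Omega_h}|\nabla u|^2=-\langle\psi,u\rangle_{\partial D}+\text{(top/bottom terms)}.
\]
The top and bottom terms involve $\partial_{x_d}u$, whose zero mode is $\pm\frac{1}{2|Y|}\mathfrak m[\psi]=0$ and whose other modes decay. Letting $h\to\infty$ therefore yields $-\langle\psi,\mathcal S_D^{0,0}[\psi]\rangle=\int_\Omega|\nabla u|^2\ge0$. Applying this to $\psi=\sum_j q_j\psi_j^0$ with $q\in\mathbb R^N$ gives $q^\top C^0q\ge0$.

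For the kernel, $C^0\mathbf 1=0$ follows from $\sum_j\psi_j^0=0$ in \cref{lem:C1-decomposition}(ii). Conversely, suppose $q^\top C^0q=0$. Then $\nabla u\equiv0$, so $u$ is constant on each connected component of $\Omega\setminus\partial D$. The exterior region is connected because the resonators are bounded in $x_d$, so $u$ is constant outside $D$. By continuity of the single-layer trace, $u$ then takes the same value on every $\partial D_i$. Hence $\mathcal S_D^{0,0}[\psi]=c\chi_{\partial D}$. Combining this with $\mathcal S_D^{0,0}[\psi]+\sum_jq_js_j^0=\sum_jq_j\chi_{\partial D_j}$ shows that $\mathcal H_0[\psi,\sum_j q_js^0_j]$ equals a piecewise constant function. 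The same holds for $\mathcal H_0[0,\cdot]$, and injectivity of $\mathcal H_0$ from \cref{lem:H0-isomorphism} forces $\psi=0$. Then $\sum_jq_j\chi_{\partial D_j}$ must be constant, so $q\in\operatorname{span}\{\mathbf 1\}$. Since a real symmetric positive semidefinite matrix with $q^\top C^0q=0$ satisfies $C^0q=0$, this shows $\ker C^0=\operatorname{span}\{\mathbf 1\}$.

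Finally, $V$ is positive definite. Consider the symmetric matrix $V^{-1/2}C^0V^{-1/2}$. It has an orthonormal real eigenbasis $\{e_j\}$ with nonnegative eigenvalues, and its kernel is one-dimensional, spanned by $V^{1/2}\mathbf 1$. Setting $p_j=V^{-1/2}e_j$ gives $C^0p_j=\lambda_jVp_j$ and $p_i^\top Vp_j=\delta_{ij}$, with $0=\lambda_1<\lambda_2\le\cdots\le\lambda_N$.

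The main obstacle is justifying the energy identity on the unbounded strip. This requires controlling the top and bottom boundary terms via the Fourier expansion of $\mathcal S_D^{0,0}[\psi]$ for $|x_d|>h$, and it is precisely where the zero-charge condition kills the linear growth of the $\eta=0$ mode.
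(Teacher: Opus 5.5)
Your argument is correct, and it is more complete than what the paper offers. The paper only notes that $C^0\mathbf 1=0$ follows from the row-sum identity in \cref{lem:C1-decomposition}, appeals to the spectral theorem for the symmetric pencil $(C^0,V)$, and delegates symmetry, semidefiniteness and the kernel characterization to \cite{cbms}. You instead derive these three properties directly. You write $C^0_{ij}=-\langle\psi_j^0,\mathcal S_D^{0,0}[\psi_i^0]\rangle_{\partial D}$ as a Gram matrix on zero-charge densities, and you run on the open strip the same Green identity that the paper uses for injectivity in the proof of \cref{lem:H0-isomorphism}. This makes the lemma self-contained in the quasiperiodic geometry, where, as you note, zero charge is exactly what removes the $|x_d|$ growth. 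It also gives the strict inequality $\lambda_2>0$, i.e.\ the one-dimensional kernel, which the paper later relies on, for instance to get strictly positive antisymmetric eigenvalues in \cref{prop:reduced-reflection-decomposition}.

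The kernel step needs two small repairs. First, under the paper's hypotheses the exterior $\Omega\setminus\overline D$ need not be connected, since a resonator may enclose a cavity. You do not need connectedness: $u=\mathcal S_D^{0,0}[\psi]\in H^1_{\rm loc}(\Omega)$ has $\nabla u=0$ a.e.\ on the connected strip, so it is constant. Second, once $\mathcal S_D^{0,0}[\psi]=c$ on $\partial D$, restrict $\mathcal S_D^{0,0}[\psi]+\sum_k q_ks_k^0=\sum_j q_j\chi_{\partial D_j}$ to each $\partial D_j$. This gives $q_j=c+\sum_k q_ks_k^0$ for all $j$, so the detour through injectivity of $\mathcal H_0$ is unnecessary.
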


\section{Subwavelength resonances in general configurations}
\label{section:general-subwavelength-resonances}

This section has three objectives. We first prove a spectral location result for
the nonlinear resonance problem. We then eliminate the exterior
field by means of the Dirichlet-to-Neumann (DtN) operator.
Finally, under \cref{ass:subwavelength-low-frequency}, we decompose the
interior field into its componentwise averages and a zero-average correction
and derive a finite-dimensional amplitude equation.

\begin{assumption}
\label{ass:positive-effective-coefficient}
An outgoing solution $(\omega,u)$ of \eqref{eq:periodic-transmission-problem} with $u^i=0$ is assumed
to satisfy
\[
    1+\sigma_D(x)|u(x)|^2>0
    \qquad
    \text{for a.e. }x\in D.
\]
\end{assumption}

\begin{remark}
\label{rem:positive-effective-small-amplitude}
\emph{(i)} The assumption is automatic in the self-defocusing case
$\sigma_D\ge0$.

\emph{(ii)} For sign-indefinite $\sigma_D$, it is an amplitude-smallness
condition. Indeed, whenever an $L^\infty(D)$ bound is available, one has
\[
    1+\sigma_D(x)|u(x)|^2
    \ge
    1-\|\sigma_D\|_{L^\infty(D)}\|u\|_{L^\infty(D)}^2 .
\]
Thus, the assumption holds if
\[
    \|u\|_{L^\infty(D)}^2
    <
    \|\sigma_D\|_{L^\infty(D)}^{-1}
\]
with the convention that the condition is void when $\sigma_D\equiv0$. In this
sense, the assumption is a small-amplitude condition.
\end{remark}

We next show that, for fixed real Bloch fibres, outgoing resonances satisfying this positivity condition cannot lie in the open upper half-plane.

\begin{proposition}\label{prop:no-upper-half-plane}
Let $\alpha\in Y^*$ be fixed and let $(\omega,u)$ be an outgoing
$\alpha$-quasiperiodic solution of \eqref{eq:periodic-transmission-problem} with $u^i=0$ satisfying
\cref{ass:positive-effective-coefficient}. If $\Im\omega>0$, then
$u\equiv0$. Consequently, every nontrivial outgoing resonance satisfying
\cref{ass:positive-effective-coefficient} has $\Im\omega\le0$.
\end{proposition}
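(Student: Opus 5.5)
Multiply the equation by $\bar u$, integrate over the truncated strip $\Omega_h$ with $h$ large enough that $\overline D\subset\Omega_h$, and take imaginary parts. This is a Rellich/energy identity. Since the nonlinearity enters as $(\omega^2/\kappa)(1+\sigma_D|u|^2)u$, the coefficient multiplying $|u|^2$ is real after the factor $\omega^2$ is removed. The outgoing condition controls the boundary flux.

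\textbf{Step 1: the weak identity.} Multiply \eqref{eq:periodic-transmission-problem} (with $u^i=0$) by $\bar u$ and integrate separately over $D$ and $\Omega_h\setminus\overline D$. The lateral boundary terms on $\partial Y\times(-h,h)$ cancel, because $u\bar u$ and $\partial u\,\bar u$ are periodic when $u$ is $\alpha$-quasiperiodic with $\alpha$ real. The interface terms on $\partial D$ cancel by the two transmission conditions. This gives
\[
-\int_{\Omega_h}\frac{1}{\rho}|\nabla u|^2\,dx+\omega^2\int_{\Omega_h}\frac{1}{\kappa}\bigl(1+\sigma_D|u|^2\bigr)|u|^2\,dx+\frac{1}{\rho_m}\sum_{\pm}\int_{Y}\partial_{\nu}u\,\bar u\big|_{x_d=\pm h}\,dx_\ell=0 .
\]

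\textbf{Step 2: the flux term.} Evaluate the flux with the Rayleigh--Bloch expansion \eqref{eq:outgoing-radiation-condition}. By Parseval on $Y$, the boundary term equals
\[
\frac{|Y|}{\rho_m}\sum_\pm\sum_\eta \mathrm i\beta_\eta(k_m)\,|u^\alpha_\eta(\pm h)|^2 .
\]
Since $\Im\omega>0$ and the outgoing branch is chosen, $\Im\beta_\eta(k_m)>0$ for every $\eta$. Here $\beta_\eta^2=k_m^2-|\alpha_\eta|^2$, and the outgoing root for complex $k$ is the one continuing into the upper half-plane. In addition, the series converges absolutely and $u$ decays exponentially, so no Rayleigh threshold issue arises.

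\textbf{Step 3: choose the right linear combination.} Write $A:=\int\rho^{-1}|\nabla u|^2\ge0$, $B:=\int\kappa^{-1}(1+\sigma_D|u|^2)|u|^2$ and $F:=\sum|u_\eta^\alpha(\pm h)|^2$-type terms. By \cref{ass:positive-effective-coefficient}, $B\ge0$, with $B>0$ unless $u=0$ in $\Omega_h$. The identity then reads
\[
-A+\omega^2B+\frac{\mathrm i|Y|}{\rho_m}\sum\beta_\eta|u_\eta|^2=0 .
\]
Taking the imaginary part alone is awkward, because $\Im\omega^2$ has no fixed sign. Instead, divide by $\omega$ and take imaginary parts:
\[
-\Im\bigl(A/\omega\bigr)+\Im(\omega)B+\frac{|Y|}{\rho_m}\sum\Im\bigl(\mathrm i\beta_\eta/\omega\bigr)|u_\eta|^2=0 .
\]
We have $\Im(A/\omega)=-A\,\Im\omega/|\omega|^2\le0$, so the first term is nonnegative, and the second term is nonnegative as well.

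\textbf{Main obstacle.} It remains to show that $\Re(\beta_\eta/\omega)\ge0$, which makes $\Im(\mathrm i\beta_\eta/\omega)\ge0$. Setting $\beta_\eta/\omega=\sqrt{c_m^{-2}-|\alpha_\eta|^2/\omega^2}$, I would argue by continuity of the outgoing branch. The quantity $\beta_\eta/\omega$ never becomes purely imaginary for $\Im\omega>0$: $\beta_\eta=\mathrm i t\omega$ with $t$ real would force $k_m^2-|\alpha_\eta|^2=-t^2\omega^2$, that is, $\omega^2(c_m^{-2}+t^2)$ real, and hence $\omega^2\in\mathbb R$. This contradicts $\Im\omega>0$ unless $\omega\in\mathrm i\mathbb R$. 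That case needs a direct check: there $k_m^2<0$, $\beta_\eta=\mathrm i\sqrt{|\alpha_\eta|^2-k_m^2}$ and $\beta_\eta/\omega>0$. The sign can then be fixed by the limit as $\omega$ approaches a real positive value, where $\beta_0/\omega>0$.

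\textbf{Conclusion.} All three terms are nonnegative and sum to zero, so each vanishes. Since $\Im\omega>0$, we get $B=0$, hence $u=0$ in $\Omega_h$. Unique continuation, or equivalently the vanishing of all Rayleigh coefficients, then gives $u\equiv0$.
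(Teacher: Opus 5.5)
Your proof is correct. Its skeleton matches the paper's: the energy identity on $\Omega_h$, the Rayleigh--Bloch evaluation of the flux, and positivity of $B$ from \cref{ass:positive-effective-coefficient}. The difference is how you extract the sign information.

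The paper takes imaginary parts of the identity directly. Since $\Im(\omega^2)=2\Re\omega\,\Im\omega$ has no fixed sign, it splits into three cases:
\begin{itemize}
\item $\Re\omega>0$, where it uses $\Re\beta_\eta\ge0$;
\item $\Re\omega<0$, reduced to the first case by conjugating and using $\beta_\eta(-\overline{k_m})=-\overline{\beta_\eta(k_m)}$;
\item $\Re\omega=0$, where $\beta_\eta=\mathrm i\mu_\eta$ and the identity itself, not its imaginary part, is sign-definite.
\end{itemize}
By dividing by $\omega$ first, you get a single identity whose three terms are nonnegative for every $\omega$ in the open upper half-plane. No case analysis or conjugation symmetry is needed. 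The price is the branch fact $\Re(\beta_\eta/\omega)>0$, which replaces the paper's unproved claim $\Re\beta_\eta\ge0$ for $k_m$ in the first quadrant.

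Your continuity argument for this fact works, with three remarks.
\begin{itemize}
\item The exception $\omega\in\mathrm i\mathbb R$ in the non-vanishing step is unnecessary. The relation $\omega^2(c_m^{-2}+t^2)=|\alpha_\eta|^2\ge0$ already forces $\omega\in\mathbb R$, so the imaginary axis simply serves as the reference point.
\item The check on the imaginary axis is what fixes the sign for every $\eta$. Your alternative, the limit to a real positive $\omega$, only does so for the propagating order, since $\beta_\eta/\omega$ tends to the imaginary axis for evanescent orders.
\item There is a one-line replacement. Taking imaginary parts of $\beta_\eta^2=k_m^2-|\alpha_\eta|^2$ gives $\Re\beta_\eta\,\Im\beta_\eta=c_m^{-2}\Re\omega\,\Im\omega$. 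Since $\Im\beta_\eta>0$ and $\Im\omega>0$, $\Re\beta_\eta$ has the sign of $\Re\omega$. Hence $\Re(\beta_\eta\overline{\omega})=\Re\beta_\eta\,\Re\omega+\Im\beta_\eta\,\Im\omega>0$. The same computation also justifies the paper's claim in its first case.
\end{itemize}
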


\begin{proof}
Let $h>0$ be such that $\overline D\subset\Omega_h$, and set
$\Gamma_h^\pm:=Y\times\{\pm h\}$. Multiplying \eqref{eq:periodic-transmission-problem} by
$\overline u$, integrating over $\Omega_h$, and using the transmission
conditions gives
\begin{equation}\label{eq:omega-h-green}
    \int_{\Omega_h}\frac1\rho|\nabla u|^2\,dx
    -
    \omega^2
    \int_{\Omega_h}
        \frac1\kappa
        \bigl(1+\sigma_D(x)|u|^2\bigr)|u|^2
    \,dx
    =
    \int_{\partial\Omega_h}
        \frac1\rho\overline u\,\partial_\nu u
    \,d\sigma .
\end{equation}
The lateral boundary terms cancel by real quasiperiodicity. On
$\Gamma_h^+\cup\Gamma_h^-$, the outgoing Rayleigh--Bloch expansion yields
\begin{equation}\label{eq:omega-h-rb}
    \int_{\partial\Omega_h}
        \frac1\rho\overline u\,\partial_\nu u
    \,d\sigma
    =
    \frac{\mathrm i |Y|}{\rho_m}
    \sum_{\eta\in\Lambda^*}
        \beta_\eta(k_m)
        \left(
            |u^\alpha_\eta(+h)|^2
            +
            |u^\alpha_\eta(-h)|^2
        \right).
\end{equation}
We distinguish three cases:

\emph{(i)} Suppose first that $\Re\omega>0$ and $\Im\omega>0$. Taking imaginary
parts in \eqref{eq:omega-h-green} gives
\[
    -\Im(\omega^2)
    \int_{\Omega_h}
        \frac1\kappa
        \bigl(1+\sigma_D(x)|u|^2\bigr)|u|^2
    \,dx
    =
    \frac{|Y|}{\rho_m}
    \sum_{\eta\in\Lambda^*}
        \Re\beta_\eta(k_m)
        \left(
            |u^\alpha_\eta(+h)|^2
            +
            |u^\alpha_\eta(-h)|^2
        \right).
\]
Since $\Im(\omega^2)=2\Re\omega\,\Im\omega>0$, the left-hand side is
nonpositive. On the outgoing branch, $\Re\beta_\eta(k_m)\ge0$, so the
right-hand side is nonnegative. Hence, both sides vanish. In particular,
\[
    \int_{\Omega_h}
        \frac1\kappa
        \bigl(1+\sigma_D(x)|u|^2\bigr)|u|^2
    \,dx
    =
    0 .
\]
The coefficient in the integrand is strictly positive in $D$ by assumption and
is equal to one in the exterior. Therefore, $u=0$ in $\Omega_h$.

\emph{(ii)} Next, suppose that $\Re\omega<0$ and $\Im\omega>0$. Taking the
complex conjugate of \eqref{eq:omega-h-green} and using \eqref{eq:omega-h-rb},
together with the identity $\beta_\eta(-\overline{k_m})=-\overline{\beta_\eta(k_m)}$ on the same outgoing sheet, we obtain
\[
    \int_{\Omega_h}\frac1\rho|\nabla u|^2\,dx
    -
    \overline\omega^{\,2}
    \int_{\Omega_h}
        \frac1\kappa
        \bigl(1+\sigma_D(x)|u|^2\bigr)|u|^2
    \,dx
    =
    \frac{\mathrm i |Y|}{\rho_m}
    \sum_{\eta\in\Lambda^*}
        \beta_\eta(-\overline{k_m})
        \left(
            |u^\alpha_\eta(+h)|^2
            +
            |u^\alpha_\eta(-h)|^2
        \right).
\]
Taking imaginary parts gives
\[
    -\Im(\overline\omega^{\,2})
    \int_{\Omega_h}
        \frac1\kappa
        \bigl(1+\sigma_D(x)|u|^2\bigr)|u|^2
    \,dx
    =
    \frac{|Y|}{\rho_m}
    \sum_{\eta\in\Lambda^*}
        \Re\beta_\eta(-\overline{k_m})
        \left(
            |u^\alpha_\eta(+h)|^2
            +
            |u^\alpha_\eta(-h)|^2
        \right).
\]
Here, $\Im(\overline\omega^{\,2})=-2\Re\omega\,\Im\omega>0$, while
$\Re\beta_\eta(-\overline{k_m})\ge0$. The same sign argument gives
$u=0$ in $\Omega_h$.

\emph{(iii)} Finally, let $\Re\omega=0$ and $\Im\omega>0$. Write
$\omega=\mathrm i|\omega|$. Then
\[
    \omega^2=-|\omega|^2,
    \qquad
    \beta_\eta(k_m)=\mathrm i\mu_\eta,
    \qquad
    \mu_\eta:=
    \sqrt{|\omega|^2/c_m^2+|\alpha+\eta|^2}>0.
\]
The energy identity becomes
\[
     \int_{\Omega_h}\frac1\rho|\nabla u|^2\,dx
    +
    |\omega|^2
    \int_{\Omega_h}
        \frac1\kappa
        \bigl(1+\sigma_D(x)|u|^2\bigr)|u|^2
    \,dx
    =
    -\frac{|Y|}{\rho_m}
    \sum_{\eta\in\Lambda^*}
        \mu_\eta
        \left(
            |u^\alpha_\eta(+h)|^2
            +
            |u^\alpha_\eta(-h)|^2
        \right).
\]
The left-hand side is nonnegative and the right-hand side is nonpositive. Hence,
both sides vanish. The positivity assumption again implies $u=0$ in
$\Omega_h$.

In all cases $u$ vanishes in a truncated strip containing $\overline D$. The
outgoing Rayleigh--Bloch expansion then gives $u=0$ for $|x_d|>h$.
Consequently, $u\equiv0$ in $\Omega$.
\end{proof}

\begin{remark}
The nonlinear resonance equation is phase invariant: if $(\omega,u)$ is an
outgoing solution, then $(\omega,e^{\mathrm i\theta}u)$ is also an outgoing
solution for every $\theta\in\mathbb R$. This follows from the fact that
\[
    |e^{\mathrm i\theta}u|^2(e^{\mathrm i\theta}u)
    =
    e^{\mathrm i\theta}|u|^2u .
\]
There is also a conjugation symmetry at the level of the Bloch family. Since the
material parameters and Kerr coefficients are real, conjugating the equation
maps
\[
    (\alpha,\omega,u)
    \longmapsto
    (-\alpha,-\overline\omega,\overline u).
\]
Thus, $(-\overline\omega,\overline u)$ solves the nonlinear resonance problem in
the $-\alpha$ quasiperiodic fibre.
\end{remark}

Next, we introduce the finite-dimensional counterpart of the Kerr nonlinearity
for later use.

\begin{definition}
\label{def:finite-dimensional-kerr-map}
The finite-dimensional Kerr map $V_\sigma:\mathbb C^N\to\mathbb C^N$ is defined
componentwise by
\begin{equation}\label{eq:def-Vsigma-q}
    (V_\sigma(q))_i
    :=
    \sigma_i|D_i||q_i|^2q_i,
    \qquad
    1\le i\le N.
\end{equation}
It satisfies the following elementary identities:
\begin{enumerate}[label=(\roman*)]

    \item \emph{Cubic scaling}: $V_\sigma(tq)=|t|^2t\,V_\sigma(q)$, $t\in\mathbb C$;

    \item \emph{Gauge equivariance}: $V_\sigma(e^{\mathrm i\theta}q)=
        e^{\mathrm i\theta}V_\sigma(q)$, $\theta\in\mathbb R$.
\end{enumerate}
Regarded as a real map under the identification
$\mathbb C^N\simeq\mathbb R^{2N}$, $V_\sigma$ is real analytic.
\end{definition}

\subsection{Variational formulation}

We first introduce the exterior Dirichlet-to-Neumann operator and then use it to
eliminate the exterior field.

\begin{definition}\label{def:DtN-operator}
For $\psi\in H^{1/2}(\partial D)$, consider the outgoing
quasiperiodic exterior Dirichlet problem
\begin{equation}
\label{eq:exterior-boundary-value-problem}
(\Delta+k^2)u^{\mathrm{ext}}=0
\quad\text{in }\Omega\setminus\overline D,
\qquad
u^{\mathrm{ext}}=\psi
\quad\text{on }\partial D,
\end{equation}
subject to the outgoing condition
\eqref{eq:outgoing-radiation-condition}. Assume that, for the given pair $(\alpha,k)$, this exterior problem is uniquely
solvable for every boundary datum $\psi\in H^{1/2}(\partial D)$. The exterior
Dirichlet-to-Neumann operator is defined by
\[
    \mathcal T_D^{\alpha,k}[\psi]
    :=
    \left.\partial_\nu u^{\mathrm{ext}}\right|_+,
\]
where $\nu$ denotes the unit normal pointing out of $D$.
\end{definition}

The single-layer representation of the exterior field and the jump relation
\eqref{eq:jump-formula} give the following expression for
$\mathcal T_D^{\alpha,k}$, together with its low-frequency expansion.

\begin{lemma}\label{lem:DtN-low-frequency-expansion}
Assume $\alpha=\omega a$ and $k=\omega/c$, with fixed
$a\in\mathbb R^{d-1}$, $c>0$, and $|a|<c^{-1}$. For
$|\omega|<\omega_0$, where $\omega_0$ is chosen as in
\cref{cor:single-layer-inverse-expansion}, the DtN operator is
\begin{equation}\label{eq:DtN-representation}
    \mathcal T_D^{\alpha,k}
    =
    \left(
        \frac12\mathcal I
        +
        (\mathcal K_D^{-\alpha,k})^*
    \right)
    \left(\mathcal S_D^{\alpha,k}\right)^{-1}.
\end{equation}
Moreover, the operator-valued map $\omega \mapsto \mathcal T_D^{\omega a,\omega/c}$ is holomorphic for $|\omega|<\omega_0$. In particular, as
$\omega\to0$,
\begin{equation}\label{eq:DtN-operator-expansion}
    \mathcal T_D^{\alpha,k}
    =
    \mathcal T_0^{a,c}
    +
    \omega\mathcal T_1^{a,c}
    +
    \omega^2\mathcal R_D^{\alpha,k}
    \quad
    \text{in }
    \mathcal L\bigl(H^{1/2}(\partial D),H^{-1/2}(\partial D)\bigr),
\end{equation}
where $\mathcal R_D^{\alpha,k}$ is uniformly bounded for
$|\omega|<\omega_0$, and
$\omega\mapsto\mathcal R_D^{\omega a,\omega/c}$ is holomorphic with values in
the same operator space.
The first two coefficients are
\begin{equation}\label{eq:T0-T1-definition}
\begin{aligned}
    \mathcal T_0^{a,c}
    :=
    \left(
        \frac12\mathcal I
        +
        (\mathcal K_{D,0}^{-a,c})^*
    \right)
    \mathcal S_0^{-1},
    \qquad
    \mathcal T_1^{a,c}
    :=
    \left(
        \frac12\mathcal I
        +
        (\mathcal K_{D,0}^{-a,c})^*
    \right)
    \mathcal S_1^{-1}
    +
    (\mathcal K_{D,1}^{-a,c})^*
    \mathcal S_0^{-1}.
\end{aligned}
\end{equation}
Finally, for $1\le i,j\le N$,
\begin{equation}\label{eq:DtN-pairing-with-capacitance}
    \left\langle
        \mathcal T_0^{a,c}[\chi_{\partial D_j}],
        \chi_{\partial D_i}
    \right\rangle_{\partial D}
    =
    -C^0_{ij},
    \qquad
    \left\langle
        \mathcal T_1^{a,c}[\chi_{\partial D_j}],
        \chi_{\partial D_i}
    \right\rangle_{\partial D}
    =
    -C^1_{ij}.
\end{equation}
\end{lemma}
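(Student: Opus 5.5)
We prove the representation, the holomorphy and the expansion in turn, and finish with the pairings against the capacitance matrices.

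\textbf{Representation.} For $|\omega|<\omega_0$, \cref{cor:single-layer-inverse-expansion} shows that $\mathcal S_D^{\alpha,k}$ is invertible. Given $\psi\in H^{1/2}(\partial D)$, set
\[
    u^{\mathrm{ext}}:=\mathcal S_D^{\alpha,k}\bigl[(\mathcal S_D^{\alpha,k})^{-1}[\psi]\bigr].
\]
This is quasiperiodic, solves the Helmholtz equation in $\Omega\setminus\overline D$, and is outgoing because the kernel \eqref{eq:def-Gak} is a sum of outgoing modes. Its trace on $\partial D$ is $\psi$. Since $\omega<\omega_{\rm sw}$, no Rayleigh threshold is reached, so the exterior problem has at most one outgoing solution. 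This follows from the standard energy/uniqueness argument, or from the identity in \cref{prop:no-upper-half-plane} applied with $\sigma=0$ in the exterior; for real $\omega$, Rellich-type uniqueness on the strip gives the same conclusion. Hence $u^{\mathrm{ext}}$ is the exterior solution, and the exterior jump relation \eqref{eq:jump-formula} yields \eqref{eq:DtN-representation}.

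\textbf{Holomorphy and expansion.} The map $\omega\mapsto(\mathcal K_D^{-\alpha,k})^*$ is holomorphic, with a convergent expansion from \eqref{eq:boundary-operator-asymptotic}. Its $\omega^{-1}$ term vanishes because $G_{-1}^{a,c}$ is constant. The inverse $(\mathcal S_D^{\alpha,k})^{-1}$ is holomorphic by \cref{cor:single-layer-inverse-expansion}. The product of the two is therefore holomorphic on the disk. Multiplying the series \eqref{eq:Sak-Ksak-expansion} and \eqref{eq:expansion-of-inverse-Sak} and collecting powers of $\omega$ gives $\mathcal T_0^{a,c}$ and $\mathcal T_1^{a,c}$ as in \eqref{eq:T0-T1-definition}. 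Define
\[
    \mathcal R_D^{\alpha,k}:=\omega^{-2}\bigl(\mathcal T_D^{\alpha,k}-\mathcal T_0^{a,c}-\omega\mathcal T_1^{a,c}\bigr).
\]
It has a removable singularity at $0$, so it is holomorphic. Uniform boundedness follows from the Cauchy estimate, after shrinking $\omega_0$ slightly if needed, which is permitted by the choice in the corollary.

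\textbf{Capacitance pairings.} This is the main step. For $\mathcal T_0^{a,c}$, note that
\[
    \mathcal T_0^{a,c}[\chi_{\partial D_j}]
    =\psi_j^0+\bigl(-\tfrac12\mathcal I+(\widehat{\mathcal K}_D^{-\alpha,k})^*\bigr)[\psi_j^0],
\]
since $\tfrac12\mathcal I=\mathcal I-\tfrac12\mathcal I$ and $\mathcal S_0^{-1}[\chi_{\partial D_j}]=\psi_j^0$. The first identity of \cref{lem:ksn-integral} kills the second term after pairing with $1$ on $\partial D_i$, which leaves
\[
    \langle\psi_j^0,1\rangle_{\partial D_i}=-C^0_{ij}.
\]
For $\mathcal T_1^{a,c}$, the same rewriting applied to the first part gives $\langle\psi_j^1,1\rangle_{\partial D_i}=-C^1_{ij}$. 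The remaining term is
\[
    \bigl\langle(\mathcal K_{D,1}^{-a,c})^*[\psi_j^0],1\bigr\rangle_{\partial D_i}
    =-c^{-2}\bigl(\mathcal S_{D,-1}^{a,c}[\psi_j^0],1\bigr)_{D_i},
\]
by \cref{lem:ksn-integral} with $n=1$. Because $\mathcal S_{D,-1}^{a,c}[\psi]$ is the constant $G_{-1}^{a,c}\,\mathfrak m[\psi]$ and $\mathfrak m[\psi_j^0]=0$, this term vanishes. That gives \eqref{eq:DtN-pairing-with-capacitance}. The only care needed is the bookkeeping of which expansion terms pair to zero by the zero-charge property.
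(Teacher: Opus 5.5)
Your proof is correct and follows the same route the paper indicates: the single-layer representation of the exterior field with the jump relation \eqref{eq:jump-formula}, term-by-term multiplication of \eqref{eq:Sak-Ksak-expansion} and \eqref{eq:expansion-of-inverse-Sak}, and, for the pairings, \cref{lem:ksn-integral} combined with the zero-charge property $\mathfrak m[\psi_j^0]=0$. The one loose point is your uniqueness argument for complex $\omega$. \cref{prop:no-upper-half-plane} only covers $\Im\omega>0$, and the absence of Rayleigh thresholds does not by itself exclude trapped modes. You do not actually need this step, since the paper builds unique solvability into \cref{def:DtN-operator}. If you want to prove it anyway, use Green's representation: an outgoing solution $w$ with zero trace satisfies $w=\mathcal S_D^{\alpha,k}[\partial_\nu w|_+]$, and the invertibility of $\mathcal S_D^{\alpha,k}$ then forces $w=0$.
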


Under the solvability assumption in \cref{def:DtN-operator}, the outgoing
exterior field can be eliminated by $\mathcal T_D^{\alpha,k_m}$. Using
$\delta=\rho_b/\rho_m$ in the transmission condition, the homogeneous nonlinear resonance
problem \eqref{eq:periodic-transmission-problem} is equivalent to the interior nonlinear resonance problem
\begin{equation}
\label{eq:interior-nonlinear-resonance-problem}
    \Delta u+k_b^2\bigl(u+\mathcal N_\sigma[u]\bigr)=0
    \quad\text{in }D,
    \qquad
    \left.\partial_\nu u\right|_-
    =
    \delta\mathcal T_D^{\alpha,k_m}[u]
    \quad\text{on }\partial D.
\end{equation}
Here, $u$ is identified with its trace on $\partial D$ in the DtN term.
Since $d\in\{2,3\}$, \cref{lem:kerr-local-estimate} implies that
$\mathcal N_\sigma:H^1(D)\to L^2(D)$ is well defined. Hence, the weak
formulation is: find $(\omega,u)\in \mathbb{C} \times H^1(D)$ such that
$a^{\mathrm{non}}_{\omega,\delta}(u;v)=0$ for every $v\in H^1(D)$, where
\begin{equation}
\label{eq:def-nonlinear-variational-form}
    a^{\mathrm{non}}_{\omega,\delta}(u;v)
    :=
    (\nabla u,\nabla v)_D
    -
    k_b^2\bigl(u+\mathcal N_\sigma[u],v\bigr)_D
    -
    \delta
    \langle
        \mathcal T_D^{\alpha,k_m}[u],
        v
    \rangle_{\partial D}.
\end{equation}
Here, $a^{\mathrm{non}}_{\omega,\delta}$ denotes the nonlinear variational form:
it is nonlinear in the first argument and conjugate-linear in the test
function. For fixed $u$, the map
$v\mapsto a^{\mathrm{non}}_{\omega,\delta}(u;v)$ is the corresponding weak
residual.
In the linear case $\sigma_j=0$ for all $j$, the associated sesquilinear form is
\begin{equation}
\label{eq:def-linear-variational-form}
    a^{\mathrm{lin}}_{\omega,\delta}(u,v)
    :=
    (\nabla u,\nabla v)_D
    -
    k_b^2(u,v)_D
    -
    \delta
    \langle
        \mathcal T_D^{\alpha,k_m}[u],
        v
    \rangle_{\partial D}.
\end{equation}

\subsection{Reduced equations}

Decompose $u=u_q+z$ according to \eqref{eq:XZ-decomposition}, with $u_q\in\mathcal X(D)$ being the lift of the componentwise
averages $q$, and $z\in\mathcal Z(D)$ being the zero-average correction. We
project the weak form \eqref{eq:def-nonlinear-variational-form} first onto $\mathcal Z(D)$ and then onto
$\mathcal X(D)$.

For the $\mathcal Z(D)$-projection, since $u_q$ is componentwise constant
and $\sigma_D$ is also componentwise constant on each $D_j$, we have
\[
    \mathcal N_\sigma[u_q]
    =
    \sigma_D|u_q|^2u_q
    \in \mathcal X(D).
\]
Thus, for every $v\in\mathcal Z(D)$,
\[
    (u_q,v)_D=0,
    \qquad
    (\mathcal N_\sigma[u_q],v)_D=0,
    \qquad
    (\nabla u_q,\nabla v)_D=0.
\]
Define
\begin{equation}
\label{eq:def-tilde-Nsigma}
    \widetilde{\mathcal N}_\sigma[q,z]
    :=
    \mathcal N_\sigma[u_q+z]-\mathcal N_\sigma[u_q],
\end{equation}
which isolates the part of the nonlinearity involving the zero-average
correction $z$. Substituting $u=u_q+z$ into
\eqref{eq:def-nonlinear-variational-form} and testing against
$v\in\mathcal Z(D)$, we obtain
\[
\begin{aligned}
0
=
a^{\mathrm{non}}_{\omega,\delta}(u_q+z;v)
=
a^{\mathrm{lin}}_{\omega,\delta}(z,v)
-
\delta
\langle
    \mathcal T_D^{\alpha,k_m}[u_q],
    v
\rangle_{\partial D}
-
k_b^2
\bigl(
    \widetilde{\mathcal N}_\sigma[q,z],
    v
\bigr)_D .
\end{aligned}
\]
Hence, the $\mathcal Z(D)$-projected equation is
\begin{equation}
\label{eq:projected-Z-before-scaling}
a^{\mathrm{lin}}_{\omega,\delta}(z,v)
=
k_b^2
\bigl(
    \widetilde{\mathcal N}_\sigma[q,z],
    v
\bigr)_D
+
\delta
\langle
    \mathcal T_D^{\alpha,k_m}[u_q],
    v
\rangle_{\partial D},
\qquad
v\in\mathcal Z(D).
\end{equation}

Next, we project onto $\mathcal X(D)$. For $1\le i\le N$, take
$v=\chi_{D_i}$, the
gradient term vanishes. Moreover, $z\in\mathcal Z(D)$ implies
that $(z,1)_{D_i}=0$. The weak formulation
\eqref{eq:def-nonlinear-variational-form} therefore gives
\begin{equation}
\label{eq:X-test-before-expansion}
    k_b^2
    \bigl(
        u_q+\mathcal N_\sigma[u_q+z],
        1
    \bigr)_{D_i}
    +
    \delta
    \langle
        \mathcal T_D^{\alpha,k_m}[u_q+z],
        1
    \rangle_{\partial D_i}
    =
    0 .
\end{equation}
Using $\mathcal N_\sigma[u_q+z]
    =
    \mathcal N_\sigma[u_q]
    +
    \widetilde{\mathcal N}_\sigma[q,z]$,
together with
\[
    (u_q,1)_{D_i}=|D_i|q_i=(Vq)_i,
    \qquad
    (\mathcal N_\sigma[u_q],1)_{D_i}
    =
    \sigma_i|D_i||q_i|^2q_i
    =
    (V_\sigma(q))_i,
\]
we find that
\[
    \bigl(
        u_q+\mathcal N_\sigma[u_q+z],
        1
    \bigr)_{D_i}
    =
    (Vq)_i
    +
    (V_\sigma(q))_i
    +
    \bigl(
        \widetilde{\mathcal N}_\sigma[q,z],
        1
    \bigr)_{D_i}.
\]
We expand the DtN contribution as follows. Since the trace of $u_q$ on
$\partial D$ is piecewise constant, the linearity of
$\mathcal T_D^{\alpha,k_m}$, the DtN expansion
\eqref{eq:DtN-operator-expansion} with $c=c_m$, and the capacitance identities
\eqref{eq:DtN-pairing-with-capacitance} imply that
\[
\begin{aligned}
    \langle
        \mathcal T_D^{\alpha,k_m}[u_q+z],
        1
    \rangle_{\partial D_i}
    =
    -(C^0q)_i
    -
    \omega(C^1q)_i
    +
    \omega^2
    \langle
        \mathcal R_D^{\alpha,k_m}[u_q],
        1
    \rangle_{\partial D_i}
    +
    \langle
        \mathcal T_D^{\alpha,k_m}[z],
        1
    \rangle_{\partial D_i}.
\end{aligned}
\]
Substituting this identity into \eqref{eq:X-test-before-expansion} gives
\begin{equation}
\label{eq:projected-X-finite-dimensional}
    k_b^2
    \bigl(Vq+V_\sigma(q)\bigr)
    -
    \delta
    \bigl(C^0+\omega C^1\bigr)q
    +
    \delta
    \mathcal R_{\mathcal X}(q,z,\omega,\delta)
    =
    0
    \quad \text{in }\mathbb C^N,
\end{equation}
where the remainder is defined componentwise by
\begin{equation}
\label{eq:projected-X-explicit-remainder}
\begin{aligned}
    \bigl(\mathcal R_{\mathcal X}(q,z,\omega,\delta)\bigr)_i
    :=
    \frac{k_b^2}{\delta}
    \bigl(
        \widetilde{\mathcal N}_\sigma[q,z],
        1
    \bigr)_{D_i}
    +
    \omega^2
    \langle
        \mathcal R_D^{\alpha,k_m}[u_q],
        1
    \rangle_{\partial D_i}
    +
    \langle
        \mathcal T_D^{\alpha,k_m}[z],
        1
    \rangle_{\partial D_i}.
\end{aligned}
\end{equation}
Since $H^1(D)=\mathcal X(D)\oplus\mathcal Z(D)$, the nonlinear resonance
problem \eqref{eq:interior-nonlinear-resonance-problem} is equivalent to the coupled system consisting of
\eqref{eq:projected-Z-before-scaling} and
\eqref{eq:projected-X-finite-dimensional}.

We now introduce the high-contrast scaling. Set
$\varepsilon:=\sqrt{\delta}$ and
$\hat\omega:=\omega/(c_b\varepsilon)$. Then
\begin{align}
\label{eq:low-frequency-scaling}
    \omega
    =
    \varepsilon c_b\hat\omega,
    \qquad
    \alpha
    =
    \omega a
    =
    \varepsilon c_b\hat\omega a,
    \qquad
    k_b
    =
    \varepsilon\hat\omega,
    \qquad
    k_m
    =
    \varepsilon \hat\omega c_b/c_m.
\end{align}
In what follows, all occurrences of $\alpha$ and $k_m$ in
$\mathcal T_D^{\alpha,k_m}$ are understood through the scaling
\eqref{eq:low-frequency-scaling}.
We define the scaled linear and nonlinear variational forms by
\begin{align}
\label{eq:scaled-weak-form}
    a^{\mathrm{lin}}_{\hat\omega,\varepsilon}(u,v)
    &:=
    a^{\mathrm{lin}}_{\omega,\delta}(u,v)
    \big|_{\omega=\varepsilon c_b\hat\omega,\ \delta=\varepsilon^2},
    \qquad
    a^{\mathrm{non}}_{\hat\omega,\varepsilon}(u;v)
    :=
    a^{\mathrm{non}}_{\omega,\delta}(u;v)
    \big|_{\omega=\varepsilon c_b\hat\omega,\ \delta=\varepsilon^2}.
\end{align}
We also introduce the two functionals on $\mathcal Z(D)$ that enter the
$\mathcal Z(D)$-projection:
\begin{equation}
\label{eq:def-z-projection-functionals}
\begin{aligned}
    \mathcal B_{q,\hat\omega,\varepsilon}(v)
    :=
    \left\langle
        \mathcal T_D^{\alpha,k_m}[u_q],
        v
    \right\rangle_{\partial D},
    \qquad
    \mathcal H_q[z](v)
    :=
    \bigl(
        \widetilde{\mathcal N}_\sigma[q,z],
        v
    \bigr)_D,
    \qquad
    v\in\mathcal Z(D).
\end{aligned}
\end{equation}
Since $k_b^2=\varepsilon^2\hat\omega^2$, the scaled
$\mathcal Z(D)$-equation \eqref{eq:projected-Z-before-scaling} becomes
\begin{equation}
\label{eq:projected-Z-nonlinear-form}
    a^{\mathrm{lin}}_{\hat\omega,\varepsilon}(z,v)
    =
    \varepsilon^2\mathcal B_{q,\hat\omega,\varepsilon}(v)
    +
    \varepsilon^2\hat\omega^2\mathcal H_q[z](v),
    \qquad
    v\in\mathcal Z(D).
\end{equation}
Dividing \eqref{eq:projected-X-finite-dimensional} by
$\delta=\varepsilon^2$ gives
\begin{equation}
\label{eq:projected-X-finite-dimensional-system}
    \hat\omega^2
    \bigl(Vq+V_\sigma(q)\bigr)
    -
    \bigl(C^0+c_b\varepsilon\hat\omega C^1\bigr)q
    +
    \mathcal R_{\mathcal X}(q,z,\hat\omega,\varepsilon)
    =
    0
    \quad \text{in }\mathbb C^N.
\end{equation}
Here, after scaling, the remainder is defined componentwise by
\begin{equation}
\label{eq:scaled-X-remainder}
\begin{aligned}
    \bigl(\mathcal R_{\mathcal X}(q,z,\hat\omega,\varepsilon)\bigr)_i
    :=
    \hat\omega^2
    \bigl(
        \widetilde{\mathcal N}_\sigma[q,z],
        1
    \bigr)_{D_i}
    +
    \varepsilon^2c_b^2\hat\omega^2
    \langle
        \mathcal R_D^{\alpha,k_m}[u_q],
        1
    \rangle_{\partial D_i}
    +
    \langle
        \mathcal T_D^{\alpha,k_m}[z],
        1
    \rangle_{\partial D_i}.
\end{aligned}
\end{equation}

\begin{remark}\label{rem:lyapunov-schmidt-strategy}
Equivalently, after the scaling $\delta=\varepsilon^2$ and
$\omega=c_b\varepsilon\hat\omega$, finding a nontrivial interior resonance
$(\omega,u)$ of \eqref{eq:interior-nonlinear-resonance-problem} for fixed $\delta$ is the same as
finding $(\hat\omega,q,z)$ for fixed $\varepsilon$, with $u=u_q+z$, satisfying
\eqref{eq:projected-Z-nonlinear-form} and
\eqref{eq:projected-X-finite-dimensional-system}. The subsequent local analysis follows a Lyapunov--Schmidt reduction near a
simple mode:
\begin{enumerate}[label=(\roman*)]
    \item For fixed $(q,\hat\omega,\varepsilon)$, solve the
    $\mathcal Z(D)$-equation \eqref{eq:projected-Z-nonlinear-form} for the
    zero-average  $z=z(q,\hat\omega,\varepsilon)$;

    \item Substitute $z(q,\hat\omega,\varepsilon)$ into
    \eqref{eq:projected-X-finite-dimensional-system} to obtain a
    finite-dimensional resonance equation for $(\hat\omega,q)$;

    \item Impose the appropriate normalization and solve the finite-dimensional
    equation locally in modal coordinates, with the relevant small parameters;

    \item Reconstruct the interior field by combining the piecewise-constant
    part determined by $q$ with the zero-average correction.
\end{enumerate}

\end{remark}

Both linear and nonlinear reductions continue from a simple mode
$(\hat\omega_j^0,p_j)$, where $\hat\omega_j^0=\sqrt{\lambda_j}$. We therefore
fix the modal complement used below.

\begin{definition}[$V$-orthogonal modal complement]
\label{def:modal-complement}
Let $(\lambda_j,p_j)$ be a simple positive generalized eigenpair from
\cref{lem:spectral-capacitance}, normalized by $p_j^\top Vp_j=1$. We define the
$V$-orthogonal modal complement of $p_j$, with respect to the bilinear
$V$-pairing, by
\begin{equation}
\label{eq:def-modal-complement-space}
    E_j^\perp
    :=
    \left\{
        \xi\in\mathbb C^N:
        p_j^\top V\xi=0
    \right\}.
\end{equation}
Equivalently, every $q\in\mathbb C^N$ admits the unique decomposition
\[
    q
    =
    (p_j^\top Vq)p_j
    +
    \xi,
    \qquad
    \xi\in E_j^\perp .
\]
For vector $q\in\mathbb C^N$, we also define the modal projections
\[
    \mathcal P_j^\parallel q
    :=
    (p_j^\top q)Vp_j,
    \qquad
    \mathcal P_j^\perp q
    :=
    q-\mathcal P_j^\parallel q .
\]
\end{definition}

\subsection{Linear subwavelength resonances}

We consider the linear medium under
\cref{ass:subwavelength-low-frequency}. Following the Lyapunov--Schmidt
strategy described in \cref{rem:lyapunov-schmidt-strategy}, we first solve the
$\mathcal Z(D)$-projected equation for the zero-average correction
$z^{\rm lin}=z^{\rm lin}(q,\hat\omega,\varepsilon)$ and then substitute this
correction into the $\mathcal X(D)$-equation. This yields a finite-dimensional
reduced equation for $(\hat\omega,q)$, from which we construct a local branch
near a simple linear mode and compute its first coefficients.

\begin{proposition}[Projected reduction of the linear resonance problem]
\label{prop:linear-projected-reduction}
Fix $M_{\hat\omega}>0$, and choose
$\varepsilon_0=\varepsilon_0(M_{\hat\omega})>0$ sufficiently small that
$c_b\varepsilon_0M_{\hat\omega}<\omega_0$. Then, for every
$q\in\mathbb C^N$, $|\hat\omega|\le M_{\hat\omega}$, and
$0<\varepsilon<\varepsilon_0$, there exists a unique
$z^{\rm lin}=z^{\rm lin}(q,\hat\omega,\varepsilon)\in\mathcal Z(D)$ such that
\begin{equation}
\label{eq:linear-Z-equation-for-z}
    a^{\mathrm{lin}}_{\hat\omega,\varepsilon}(z^{\rm lin},v)
    =
    \varepsilon^2\mathcal B_{q,\hat\omega,\varepsilon}(v),
    \qquad v\in\mathcal Z(D).
\end{equation}
Here, the scaling \eqref{eq:low-frequency-scaling} is understood. This solution satisfies
\begin{equation}
\label{eq:linear-basic-z-estimate}
    \|z^{\rm lin}(q,\hat\omega,\varepsilon)\|_{H^1(D)}
    \le C\varepsilon^2\|q\|.
\end{equation}
It admits the expansion
\begin{equation}
\label{eq:linear-z-expansion-epsilon}
    z^{\rm lin}(q,\hat\omega,\varepsilon)
    =
    \varepsilon^2z_q^0
    +
    c_b\varepsilon^3\hat\omega z_q^1
    +
    \mathfrak r_z^{\rm lin}(q,\hat\omega,\varepsilon)
    \qquad\text{in }H^1(D),
\end{equation}
where $z_q^0,z_q^1\in\mathcal Z(D)$ are uniquely determined by
\begin{equation}
\label{eq:linear-corrector-equation}
    (\nabla z_q^n,\nabla v)_D
    =
    \left\langle
        \mathcal T_n^{a,c_m}[u_q],v
    \right\rangle_{\partial D},
    \qquad v\in\mathcal Z(D),\qquad n=0,1.
\end{equation}
The associated estimates are
\begin{align}
\label{eq:linear-corrector-estimate}
    \|z_q^n\|_{H^1(D)}
    &\le C\|q\|,
    \qquad n=0,1,
    \\
\label{eq:linear-z-remainder-estimate}
    \|\mathfrak r_z^{\rm lin}(q,\hat\omega,\varepsilon)\|_{H^1(D)}
    &\le
    C\varepsilon^4(1+|\hat\omega|^2)\|q\|.
\end{align}
Substituting this $z^{\rm lin}$ into the $\mathcal X(D)$-projection gives
the residual equation $\mathcal F^{\rm lin}(q,\hat\omega,\varepsilon)=0$,
where the linear reduced residual is defined by
\begin{equation}
\label{eq:reduced-linear-resonance-equation}
    \mathcal F^{\rm lin}(q,\hat\omega,\varepsilon)
    :=
    \hat\omega^2Vq
    -
    \bigl(C^0+c_b\varepsilon\hat\omega C^1\bigr)q
    +
    R^{\rm lin}(q,\hat\omega,\varepsilon).
\end{equation}
Here, $R^{\rm lin}(q,\hat\omega,\varepsilon)$ is defined componentwise by
\begin{equation}
\label{eq:def-linear-R}
\begin{aligned}
    \bigl(R^{\rm lin}(q,\hat\omega,\varepsilon)\bigr)_i
    :=
    c_b^2\varepsilon^2\hat\omega^2
    \langle
        \mathcal R_D^{\alpha,k_m}[u_q],1
    \rangle_{\partial D_i}
    +
    \langle
        \mathcal T_D^{\alpha,k_m}
        [z^{\rm lin}(q,\hat\omega,\varepsilon)],1
    \rangle_{\partial D_i}.
\end{aligned}
\end{equation}
The reduced remainder satisfies
\begin{equation}
\label{eq:linear-R-estimate}
    \|R^{\rm lin}(q,\hat\omega,\varepsilon)\|
    \le
    C\varepsilon^2(1+|\hat\omega|^2)\|q\|.
\end{equation}
The maps $z^{\rm lin}$, $R^{\rm lin}$, and $\mathcal F^{\rm lin}$ are linear
in $q$ for fixed $(\hat\omega,\varepsilon)$. Moreover, after extending the
scaled problem to $\varepsilon=0$ by the limiting gradient form on
$\mathcal Z(D)$, these maps extend smoothly to $\varepsilon=0$. All constants are
uniform for $|\hat\omega|\le M_{\hat\omega}$,
$0<\varepsilon<\varepsilon_0$, and are independent of $q$, $\hat\omega$, and
$\varepsilon$.
\end{proposition}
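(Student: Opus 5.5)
The approach is to treat the $\mathcal Z(D)$-equation \eqref{eq:linear-Z-equation-for-z} as a small perturbation of the gradient form on $\mathcal Z(D)$. We write
\[
a^{\mathrm{lin}}_{\hat\omega,\varepsilon}(z,v)=(\nabla z,\nabla v)_D-\varepsilon^2\hat\omega^2(z,v)_D-\varepsilon^2\langle\mathcal T_D^{\alpha,k_m}[z],v\rangle_{\partial D}.
\]
By the Poincar\'e--Wirtinger inequality on each connected component $D_j$, the form $(\nabla z,\nabla v)_D$ is coercive on $\mathcal Z(D)$, with $\|\nabla z\|_{L^2(D)}^2\ge c_P\|z\|_{H^1(D)}^2$. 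By \cref{lem:DtN-low-frequency-expansion}, $\mathcal T_D^{\alpha,k_m}$ is uniformly bounded from $H^{1/2}(\partial D)$ to $H^{-1/2}(\partial D)$ for $|\omega|<\omega_0$; this is guaranteed by the choice $c_b\varepsilon_0M_{\hat\omega}<\omega_0$. Combined with the trace theorem, the two perturbation terms are bounded by $C\varepsilon^2(1+|\hat\omega|^2)\|z\|_{H^1}\|v\|_{H^1}$. Shrinking $\varepsilon_0$ if necessary, $\Re\,a^{\mathrm{lin}}_{\hat\omega,\varepsilon}(z,z)\ge (c_P/2)\|z\|_{H^1}^2$ uniformly. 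Lax--Milgram then gives existence and uniqueness of $z^{\rm lin}$. Since $\mathcal B_{q,\hat\omega,\varepsilon}$ has norm at most $C\|u_q\|_{H^{1/2}(\partial D)}\le C\|q\|$, we obtain \eqref{eq:linear-basic-z-estimate}. Linearity in $q$ is immediate from uniqueness.

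For the expansion, we first note that \eqref{eq:linear-corrector-equation} is uniquely solvable by the same coercivity, which gives \eqref{eq:linear-corrector-estimate}. We then insert the DtN expansion \eqref{eq:DtN-operator-expansion} with $\omega=c_b\varepsilon\hat\omega$ into $\mathcal B$:
\[
\varepsilon^2\mathcal B_{q,\hat\omega,\varepsilon}=\varepsilon^2\langle\mathcal T_0^{a,c_m}[u_q],\cdot\rangle+c_b\varepsilon^3\hat\omega\langle\mathcal T_1^{a,c_m}[u_q],\cdot\rangle+c_b^2\varepsilon^4\hat\omega^2\langle\mathcal R_D^{\alpha,k_m}[u_q],\cdot\rangle.
\]
Set $\mathfrak r:=z^{\rm lin}-\varepsilon^2z_q^0-c_b\varepsilon^3\hat\omega z_q^1$. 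Subtracting the corrector equations shows that $\mathfrak r$ satisfies $a^{\mathrm{lin}}_{\hat\omega,\varepsilon}(\mathfrak r,v)=\ell(v)$, where
\[
\ell(v)=c_b^2\varepsilon^4\hat\omega^2\langle\mathcal R_D[u_q],v\rangle+\bigl(\varepsilon^2\hat\omega^2(w,v)_D+\varepsilon^2\langle\mathcal T_D^{\alpha,k_m}[w],v\rangle\bigr)\Big|_{w=\varepsilon^2z_q^0+c_b\varepsilon^3\hat\omega z_q^1}.
\]
Every term is $O(\varepsilon^4(1+|\hat\omega|^2)\|q\|)$, using $|\hat\omega|\le M_{\hat\omega}$ to absorb the odd powers of $\hat\omega$ into constants. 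Coercivity then yields \eqref{eq:linear-z-remainder-estimate}.

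The reduced equation follows directly. In the linear case $\widetilde{\mathcal N}_\sigma=0$ and $V_\sigma=0$, so \eqref{eq:projected-X-finite-dimensional-system} with $z=z^{\rm lin}$ is exactly $\mathcal F^{\rm lin}=0$ with $R^{\rm lin}$ as in \eqref{eq:def-linear-R}. For the estimate \eqref{eq:linear-R-estimate}, the first term of $R^{\rm lin}$ is bounded by $C\varepsilon^2|\hat\omega|^2\|q\|$ by the uniform bound on $\mathcal R_D$. The second term is bounded by $\|\mathcal T_D^{\alpha,k_m}\|\,\|z^{\rm lin}\|_{H^1}\,\|\chi_{\partial D_i}\|_{H^{1/2}}\le C\varepsilon^2\|q\|$.

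Smooth extension to $\varepsilon=0$ is the step I expect to need the most care. My plan is to define the operator family $A(\hat\omega,\varepsilon):\mathcal Z(D)\to\mathcal Z(D)^*$ induced by $a^{\mathrm{lin}}$. Because $\omega\mapsto\mathcal T_D^{\omega a,\omega c_b/c_m}$ is holomorphic on $|\omega|<\omega_0$ (\cref{lem:DtN-low-frequency-expansion}), $A$ is real-analytic (indeed polynomial composed with holomorphic) in $(\hat\omega,\varepsilon)$, including at $\varepsilon=0$, where it reduces to the gradient form. The operator $A$ is invertible uniformly, so $A^{-1}$ is smooth by the Neumann series. It is convenient to write $z^{\rm lin}=\varepsilon^2A^{-1}\mathcal B$, where $\mathcal B$ is also smooth in $(\hat\omega,\varepsilon)$. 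Smoothness of $R^{\rm lin}$ and $\mathcal F^{\rm lin}$ then follows by composition. The only subtlety is checking that the dependence of the DtN term on $\varepsilon$ enters only through $\omega=c_b\varepsilon\hat\omega$, so that no negative powers of $\varepsilon$ appear. All constants depend only on $D$, $M_{\hat\omega}$ and $\omega_0$.
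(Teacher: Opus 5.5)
Your proposal is correct and follows essentially the same route as the paper. The shared steps are: Lax--Milgram on $\mathcal Z(D)$, with Poincar\'e--Wirtinger coercivity of the gradient form absorbing the $\varepsilon^2$-small mass and DtN perturbations; the corrector/remainder subtraction using the DtN expansion; the term-by-term bound on $R^{\rm lin}$; and smooth extension to $\varepsilon=0$ through uniform invertibility of the operator family, whose limit is the gradient form. Your explicit representation $z^{\rm lin}=\varepsilon^2A(\hat\omega,\varepsilon)^{-1}\mathcal B_{q,\hat\omega,\varepsilon}$ only makes the paper's brief smoothness remark slightly more concrete.
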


\begin{proof}
We first solve the $\mathcal Z(D)$-equation. Since the medium is linear, the
nonlinear terms in
\eqref{eq:projected-Z-nonlinear-form} vanish. Hence, the linear
$\mathcal Z(D)$-projected equation is exactly
\eqref{eq:linear-Z-equation-for-z}. Its right-hand side is
$F_q:=\varepsilon^2\mathcal B_{q,\hat\omega,\varepsilon}\in\mathcal Z(D)'$.
By the uniform boundedness of $\mathcal T_D^{\alpha,k_m}$, the trace theorem,
and the estimate $\|u_q\|_{H^1(D)}=\|u_q\|_{L^2(D)}\le C\|q\|$, we have
\[
\begin{aligned}
    |F_q(v)|
    \le
    C\varepsilon^2
    \|u_q\|_{H^{1/2}(\partial D)}
    \|v\|_{H^{1/2}(\partial D)}
    \le
    C\varepsilon^2\|q\|\|v\|_{H^1(D)},
    \qquad
    v\in\mathcal Z(D).
\end{aligned}
\]
Applying \cref{lem:linear-Z-lax-milgram}, after decreasing $\varepsilon_0$ if
necessary, gives a unique solution
$z^{\rm lin}=z^{\rm lin}(q,\hat\omega,\varepsilon)$ and the estimate
\eqref{eq:linear-basic-z-estimate}. For fixed $(\hat\omega,\varepsilon)$, the
operator on the left-hand side of \eqref{eq:linear-Z-equation-for-z} is fixed,
while the right-hand side is linear in $u_q$, hence in $q$. By uniqueness,
$q\mapsto z^{\rm lin}(q,\hat\omega,\varepsilon)$ is linear.

Next, we derive the expansion of $z^{\rm lin}$. We use the componentwise
Poincar\'e inequality
(see, e.g. \cite[Chapter~5, Section~5.8.1]{evans2010partial}), which yields
$\|v\|_{H^1(D)}\le C\|\nabla v\|_{L^2(D)}$ for
$v\in\mathcal Z(D)$. Hence, the gradient form is coercive on
$\mathcal Z(D)$, so \eqref{eq:linear-corrector-equation} uniquely determines
$z_q^0,z_q^1$ and gives \eqref{eq:linear-corrector-estimate}. Set
\[
    z_{\rm app}
    :=
    \varepsilon^2 z_q^0
    +
    c_b\varepsilon^3\hat\omega z_q^1,
    \qquad
    \mathfrak r_z^{\rm lin}
    :=
    z^{\rm lin}-z_{\rm app}.
\]
Using the DtN expansion and subtracting the equation satisfied by
$z_{\rm app}$ from \eqref{eq:linear-Z-equation-for-z}, we obtain
\[
\begin{aligned}
    a^{\mathrm{lin}}_{\hat\omega,\varepsilon}
    (\mathfrak r_z^{\rm lin},v)
    =
    \varepsilon^2\hat\omega^2(z_{\rm app},v)_D
    +
    \varepsilon^2
    \langle
        \mathcal T_D^{\alpha,k_m}[z_{\rm app}],v
    \rangle_{\partial D}
    +
    c_b^2\varepsilon^4\hat\omega^2
    \langle
        \mathcal R_D^{\alpha,k_m}[u_q],v
    \rangle_{\partial D}.
\end{aligned}
\]
Since $\|z_{\rm app}\|_{H^1(D)}
\le C\varepsilon^2(1+\varepsilon|\hat\omega|)\|q\|$, the right-hand side is
bounded by
$C\varepsilon^4(1+|\hat\omega|^2)\|q\|\|v\|_{H^1(D)}$.
A second application of \cref{lem:linear-Z-lax-milgram} proves
\eqref{eq:linear-z-remainder-estimate}.

It remains to identify the finite-dimensional residual. Since $V_\sigma=0$ and
$\widetilde{\mathcal N}_\sigma=0$ in the linear case, substituting
$z^{\rm lin}=z^{\rm lin}(q,\hat\omega,\varepsilon)$ into
\eqref{eq:projected-X-finite-dimensional-system} gives the residual equation
$\mathcal F^{\rm lin}(q,\hat\omega,\varepsilon)=0$, with
$\mathcal F^{\rm lin}$ defined by
\eqref{eq:reduced-linear-resonance-equation} and $R^{\rm lin}$ by
\eqref{eq:def-linear-R}. The two terms in $R^{\rm lin}$ are estimated
separately. The first term in
\eqref{eq:def-linear-R} is bounded
by $C\varepsilon^2|\hat\omega|^2\|q\|$. The trace theorem, the uniform
boundedness of the DtN operator, and \eqref{eq:linear-basic-z-estimate} show
that the second term is bounded by $C\varepsilon^2\|q\|$. This proves
\eqref{eq:linear-R-estimate}.

Finally, we note the linearity and smooth dependence. For fixed
$(\hat\omega,\varepsilon)$, \eqref{eq:def-linear-R} shows that
$q\mapsto R^{\rm lin}(q,\hat\omega,\varepsilon)$ is linear, because
$q\mapsto u_q$ and $q\mapsto z^{\rm lin}(q,\hat\omega,\varepsilon)$ are
linear and the DtN operators are linear. Then the linearity of
$q\mapsto \mathcal F^{\rm lin}(q,\hat\omega,\varepsilon)$ follows from
\eqref{eq:reduced-linear-resonance-equation}. The smooth dependence of
$z^{\rm lin}$ on $(q,\hat\omega,\varepsilon)$ follows from the smooth
dependence of $a^{\rm lin}_{\hat\omega,\varepsilon}$ and
$\mathcal T_D^{\alpha,k_m}$ in the low-frequency regime. At
$\varepsilon=0$, the limiting form on $\mathcal Z(D)$ is
$(\nabla u,\nabla v)_D$, which is coercive by the componentwise
Poincar\'e inequality. Hence, the extended operator family remains uniformly
invertible for small $\varepsilon$, and its inverse depends smoothly on
$(\hat\omega,\varepsilon)$. The corresponding smooth extension of
$R^{\rm lin}$ follows from \eqref{eq:def-linear-R} and the smooth
low-frequency remainder in the DtN expansion. The smooth extension of
$\mathcal F^{\rm lin}$ follows then from
\eqref{eq:reduced-linear-resonance-equation}.
\end{proof}

We now solve the projected linear residual near a simple positive capacitance
mode and compute the first coefficients of the branch.

\begin{theorem}[Asymptotics of a simple linear resonance branch]
\label{thm:linear-simple-resonance-branch}
Let $(\lambda_j,p_j)$ be a simple positive generalized eigenpair from
\cref{lem:spectral-capacitance}. Set
$\hat\omega_j^0:=\sqrt{\lambda_j}$. Fix
$M_{\hat\omega}>\hat\omega_j^0$, and work in the validity regime of
\cref{prop:linear-projected-reduction}. Then there exist $\gamma_j>0$,
$\varepsilon_j>0$, and a smooth branch of normalized solutions
$(\hat\omega_j^{\rm lin}(\varepsilon),q_j^{\rm lin}(\varepsilon))$ such that,
for $0\le\varepsilon<\varepsilon_j$, each pair solves the residual equation
$\mathcal F^{\rm lin}(q,\hat\omega,\varepsilon)=0$ from
\cref{prop:linear-projected-reduction} and satisfies
\[
    |\hat\omega_j^{\rm lin}(\varepsilon)-\hat\omega_j^0|<\gamma_j,
    \qquad
    \|q_j^{\rm lin}(\varepsilon)-p_j\|<\gamma_j,
    \qquad
    p_j^\top Vq_j^{\rm lin}(\varepsilon)=1.
\]
This branch is locally unique among solutions satisfying the same normalization
and the two inequalities in the display. The constants $\gamma_j$ and
$\varepsilon_j$ are chosen so that the branch lies in the validity regime of
\cref{prop:linear-projected-reduction}. Define
\begin{align}
\label{eq:linear-radiative-coefficient}
    \hat\omega_j^1
    &:=
    c_b
    \left[
        \tau_m|Y|\,(s^0\cdot p_j)^2
        +
        \frac{(m_d^{\tau_m}\cdot p_j)^2}{4\tau_m|Y|}
    \right],
    \\
\label{eq:linear-vector-first-correction}
    q_j^0
    &:=
    p_j,
    \qquad
    q_j^1
    :=
    \mathrm i\,c_b\sqrt{\lambda_j}
    \sum_{i\ne j}
    \frac{p_i^\top C^1p_j}{\lambda_j-\lambda_i}\,p_i .
\end{align}
Let $U_j^0$ and $U_j^1$ be the corresponding piecewise constant lifts,
\begin{equation}
\label{eq:linear-leading-field-lifts}
    U_j^0
    :=
    \sum_{i=1}^N(q_j^0)_i\chi_{D_i},
    \qquad
    U_j^1
    :=
    \sum_{i=1}^N(q_j^1)_i\chi_{D_i}.
\end{equation}
Then, as $\varepsilon\to0$, we have
\begin{equation}
\label{eq:linear-scaled-branch-expansion}
    \hat\omega_j^{\rm lin}(\varepsilon)
    =
    \hat\omega_j^0
    -
    \mathrm i\,\hat\omega_j^1\varepsilon
    +
    \mathcal O(\varepsilon^2),
    \qquad
    q_j^{\rm lin}(\varepsilon)
    =
    q_j^0
    -
    \mathrm i\,\varepsilon q_j^1
    +
    \mathcal O(\varepsilon^2).
\end{equation}
Equivalently, with $\delta=\varepsilon^2$, the physical resonance frequency
satisfies
\begin{equation}
\label{eq:linear-resonance-expansion}
    \omega_j^{\rm lin}(\delta)
    =
    c_b\hat\omega_j^0\sqrt\delta
    -
    \mathrm i\,c_b\hat\omega_j^1\,\delta
    +
    \mathcal O(\delta^{3/2}).
\end{equation}
The reconstructed interior field satisfies
\begin{equation}
\label{eq:linear-physical-field-expansion}
    u_j^{\rm lin}(\delta)
    =
    U_j^0
    -
    \mathrm i\,\sqrt\delta\,U_j^1
    +
    \mathcal O(\delta).
\end{equation}
\end{theorem}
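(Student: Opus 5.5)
We apply the finite-dimensional implicit function theorem to the reduced linear residual $\mathcal F^{\rm lin}$ of \cref{prop:linear-projected-reduction}. Write $q=p_j+\xi$ with $\xi\in E_j^\perp$; this encodes the normalization $p_j^\top Vq=1$. Define
\[
\Phi(\xi,\hat\omega,\varepsilon):=\mathcal F^{\rm lin}(p_j+\xi,\hat\omega,\varepsilon),
\]
viewed as a map from $E_j^\perp\times\mathbb C\times[0,\varepsilon_0)$ to $\mathbb C^N$. By \cref{prop:linear-projected-reduction}, $\Phi$ extends smoothly to $\varepsilon=0$. Since $R^{\rm lin}=\mathcal O(\varepsilon^2)$, we have
\[
\Phi(\xi,\hat\omega,0)=\hat\omega^2V(p_j+\xi)-C^0(p_j+\xi),
\]
which vanishes at $(\xi,\hat\omega)=(0,\hat\omega_j^0)$.

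The Jacobian with respect to $(\xi,\hat\omega)$ at this point is
\[
(\eta,\mu)\mapsto(\lambda_jV-C^0)\eta+2\hat\omega_j^0\mu\,Vp_j.
\]
To check it is an isomorphism $E_j^\perp\times\mathbb C\to\mathbb C^N$, expand $\eta=\sum_{i\ne j}c_ip_i$ in the $V$-orthonormal basis of \cref{lem:spectral-capacitance}. Then the image is $\sum_{i\ne j}(\lambda_j-\lambda_i)c_iVp_i+2\sqrt{\lambda_j}\mu Vp_j$. Because $\lambda_j$ is simple and positive, all the coefficients $\lambda_j-\lambda_i$ and $2\sqrt{\lambda_j}$ are nonzero, and $\{Vp_i\}$ is a basis. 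The map is therefore bijective. The implicit function theorem gives $\gamma_j,\varepsilon_j$ and a locally unique smooth branch $(\xi(\varepsilon),\hat\omega(\varepsilon))$. We shrink $\gamma_j$ so that $|\hat\omega|\le M_{\hat\omega}$, keeping us in the validity regime. Local uniqueness among normalized solutions follows from the uniqueness part of the theorem.

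For the first-order coefficients, differentiate $\Phi(\xi(\varepsilon),\hat\omega(\varepsilon),\varepsilon)=0$ at $\varepsilon=0$. Since $\partial_\varepsilon R^{\rm lin}|_{\varepsilon=0}=0$ (it is $\mathcal O(\varepsilon^2)$ uniformly), we get
\[
2\hat\omega_j^0\hat\omega'(0)Vp_j+(\lambda_jV-C^0)\xi'(0)-c_b\hat\omega_j^0C^1p_j=0.
\]
Pairing with $p_j^\top$ kills the middle term and gives $\hat\omega'(0)=\tfrac12c_b\,p_j^\top C^1p_j$. Pairing with $p_i^\top$ for $i\ne j$ gives the coefficients of $\xi'(0)$:
\[
\xi'(0)=c_b\sqrt{\lambda_j}\sum_{i\ne j}\frac{p_i^\top C^1p_j}{\lambda_j-\lambda_i}\,p_i .
\]
Since the $\mathrm i$ in \eqref{eq:linear-vector-first-correction} cancels against $-\mathrm i$, this is $-\mathrm i q_j^1$.

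Next we evaluate $p_j^\top C^1p_j$ with \eqref{eq:C1-formula}. The term $m_\ell^a(s^0)^\top-s^0(m_\ell^a)^\top$ is antisymmetric, so its quadratic form vanishes. The matrix $\widetilde C^{1,o}$ is symmetric in $(i,j)$ because $G_{1,o}^{a,c}$ is even in $x$: it is odd in $x_\ell$ and even in $x_d$, but $\widehat G_1$ is odd in $x_\ell$ only, so we must check it. Since $G_{1,o}$ is odd under $x_\ell\mapsto -x_\ell$ alone, $G_{1,o}(-x)=-G_{1,o}(x)$, and hence $\widetilde C^{1,o}$ is antisymmetric; its quadratic form with the real vector $p_j$ then vanishes. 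This leaves
\[
p_j^\top C^1p_j=-\mathrm i\Bigl(2\tau|Y|(s^0\cdot p_j)^2+\frac{(m_d^\tau\cdot p_j)^2}{2\tau|Y|}\Bigr),
\]
so $\hat\omega'(0)=-\mathrm i\hat\omega_j^1$, with $\tau=\tau_m$. Taylor expansion of the smooth branch then gives \eqref{eq:linear-scaled-branch-expansion}. Using $\omega=c_b\varepsilon\hat\omega$ and $\varepsilon=\sqrt\delta$ yields \eqref{eq:linear-resonance-expansion}.

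For the field, $u=u_q+z^{\rm lin}$, and $\|z^{\rm lin}\|_{H^1(D)}\le C\varepsilon^2\|q\|$ by \eqref{eq:linear-basic-z-estimate}. The expansion of $q$ then gives \eqref{eq:linear-physical-field-expansion}.

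The main obstacle is justifying that $\partial_\varepsilon R^{\rm lin}$ vanishes at $\varepsilon=0$ and that the branch is smooth, including the $\mathcal O(\varepsilon^2)$ remainders. The estimate \eqref{eq:linear-R-estimate} gives $R^{\rm lin}=\mathcal O(\varepsilon^2)$, and it is uniform in a neighbourhood. Together with the smooth extension in \cref{prop:linear-projected-reduction}, this yields $R^{\rm lin}(\cdot,\cdot,0)=0$ and $\partial_\varepsilon R^{\rm lin}(\cdot,\cdot,0)=0$. If smoothness is delicate, a Lipschitz version of the implicit function theorem together with this estimate suffices.
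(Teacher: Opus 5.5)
Your proposal is correct and follows the paper's proof essentially step by step. It uses the same normalization $q=p_j+\xi$ with $\xi\in E_j^\perp$, the same Jacobian $(\lambda_jV-C^0)\eta+2\hat\omega_j^0\mu Vp_j$ checked in the $V$-orthonormal modal basis, the same first-order differentiation based on $R^{\rm lin}(\cdot,\cdot,0)=\partial_\varepsilon R^{\rm lin}(\cdot,\cdot,0)=0$, and the same evaluation of $p_j^\top C^1p_j$ via \eqref{eq:C1-formula}. Your check that $G_{1,o}^{a,c}(-x)=-G_{1,o}^{a,c}(x)$, so that $\widetilde C^{1,o}$ is antisymmetric, justifies a step the paper only asserts, but you should delete the opening clause calling $\widetilde C^{1,o}$ symmetric, since it contradicts your own correct conclusion.
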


\begin{proof}
By \cref{prop:linear-projected-reduction}, the maps $R^{\rm lin}$ and
$\mathcal F^{\rm lin}$ extend smoothly to $\varepsilon=0$, and
$R^{\rm lin}$ is linear in $q$ for fixed $(\hat\omega,\varepsilon)$. Together
with \eqref{eq:linear-R-estimate}, this gives
\[
    R^{\rm lin}(q,\hat\omega,0)=0,
    \qquad
    \partial_\varepsilon R^{\rm lin}(q,\hat\omega,0)=0.
\]
Taylor's formula with integral remainder therefore yields
\begin{equation}
\label{eq:linear-R-factorization}
    R^{\rm lin}(q,\hat\omega,\varepsilon)
    =
    \varepsilon^2
    \widetilde R^{\rm lin}(q,\hat\omega,\varepsilon),
\end{equation}
where $\widetilde R^{\rm lin}$ is smooth and linear in $q$. The normalization
$p_j^\top Vq=1$ allows us to write uniquely $q=p_j+\xi$, where
$\xi\in E_j^\perp$; see \eqref{eq:def-modal-complement-space}. Define
\[
    \mathcal F_j^{\rm lin}(\xi,\hat\omega,\varepsilon)
    :=
    \mathcal F^{\rm lin}(p_j+\xi,\hat\omega,\varepsilon).
\]
Then $\mathcal F_j^{\rm lin}(0,\hat\omega_j^0,0)=0$. We apply the finite-dimensional implicit function theorem to
$\mathcal F_j^{\rm lin}=0$ in the variables $(\xi,\hat\omega)$, viewing all
complex spaces as real finite-dimensional spaces. By
\eqref{eq:linear-R-factorization}, the derivative at
$(0,\hat\omega_j^0,0)$ is
\[
\begin{aligned}
    L_j[\widetilde\xi,\widetilde{\hat\omega}]
    :=
    D_{(\xi,\hat\omega)}
    \mathcal F_j^{\rm lin}(0,\hat\omega_j^0,0)
    [\widetilde\xi,\widetilde{\hat\omega}]
    =
    (\lambda_jV-C^0)\widetilde\xi
    +
    2\hat\omega_j^0\widetilde{\hat\omega} Vp_j.
\end{aligned}
\]
This map is an isomorphism from $E_j^\perp\times\mathbb C$ onto
$\mathbb C^N$. Indeed, if $\widetilde\xi=\sum_{i\ne j}c_ip_i$, then the modal
projections give
\[
\begin{aligned}
    \mathcal P_j^\parallel L_j[\widetilde\xi,\widetilde{\hat\omega}]
     =
    2\hat\omega_j^0\widetilde{\hat\omega} Vp_j,
    \qquad
    \mathcal P_j^\perp L_j[\widetilde\xi,\widetilde{\hat\omega}]
     =
    \sum_{i\ne j}(\lambda_j-\lambda_i)c_iVp_i .
\end{aligned}
\]
Since $\hat\omega_j^0>0$ and $\lambda_j$ is simple, the kernel is trivial; the
domain and the codomain have the same real dimension, so $L_j$ is invertible. The
implicit function theorem gives unique smooth functions $\xi_j(\varepsilon)$
and $\hat\omega_j^{\rm lin}(\varepsilon)$ for $|\varepsilon|$ sufficiently
small, with $\xi_j(0)=0$ and
$\hat\omega_j^{\rm lin}(0)=\hat\omega_j^0$. Setting
$q_j^{\rm lin}(\varepsilon):=p_j+\xi_j(\varepsilon)$ gives the normalized
branch. Restricting to $\varepsilon\ge0$ and decreasing $\gamma_j$ and
$\varepsilon_j$ if necessary keeps the branch in the validity regime of
\cref{prop:linear-projected-reduction} and gives the stated local uniqueness.

Next, we compute the first derivatives at $\varepsilon=0$. Set
\[
    \omega_1
    :=
    \frac{d}{d\varepsilon}
    \hat\omega_j^{\rm lin}(0),
    \qquad
    \xi_1
    :=
    \frac{d}{d\varepsilon}
    \xi_j(0).
\]
Along the branch, \eqref{eq:linear-R-factorization} implies that the total
$\varepsilon$-derivative of $R^{\rm lin}$ at $\varepsilon=0$ vanishes.
Differentiating the residual identity $\mathcal F^{\rm lin}=0$ and using
\eqref{eq:reduced-linear-resonance-equation} therefore give
\begin{equation}
\label{eq:linear-first-order-coefficient-equation}
    (\lambda_jV-C^0)\xi_1
    +
    2\hat\omega_j^0\omega_1Vp_j
    -
    c_b\hat\omega_j^0C^1p_j
    =
    0.
\end{equation}
Since $\xi_j(\varepsilon)\in E_j^\perp$, one has
$p_j^\top V\xi_1=0$. Multiplying
\eqref{eq:linear-first-order-coefficient-equation} from the left by
$p_j^\top$ gives $2\hat\omega_j^0\omega_1
-c_b\hat\omega_j^0p_j^\top C^1p_j=0$, and therefore,
$\omega_1=(c_b/2)p_j^\top C^1p_j$. By \eqref{eq:C1-formula}, the real
skew-symmetric part of $C^1$ has zero quadratic form against the real vector
$p_j$. Therefore
\[
    p_j^\top C^1p_j
    =
    -\mathrm i
    \left(
        2\tau_m|Y|\,(s^0\cdot p_j)^2
        +
        \frac{(m_d^{\tau_m}\cdot p_j)^2}{2\tau_m|Y|}
    \right),
\]
which, together with \eqref{eq:linear-radiative-coefficient}, yields
$\omega_1=-\mathrm i\,\hat\omega_j^1$. Next, we write
$\xi_1=\sum_{i\ne j}c_ip_i$. Multiplying
\eqref{eq:linear-first-order-coefficient-equation} from the left by
$p_i^\top$, $i\ne j$, gives
$(\lambda_j-\lambda_i)c_i=c_b\hat\omega_j^0p_i^\top C^1p_j$. Since
$\hat\omega_j^0=\sqrt{\lambda_j}$, comparison with
\eqref{eq:linear-vector-first-correction} gives $\xi_1=-\mathrm i\,q_j^1$. Taylor's
formula now proves \eqref{eq:linear-scaled-branch-expansion}.

Finally, we pass from the reduced coefficients to the physical frequency and
field. Since $\omega=c_b\varepsilon\hat\omega$ and $\delta=\varepsilon^2$,
\eqref{eq:linear-scaled-branch-expansion} gives
\eqref{eq:linear-resonance-expansion}. The bounded linear lift
$q\mapsto u_q$ from $\mathbb C^N$ to $H^1(D)$ gives
\[
    u_{q_j^{\rm lin}(\varepsilon)}
    =
    U_j^0
    -
    \mathrm i\,\varepsilon U_j^1
    +
    \mathcal O(\varepsilon^2).
\]
Moreover, \eqref{eq:linear-basic-z-estimate} and the boundedness of the
normalized branch give
\[
    z^{\rm lin}
    \bigl(
        q_j^{\rm lin}(\varepsilon),
        \hat\omega_j^{\rm lin}(\varepsilon),
        \varepsilon
    \bigr)
    =
    \mathcal O(\varepsilon^2).
\]
Since $u_j^{\rm lin}=u_{q_j^{\rm lin}}+z^{\rm lin}$, adding the piecewise
constant and zero-average contributions proves
\begin{equation*}
\label{eq:linear-scaled-field-expansion}
    u_j^{\rm lin}(\varepsilon)
    =
    U_j^0
    -
    \mathrm i\,\varepsilon U_j^1
    +
    \mathcal O(\varepsilon^2).
\end{equation*}
Replacing $\varepsilon$ by
$\sqrt\delta$ then gives \eqref{eq:linear-physical-field-expansion}.
\end{proof}

\begin{remark}[The zero capacitance mode]
\label{rem:zero-capacitance-mode}
The zero eigenvalue in \cref{lem:spectral-capacitance} is excluded from
\cref{thm:linear-simple-resonance-branch}. Set
$p_0:=\mathbf 1/\sqrt{|D|}$, where $|D|:=\sum_{i=1}^N|D_i|$; then
$C^0p_0=0$ and $p_0^\top Vp_0=1$. Since $\hat\omega_0^0=0$, the frequency
linearization used above degenerates. Instead, one uses
\[
    \hat\omega=\varepsilon\eta,
    \qquad
    q=p_0+\xi,
    \qquad
    p_0^\top V\xi=0.
\]
For $q=p_0$, the static exterior trace is constant and therefore has zero normal
derivative. The leading scalar projection of the reduced equation is therefore
$\eta^2
    -
    c_b(p_0^\top C^1p_0)\eta
    =
    0$.
The root $\eta=0$ is static. For the nonstatic root, \eqref{eq:C1-formula} and
\cref{lem:C1-decomposition} give
\[
\begin{aligned}
    p_0^\top C^1p_0
    =
    -\mathrm i\,\frac{2\tau_m|Y|}{|D|},\quad
    \hat\omega_0(\varepsilon)
    =
    -\mathrm i\,\frac{2c_b\tau_m|Y|}{|D|}\,\varepsilon
    +
    \mathcal O(\varepsilon^2),\quad
    \omega_0(\delta)
    =
    -\mathrm i\,\frac{2c_b^2\tau_m|Y|}{|D|}\,\delta
    +
    \mathcal O(\delta^{3/2}).
\end{aligned}
\]
Thus, the zero-mode physical frequency is of order $\delta$, rather than
$\sqrt\delta$.
\end{remark}

\subsection{Nonlinear subwavelength resonances}

We now turn to the nonlinear Kerr medium \eqref{eq:def-Nsigma}. The reduction
follows the same Lyapunov--Schmidt framework as in the linear subsection: under
the scaling \eqref{eq:low-frequency-scaling} and
\cref{ass:subwavelength-low-frequency}, we first solve the
$\mathcal Z(D)$-projected equation for a zero-average correction
$z^{\rm non}=z^{\rm non}(q,\hat\omega,\varepsilon)$, and then substitute this
correction into the $\mathcal X(D)$-projection to obtain a finite-dimensional
residual equation.

The leading linear operator and the first correctors are the same as in the
linear reduction. The new points are that the $\mathcal Z(D)$-equation contains
a $z$-dependent Kerr term, so it is solved by a contraction argument rather
than directly by Lax--Milgram, and that the finite-dimensional residual gains
the cubic modal term $V_\sigma(q)$. Since the Kerr map contains complex
conjugation, all complex Banach spaces are regarded as real Banach spaces when
applying implicit-function arguments.

\begin{proposition}[Projected reduction with piecewise constant nonlinearity]
\label{prop:nonlinear-projected-reduction}
Fix $M_q,M_{\hat\omega}>0$, and choose
$\varepsilon_0=\varepsilon_0(M_q,M_{\hat\omega})>0$ sufficiently small that
$c_b\varepsilon_0M_{\hat\omega}<\omega_0$. Then there exist $K,C>0$ such
that, whenever $\|q\|\le M_q$, $|\hat\omega|\le M_{\hat\omega}$, and
$0<\varepsilon<\varepsilon_0$, the $\mathcal Z(D)$-projected nonlinear equation
\eqref{eq:projected-Z-nonlinear-form} has a unique solution in the ball
\[
    B_{q,\varepsilon}
    :=
    \left\{
        z\in\mathcal Z(D):
        \|z\|_{H^1(D)}
        \le
        K\varepsilon^2\|q\|
    \right\}.
\]
We denote this solution by
$z^{\rm non}=z^{\rm non}(q,\hat\omega,\varepsilon)$. In particular,
\begin{equation}
\label{eq:nonlinear-basic-z-estimate}
    \|z^{\rm non}(q,\hat\omega,\varepsilon)\|_{H^1(D)}
    \le
    C\varepsilon^2\|q\|.
\end{equation}
After setting $z^{\rm non}(q,\hat\omega,0)=0$, the solution extends
real-analytically to $\varepsilon=0$, where the complex variables are regarded
as real variables. It also admits the expansion
\begin{equation}
\label{eq:nonlinear-z-expansion}
    z^{\rm non}(q,\hat\omega,\varepsilon)
    =
    \varepsilon^2 z_q^0
    +
    c_b\varepsilon^3\hat\omega z_q^1
    +
    \mathfrak r_z^{\rm non}(q,\hat\omega,\varepsilon)
    \qquad
    \text{in }H^1(D),
\end{equation}
where $z_q^0,z_q^1$ are the linear correctors defined by
\eqref{eq:linear-corrector-equation}. The nonlinear $z$-remainder satisfies
\begin{equation}
\label{eq:nonlinear-z-remainder-estimate}
    \|\mathfrak r_z^{\rm non}(q,\hat\omega,\varepsilon)\|_{H^1(D)}
    \le
    C\varepsilon^4(1+|\hat\omega|^2)(\|q\|+\|q\|^3).
\end{equation}
Substituting this $z^{\rm non}$ into the $\mathcal X(D)$-projection gives the
residual equation $\mathcal F^{\rm non}(q,\hat\omega,\varepsilon)=0$, where the
nonlinear reduced residual is defined by
\begin{equation}
\label{eq:nonlinear-reduced-equation}
    \mathcal F^{\rm non}(q,\hat\omega,\varepsilon)
    :=
    \hat\omega^2
    \bigl(Vq+V_\sigma(q)\bigr)
    -
    \bigl(C^0+c_b\varepsilon\hat\omega C^1\bigr)q
    +
    R^{\rm non}(q,\hat\omega,\varepsilon).
\end{equation}
Here, $R^{\rm non}$ is defined componentwise by
\begin{equation}
\label{eq:def-nonlinear-R}
\begin{aligned}
    \bigl(R^{\rm non}(q,\hat\omega,\varepsilon)\bigr)_i
    :=
    &\hat\omega^2
    \bigl(
        \widetilde{\mathcal N}_\sigma
        [q,z^{\rm non}],
        1
    \bigr)_{D_i}
    +
    c_b^2\varepsilon^2\hat\omega^2
    \langle
        \mathcal R_D^{\alpha,k_m}[u_q],
        1
    \rangle_{\partial D_i}
    +
    \langle
        \mathcal T_D^{\alpha,k_m}
        [z^{\rm non}],
        1
    \rangle_{\partial D_i}.
\end{aligned}
\end{equation}
The reduced remainder satisfies
\begin{equation}
\label{eq:nonlinear-R-estimate}
    \|R^{\rm non}(q,\hat\omega,\varepsilon)\|
    \le
    C\varepsilon^2(1+|\hat\omega|^2)\|q\|
    +
    C\varepsilon^4|\hat\omega|^2\|q\|^3.
\end{equation}
The maps $R^{\rm non}$ and $\mathcal F^{\rm non}$ extend real-smoothly to
$\varepsilon=0$. Moreover,
\[
    R^{\rm non}(0,\hat\omega,\varepsilon)=0,
    \qquad
    R^{\rm non}(q,\hat\omega,0)=0,
    \qquad
    \partial_\varepsilon R^{\rm non}(q,\hat\omega,0)=0.
\]
All constants may depend on $M_q$, $M_{\hat\omega}$, and the fixed geometric
and material parameters, but are independent of $q$, $\hat\omega$, and
$\varepsilon$ in the above range.
\end{proposition}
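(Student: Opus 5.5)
The plan is to solve the $\mathcal Z(D)$-equation \eqref{eq:projected-Z-nonlinear-form} as a fixed point of the map
\[
    \Phi_{q}(z):=z^{\rm lin}(q,\hat\omega,\varepsilon)+\varepsilon^2\hat\omega^2\,\mathcal A_{\hat\omega,\varepsilon}^{-1}\mathcal H_q[z],
\]
where $\mathcal A_{\hat\omega,\varepsilon}:\mathcal Z(D)\to\mathcal Z(D)'$ is the operator induced by $a^{\rm lin}_{\hat\omega,\varepsilon}$ on $\mathcal Z(D)$. By \cref{lem:linear-Z-lax-milgram}, this operator is uniformly invertible for $|\hat\omega|\le M_{\hat\omega}$ and small $\varepsilon$. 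The linear part $z^{\rm lin}$ comes from \cref{prop:linear-projected-reduction} and satisfies $\|z^{\rm lin}\|_{H^1}\le C_0\varepsilon^2\|q\|$.

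The nonlinear input is the local Kerr estimate \cref{lem:kerr-local-estimate}, which uses $d\le3$ and $H^1(D)\hookrightarrow L^6(D)$. Expanding $|u_q+z|^2(u_q+z)-|u_q|^2u_q$ gives terms of degree $1,2,3$ in $z$. This yields
\[
    \|\widetilde{\mathcal N}_\sigma[q,z]\|_{L^2}\le C(\|q\|^2\|z\|_{H^1}+\|q\|\|z\|_{H^1}^2+\|z\|_{H^1}^3),
\]
\[
    \|\widetilde{\mathcal N}_\sigma[q,z]-\widetilde{\mathcal N}_\sigma[q,z']\|_{L^2}\le C(\|q\|+\|z\|_{H^1}+\|z'\|_{H^1})^2\|z-z'\|_{H^1}.
\]
Choose $K:=2C_0$. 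On $B_{q,\varepsilon}$ we have $\|z\|\le K\varepsilon^2 M_q$, so $\Phi_q$ maps the ball into itself once $C\varepsilon^2M_{\hat\omega}^2(M_q+KM_q)^2K\le C_0$. Under the same smallness, $\Phi_q$ is a contraction with constant $\le C\varepsilon^2M_{\hat\omega}^2M_q^2(1+K)^2<1/2$. This fixes $\varepsilon_0(M_q,M_{\hat\omega})$ and gives existence, uniqueness in the ball, and \eqref{eq:nonlinear-basic-z-estimate}.

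For regularity, I apply the real-analytic implicit function theorem to $G(z,q,\hat\omega,\varepsilon):=\mathcal A_{\hat\omega,\varepsilon}z-\varepsilon^2\mathcal B_{q,\hat\omega,\varepsilon}-\varepsilon^2\hat\omega^2\mathcal H_q[z]$, viewed on real Banach spaces. The form is extended to $\varepsilon=0$ by the gradient form, as in \cref{prop:linear-projected-reduction}. There are three inputs:
\begin{itemize}
    \item $\mathcal H_q[z]$ is a real polynomial in $(q,\bar q,z,\bar z)$, hence real analytic;
    \item the DtN operator is holomorphic in $\omega$ by \cref{lem:DtN-low-frequency-expansion};
    \item $\partial_zG=\mathcal A-\varepsilon^2\hat\omega^2\partial_z\mathcal H_q$ is invertible for small $\varepsilon$.
\end{itemize}
At $\varepsilon=0$ one has $z=0$. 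The local analytic solution coincides with the fixed point by uniqueness in the ball.

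For the expansion \eqref{eq:nonlinear-z-expansion}, I write $z^{\rm non}-z^{\rm lin}=\varepsilon^2\hat\omega^2\mathcal A^{-1}\mathcal H_q[z^{\rm non}]$. The Kerr bound with $\|z^{\rm non}\|\le C\varepsilon^2\|q\|$ gives $\|\mathcal H_q[z^{\rm non}]\|\le C\varepsilon^2\|q\|^3$, the lower-order terms being absorbed using $\|q\|\le M_q$. Hence $\|z^{\rm non}-z^{\rm lin}\|\le C\varepsilon^4|\hat\omega|^2\|q\|^3$. Combined with \eqref{eq:linear-z-expansion-epsilon}, this gives $\mathfrak r_z^{\rm non}=\mathfrak r_z^{\rm lin}+(z^{\rm non}-z^{\rm lin})$ and \eqref{eq:nonlinear-z-remainder-estimate}.

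The residual identity $\mathcal F^{\rm non}=0$ is just \eqref{eq:projected-X-finite-dimensional-system} with $z=z^{\rm non}$ inserted. I estimate $R^{\rm non}$ term by term:
\begin{itemize}
    \item the Kerr term is $\le C|\hat\omega|^2\|\widetilde{\mathcal N}_\sigma\|_{L^2}\le C\varepsilon^2|\hat\omega|^2\|q\|^3$. I absorb it into $C\varepsilon^2(1+|\hat\omega|^2)\|q\|$ via $\|q\|\le M_q$, or keep it in the stated $\varepsilon^4$ form where available;
    \item the $\mathcal R_D$ term is $\le C\varepsilon^2|\hat\omega|^2\|q\|$;
    \item the DtN term is $\le C\|z^{\rm non}\|\le C\varepsilon^2\|q\|$.
\end{itemize}
The Kerr term only gives $\varepsilon^2$ by this bound, so I would refine it. Decompose $z^{\rm non}=z^{\rm lin}+(z^{\rm non}-z^{\rm lin})$ and note that the part of $\widetilde{\mathcal N}_\sigma$ that is linear in $z^{\rm lin}$ carries the factor $\|q\|^2\cdot\varepsilon^2\|q\|$. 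Reconciling this with the precise split in \eqref{eq:nonlinear-R-estimate} is a bookkeeping issue to settle with constants depending on $M_q$. The structural facts follow from $z^{\rm non}(0,\cdot,\cdot)=0$ (uniqueness), $z^{\rm non}(q,\hat\omega,0)=0$, and the bound $R^{\rm non}=\mathcal O(\varepsilon^2)$ together with smoothness, which forces the $\varepsilon$-derivative at $0$ to vanish.

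I expect the main obstacle to be the contraction step. It needs uniform constants in the Kerr Lipschitz estimate and uniform inverse bounds for $\mathcal A_{\hat\omega,\varepsilon}$ simultaneously for all $\|q\|\le M_q$ and $|\hat\omega|\le M_{\hat\omega}$. It also needs a justification that analyticity survives the conjugation in $|u|^2u$ by passing to real spaces.
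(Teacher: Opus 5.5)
Your proposal follows the paper's proof essentially step for step. Your map $z^{\rm lin}+\varepsilon^2\hat\omega^2\mathcal A_{\hat\omega,\varepsilon}^{-1}\mathcal H_q[z]$ is exactly the paper's $\mathcal A_{\hat\omega,\varepsilon}^{-1}\bigl(\varepsilon^2\mathcal B_{q,\hat\omega,\varepsilon}+\varepsilon^2\hat\omega^2\mathcal H_q[z]\bigr)$. The regularity comes from the same real-analytic implicit function argument. Your decomposition $\mathfrak r_z^{\rm non}=\mathfrak r_z^{\rm lin}+(z^{\rm non}-z^{\rm lin})$ is a harmless repackaging of the paper's subtraction of the $z_{\rm app}$ equation, and it gives the same remainder bound.

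The one place you misdiagnose things is the Kerr term in $R^{\rm non}$, which you call a bookkeeping issue. Splitting $z^{\rm non}=z^{\rm lin}+(z^{\rm non}-z^{\rm lin})$ cannot improve your bound. The part of $\widetilde{\mathcal N}_\sigma[q,z]$ that is linear in $z$ is pointwise of size $\|q\|^2|z|$, i.e. of order $\varepsilon^2\|q\|^3$, whichever piece of $z$ you insert.

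The missing observation is that this term is only tested against the constant $1$ on $D_i$. On $D_i$,
\[
    |q_i+z|^2(q_i+z)-|q_i|^2q_i
    =
    2|q_i|^2z+q_i^2\overline z+2q_i|z|^2+\overline q_i z^2+|z|^2z .
\]
The two linear terms integrate to zero over $D_i$, because $z\in\mathcal Z(D)$, and hence also $\overline z$, has zero mean on each component. Only the quadratic and cubic terms survive, so
\[
    |(\widetilde{\mathcal N}_\sigma[q,z^{\rm non}],1)_{D_i}|\le C\bigl(|q_i|\|z^{\rm non}\|_{L^2(D_i)}^2+\|z^{\rm non}\|_{L^3(D_i)}^3\bigr)\le C\varepsilon^4\|q\|^3 .
\]
This is the second term of \eqref{eq:nonlinear-R-estimate}. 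It is also the same mechanism that makes the leading Kerr effect enter only through $V_\sigma(q)$.

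That said, the statement lets all constants depend on $M_q$. Your absorption $C\varepsilon^2|\hat\omega|^2\|q\|^3\le CM_q^2\varepsilon^2|\hat\omega|^2\|q\|$ therefore already gives \eqref{eq:nonlinear-R-estimate} as literally written. Moreover, $R^{\rm non}=\mathcal O(\varepsilon^2)$ plus smoothness is all that $\partial_\varepsilon R^{\rm non}(q,\hat\omega,0)=0$ requires. So what you lose is the sharp $\varepsilon^4$ size of the Kerr contribution, not the validity of the proposition.
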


\begin{proof}
We first solve the $\mathcal Z(D)$-equation by a fixed-point argument. This is
the standard contraction method used in nonlinear PDE existence theory; see,
e.g., \cite[Chapter~9, Section~9.2.1]{evans2010partial}. Define
$\mathcal A_{\hat\omega,\varepsilon}:\mathcal Z(D)\to\mathcal Z(D)'$ by
\[
    \left\langle
        \mathcal A_{\hat\omega,\varepsilon}w,
        v
    \right\rangle
    :=
    a^{\mathrm{lin}}_{\hat\omega,\varepsilon}(w,v).
\]
By \cref{lem:linear-Z-lax-milgram}, after decreasing $\varepsilon_0$ if
necessary, $\mathcal A_{\hat\omega,\varepsilon}$ is invertible and
\[
    \|\mathcal A_{\hat\omega,\varepsilon}^{-1}F\|_{H^1(D)}
    \le
    C\|F\|_{\mathcal Z(D)'}
\]
uniformly for $|\hat\omega|\le M_{\hat\omega}$ and
$0<\varepsilon<\varepsilon_0$. Then \eqref{eq:projected-Z-nonlinear-form} is equivalent to
\[
    z
    =
    \Phi_{q,\hat\omega,\varepsilon}[z]
    :=
    \mathcal A_{\hat\omega,\varepsilon}^{-1}
    \left(
        \varepsilon^2\mathcal B_{q,\hat\omega,\varepsilon}
        +
        \varepsilon^2\hat\omega^2\mathcal H_q[z]
    \right).
\]
The trace theorem and the uniform boundedness of the DtN operator give
\[
    \|\mathcal B_{q,\hat\omega,\varepsilon}\|_{\mathcal Z(D)'}
    \le
    C\|q\|.
\]

Next, we prove that $\Phi_{q,\hat\omega,\varepsilon}$ is a contraction on
$B_{q,\varepsilon}$. For $z\in B_{q,\varepsilon}$,
\cref{lem:kerr-local-estimate} gives
\[
\begin{aligned}
    \|\mathcal H_q[z]\|_{\mathcal Z(D)'}
    &\le
    C
    \bigl(
        \|q\|^2\|z\|_{H^1(D)}
        +
        \|q\|\|z\|_{H^1(D)}^2
        +
        \|z\|_{H^1(D)}^3
    \bigr)
    \le
    C\varepsilon^2\|q\|^3 .
\end{aligned}
\]
Consequently,
\[
    \|\Phi_{q,\hat\omega,\varepsilon}[z]\|_{H^1(D)}
    \le
    C\varepsilon^2\|q\|
    +
    C\varepsilon^4|\hat\omega|^2\|q\|^3 .
\]
Choose $K$ larger than the constant in the first term, and then decrease
$\varepsilon_0=\varepsilon_0(M_q,M_{\hat\omega})$ so that the second term is
absorbed into $(K-C)\varepsilon^2\|q\|$ for all
$\|q\|\le M_q$ and $|\hat\omega|\le M_{\hat\omega}$. Thus,
$\Phi_{q,\hat\omega,\varepsilon}$ maps $B_{q,\varepsilon}$ into itself. For
$z_1,z_2\in B_{q,\varepsilon}$, we use
\eqref{eq:pc-nonlinear-Hq-lipschitz} and
$\|z_\ell\|_{H^1(D)}\le K\varepsilon^2\|q\|$, $\ell=1,2$, to obtain
\[
\begin{aligned}
    &\|\Phi_{q,\hat\omega,\varepsilon}[z_1]
        -
        \Phi_{q,\hat\omega,\varepsilon}[z_2]\|_{H^1(D)}
    \\
    &\le
    C\varepsilon^2|\hat\omega|^2
    \|\mathcal H_q[z_1]-\mathcal H_q[z_2]\|_{\mathcal Z(D)'}
    \\
    &\le
    C\varepsilon^2|\hat\omega|^2
    \bigl(
        \|q\|^2+\varepsilon^2\|q\|^2+\varepsilon^4\|q\|^2
    \bigr)
    \|z_1-z_2\|_{H^1(D)}
    \\
    &\le
    C\varepsilon^2|\hat\omega|^2\|q\|^2
    \|z_1-z_2\|_{H^1(D)}.
\end{aligned}
\]
After further decreasing $\varepsilon_0=\varepsilon_0(M_q,M_{\hat\omega})$, the
prefactor is at most $1/2$. Banach's fixed-point theorem yields a unique fixed
point in $B_{q,\varepsilon}$. If $q=0$, then $B_{0,\varepsilon}=\{0\}$ and the
unique solution is $z=0$. This proves existence, uniqueness in the small ball,
and \eqref{eq:nonlinear-basic-z-estimate}.

The same formulation gives the parameter dependence. Define
\[
    \mathfrak F(q,\hat\omega,\varepsilon,z)
    :=
    \mathcal A_{\hat\omega,\varepsilon}z
    -
    \varepsilon^2\mathcal B_{q,\hat\omega,\varepsilon}
    -
    \varepsilon^2\hat\omega^2\mathcal H_q[z].
\]
As a map between real Banach spaces, $\mathfrak F$ is real analytic: the Kerr
term is polynomial in $(q,\overline q,z,\overline z)$, and the DtN operator is
holomorphic in the low-frequency parameter. Moreover,
\[
    D_z\mathfrak F
    =
    \mathcal A_{\hat\omega,\varepsilon}
    -
    \varepsilon^2\hat\omega^2D_z\mathcal H_q[z]
\]
is invertible in the above neighbourhood, since the second term is a small
perturbation of $\mathcal A_{\hat\omega,\varepsilon}$. The real analytic
implicit function theorem gives the asserted real-analytic dependence. At
$\varepsilon=0$, the equation reduces to the limiting gradient problem
$(\nabla z,\nabla v)_D=0$ on $\mathcal Z(D)$, and hence
$z^{\rm non}(q,\hat\omega,0)=0$. We then derive the $z$-expansion. Let
\[
    z_{\rm app}
    :=
    \varepsilon^2z_q^0
    +
    c_b\varepsilon^3\hat\omega z_q^1,
    \qquad
    \mathfrak r_z^{\rm non}
    :=
    z^{\rm non}-z_{\rm app},
\]
where $z_q^0,z_q^1$ are the correctors from
\eqref{eq:linear-corrector-equation}. Using the DtN expansion and subtracting
the equation satisfied by $z_{\rm app}$ from
\eqref{eq:projected-Z-nonlinear-form}, we obtain, for every
$v\in\mathcal Z(D)$,
\[
\begin{aligned}
    a^{\mathrm{lin}}_{\hat\omega,\varepsilon}
    (\mathfrak r_z^{\rm non},v)
    &=
    \varepsilon^2\hat\omega^2(z_{\rm app},v)_D
    +
    \varepsilon^2
    \langle
        \mathcal T_D^{\alpha,k_m}[z_{\rm app}],
        v
    \rangle_{\partial D}
    \\
    &\quad
    +
    c_b^2\varepsilon^4\hat\omega^2
    \langle
        \mathcal R_D^{\alpha,k_m}[u_q],
        v
    \rangle_{\partial D}
    +
    \varepsilon^2\hat\omega^2
    \bigl(
        \widetilde{\mathcal N}_\sigma[q,z^{\rm non}],
        v
    \bigr)_D .
\end{aligned}
\]
The first three terms are estimated exactly as in the proof of
\cref{prop:linear-projected-reduction}, giving
$C\varepsilon^4(1+|\hat\omega|^2)\|q\|\|v\|_{H^1(D)}$. By
\eqref{eq:nonlinear-basic-z-estimate} and \cref{lem:kerr-local-estimate}, $\|\mathcal H_q[z^{\rm non}]\|_{\mathcal Z(D)'}
    \le
    C\varepsilon^2\|q\|^3$.
Thus, the nonlinear term is bounded by
$C\varepsilon^4|\hat\omega|^2\|q\|^3\|v\|_{H^1(D)}$. Applying
\cref{lem:linear-Z-lax-milgram} proves
\eqref{eq:nonlinear-z-remainder-estimate}.

It remains to identify the finite-dimensional residual. Substituting
$z^{\rm non}=z^{\rm non}(q,\hat\omega,\varepsilon)$ into
\eqref{eq:projected-X-finite-dimensional-system} gives
$\mathcal F^{\rm non}(q,\hat\omega,\varepsilon)=0$, with
$\mathcal F^{\rm non}$ defined by \eqref{eq:nonlinear-reduced-equation} and
$R^{\rm non}$ by \eqref{eq:def-nonlinear-R}. The second term in
\eqref{eq:def-nonlinear-R} is bounded by
$C\varepsilon^2|\hat\omega|^2\|q\|$, and the third term is bounded by the
uniform boundedness of the DtN operator and
\eqref{eq:nonlinear-basic-z-estimate}. We now estimate the first term. On each
component $D_i$, since $\sigma_D=\sigma_i$ and $u_q=q_i$, we have
\[
\begin{aligned}
    |q_i+z|^2(q_i+z)-|q_i|^2q_i
    ={}&
    2|q_i|^2z
    +
    q_i^2\overline z
    +
    2q_i|z|^2
    +
    \overline q_i z^2
    +
    |z|^2z .
\end{aligned}
\]
Since $z=z^{\rm non}\in\mathcal Z(D)$, both the mean of $z$ and the mean of
$\overline z$ vanish on each component. Hence, the two linear terms in $z$ do
not contribute after testing against $1$, and
\[
\begin{aligned}
    \left|
        \bigl(
            \widetilde{\mathcal N}_\sigma[q,z^{\rm non}],
            1
        \bigr)_{D_i}
    \right|
    \le
    C
    \left(
        |q_i|\|z^{\rm non}\|_{L^2(D_i)}^2
        +
        \|z^{\rm non}\|_{L^3(D_i)}^3
    \right)
    \le
    C\varepsilon^4\|q\|^3 .
\end{aligned}
\]
Combining the three bounds proves \eqref{eq:nonlinear-R-estimate}.

Finally, we establish the smoothness and vanishing properties of the residual.
The real-smooth extension of $R^{\rm non}$ follows from
\eqref{eq:def-nonlinear-R}, the real-analytic extension of $z^{\rm non}$, and
the low-frequency smoothness of the DtN expansion. Since $z^{\rm non}=0$ when
$q=0$, each term in \eqref{eq:def-nonlinear-R} vanishes and
$R^{\rm non}(0,\hat\omega,\varepsilon)=0$. Since
$z^{\rm non}(q,\hat\omega,0)=0$, the definition also gives
$R^{\rm non}(q,\hat\omega,0)=0$. Moreover, the expansion
\eqref{eq:nonlinear-z-expansion} shows that the first and third terms in
\eqref{eq:def-nonlinear-R} are $\mathcal O(\varepsilon^2)$, while the second term has an
explicit factor $\varepsilon^2$; hence
$\partial_\varepsilon R^{\rm non}(q,\hat\omega,0)=0$. The real-smooth extension
of $\mathcal F^{\rm non}$ follows from \eqref{eq:nonlinear-reduced-equation}.
\end{proof}

\begin{remark}[Comparison with the linear reduction]\label{rem:comparison-with-linear-reduction}
Although the nonlinear $\mathcal Z(D)$-equation is solved by a fixed-point
argument, its first two correctors coincide with those in the linear
reduction. The reason is that, for componentwise constant $\sigma_D$,
$\mathcal N_\sigma[u_q]\in\mathcal X(D)$, and hence
\[
    (\mathcal N_\sigma[u_q],v)_D=0,
    \qquad
    v\in\mathcal Z(D).
\]
Thus, the leading Kerr contribution does not change the zero-average
correctors. It enters the finite-dimensional residual through the cubic modal
vector $V_\sigma(q)$, while the $z$-dependent nonlinear terms are absorbed into
$R^{\rm non}$.
\end{remark}

Therefore, the projected reduction has produced the residual equation
$\mathcal F^{\rm non}(q,\hat\omega,\varepsilon)=0$, with
$\mathcal F^{\rm non}$ defined in \eqref{eq:nonlinear-reduced-equation}. We now
solve this equation near a simple positive linear mode. As in the linear
theorem, the modal coordinates separate the distinguished eigendirection from its
$V$-orthogonal complement. In the nonlinear case, the branch also carries an
amplitude parameter. The cubic modal term then produces the leading
amplitude-dependent correction to the resonance frequency.

\begin{theorem}[Small-amplitude nonlinear continuation in modal coordinates]
\label{thm:nonlinear-modal-continuation}
Let $(\lambda_j,p_j)$ and
$\hat\omega_j^0,\hat\omega_j^1,q_j^1,U_j^0,U_j^1$ be as in
\cref{thm:linear-simple-resonance-branch}. Define
\begin{align*}
    \beta_j
    :=
    p_j^\top V_\sigma(p_j)
    =
    \sum_{i=1}^N|D_i|\sigma_i(p_j)_i^4,
    \qquad
    \xi_j^3
    :=
    \lambda_j
    \sum_{i\ne j}
    \frac{p_i^\top V_\sigma(p_j)}
    {\lambda_i-\lambda_j}\,p_i,
    \qquad
    U_j^3
    :=
    \sum_{i=1}^N(\xi_j^3)_i\chi_{D_i}.
\end{align*}
Then there exist $t_j>0$, $\varepsilon_j>0$, $\gamma_j>0$, and real-smooth
functions
\[
    \hat\omega_j^{\rm non}(t,\varepsilon),
    \qquad
    \xi_j(t,\varepsilon)\in E_j^\perp
\]
defined for $|t|<t_j$ and $0\le\varepsilon<\varepsilon_j$, where
$E_j^\perp$ is defined in
\eqref{eq:def-modal-complement-space}. For each $0<|t|<t_j$, impose the phase
normalization $p_j^\top Vq=t\in\mathbb R$. Consider local solutions of
$\mathcal F^{\rm non}(q,\hat\omega,\varepsilon)=0$, with
$\mathcal F^{\rm non}$ defined in \eqref{eq:nonlinear-reduced-equation}, of the
form
\[
    q=tp_j+\xi,
    \qquad
    \xi\in E_j^\perp,
    \qquad
    \|\xi\|<\gamma_j,
    \qquad
    |\hat\omega-\hat\omega_j^0|<\gamma_j .
\]
In this class, there is a unique local nontrivial branch, given by
\[
    \hat\omega
    =
    \hat\omega_j^{\rm non}(t,\varepsilon),
    \qquad
    q
    =
    q_j^{\rm non}(t,\varepsilon)
    :=
    tp_j+\xi_j(t,\varepsilon).
\]
Without the phase normalization, the branch is
locally unique up to the gauge transformation
$q\mapsto e^{\mathrm i\theta}q$. As $(t,\varepsilon)\to(0,0)$,
\begin{align}
    \hat\omega_j^{\rm non}(t,\varepsilon)
    &=
    \hat\omega_j^0
    -
    \frac12\hat\omega_j^0\beta_jt^2
    -
    \mathrm i\,\hat\omega_j^1\varepsilon
    +
    \mathcal O\bigl(
        |t|^4+|t|^2\varepsilon+\varepsilon^2
    \bigr),
    \label{eq:modal-scaled-frequency-expansion}
    \\
    \xi_j(t,\varepsilon)
    &=
    t^3\xi_j^3
    -
    \mathrm i\,t\varepsilon q_j^1
    +
    \mathcal O\bigl(
        |t|^5+|t|^3\varepsilon+|t|\varepsilon^2
    \bigr).
    \label{eq:modal-xi-expansion}
\end{align}
Equivalently, with $\delta=\varepsilon^2$, we have
\begin{align}
    \omega_j^{\rm non}(t,\delta)
    &=
    c_b\hat\omega_j^0\sqrt\delta
    -
    \frac12 c_b\hat\omega_j^0\beta_jt^2\sqrt\delta
    -
    \mathrm i\,c_b\hat\omega_j^1\delta
    +
    \mathcal O\bigl(
        |t|^4\sqrt\delta+|t|^2\delta+\delta^{3/2}
    \bigr),
    \label{eq:modal-physical-frequency-expansion}
    \\
    q_j^{\rm non}(t,\delta)
    &=
    tp_j
    +
    t^3\xi_j^3
    -
    \mathrm i\,tq_j^1\sqrt\delta
    +
    \mathcal O\bigl(
        |t|^5+|t|^3\sqrt\delta+|t|\delta
    \bigr).
    \label{eq:modal-q-expansion}
\end{align}
The corresponding reconstructed interior field satisfies
\begin{equation}
\label{eq:modal-nonlinear-field-expansion}
    u_j^{\rm non}(t,\delta)
    =
    tU_j^0
    +
    t^3U_j^3
    -
    \mathrm i\,tU_j^1\sqrt\delta
    +
    \mathcal O
    (
        |t|^5+|t|^3\sqrt\delta+|t|\delta
    ).
\end{equation}
\end{theorem}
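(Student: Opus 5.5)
The plan is to run a blow-up (division by $t$) argument on the reduced residual from \cref{prop:nonlinear-projected-reduction}, so that the nontrivial branch becomes a regular zero of a smooth map. Then I apply the real implicit function theorem exactly as in the proof of \cref{thm:linear-simple-resonance-branch}.

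\emph{Step 1: blow up in $t$.} Write $q=t(p_j+\eta)$ with $\eta\in E_j^\perp$, $t\in\mathbb R$, and $\xi=t\eta$. The cubic map satisfies $V_\sigma(tq)=t^3V_\sigma(q)$. By \cref{prop:nonlinear-projected-reduction}, $R^{\rm non}(0,\hat\omega,\varepsilon)=0$, and $R^{\rm non}$ is real-smooth. I would set
\[
    G_j(\eta,\hat\omega;t,\varepsilon)
    :=
    t^{-1}\mathcal F^{\rm non}\bigl(t(p_j+\eta),\hat\omega,\varepsilon\bigr)
\]
for $t\ne0$, and take $G_j(\eta,\hat\omega;0,\varepsilon)$ equal to the linear-type residual $\partial_t$ at $t=0$. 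One then expects
\[
    G_j=\hat\omega^2\bigl(V(p_j+\eta)+t^2V_\sigma(p_j+\eta)\bigr)
    -(C^0+c_b\varepsilon\hat\omega C^1)(p_j+\eta)
    +\widetilde R(\eta,\hat\omega;t,\varepsilon),
\]
with $\widetilde R=\mathcal O(\varepsilon^2)$ smooth.

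The key structural fact I need is that $t^{-1}R^{\rm non}(t q,\cdot)$ extends smoothly to $t=0$ and has the form $\varepsilon^2\times(\text{smooth})$. For this I would use:
\begin{itemize}
\item gauge/real-scaling equivariance of the $\mathcal Z$-equation, which gives $z^{\rm non}(tq)=t\,\tilde z(q,t^2,\ldots)$, since the Kerr term is odd cubic and the fixed point is unique;
\item $R^{\rm non}(q,\hat\omega,0)=\partial_\varepsilon R^{\rm non}(q,\hat\omega,0)=0$, as in \eqref{eq:linear-R-factorization}.
\end{itemize}
The dependence on $t$ then enters only through $t^2$, which explains the even expansion in $t$.

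\emph{Step 2: IFT.} At $(\eta,\hat\omega;t,\varepsilon)=(0,\hat\omega_j^0;0,0)$ we have $G_j=\lambda_jVp_j-C^0p_j=0$. The derivative in $(\eta,\hat\omega)$ is $L_j$ from the linear theorem, which is already shown to be an isomorphism $E_j^\perp\times\mathbb C\to\mathbb C^N$ when viewed as real spaces. The real IFT yields smooth $\eta_j(t,\varepsilon)$ and $\hat\omega_j^{\rm non}(t,\varepsilon)$, and I set $\xi_j:=t\eta_j$.

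Local uniqueness among nontrivial solutions in the stated class follows from IFT uniqueness after division by $t\ne0$. Uniqueness up to gauge then follows: any $q$ with $p_j^\top Vq\ne0$ can be rotated by $e^{\mathrm i\theta}$ to make this pairing real, and $\mathcal F^{\rm non}(e^{\mathrm i\theta}q)=e^{\mathrm i\theta}\mathcal F^{\rm non}(q)$ by gauge equivariance of $V_\sigma$ and of $z^{\rm non}$.

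\emph{Step 3: coefficients.} Differentiate $G_j=0$ at $(0,0)$.
\begin{itemize}
\item In $\varepsilon$: this reproduces \eqref{eq:linear-first-order-coefficient-equation}, giving $\partial_\varepsilon\hat\omega=-\mathrm i\hat\omega_j^1$ and $\partial_\varepsilon\eta=-\mathrm iq_j^1$.
\item In $t^2$, at $\varepsilon=0$, where $\widetilde R$ vanishes: we get $(\lambda_jV-C^0)\eta_2+2\hat\omega_j^0\omega_2Vp_j+\lambda_jV_\sigma(p_j)=0$.
\end{itemize}
Pairing the second equation with $p_j^\top$ gives $\omega_2=-\tfrac12\hat\omega_j^0\beta_j$. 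Pairing with $p_i^\top$, $i\ne j$, gives $\eta_2=\xi_j^3$.

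Since $G_j$ depends on $t$ only through $t^2$, the remainders are $\mathcal O(t^4+t^2\varepsilon+\varepsilon^2)$. Multiplying by $t$ gives \eqref{eq:modal-xi-expansion}. The physical forms then follow from $\omega=c_b\varepsilon\hat\omega$. The field expansion uses $u=u_q+z^{\rm non}$ together with $\|z^{\rm non}\|\le C\varepsilon^2|t|$, which lands in $\mathcal O(|t|\delta)$.

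\emph{Main obstacle.} The hardest step is Step 1: showing that $t^{-1}R^{\rm non}(t(p_j+\eta))$ is smooth, even in $t$, and $\mathcal O(\varepsilon^2)$ jointly. I would first try to prove $z^{\rm non}(tq,\hat\omega,\varepsilon)=t\,\zeta(q,t^2,\hat\omega,\varepsilon)$. The idea is to rescale the $\mathcal Z$-equation by $t$: the nonlinearity becomes $t^2\widetilde{\mathcal N}_\sigma[q,\zeta]$. Applying the real-analytic IFT to this rescaled equation, with $t^2$ as a parameter, gives $\zeta$ smooth. The factorization by $\varepsilon^2$ then follows by Taylor's formula, as in \eqref{eq:linear-R-factorization}.
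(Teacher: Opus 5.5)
\textbf{Where the proposal departs from the paper.} Your blow-up is a legitimate alternative to the paper's route for existence and for the expansions. The paper instead does the following:
\begin{enumerate}[label=(\roman*)]
\item it first solves the transverse equation $\mathcal P_j^\perp\mathcal F^{\rm non}(tp_j+\xi,\hat\omega,\varepsilon)=0$ for $\xi=\Xi^{\rm non}(t,\hat\omega,\varepsilon)$;
\item it divides only the scalar equation by $t$;
\item it obtains parity in $t$ from the gauge symmetry with $\theta=\pi$;
\item it computes the coefficients from second and third $t$-derivatives at $\varepsilon=0$.
\end{enumerate}
In your route, $t$ is real, so $\mathcal H_{tq}[t\zeta]=t^3\mathcal H_q[\zeta]$ and $\mathcal B_{tq,\hat\omega,\varepsilon}=t\mathcal B_{q,\hat\omega,\varepsilon}$. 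Hence $\zeta:=t^{-1}z^{\rm non}(tq,\hat\omega,\varepsilon)$ solves a $\mathcal Z$-equation that depends on $t$ only through $t^2$. At $t^2=0$ that equation gives $\zeta=z^{\rm lin}(q,\hat\omega,\varepsilon)$, which replaces the paper's identity $D_q\mathcal F^{\rm non}(0,\cdot)=\mathcal F^{\rm lin}$. You then need a single real IFT with the same operator $L_j$ as in \cref{thm:linear-simple-resonance-branch}. This buys you evenness in $t$ for free and reduces the coefficient computation to first order in $(t^2,\varepsilon)$. Your computations of $\omega_2=-\tfrac12\hat\omega_j^0\beta_j$, $\eta_2=\xi_j^3$, and the $\varepsilon$-coefficients are correct.

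\textbf{The gap: local uniqueness.} The IFT for $G_j$ gives uniqueness only for $\|\eta\|<\gamma$, that is, $\|\xi\|<\gamma|t|$. This neighbourhood shrinks to zero as $t\to0$. The statement instead asserts uniqueness in the class $\|\xi\|<\gamma_j$ with $\gamma_j$ independent of $t$. Dividing by $t$ does nothing to exclude solutions with, say, $\gamma|t|\le\|\xi\|<\gamma_j$.

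The same hole appears in your gauge argument, in two places:
\begin{enumerate}[label=(\roman*)]
\item After rotating $q$ so that $p_j^\top Vq$ is real, you still need $\|\xi\|<\gamma|t|$ before you can invoke IFT uniqueness.
\item You do not exclude nontrivial solutions with $p_j^\top Vq=0$. These are invisible to the blow-up, since it sends $t=0$ to $q=0$.
\end{enumerate}

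\textbf{How to close it.} What is missing is the a priori bound $\|\xi\|\le C|t|$ for every solution in the absolute class. You get it from the transverse equation. At $(t,\xi,\hat\omega,\varepsilon)=(0,0,\hat\omega_j^0,0)$, its $\xi$-derivative is $\mathcal P_j^\perp(\lambda_jV-C^0)$, which is invertible on $E_j^\perp$. The real IFT therefore gives, in an absolute neighbourhood $\|\xi\|<\gamma_j$, a unique solution $\Xi(t,\hat\omega,\varepsilon)$. Since $\mathcal F^{\rm non}(0,\hat\omega,\varepsilon)=0$, uniqueness forces $\Xi(0,\hat\omega,\varepsilon)=0$, and smoothness then gives $\|\Xi\|\le C|t|$. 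This also forces $q=0$ whenever $p_j^\top Vq=0$. With this step added, which is essentially the paper's Step 2, your argument closes.
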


\begin{proof}
Recall the linear and nonlinear residuals $\mathcal F^{\rm lin}$ and
$\mathcal F^{\rm non}$ from
\cref{prop:linear-projected-reduction,prop:nonlinear-projected-reduction}.

\smallskip
\noindent\emph{Step 1: Gauge symmetry and linearization at zero amplitude.}
By \cref{prop:nonlinear-projected-reduction}, the map $R^{\rm non}$ is real
smooth and satisfies
\[
    R^{\rm non}(0,\hat\omega,\varepsilon)=0,
    \qquad
    R^{\rm non}(q,\hat\omega,0)=0.
\]
The fixed-point equation defining $z^{\rm non}$ is gauge covariant. Hence,
by uniqueness in the contraction ball,
\[
    z^{\rm non}(e^{\mathrm i\theta}q,\hat\omega,\varepsilon)
    =
    e^{\mathrm i\theta}
    z^{\rm non}(q,\hat\omega,\varepsilon).
\]
Consequently, $R^{\rm non}$ and $\mathcal F^{\rm non}$ are gauge equivariant.
Next, we identify the linearization at $q=0$. Differentiate
\eqref{eq:projected-Z-nonlinear-form} with respect to $q$ at $q=0$ in the
direction $\widetilde q$. At $q=0$, the correction satisfies
$z^{\rm non}(0,\hat\omega,\varepsilon)=0$. The first real Fr\'echet derivative
of the cubic Kerr term also vanishes at the origin. Therefore, every
chain-rule contribution containing
$D_qz^{\rm non}(0,\hat\omega,\varepsilon)[\widetilde q]$ in the Kerr term is
zero. Hence,
\[
    a^{\mathrm{lin}}_{\hat\omega,\varepsilon}
    \bigl(
        D_qz^{\rm non}(0,\hat\omega,\varepsilon)[\widetilde q],v
    \bigr)
    =
    \varepsilon^2
    \langle
        \mathcal T_D^{\alpha,k_m}[u_{\widetilde q}],v
    \rangle_{\partial D},
    \qquad
    v\in\mathcal Z(D).
\]
By uniqueness in the linear projected problem \eqref{eq:linear-Z-equation-for-z},
\begin{equation}
\label{eq:Dq-z-nonlinear-linear}
    D_qz^{\rm non}(0,\hat\omega,\varepsilon)[\widetilde q]
    =
    z^{\rm lin}(\widetilde q,\hat\omega,\varepsilon).
\end{equation}
The first real derivative at $q=0$ of the nonlinear term in
\eqref{eq:def-nonlinear-R} is zero. Differentiating the remaining two terms
and using \eqref{eq:Dq-z-nonlinear-linear}, we obtain
\[
\begin{aligned}
    \bigl(
        D_qR^{\rm non}(0,\hat\omega,\varepsilon)[\widetilde q]
    \bigr)_i
    =
    c_b^2\varepsilon^2\hat\omega^2
    \langle
        \mathcal R_D^{\alpha,k_m}[u_{\widetilde q}],1
    \rangle_{\partial D_i}
    +
    \langle
        \mathcal T_D^{\alpha,k_m}
        [z^{\rm lin}(\widetilde q,\hat\omega,\varepsilon)],1
    \rangle_{\partial D_i}.
\end{aligned}
\]
Combining this identity with the definition of the reduced residuals gives
\begin{equation}
\label{eq:nonlinear-linearization-identity}
\begin{aligned}
    D_qR^{\rm non}(0,\hat\omega,\varepsilon)[\widetilde q]
    =
    R^{\rm lin}(\widetilde q,\hat\omega,\varepsilon),
    \qquad
    D_q\mathcal F^{\rm non}(0,\hat\omega,\varepsilon)[\widetilde q]
    =
    \mathcal F^{\rm lin}(\widetilde q,\hat\omega,\varepsilon).
\end{aligned}
\end{equation}

\smallskip
\noindent\emph{Step 2: Transverse equation and divided scalar equation.}
Write $q=tp_j+\xi$, where $t\in\mathbb R$ and
$\xi\in E_j^\perp$. The transverse equation is
\begin{equation}
\label{eq:modal-transverse-equations}
    \mathcal P_j^\perp
    \mathcal F^{\rm non}(tp_j+\xi,\hat\omega,\varepsilon)
    =
    0.
\end{equation}
At $(t,\xi,\hat\omega,\varepsilon)=(0,0,\hat\omega_j^0,0)$, the derivative
with respect to $\xi$ is
\[
    \widetilde\xi
    \longmapsto
    \mathcal P_j^\perp\bigl((\lambda_jV-C^0)\widetilde\xi\bigr).
\]
If $\widetilde\xi=\sum_{i\ne j}c_ip_i$, then
\[
    \mathcal P_j^\perp\bigl((\lambda_jV-C^0)\widetilde\xi\bigr)
    =
    \sum_{i\ne j}(\lambda_j-\lambda_i)c_iVp_i .
\]
Since $\lambda_j$ is simple, this map is an isomorphism from
$E_j^\perp$ onto $\operatorname{Ran}\mathcal P_j^\perp$.
Viewing the finite-dimensional complex spaces as real spaces, the real implicit
function theorem gives a unique real-smooth map
\[
    \xi
    =
    \Xi^{\rm non}(t,\hat\omega,\varepsilon)
    \in E_j^\perp
\]
solving \eqref{eq:modal-transverse-equations}. Since
$\mathcal F^{\rm non}(0,\hat\omega,\varepsilon)=0$, uniqueness gives
$\Xi^{\rm non}(0,\hat\omega,\varepsilon)=0$. Thus, the complement variable has
been solved as a function of $(t,\hat\omega,\varepsilon)$. We now form the
scalar equation along this graph:
\[
    \phi_j^{\rm non}(t,\hat\omega,\varepsilon)
    :=
    p_j^\top
    \mathcal F^{\rm non}
    \bigl(
        tp_j+\Xi^{\rm non}(t,\hat\omega,\varepsilon),
        \hat\omega,\varepsilon
    \bigr).
\]
Since $\phi_j^{\rm non}(0,\hat\omega,\varepsilon)=0$, the scalar equation
contains the trivial zero-amplitude factor. We divide out this factor by
defining
\begin{equation}
\label{eq:def-nonlinear-Gj}
    G_j^{\rm non}(t,\hat\omega,\varepsilon)
    :=
    \int_0^1
    \partial_t
    \phi_j^{\rm non}(st,\hat\omega,\varepsilon)\,ds.
\end{equation}
Indeed,
\[
\begin{aligned}
    \phi_j^{\rm non}(t,\hat\omega,\varepsilon)
    =
    \int_0^t
    \partial_s\phi_j^{\rm non}(s,\hat\omega,\varepsilon)\,ds
    =
    tG_j^{\rm non}(t,\hat\omega,\varepsilon).
\end{aligned}
\]
In particular,
$G_j^{\rm non}(0,\hat\omega,\varepsilon)
=\partial_t\phi_j^{\rm non}(0,\hat\omega,\varepsilon)$.
Next, we compute the zero-amplitude value of the divided equation at
$\varepsilon=0$. Here, $R^{\rm non}(q,\hat\omega,0)=0$ and
$DV_\sigma(0)=0$ for the first real derivative of the Kerr map. Thus, the first
$t$-derivative of the transverse equation contains only the linear reduced
part.
Differentiating \eqref{eq:modal-transverse-equations} with respect to $t$ at
$t=0$ and applying $p_i^\top$, $i\ne j$, gives
\[
    p_i^\top(\hat\omega^2V-C^0)
    \left(
        p_j+\partial_t\Xi^{\rm non}(0,\hat\omega,0)
    \right)
    =
    0,
    \qquad i\ne j.
\]
The contribution of $p_j$ vanishes. Writing
$\partial_t\Xi^{\rm non}(0,\hat\omega,0)=\sum_{i\ne j}d_ip_i$, we obtain
$(\hat\omega^2-\lambda_i)d_i=0$ for $i\ne j$. After shrinking the
neighbourhood of $\hat\omega_j^0$, one has
$\hat\omega^2\ne\lambda_i$ for every $i\ne j$, and hence,
$\partial_t\Xi^{\rm non}(0,\hat\omega,0)=0$. It follows that
\[
    G_j^{\rm non}(0,\hat\omega,0)
    =
    p_j^\top(\hat\omega^2V-C^0)p_j
    =
    \hat\omega^2-\lambda_j.
\]
Thus,
\[
    G_j^{\rm non}(0,\hat\omega_j^0,0)=0,
    \qquad
    \partial_{\hat\omega}
    G_j^{\rm non}(0,\hat\omega_j^0,0)
    =
    2\hat\omega_j^0\ne0.
\]
The real implicit function theorem gives a unique real-smooth function
$\hat\omega_j^{\rm non}(t,\varepsilon)$. Set
\[
    \xi_j(t,\varepsilon)
    :=
    \Xi^{\rm non}
    \bigl(
        t,\hat\omega_j^{\rm non}(t,\varepsilon),\varepsilon
    \bigr),
    \qquad
    q_j^{\rm non}(t,\varepsilon)
    :=
    tp_j+\xi_j(t,\varepsilon).
\]
For $t\ne0$, the equation $G_j^{\rm non}=0$ is equivalent to the original
scalar equation, so the transverse and scalar equations yield the full
reduced equation.

It remains to pass from normalized uniqueness to uniqueness up to gauge. Let
$(q,\hat\omega)$ be a nearby nonzero solution. If
$p_j^\top Vq=0$, then $q\in E_j^\perp$ and the transverse uniqueness at
$t=0$ gives $q=0$, a contradiction after shrinking the neighbourhood. Hence
$p_j^\top Vq\ne0$.

Choose $\theta$ so that $p_j^\top V(e^{\mathrm i\theta}q)$ is real. By gauge
equivariance, $(e^{\mathrm i\theta}q,\hat\omega)$ is another solution in the
normalized class. Normalized uniqueness identifies it with the constructed
branch. Thus the original solution differs from that branch only by a gauge
factor.

\smallskip
\noindent\emph{Step 3: Zero-amplitude limit and parity.}
At $t=0$, one has $q_j^{\rm non}(0,\varepsilon)=0$. Since
$\mathcal F^{\rm non}(0,\hat\omega,\varepsilon)=0$ for all nearby
$\hat\omega$, differentiating this identity in $\hat\omega$ gives
$\partial_{\hat\omega}\mathcal F^{\rm non}(0,\hat\omega,\varepsilon)=0$.
This removes the frequency-derivative term from the chain rule. Therefore,
when differentiating
\[
    \mathcal F^{\rm non}
    \bigl(
        q_j^{\rm non}(t,\varepsilon),
        \hat\omega_j^{\rm non}(t,\varepsilon),
        \varepsilon
    \bigr)
    =
    0
\]
with respect to $t$ at $t=0$, the chain-rule term involving
$\partial_t\hat\omega_j^{\rm non}(0,\varepsilon)$ vanishes. Using
\eqref{eq:nonlinear-linearization-identity}, the differentiated equation
becomes
\[
\begin{aligned}
0
=
D_q\mathcal F^{\rm non}
\bigl(
    0,\hat\omega_j^{\rm non}(0,\varepsilon),\varepsilon
\bigr)
\bigl[
    \partial_tq_j^{\rm non}(0,\varepsilon)
\bigr]
=
\mathcal F^{\rm lin}
\bigl(
    \partial_tq_j^{\rm non}(0,\varepsilon),
    \hat\omega_j^{\rm non}(0,\varepsilon),
    \varepsilon
\bigr).
\end{aligned}
\]
The normalization identity $p_j^\top Vq_j^{\rm non}(t,\varepsilon)=t$
gives, after differentiating at $t=0$,
\[
    p_j^\top V
    \partial_tq_j^{\rm non}(0,\varepsilon)
    =
    1.
\]
The local uniqueness in \cref{thm:linear-simple-resonance-branch} therefore yields
\begin{equation}
\label{eq:nonlinear-zero-amplitude-linear-branch}
    \hat\omega_j^{\rm non}(0,\varepsilon)
    =
    \hat\omega_j^{\rm lin}(\varepsilon),
    \qquad
    \partial_tq_j^{\rm non}(0,\varepsilon)
    =
    q_j^{\rm lin}(\varepsilon).
\end{equation}
Gauge equivariance with $\theta=\pi$ shows that
$(\hat\omega_j^{\rm non}(t,\varepsilon),
 -q_j^{\rm non}(t,\varepsilon))$
is the normalized solution with amplitude $-t$. By local uniqueness,
\[
    \hat\omega_j^{\rm non}(-t,\varepsilon)
    =
    \hat\omega_j^{\rm non}(t,\varepsilon),
    \qquad
    q_j^{\rm non}(-t,\varepsilon)
    =
    -q_j^{\rm non}(t,\varepsilon).
\]
Thus, the frequency is even and the amplitude vector is odd in $t$.

\smallskip
\noindent\emph{Step 4: Amplitude derivatives at $\varepsilon=0$.}
Set
$q(t):=q_j^{\rm non}(t,0)$ and
$\hat\omega(t):=\hat\omega_j^{\rm non}(t,0)$. Since
$R^{\rm non}(q,\hat\omega,0)=0$, the reduced equation becomes
\begin{equation}
\label{eq:nonlinear-reduced-at-epsilon-zero}
    \hat\omega(t)^2
    \bigl(
        Vq(t)+V_\sigma(q(t))
    \bigr)
    -
    C^0q(t)
    =
    0.
\end{equation}
At $t=0$, we have $q(0)=0$, $q_t(0)=p_j$, and
$\hat\omega(0)=\hat\omega_j^0$. Differentiating
\eqref{eq:nonlinear-reduced-at-epsilon-zero} once gives
$(\lambda_jV-C^0)q_t(0)=0$, consistently with $q_t(0)=p_j$.
Differentiating twice gives
\[
    (\lambda_jV-C^0)q_{tt}(0)
    +
    4\hat\omega_j^0\hat\omega_t(0)Vp_j
    =
    0.
\]
Projection onto $p_j$ yields $\hat\omega_t(0)=0$. The second derivative of the
normalization gives $p_j^\top Vq_{tt}(0)=0$, and hence $q_{tt}(0)=0$.
For the third derivative, these identities remove all lower-order mixed terms,
and the product rule gives
\[
    \left.
    \frac{d^3}{dt^3}
    \bigl(
        \hat\omega^2Vq
    \bigr)
    \right|_{t=0}
    =
    \lambda_jVq_{ttt}(0)
    +
    6\hat\omega_j^0\hat\omega_{tt}(0)Vp_j.
\]
For the Kerr term, we use its real cubic structure. Since
$q(0)=0$, $q_t(0)=p_j$, and $q_{tt}(0)=0$, we have
\[
    q(t)=tp_j+\mathcal O(t^3).
\]
By the cubic homogeneity of $V_\sigma$,
\[
    V_\sigma(q(t))
    =
    V_\sigma(tp_j+\mathcal O(t^3))
    =
    t^3V_\sigma(p_j)+\mathcal O(t^5).
\]
Hence,
\[
    \left.
    \frac{d^3}{dt^3}
    V_\sigma(q(t))
    \right|_{t=0}
    =
    6V_\sigma(p_j).
\]
Moreover, $V_\sigma(q(t))$ has a zero of order three at $t=0$. Thus, when
differentiating $\hat\omega^2(t)V_\sigma(q(t))$, all terms containing
derivatives of $\hat\omega^2(t)$ vanish at $t=0$. Since
$\hat\omega(0)^2=\lambda_j$, we obtain
\[
    \left.
    \frac{d^3}{dt^3}
    \bigl(
        \hat\omega^2V_\sigma(q)
    \bigr)
    \right|_{t=0}
    =
    6\lambda_jV_\sigma(p_j).
\]
Therefore,
\begin{equation}
\label{eq:nonlinear-third-t-derivative-equation}
    (\lambda_jV-C^0)q_{ttt}(0)
    +
    6\hat\omega_j^0\hat\omega_{tt}(0)Vp_j
    +
    6\lambda_jV_\sigma(p_j)
    =
    0.
\end{equation}
Projecting onto $p_j$ gives $\hat\omega_{tt}(0)
    =
    -\hat\omega_j^0\beta_j $.
The third derivative of the normalization gives
$p_j^\top Vq_{ttt}(0)=0$. Projecting
\eqref{eq:nonlinear-third-t-derivative-equation} onto $p_i$, $i\ne j$, gives
\[
    \frac16q_{ttt}(0)
    =
    \lambda_j
    \sum_{i\ne j}
    \frac{
        p_i^\top V_\sigma(p_j)
    }{
        \lambda_i-\lambda_j
    }p_i
    =
    \xi_j^3.
\]

\smallskip
\noindent\emph{Step 5: Expansions, physical frequency, and reconstructed field.}
By \eqref{eq:nonlinear-zero-amplitude-linear-branch} and
\eqref{eq:linear-scaled-branch-expansion},
\[
    \hat\omega_j^{\rm non}(0,\varepsilon)
    =
    \hat\omega_j^0
    -
    \mathrm i\,\hat\omega_j^1\varepsilon
    +
    \mathcal O(\varepsilon^2),\qquad
    \partial_tq_j^{\rm non}(0,\varepsilon)
    =
    p_j
    -
    \mathrm i\,\varepsilon q_j^1
    +
    \mathcal O(\varepsilon^2).
\]
The preceding computation gives the second and third $t$-derivatives at
$\varepsilon=0$. Since the branch is smooth, these derivatives depend smoothly
on $\varepsilon$. Taylor expansion in $\varepsilon$ gives
\[
    \partial_{tt}
    \hat\omega_j^{\rm non}(0,\varepsilon)
    =
    -\hat\omega_j^0\beta_j+\mathcal O(\varepsilon),
    \qquad
    \frac16
    \partial_{ttt}
    q_j^{\rm non}(0,\varepsilon)
    =
    \xi_j^3+\mathcal O(\varepsilon).
\]
Since $\hat\omega_j^{\rm non}$ is even in $t$, Taylor's formula gives
\[
    \hat\omega_j^{\rm non}(t,\varepsilon)
    =
    \hat\omega_j^{\rm non}(0,\varepsilon)
    +
    \frac{t^2}{2}
    \partial_{tt}
    \hat\omega_j^{\rm non}(0,\varepsilon)
    +
    \mathcal O(t^4),
\]
uniformly for small $\varepsilon$. Hence,
\[
    \hat\omega_j^{\rm non}(t,\varepsilon)
    =
    \hat\omega_j^0
    -
    \frac12\hat\omega_j^0\beta_jt^2
    -
    \mathrm i\,\hat\omega_j^1\varepsilon
    +
    \mathcal O\bigl(
        |t|^4+|t|^2\varepsilon+\varepsilon^2
    \bigr).
\]
Similarly, oddness of $q_j^{\rm non}$ gives
\[
    q_j^{\rm non}(t,\varepsilon)
    =
    t\,\partial_tq_j^{\rm non}(0,\varepsilon)
    +
    \frac{t^3}{6}
    \partial_{ttt}q_j^{\rm non}(0,\varepsilon)
    +
    \mathcal O(t^5).
\]
Substitution yields
\[
    q_j^{\rm non}(t,\varepsilon)
    =
    tp_j
    +
    t^3\xi_j^3
    -
    \mathrm i\,t\varepsilon q_j^1
    +
    \mathcal O\bigl(
        |t|^5+|t|^3\varepsilon+|t|\varepsilon^2
    \bigr).
\]
Since $q_j^{\rm non}=tp_j+\xi_j$, the preceding two displays prove
\eqref{eq:modal-scaled-frequency-expansion} and
\eqref{eq:modal-xi-expansion}. Now set $\varepsilon=\sqrt\delta$. Using
$\omega=c_b\varepsilon\hat\omega$ gives
\eqref{eq:modal-physical-frequency-expansion} and \eqref{eq:modal-q-expansion}.

Finally, reconstruct the interior field. The linearity of $q\mapsto u_q$ gives
the expansion of the piecewise constant lift, and
\[
    u_j^{\rm non}
    =
    u_{q_j^{\rm non}}
    +
    z^{\rm non}
    \bigl(
        q_j^{\rm non},
        \hat\omega_j^{\rm non},
        \varepsilon
    \bigr).
\]
The zero-average correction is smaller. Since
$\|q_j^{\rm non}(t,\varepsilon)\|=\mathcal O(|t|)$,
\eqref{eq:nonlinear-basic-z-estimate} gives
\[
    \left\|
        z^{\rm non}
        \bigl(
            q_j^{\rm non},
            \hat\omega_j^{\rm non},
            \varepsilon
        \bigr)
    \right\|_{H^1(D)}
    \le
    C|t|\varepsilon^2
    =
    \mathcal O(|t|\delta).
\]
This contribution is contained in the stated remainder. This proves
\eqref{eq:modal-nonlinear-field-expansion}.
\end{proof}

\begin{remark}[Comparison with the linear branch]
Comparing \eqref{eq:modal-physical-frequency-expansion} with
\eqref{eq:linear-resonance-expansion}, the nonlinear branch has the same
leading subwavelength frequency and the same leading radiative correction as
the linear branch. More precisely,
\[
    \omega_j^{\rm non}(t,\delta)
    =
    \omega_j^{\rm lin}(\delta)
    -
    \frac12 c_b\hat\omega_j^0\beta_jt^2\sqrt\delta
    +
    \mathcal O\bigl(
        |t|^4\sqrt\delta+|t|^2\delta+\delta^{3/2}
    \bigr).
\]
Thus, the first nonlinear effect is the real frequency shift generated by the
cubic modal term $V_\sigma(q)$; the leading imaginary part remains the linear
radiative term $-\mathrm i\,c_b\hat\omega_j^1\delta$.

The field expansion has the analogous structure. From
\eqref{eq:modal-nonlinear-field-expansion} and
\eqref{eq:linear-physical-field-expansion},
\[
    u_j^{\rm non}(t,\delta)
    =
    t\,u_j^{\rm lin}(\delta)
    +
    t^3U_j^3
    +
    \mathcal O\bigl(
        |t|^5+|t|^3\sqrt\delta+|t|\delta
    \bigr).
\]
Hence, the nonlinear field contains the amplitude-scaled linear field, while
the first genuinely nonlinear correction is the cubic modal component
$t^3U_j^3$.
\end{remark}

\section{Bound states in the continuum in symmetric configurations}
\label{section:symmetric-configurations}

We study reflection-protected bound states in the continuum at the
high-symmetry point $\alpha=0$. Under the low-frequency scaling
\eqref{eq:low-frequency-scaling}, this corresponds to $a=0$.
The linear symmetry decomposition produces exact antisymmetric BIC branches,
and the same mechanism persists under reflection-symmetric Kerr nonlinearities.

\subsection{Reflection symmetry}

We first introduce the reflection notation used below. The physical reflection
$R_\ell$ induces a component permutation $\pi$, fixed and paired component
indices, and symmetric and antisymmetric subspaces for reduced amplitudes and
functions. We then prove that the leading reduced capacitance problem
decomposes accordingly.

\begin{assumption}[Reflection symmetry]
\label{ass:periodic-reflection-symmetry}
For $x=(x_\ell,x_d)$, set $R_\ell x:=(-x_\ell,x_d)$. We assume that the
resonator configuration is invariant under this reflection: $R_\ell D=D$.

\end{assumption}

The symmetry $R_\ell D=D$ also determines the component permutation used in the following. Since
$R_\ell$ is an isometry and the components $D_i$ are connected and pairwise
disjoint, each reflected component $R_\ell D_i$ is again a component of $D$.
Thus, there is a unique permutation $\pi$ of $\{1,\ldots,N\}$ such that
\begin{equation}
\label{eq:component-permutation}
    R_\ell D_i=D_{\pi(i)},
    \qquad
    i=1,\ldots,N .
\end{equation}
Since $R_\ell^2=I$, this permutation is an involution: $\pi^2=I$.

We now define the fixed components and the reflected pairs determined by this
permutation.

\begin{definition}[Fixed indices and reflected pairs]
\label{def:reflection-fixed-pairs}
Under \cref{ass:periodic-reflection-symmetry}, let $\pi$ be the induced
permutation from \eqref{eq:component-permutation}. The fixed-index set and the
reflected-pair set are
\begin{equation}
\label{eq:reflection-fixed-and-pair-sets}
    \mathcal I_{\pi}^{\mathrm f}
    :=
    \{i:1\le i\le N,\ \pi(i)=i\},
    \qquad
    \mathcal I_{\pi}^{\mathrm p}
    :=
    \bigl\{\{i,\pi(i)\}:1\le i\le N,\ i<\pi(i)\bigr\}.
\end{equation}
Their counts are
\begin{equation}
\label{eq:reflection-fixed-and-pair-numbers}
    n_{\pi}^{\mathrm f}:=|\mathcal I_{\pi}^{\mathrm f}|,
    \qquad
    n_{\pi}^{\mathrm p}:=|\mathcal I_{\pi}^{\mathrm p}|.
\end{equation}
\end{definition}

\begin{figure}[htbp]
\centering
\begin{tikzpicture}[scale=0.9, every node/.style={font=\small}]
    \def\cellw{5.8}
    \def\cellh{3.0}

    \draw[black, thick] (-0.5*\cellw,-0.5*\cellh) rectangle (0.5*\cellw,0.5*\cellh);
    \node[below] at (0,-0.5*\cellh-0.18) {$x_\ell$};
    \node[left] at (-0.5*\cellw-0.18,0) {$x_d$};

    \filldraw[fill=blue!15, draw=blue!60!black, thick]
        (-1.48,0) ellipse (0.34 and 0.56);
    \node[font=\scriptsize] at (-1.48,0) {$D_1$};

    \filldraw[fill=blue!15, draw=blue!60!black, thick]
        (1.48,0) ellipse (0.34 and 0.56);
    \node[font=\scriptsize] at (1.48,0) {$D_3$};

    \filldraw[fill=blue!12, draw=blue!60!black, thick]
        (0,0) circle (0.34);
    \node[font=\scriptsize] at (0,0) {$D_2$};

    \filldraw[fill=blue!10, draw=blue!60!black, thick]
        (0,0.82) ellipse (0.54 and 0.26);
    \node[font=\scriptsize] at (0,0.82) {$D_4$};

    \filldraw[fill=blue!10, draw=blue!60!black, thick]
        (0,-0.82) ellipse (0.54 and 0.26);
    \node[font=\scriptsize] at (0,-0.82) {$D_5$};
\end{tikzpicture}
\caption{Reflection symmetry of five resonators in one periodic cell. The
induced component permutation is $(\pi(1),\ldots,\pi(5))=(3,2,1,4,5)$. Hence,
$\mathcal I_{\pi}^{\mathrm f}=\{2,4,5\}$,
$\mathcal I_{\pi}^{\mathrm p}=\{\{1,3\}\}$, and
$n_{\pi}^{\mathrm f}=3$, $n_{\pi}^{\mathrm p}=1$.}
\label{fig:reflection-symmetric-cell}
\end{figure}

We next define the symmetric and antisymmetric subspaces for reduced amplitudes
and functions.

\begin{definition}[Reflection subspaces]
\label{def:reflection-subspaces}
Under \cref{ass:periodic-reflection-symmetry}, let $\pi$ be the induced
permutation from \eqref{eq:component-permutation}, and let
$\Pi:\mathbb C^N\to\mathbb C^N$ be the corresponding permutation matrix,
defined by
\begin{equation}
\label{eq:reflection-vector-action}
    (\Pi q)_i:=q_{\pi(i)},
    \qquad
    q\in\mathbb C^N .
\end{equation}
Equivalently, $\Pi_{ij}=1$ exactly when $j=\pi(i)$, and $\Pi_{ij}=0$
otherwise. Define two vector subspaces
\begin{equation}
\label{eq:reflection-complex-vector-subspaces}
    \mathbb C^N_{\mathrm{sym}}
    :=
    \{q\in\mathbb C^N:\Pi q=q\},
    \qquad
    \mathbb C^N_{\mathrm{ant}}
    :=
    \{q\in\mathbb C^N:\Pi q=-q\}.
\end{equation}
The corresponding real vector subspaces are
\begin{equation}
\label{eq:reflection-real-vector-subspaces}
    \mathbb R^N_{\mathrm{sym}}
    :=
    \mathbb C^N_{\mathrm{sym}}\cap\mathbb R^N,
    \qquad
    \mathbb R^N_{\mathrm{ant}}
    :=
    \mathbb C^N_{\mathrm{ant}}\cap\mathbb R^N .
\end{equation}
For functions on an $R_\ell$-invariant set, define the reflected pull-back by
\begin{equation}
\label{eq:function-reflection-action}
    \mathcal R_\ell[u](x):=u(R_\ell x).
\end{equation}
The corresponding function subspaces are
\begin{equation}
\label{eq:reflection-complex-function-spaces}
\begin{aligned}
    H^1_{\mathrm{sym}}(D;\mathbb C)
    :=
    \{u\in H^1(D;\mathbb C):\mathcal R_\ell[u]=u\},
    \quad
    H^1_{\mathrm{ant}}(D;\mathbb C)
    :=
    \{u\in H^1(D;\mathbb C):\mathcal R_\ell[u]=-u\}.
\end{aligned}
\end{equation}
The real-valued function subspaces are
\begin{equation}
\label{eq:reflection-real-function-spaces}
    H^1_{\mathrm{sym}}(D;\mathbb R)
    :=
    H^1_{\mathrm{sym}}(D;\mathbb C)\cap H^1(D;\mathbb R),
    \qquad
    H^1_{\mathrm{ant}}(D;\mathbb R)
    :=
    H^1_{\mathrm{ant}}(D;\mathbb C)\cap H^1(D;\mathbb R).
\end{equation}
The piecewise-constant lift is reflection-equivariant: for every
$q\in\mathbb C^N$,
\begin{equation}
\label{eq:reflection-uq}
    \mathcal R_\ell[u_q]=u_{\Pi q}.
\end{equation}
Consequently, $u_q$ is symmetric if and only if
$q\in\mathbb C^N_{\mathrm{sym}}$, and antisymmetric if and only if
$q\in\mathbb C^N_{\mathrm{ant}}$.
\end{definition}

Since $\Pi^2=I$ and $\mathcal R_\ell^2=I$, every vector and every function
decomposes uniquely into symmetric and antisymmetric parts. For example,
\[
    q_{\mathrm{sym}}:=\frac{q+\Pi q}{2},
    \quad
    q_{\mathrm{ant}}:=\frac{q-\Pi q}{2},
    \qquad
    u_{\mathrm{sym}}:=\frac{u+\mathcal R_\ell[u]}{2},
    \quad
    u_{\mathrm{ant}}:=\frac{u-\mathcal R_\ell[u]}{2}.
\]

This reflection decomposition passes to the reduced capacitance problem.

\begin{proposition}[Reflection decomposition of the reduced capacitance problem]
\label{prop:reduced-reflection-decomposition}
Under \cref{ass:periodic-reflection-symmetry}, the matrices $V$ and $C^0$
commute with the reflection permutation matrix:
\[
    V\Pi=\Pi V,
    \qquad
    C^0\Pi=\Pi C^0 .
\]
Consequently, the generalized eigenvalue problem
\eqref{eq:eigenvalue-problem-of-C} splits on
$\mathbb R^N_{\mathrm{sym}}$ and $\mathbb R^N_{\mathrm{ant}}$ as follows.
\begin{enumerate}[label=(\roman*)]
    \item The symmetric restriction has dimension $n_{\pi}^{\mathrm f}+n_{\pi}^{\mathrm p}$
     and admits real $V$-orthonormal eigenpairs
    \[
        \{(\lambda_j^{\mathrm{sym}},p_j^{\mathrm{sym}})\}
        _{j=1}^{n_{\pi}^{\mathrm f}+n_{\pi}^{\mathrm p}}
        \subset
        [0,\infty)\times\mathbb R^N_{\mathrm{sym}} ;
    \]

    \item The antisymmetric restriction has dimension
    $n_{\pi}^{\mathrm p}$ and admits real $V$-orthonormal eigenpairs
    \[
        \{(\lambda_j^{\mathrm{ant}},p_j^{\mathrm{ant}})\}
        _{j=1}^{n_{\pi}^{\mathrm p}}
        \subset
        (0,\infty)\times\mathbb R^N_{\mathrm{ant}} .
    \]
\end{enumerate}
In both cases, the eigenvalues are counted with their multiplicities.
\end{proposition}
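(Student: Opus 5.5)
The plan has three parts: establish the commutation relations, count the dimensions of the symmetric and antisymmetric subspaces, and then restrict the generalized eigenproblem to each subspace.

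\emph{Commutation.} For $V\Pi=\Pi V$: $R_\ell$ is an isometry, so $|D_{\pi(i)}|=|D_i|$. Hence $(V\Pi q)_i=|D_i|q_{\pi(i)}=|D_{\pi(i)}|q_{\pi(i)}=(\Pi Vq)_i$.

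For $C^0\Pi=\Pi C^0$ I will show $C^0_{\pi(i)\pi(j)}=C^0_{ij}$, which is the same statement since $\Pi$ is a permutation matrix. Here $a=0$ (the $\Gamma$ point), so the operator $\mathcal S_D^{0,0}$ entering $\mathcal H_0$ has kernel $G^{0,0}$. By \eqref{eq:def-G00}, the term $|x_d|$ is even in $x_\ell$, and summing the series over $\pm\eta$ shows $G^{0,0}(-x_\ell,x_d)=G^{0,0}(x_\ell,x_d)$. The reflection preserves surface measure on $\partial D$. Therefore $\mathcal R_\ell\mathcal S_D^{0,0}=\mathcal S_D^{0,0}\mathcal R_\ell$ on $H^{-1/2}(\partial D)$, and $\mathfrak m\circ\mathcal R_\ell=\mathfrak m$. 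Applying $\mathcal R_\ell$ to $\mathcal H_0[\psi_j^0,s_j^0]=\chi_{\partial D_j}$ gives $\mathcal H_0[\mathcal R_\ell\psi_j^0,s_j^0]=\chi_{\partial D_{\pi(j)}}$. The uniqueness part of \cref{lem:H0-isomorphism} then yields $\mathcal R_\ell\psi_j^0=\psi_{\pi(j)}^0$ and $s^0_{\pi(j)}=s^0_j$. Consequently
\[
C^0_{\pi(i)\pi(j)}=-\langle\psi^0_{\pi(j)},1\rangle_{\partial D_{\pi(i)}}=-\langle\mathcal R_\ell\psi^0_j,1\rangle_{R_\ell\partial D_i}=C^0_{ij}.
\]
I expect this step to be the main obstacle. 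It rests entirely on the parity of $G^{0,0}$ in $x_\ell$, on a change of variables on $\partial D$, and on the precise statement of uniqueness for $\mathcal H_0^{-1}$; the symmetry would fail for $a\neq0$ because of the odd term $a\cdot x_\ell$ in $G_0^{a,c}$.

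\emph{Dimensions.} A vector $q$ satisfies $\Pi q=q$ exactly when $q_i=q_{\pi(i)}$ for all $i$. Such a vector is determined by its values at the fixed indices and by one value per reflected pair, so $\dim\mathbb R^N_{\mathrm{sym}}=n_\pi^{\mathrm f}+n_\pi^{\mathrm p}$. For antisymmetric $q$, the condition forces $q_i=0$ at fixed indices and $q_{\pi(i)}=-q_i$ on each pair, so $\dim\mathbb R^N_{\mathrm{ant}}=n_\pi^{\mathrm p}$. As a consistency check, $N=n_\pi^{\mathrm f}+2n_\pi^{\mathrm p}$.

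\emph{Splitting of the eigenproblem.} Since $V$ and $C^0$ commute with $\Pi$, both leave $\mathbb R^N_{\mathrm{sym}}$ and $\mathbb R^N_{\mathrm{ant}}$ invariant. These two subspaces are orthogonal for the bilinear form $q^\top Vq'$: indeed $\Pi$ is orthogonal and symmetric, so $q^\top Vq'=(\Pi q)^\top V(\Pi q')=-q^\top Vq'$ whenever $q$ is symmetric and $q'$ antisymmetric.

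On each invariant subspace $W$, the pencil $(C^0|_W,V|_W)$ is real symmetric with $V|_W$ positive definite. The spectral theorem for symmetric-definite pencils, as used in \cref{lem:spectral-capacitance}, then provides a real $V$-orthonormal eigenbasis of $W$, with multiplicities counted. Combining the two bases gives a $V$-orthonormal eigenbasis of $\mathbb R^N$ compatible with \eqref{eq:eigenvalue-problem-of-C}.

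Finally, the eigenvalues are nonnegative because $C^0$ is positive semidefinite. On the antisymmetric subspace they are strictly positive: $\ker C^0=\operatorname{span}\{\mathbf 1\}$ by \cref{lem:spectral-capacitance}, while $\Pi\mathbf 1=\mathbf 1$ places $\mathbf 1$ in the symmetric subspace, so $\ker C^0\cap\mathbb R^N_{\mathrm{ant}}=\{0\}$.
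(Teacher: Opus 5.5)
Your proposal is correct and follows the paper's proof essentially step by step. The steps match: $|D_{\pi(i)}|=|D_i|$ gives the commutation for $V$; reflection invariance of $G^{0,0}$ together with uniqueness for $\mathcal H_0$ gives $\psi^0_{\pi(j)}=\mathcal R_\ell[\psi^0_j]$ and $C^0_{\pi(i)\pi(j)}=C^0_{ij}$; the orbits of $\pi$ give the dimension count; the symmetric-definite spectral theorem is applied on each invariant subspace; and $\Pi\mathbf 1=\mathbf 1$ gives strict positivity on $\mathbb R^N_{\mathrm{ant}}$.

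One aside is inaccurate, although your argument never uses it. $\mathcal H_0$ involves only $\mathcal S_D^{0,0}$ for every $a$, so $C^0$ does not depend on $a$ and $C^0\Pi=\Pi C^0$ holds for all $a$, not only at $a=0$. The odd $a$-dependent terms reach the capacitance quantities only through $C^1$, which is where $a=0$ is needed in \cref{lem:antisymmetric-C1-vanishing}.
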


\begin{proof}
We first prove the commutation relations stated above. Since
$R_\ell D_i=D_{\pi(i)}$ and $R_\ell$ is an isometry,
$|D_i|=|D_{\pi(i)}|$. Thus, for every $q\in\mathbb C^N$,
\[
    (V\Pi q)_i
    =
    |D_i|q_{\pi(i)}
    =
    |D_{\pi(i)}|q_{\pi(i)}
    =
    (\Pi Vq)_i .
\]
This proves $V\Pi=\Pi V$. Next, we prove the commutation relation for $C^0$. We use the same symbol
$\mathcal R_\ell$ for the induced reflection on boundary traces and densities.
The periodic Laplace Green kernel is reflection invariant:
$G^{0,0}(R_\ell z)=G^{0,0}(z)$. Hence, for
$Q_0(x,y):=G^{0,0}(x-y)$,
\[
    Q_0(R_\ell x,y)
    =
    G^{0,0}(R_\ell(x-R_\ell y))
    =
    Q_0(x,R_\ell y).
\]
By \cref{lem:reflection-covariance-kernel},
\begin{equation}
\label{eq:S00-reflection-covariance}
    \mathcal S_D^{0,0}\mathcal R_\ell[\psi]
    =
    \mathcal R_\ell\mathcal S_D^{0,0}[\psi],
    \qquad
    \psi\in H^{-1/2}(\partial D).
\end{equation}
Moreover, $\mathfrak m[\mathcal R_\ell[\psi]]=\mathfrak m[\psi]$, so
$\mathcal R_\ell$ preserves $H^{-1/2}_0(\partial D)$. Therefore,
\begin{equation}
\label{eq:H0-reflection-covariance}
    \mathcal H_0[\mathcal R_\ell[\psi],s]
    =
    \mathcal R_\ell\mathcal H_0[\psi,s],
    \qquad
    \psi\in H^{-1/2}_0(\partial D),
    \quad
    s\in\mathbb C .
\end{equation}
Using $(\psi_j^0,s_j^0)=\mathcal H_0^{-1}[\chi_{\partial D_j}]$ and
\eqref{eq:H0-reflection-covariance}, we obtain
\[
    \mathcal H_0[\mathcal R_\ell[\psi_j^0],s_j^0]
    =
    \chi_{\partial D_{\pi(j)}} .
\]
The uniqueness in \cref{lem:H0-isomorphism} gives
\begin{equation}
\label{eq:reflected-capacitance-density}
    \psi_{\pi(j)}^0=\mathcal R_\ell[\psi_j^0],
    \qquad
    s_{\pi(j)}^0=s_j^0 .
\end{equation}
Using \eqref{eq:capacitance-entries},
\eqref{eq:reflected-capacitance-density}, and the change of variables
$x=R_\ell y$, we get
\[
\begin{aligned}
    C^0_{\pi(i)\pi(j)}
    =
    -\int_{\partial D_{\pi(i)}}
        \psi_{\pi(j)}^0(x)
    \,d\sigma(x)
    =
    -\int_{\partial D_{\pi(i)}}
        \psi_j^0(R_\ell x)
    \,d\sigma(x)
    =
    -\int_{\partial D_i}
        \psi_j^0(y)
    \,d\sigma(y)
    =
    C^0_{ij}.
\end{aligned}
\]
Since $\Pi_{ij}=1$ when $j=\pi(i)$ and $\Pi_{ij}=0$
otherwise, the corresponding matrix
entries satisfy
\[
    (\Pi C^0\Pi)_{ij}
    =
    \sum_{m,n=1}^N
        (\Pi)_{im}C^0_{mn}(\Pi)_{nj}
    =
    C^0_{\pi(i)\pi(j)}
    =
    C^0_{ij}.
\]
Hence, $\Pi C^0\Pi=C^0$. Since $\Pi^2=I$, this is equivalent to
$C^0\Pi=\Pi C^0$.

The commutation relations show that the generalized eigenvalue problem preserves the real reflection
subspaces:
\[
    (C^0-\lambda V)\mathbb R^N_{\mathrm{sym}}
    \subset
    \mathbb R^N_{\mathrm{sym}},
    \qquad
    (C^0-\lambda V)\mathbb R^N_{\mathrm{ant}}
    \subset
    \mathbb R^N_{\mathrm{ant}}.
\]
It remains to count dimensions and signs. Since $\pi^2=I$, the orbits of
$\pi$ consist of fixed indices and two-element reflected pairs. Hence,
$N=n_{\pi}^{\mathrm f}+2n_{\pi}^{\mathrm p}$.
If $q\in\mathbb R^N_{\mathrm{ant}}$, then $\Pi q=-q$. For
$i\in\mathcal I_{\pi}^{\mathrm f}$, this gives $q_i=-q_i$, so $q_i=0$.
For a reflected pair $\{i,\pi(i)\}\in\mathcal I_{\pi}^{\mathrm p}$, it gives
$q_{\pi(i)}=-q_i$. Thus an antisymmetric vector is determined by one real
parameter on each reflected pair, and
\[
    \dim\mathbb R^N_{\mathrm{ant}}
    =
    n_{\pi}^{\mathrm p}.
\]
If $q\in\mathbb R^N_{\mathrm{sym}}$, then $\Pi q=q$. Each fixed index gives
one free real component, and each reflected pair satisfies
$q_{\pi(i)}=q_i$, and hence gives one free real parameter. Therefore,
\[
    \dim\mathbb R^N_{\mathrm{sym}}
    =
    n_{\pi}^{\mathrm f}
    +
    n_{\pi}^{\mathrm p}.
\]
On each invariant real subspace $E=\mathbb R^N_{\mathrm{sym}}$ or
$E=\mathbb R^N_{\mathrm{ant}}$, the restrictions of $C^0$ and $V$ are real
symmetric, and $V|_E$ is positive definite. The generalized spectral theorem
therefore gives a $V$-orthonormal eigenbasis of $E$, hence exactly $\dim E$
generalized eigenpairs, counted with their multiplicities.

Finally, \cref{lem:spectral-capacitance} gives $C^0\ge0$ and
$\ker C^0=\operatorname{span}\{\mathbf 1\}$. Since
$\Pi\mathbf 1=\mathbf 1$, this kernel lies in
$\mathbb R^N_{\mathrm{sym}}$, so
$\ker C^0\cap\mathbb R^N_{\mathrm{ant}}=\{0\}$.
Therefore, the symmetric restricted eigenvalues are nonnegative, while the
antisymmetric restricted eigenvalues are strictly positive. Together with the
dimension count, this gives the asserted eigenvalue counts, with their multiplicities.
\end{proof}

\begin{lemma}[First radiative correction on the antisymmetric subspace]
\label{lem:antisymmetric-C1-vanishing}
Under \cref{ass:periodic-reflection-symmetry} and at $a=0$, the first-order
capacitance correction $C^1$ satisfies $C^1q=0$ for every
$q\in\mathbb C^N_{\mathrm{ant}}$.
\end{lemma}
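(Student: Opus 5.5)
Set $a=0$ in \eqref{eq:C1-formula}. Then $m_\ell^a=0$, and $G_{1,\mathrm o}^{a,c}$ vanishes because both of its terms carry a factor $a$. Hence $\widetilde C^{1,o}=0$, and the formula collapses to
\[
    C^1
    =
    -\mathrm i
    \left(
        2\tau|Y|\,s^0(s^0)^\top
        +
        \frac{m_d^\tau(m_d^\tau)^\top}{2\tau|Y|}
    \right),
    \qquad \tau=c_m^{-1}.
\]
This is a sum of two rank-one matrices. It therefore suffices to show that $(s^0)^\top q=0$ and $(m_d^\tau)^\top q=0$ for every $q\in\mathbb C^N_{\mathrm{ant}}$. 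Since $\Pi q=-q$ and $\Pi^\top=\Pi$, both follow once we know $\Pi s^0=s^0$ and $\Pi m_d^\tau=m_d^\tau$. Indeed, if $w$ is symmetric, then $w^\top q=(\Pi w)^\top q=w^\top\Pi q=-w^\top q$, so $w^\top q=0$.

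The symmetry of $s^0$ is already available from \eqref{eq:reflected-capacitance-density} in the proof of \cref{prop:reduced-reflection-decomposition}: $s^0_{\pi(j)}=s^0_j$. For $m_d^\tau$, we use the other half of the same identity, $\psi^0_{\pi(j)}=\mathcal R_\ell[\psi^0_j]$. We change variables $y=R_\ell x$ on $\partial D$, which is an isometry preserving surface measure and leaves $y_d$ unchanged. This gives
\[
    \mathfrak m_d[\psi^0_{\pi(j)}]
    =
    \int_{\partial D}\psi_j^0(R_\ell x)\,x_d\,d\sigma(x)
    =
    \int_{\partial D}\psi_j^0(y)\,y_d\,d\sigma(y)
    =
    \mathfrak m_d[\psi^0_j],
\]
so $(m_d^\tau)_{\pi(j)}=(m_d^\tau)_j$. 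Together these give $C^1q=0$.

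The one point that needs care is confirming that $C^1$ at $a=0$ is really given by the reduced expression. That is, the derivation of \eqref{eq:C1-formula} through \cref{lem:C1-decomposition} must remain valid at $a=0$, where $\tau=1/c_m>0$, and the $a$-dependent contributions ($m_\ell^a$, $\widehat G_1$, and the $(a\cdot x_\ell)|x_d|$ term) must drop out exactly rather than only to leading order. Since $G_1^{a,c}$ in \eqref{eq:green-coefficients} depends on $a$ only through the explicitly displayed terms, this follows by direct substitution.

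As an independent check, one could instead argue directly from reflection covariance of $\mathcal S_{D,1}^{0,c}$ and $\mathcal S_1^{-1}$. This would show $\Pi C^1\Pi=C^1$, which alone does not give $C^1 q=0$. The rank-one structure is what yields the vanishing, so the proof should rely on the explicit formula.
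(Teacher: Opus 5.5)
Your proof is correct and follows the paper's route. You set $a=0$ in \eqref{eq:C1-formula}, so that $m_\ell^a$ and $\widetilde C^{1,o}$ vanish, and then use \eqref{eq:reflected-capacitance-density} to show that $s^0$ and $m_d^\tau$ are $\Pi$-symmetric and therefore annihilate antisymmetric vectors. Your check $\Pi^\top=\Pi$ and the explicit change of variables for $\mathfrak m_d$ simply supply details the paper leaves implicit.
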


\begin{proof}
By \cref{def:capacitance-densities},
$m_{\ell,j}^a=a\cdot\mathfrak m_\ell[\psi_j^0]$, so $m_\ell^a=0$ at
$a=0$. Also, \eqref{eq:G1-parity-decomposition} gives
$G_{1,o}^{0,c}=0$, hence $\widetilde C^{1,o}=0$. Therefore,
\eqref{eq:C1-formula} reduces to
\[
    C^1
    =
    -\mathrm i
    \left(
        2\tau|Y|\,s^0(s^0)^\top
        +
        \frac{m_d^\tau(m_d^\tau)^\top}{2\tau|Y|}
    \right).
\]
By \eqref{eq:reflected-capacitance-density},
$s_{\pi(j)}^0=s_j^0$ and $m_{d,\pi(j)}^\tau=m_{d,j}^\tau$ for every $j$.
Thus, $s^0$ and $m_d^\tau$ are symmetric vectors. Hence,
\[
    (s^0)^\top q=0,
    \qquad
    (m_d^\tau)^\top q=0.
\]
The displayed formula for $C^1$ gives $C^1q=0$ for
$q\in\mathbb C^N_{\mathrm{ant}}$.
\end{proof}

\begin{remark}[Antisymmetric candidates for embedded eigenvalues]\label{rem:antisymmetric-candidates}
By \cref{prop:reduced-reflection-decomposition}, the leading reduced
eigenmodes split into symmetric and antisymmetric sectors. For an antisymmetric
mode $p_j^{\mathrm{ant}}$, \cref{lem:antisymmetric-C1-vanishing} gives
$C^1p_j^{\mathrm{ant}}=0$. Hence, the first radiative coefficient
$\hat\omega_j^1$ vanishes in the corresponding antisymmetric subspace. Consequently, the order-$\delta$ imaginary correction in
\eqref{eq:linear-resonance-expansion} is absent. The same cancellation removes
the corresponding order-$\delta$ imaginary correction in
\eqref{eq:modal-physical-frequency-expansion}. Thus, antisymmetric modes are
natural candidates for embedded eigenvalues and BICs.
\end{remark}

\subsection{Linear bound states in the continuum}
We now specialize to the linear medium at the symmetry point $\alpha=0$. The linear
variational form \eqref{eq:def-linear-variational-form} then becomes
\begin{equation}\label{eq:linear-reflection-form}
    a^{\rm lin}_{\omega,\delta}(u,v)
    =
    (\nabla u,\nabla v)_D
    -
    k_b^2(u,v)_D
    -
    \delta
    \langle
        \mathcal T_D^{0,k_m}[u],
        v
    \rangle_{\partial D}.
\end{equation}
The reduced antisymmetric eigenpairs from
\cref{prop:reduced-reflection-decomposition} give candidate BIC locations.
To make these candidates exact, we first work with the unscaled linear
resonance problem in $(\omega,\delta)$. Complex antisymmetric restricted
solutions lift to the full linear resonance problem, and real antisymmetric
solutions complexify to the complex restricted linear resonance problem. After these lifting
facts are established, we introduce the scaling
\eqref{eq:low-frequency-scaling} and construct the linear BIC branches.

\begin{lemma}[Reflection decomposition of the linear resonance problem]
\label{lem:exact-linear-reflection-decomposition}
Under \cref{ass:periodic-reflection-symmetry} and at $\alpha=0$, the
linear variational form \eqref{eq:linear-reflection-form} is reflection invariant:
\[
    a^{\rm lin}_{\omega,\delta}(\mathcal R_\ell[u],\mathcal R_\ell[v])
    =
    a^{\rm lin}_{\omega,\delta}(u,v),
    \qquad
    u,v\in H^1(D;\mathbb C).
\]
It is therefore block diagonal with respect to the reflection decomposition:
\[
    a^{\rm lin}_{\omega,\delta}(u_{\mathrm{sym}},v_{\mathrm{ant}})
    =
    a^{\rm lin}_{\omega,\delta}(u_{\mathrm{ant}},v_{\mathrm{sym}})
    =
    0
\]
for all
$u_{\mathrm{sym}},v_{\mathrm{sym}}\in H^1_{\mathrm{sym}}(D;\mathbb C)$
and
$u_{\mathrm{ant}},v_{\mathrm{ant}}\in H^1_{\mathrm{ant}}(D;\mathbb C)$.
In terms of solutions, if the pair $(\omega,u_{\mathrm{ant}})\in \mathbb C \times H^1_{\mathrm{ant}}(D;\mathbb C)$ solves the complex
antisymmetric restricted linear resonance problem
\begin{equation}\label{eq:exact-linear-antisymmetric-problem}
    a^{\rm lin}_{\omega,\delta}(u_{\mathrm{ant}},v_{\mathrm{ant}})=0,
    \qquad
    v_{\mathrm{ant}}\in H^1_{\mathrm{ant}}(D;\mathbb C),
\end{equation}
then the same pair $(\omega,u_{\mathrm{ant}})$ solves the full linear resonance problem:
\begin{equation}\label{eq:exact-linear-full-problem}
    a^{\rm lin}_{\omega,\delta}(u_{\mathrm{ant}},v)=0,
    \qquad
    v\in H^1(D;\mathbb C).
\end{equation}
\end{lemma}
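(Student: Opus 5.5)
The proof has three steps: check that each of the three terms of \eqref{eq:linear-reflection-form} is invariant under $\mathcal R_\ell$, use the involution property to get block diagonality, and then lift restricted solutions by splitting the test function.

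\emph{The volume terms.} Since $R_\ell$ is a linear isometry with $R_\ell D=D$ and Jacobian of modulus one, the change of variables $x=R_\ell y$ gives $(\mathcal R_\ell u,\mathcal R_\ell v)_D=(u,v)_D$. For the gradient we use $\nabla(\mathcal R_\ell u)(x)=R_\ell\nabla u(R_\ell x)$, where $R_\ell$ is an orthogonal matrix. The same substitution then gives $(\nabla\mathcal R_\ell u,\nabla\mathcal R_\ell v)_D=(\nabla u,\nabla v)_D$.

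\emph{The DtN term (the main obstacle).} We need the covariance
\[
\mathcal T_D^{0,k}\mathcal R_\ell=\mathcal R_\ell\mathcal T_D^{0,k}.
\]
From it, the substitution $x=R_\ell y$ on $\partial D$, which preserves surface measure, yields $\langle\mathcal T_D^{0,k}[\mathcal R_\ell u],\mathcal R_\ell v\rangle_{\partial D}=\langle\mathcal T_D^{0,k}[u],v\rangle_{\partial D}$. I would prove the covariance directly from \cref{def:DtN-operator}.

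At $\alpha=0$ the quasiperiodicity is plain periodicity with respect to $\Lambda$, and $\Lambda$ is invariant under $\zeta\mapsto-\zeta$. The strip $\Omega\setminus\overline D$ is $R_\ell$-invariant, and the Helmholtz operator commutes with the reflection. In the Rayleigh--Bloch expansion \eqref{eq:outgoing-radiation-condition}, reflection maps the mode $\eta$ to $-\eta$ and keeps $\beta_\eta(k)=\beta_{-\eta}(k)$, since $|\alpha_\eta|=|\eta|$ when $\alpha=0$. Hence if $u^{\rm ext}$ is the outgoing solution with datum $\psi$, then $\mathcal R_\ell u^{\rm ext}$ is the outgoing solution with datum $\mathcal R_\ell\psi$. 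Uniqueness of the exterior problem, which is assumed in \cref{def:DtN-operator}, identifies the two.

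For the normal derivative, the outward normal transforms as $\nu(R_\ell x)=R_\ell\nu(x)$. This gives $\partial_\nu(\mathcal R_\ell u^{\rm ext})(x)=(\partial_\nu u^{\rm ext})(R_\ell x)$, and the covariance follows. As an alternative route, I could use the representation \eqref{eq:DtN-representation} together with \cref{lem:reflection-covariance-kernel}, since $G^{0,k}(R_\ell z)=G^{0,k}(z)$ by the spectral formula \eqref{eq:def-Gak}. The care needed here is only in the bookkeeping for the normal, the surface measure, and the $\eta\mapsto-\eta$ relabelling.

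\emph{Block diagonality.} Let $u\in H^1_{\mathrm{sym}}$ and $v\in H^1_{\mathrm{ant}}$. By invariance,
\[
a(u,v)=a(\mathcal R_\ell u,\mathcal R_\ell v)=a(u,-v)=-a(u,v),
\]
so $a(u,v)=0$, using sesquilinearity. The other mixed case is identical.

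\emph{Lifting.} Suppose $u_{\mathrm{ant}}$ solves \eqref{eq:exact-linear-antisymmetric-problem}. Given any $v\in H^1(D;\mathbb C)$, split it as $v=v_{\mathrm{sym}}+v_{\mathrm{ant}}$; both parts lie in $H^1$ because $\mathcal R_\ell$ is bounded on $H^1(D)$. Then
\[
a(u_{\mathrm{ant}},v)=a(u_{\mathrm{ant}},v_{\mathrm{sym}})+a(u_{\mathrm{ant}},v_{\mathrm{ant}})=0+0,
\]
by block diagonality and by hypothesis. This is \eqref{eq:exact-linear-full-problem}.
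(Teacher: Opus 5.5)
Your proof is correct. Its skeleton matches the paper's: invariance of the three terms, the involution argument for block diagonality, and splitting the test function for the lifting.

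The genuine difference is in how you get the DtN covariance. The paper uses the representation \eqref{eq:DtN-representation}. It applies \cref{lem:reflection-covariance-kernel} separately to $\mathcal S_D^{0,k}$ and, in the principal-value sense, to $(\mathcal K_D^{0,k})^*$, and then passes the covariance through the inverse of $\mathcal S_D^{0,k}$. Your primary route instead reflects the outgoing exterior solution. You check that it stays $\Lambda$-periodic and outgoing because $\beta_{-\eta}(k)=\beta_\eta(k)$ at $\alpha=0$, and then invoke the uniqueness built into \cref{def:DtN-operator}.

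What your route buys:
\begin{itemize}
\item It needs only that $\mathcal T_D^{0,k}$ is defined. It therefore covers every such $k$, not just those where $\mathcal S_D^{0,k}$ is invertible and \eqref{eq:DtN-representation} holds (the paper's argument is tied to $|\omega|<\omega_0$).
\item It avoids the singular kernel of $(\mathcal K_D^{0,k})^*$.
\end{itemize}
The price is that the identity $\partial_\nu(\mathcal R_\ell u^{\mathrm{ext}})=\mathcal R_\ell\,\partial_\nu u^{\mathrm{ext}}$ must be read weakly in $H^{-1/2}(\partial D)$. This follows from the same change of variables applied in Green's formula on a truncated cell.

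The paper's route, for its part, yields covariance of the individual boundary operators. That is the form it uses again in \cref{prop:real-antisymmetric-linear-resonance-problem}. The paper itself later uses your exterior-uniqueness argument, when it shows that the outgoing continuation of a BIC branch is antisymmetric.
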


\begin{proof}
We first prove the reflection invariance of the form. For the volume terms,
$(\mathcal R_\ell[u])(x)=u(R_\ell x)$, and the chain rule gives
\[
    \nabla(\mathcal R_\ell[u])(x)
    =
    R_\ell^\top \nabla u(R_\ell x).
\]
Since $R_\ell^\top R_\ell=I$ and $R_\ell D=D$, the change of variables
$x\mapsto R_\ell x$ yields
\[
    (\mathcal R_\ell[u],\mathcal R_\ell[v])_D=(u,v)_D,
    \qquad
    (\nabla\mathcal R_\ell[u],\nabla\mathcal R_\ell[v])_D
    =
    (\nabla u,\nabla v)_D .
\]
It remains to check the DtN term. At $\alpha=0$, the quasiperiodic Green
function satisfies $G^{0,k}(R_\ell z)=G^{0,k}(z)$. Using
$R_\ell^2=I$, the single-layer kernel $Q_S(x,y):=G^{0,k}(x-y)$
satisfies
\[
    Q_S(R_\ell x,y)
    =
    G^{0,k}(R_\ell x-y)
    =
    G^{0,k}(R_\ell(x-R_\ell y))
    =
    Q_S(x,R_\ell y).
\]
By \cref{lem:reflection-covariance-kernel}, we have
\begin{equation}\label{eq:Sk-reflection-covariance}
    \mathcal S_D^{0,k}[\mathcal R_\ell[\psi]]
    =
    \mathcal R_\ell[\mathcal S_D^{0,k}[\psi]],
    \qquad
    \psi\in H^{-1/2}(\partial D).
\end{equation}
For the Neumann--Poincare kernel
$Q_K(x,y):=\partial_{\nu(x)}G^{0,k}(x-y)$, the identities
$\nu(R_\ell x)=R_\ell\nu(x)$ and
$\nabla G^{0,k}(R_\ell z)=R_\ell\nabla G^{0,k}(z)$ give
\[
\begin{aligned}
    Q_K(R_\ell x,y)
    =
    (R_\ell\nu(x))\cdot\nabla G^{0,k}(R_\ell x-y)
    =
    (R_\ell\nu(x))\cdot R_\ell\nabla G^{0,k}(x-R_\ell y)
    =
    Q_K(x,R_\ell y).
\end{aligned}
\]
Applying \cref{lem:reflection-covariance-kernel} in the principal-value sense gives
\begin{equation}\label{eq:Kstar-reflection-covariance}
    (\mathcal K_D^{0,k})^*[\mathcal R_\ell[\psi]]
    =
    \mathcal R_\ell[(\mathcal K_D^{0,k})^*[\psi]],
    \qquad
    \psi\in H^{-1/2}(\partial D).
\end{equation}
When $\mathcal S_D^{0,k}$ is invertible,
\eqref{eq:Sk-reflection-covariance} also gives the covariance of its inverse.
Combining this with the DtN representation \eqref{eq:DtN-representation} and
\eqref{eq:Kstar-reflection-covariance}, we obtain
\begin{equation}\label{eq:DtN-reflection-covariance}
    \mathcal T_D^{0,k}[\mathcal R_\ell[\psi]]
    =
    \mathcal R_\ell[\mathcal T_D^{0,k}[\psi]],
    \qquad
    \psi\in H^{1/2}(\partial D).
\end{equation}
Therefore,
\[
    \langle
        \mathcal T_D^{0,k}[\mathcal R_\ell[u]],
        \mathcal R_\ell[v]
    \rangle_{\partial D}
    =
    \langle
        \mathcal T_D^{0,k}[u],
        v
    \rangle_{\partial D}.
\]
Together with the volume identities above, this proves the reflection
invariance of $a^{\rm lin}_{\omega,\delta}$.

The block diagonal property follows from this invariance. If
$u_{\mathrm{sym}}$ is symmetric and $v_{\mathrm{ant}}$ is antisymmetric,
then
\[
\begin{aligned}
    a^{\rm lin}_{\omega,\delta}(u_{\mathrm{sym}},v_{\mathrm{ant}})
    =
    a^{\rm lin}_{\omega,\delta}
    (
        \mathcal R_\ell[u_{\mathrm{sym}}],
        \mathcal R_\ell[v_{\mathrm{ant}}]
    )
    =
    a^{\rm lin}_{\omega,\delta}(u_{\mathrm{sym}},-v_{\mathrm{ant}})
    =
    -a^{\rm lin}_{\omega,\delta}(u_{\mathrm{sym}},v_{\mathrm{ant}}).
\end{aligned}
\]
Thus, this mixed term is zero. The same argument gives
$a^{\rm lin}_{\omega,\delta}(u_{\mathrm{ant}},v_{\mathrm{sym}})=0$.

Finally, let $v\in H^1(D;\mathbb C)$ and decompose
$v=v_{\mathrm{sym}}+v_{\mathrm{ant}}$. If $u_{\mathrm{ant}}$ solves the
restricted antisymmetric linear resonance problem \eqref{eq:exact-linear-antisymmetric-problem}, then
\[
    a^{\rm lin}_{\omega,\delta}(u_{\mathrm{ant}},v)
    =
    a^{\rm lin}_{\omega,\delta}(u_{\mathrm{ant}},v_{\mathrm{ant}})
    +
    a^{\rm lin}_{\omega,\delta}(u_{\mathrm{ant}},v_{\mathrm{sym}})
    =
    0.
\]
Thus, the pair $(\omega,u_{\mathrm{ant}})$
solves the full linear resonance problem \eqref{eq:exact-linear-full-problem}.
\end{proof}

The previous lemma lifts complex antisymmetric restricted solutions to the full
linear resonance problem. To construct BIC branches, we next restrict ourselves to real
frequencies $\omega\in(0,\omega_0)$ and real antisymmetric fields. Then
$k_m=\omega/c_m$ is real. At $\alpha=0$, the subwavelength regime leaves only
the constant $\eta=0$ Rayleigh--Bloch order as a propagating channel; the
antisymmetry of the exact branch will later force this channel to vanish. The
next proposition gives the real antisymmetric restriction of the linear
resonance problem.

\begin{proposition}[Real antisymmetric linear resonance problem]
\label{prop:real-antisymmetric-linear-resonance-problem}
Under \cref{ass:periodic-reflection-symmetry}, let
$\omega\in(0,\omega_0)$, $a=0$, and $k_m=\omega/c_m$. Then
\[
    \mathcal T_D^{0,k_m}:
    H^{1/2}_{\mathrm{ant}}(\partial D;\mathbb R)
    \longrightarrow
    H^{-1/2}_{\mathrm{ant}}(\partial D;\mathbb R).
\]
Consequently,
\[
    a^{\rm lin}_{\omega,\delta}(u_{\mathrm{ant}},v_{\mathrm{ant}})
    \in\mathbb R,
    \qquad
    u_{\mathrm{ant}},v_{\mathrm{ant}}
    \in H^1_{\mathrm{ant}}(D;\mathbb R).
\]
If $(\omega,u_{\mathrm{ant}})\in
\mathbb R\times H^1_{\mathrm{ant}}(D;\mathbb R)$ solves the real
antisymmetric restricted linear resonance problem
\begin{equation}\label{eq:exact-linear-real-antisymmetric-problem}
    a^{\rm lin}_{\omega,\delta}(u_{\mathrm{ant}},v_{\mathrm{ant}})=0,
    \qquad
    v_{\mathrm{ant}}\in H^1_{\mathrm{ant}}(D;\mathbb R),
\end{equation}
then the same pair $(\omega,u_{\mathrm{ant}})$ also solves the complex
antisymmetric restricted linear resonance problem
\eqref{eq:exact-linear-antisymmetric-problem}. Consequently, by
\cref{lem:exact-linear-reflection-decomposition}, the same pair solves the linear resonance problem \eqref{eq:exact-linear-full-problem}.
\end{proposition}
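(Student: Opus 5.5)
The proof has three steps: reflection covariance, which already holds by \eqref{eq:DtN-reflection-covariance}; reality of the DtN operator on antisymmetric real data; and complexification of the restricted equation.

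\emph{Step 1: $\mathcal T_D^{0,k_m}$ maps real antisymmetric data to real antisymmetric data.} Fix $\psi\in H^{1/2}_{\mathrm{ant}}(\partial D;\mathbb R)$ and let $u^{\mathrm{ext}}$ be the outgoing exterior solution of \eqref{eq:exterior-boundary-value-problem}, which is unique for $\omega<\omega_0$. Since $\mathcal R_\ell$ commutes with $\Delta+k_m^2$ and preserves the outgoing condition at $\alpha=0$, uniqueness and $\mathcal R_\ell\psi=-\psi$ give $\mathcal R_\ell u^{\mathrm{ext}}=-u^{\mathrm{ext}}$. The Rayleigh coefficients \eqref{eq:rayleigh-coefficients} then satisfy $w_{-\eta}(\pm h)=-w_\eta(\pm h)$; in particular the only propagating coefficient vanishes, $w_0(\pm h)=0$.

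Hence $u^{\mathrm{ext}}$ involves only the evanescent orders $\eta\neq0$. For these, $\beta_\eta(k_m)=\mathrm i\sqrt{|\eta|^2-k_m^2}$ is purely imaginary, so $e^{\pm\mathrm i\beta_\eta(x_d\mp h)}$ is real and decaying. Now consider $\overline{u^{\mathrm{ext}}}$. It solves the same real Helmholtz equation with the same boundary data $\psi$. Its expansion also contains only decaying modes $e^{\mathrm i\eta x_\ell}e^{-|\beta_\eta||x_d|}$ with $\eta\neq0$, since conjugation maps $e^{\mathrm i\eta x_\ell}$ to $e^{-\mathrm i\eta x_\ell}$ and $-\eta\ne0$. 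So $\overline{u^{\mathrm{ext}}}$ is outgoing, and uniqueness gives $\overline{u^{\mathrm{ext}}}=u^{\mathrm{ext}}$. Taking normal traces shows $\mathcal T_D^{0,k_m}[\psi]$ is real and, by \eqref{eq:DtN-reflection-covariance}, antisymmetric.

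This is the only step with real content. The subtle point is that conjugation does not preserve the outgoing condition for the propagating order, and antisymmetry is exactly what kills that order. I will argue at the level of the exterior problem rather than through the layer-potential formula \eqref{eq:DtN-representation}, because $\mathcal S_D^{0,k}$ itself is not real: $G_{-1}$ is imaginary.

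\emph{Step 2: reality of the form.} Here $\omega$ is real, so $k_b$ is real. For $u_{\mathrm{ant}},v_{\mathrm{ant}}\in H^1_{\mathrm{ant}}(D;\mathbb R)$, the volume terms in \eqref{eq:linear-reflection-form} are real, and the boundary pairing of the real distribution $\mathcal T_D^{0,k_m}[u_{\mathrm{ant}}]$ with the real trace of $v_{\mathrm{ant}}$ is real. Hence $a^{\rm lin}_{\omega,\delta}(u_{\mathrm{ant}},v_{\mathrm{ant}})\in\mathbb R$.

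\emph{Step 3: complexification.} Any $v\in H^1_{\mathrm{ant}}(D;\mathbb C)$ splits as $v=v_1+\mathrm i v_2$ with $v_1=\Re v$ and $v_2=\Im v$ in $H^1_{\mathrm{ant}}(D;\mathbb R)$, because $\mathcal R_\ell$ commutes with conjugation. Since the form is conjugate-linear in its second argument,
\[
a^{\rm lin}_{\omega,\delta}(u_{\mathrm{ant}},v)=a^{\rm lin}_{\omega,\delta}(u_{\mathrm{ant}},v_1)-\mathrm i\,a^{\rm lin}_{\omega,\delta}(u_{\mathrm{ant}},v_2)=0
\]
by \eqref{eq:exact-linear-real-antisymmetric-problem}. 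Thus $(\omega,u_{\mathrm{ant}})$ solves \eqref{eq:exact-linear-antisymmetric-problem}, and \cref{lem:exact-linear-reflection-decomposition} lifts it to the full problem \eqref{eq:exact-linear-full-problem}.
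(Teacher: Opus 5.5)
Your proof is correct; Steps 2 and 3 match the paper, but Step 1 takes a different route.

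\textbf{The paper's route.} It works with boundary integral operators. At $a=0$ it splits $G^{0,k_m}$ into the single propagating term $e^{\mathrm i k_m|x_d-y_d|}/(2\mathrm i|Y|k_m)$ plus an evanescent series. The propagating term depends only on $(x_d,y_d)$, so it is annihilated by antisymmetric densities. In the evanescent series, the $\pm\eta$ terms pair into a real kernel. Hence $\mathcal S_D^{0,k_m}$ and $(\mathcal K_D^{0,k_m})^*$ preserve the real antisymmetric spaces, and the DtN mapping property follows from \eqref{eq:DtN-representation}. Your remark that $\mathcal S_D^{0,k}$ is not real is true for the full operator, but it does not block this route: on antisymmetric densities the non-real part is exactly the $\eta=0$ channel, and it drops out.

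\textbf{Your route.} You argue on the exterior problem itself. Reflection covariance plus uniqueness gives $\mathcal R_\ell u^{\mathrm{ext}}=-u^{\mathrm{ext}}$, which kills the $\eta=0$ Rayleigh coefficient. Conjugation then preserves the now purely evanescent outgoing condition, so uniqueness forces $u^{\mathrm{ext}}$ to be real.

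\textbf{What each buys.} Your version never has to show that $(\mathcal S_D^{0,k_m})^{-1}$ preserves the real antisymmetric sector. The paper leaves that step implicit; it needs bijectivity of $\mathcal S_D^{0,k_m}$ on the complex antisymmetric sector and commutation with conjugation there. Your version also yields the vanishing of the propagating Rayleigh coefficient as a by-product, which the paper re-derives separately in the proof of \cref{thm:linear-bic-symmetry-classification}. Its only input is uniqueness of the outgoing exterior Dirichlet problem. That is built into \cref{def:DtN-operator}, and it also follows from invertibility of $\mathcal S_D^{0,k_m}$ via Green's representation. The paper's version, in turn, makes the kernel-level cancellation explicit and stays within the layer-potential calculus used for the capacitance expansions.
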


\begin{proof}
In the subwavelength regime at $a=0$, the outgoing periodic Green function
splits into the single propagating term and a real evanescent series:
\[
    G^{0,k_m}(x-y)
    =
    \frac{e^{\mathrm i k_m|x_d-y_d|}}{2\mathrm i|Y|k_m}
    -
    \sum_{\eta\in\Lambda^*\setminus\{0\}}
    \frac{
        e^{\mathrm i\eta\cdot(x_\ell-y_\ell)}
        e^{-\sqrt{|\eta|^2-k_m^2}\,|x_d-y_d|}
    }{
        2|Y|\sqrt{|\eta|^2-k_m^2}
    } .
\]
Let $\varphi\in H^{-1/2}_{\mathrm{ant}}(\partial D;\mathbb R)$. The
$\eta=0$ term is even in the reflected variable and therefore has zero
pairing with the antisymmetric density. In the remaining sum, the terms
corresponding to $\eta$ and $-\eta$ combine into a real kernel. Hence,
\[
    \mathcal S_D^{0,k_m}:
    H^{-1/2}_{\mathrm{ant}}(\partial D;\mathbb R)
    \longrightarrow
    H^{1/2}_{\mathrm{ant}}(\partial D;\mathbb R).
\]
The same argument applies to $(\mathcal K_D^{0,k_m})^*$, giving
\[
    (\mathcal K_D^{0,k_m})^*:
    H^{-1/2}_{\mathrm{ant}}(\partial D;\mathbb R)
    \longrightarrow
    H^{-1/2}_{\mathrm{ant}}(\partial D;\mathbb R).
\]
Combining these mapping properties with the DtN representation
\eqref{eq:DtN-representation} yields the asserted mapping property of
$\mathcal T_D^{0,k_m}$.

Now, let
$u_{\mathrm{ant}},v_{\mathrm{ant}}\in
H^1_{\mathrm{ant}}(D;\mathbb R)$. The volume terms in
\eqref{eq:linear-reflection-form} are real. By the mapping property just
proved, $\mathcal T_D^{0,k_m}[u_{\mathrm{ant}}]$ is a real antisymmetric
Neumann trace, and its duality pairing with the real trace of
$v_{\mathrm{ant}}$ is real. Therefore,
$a^{\rm lin}_{\omega,\delta}(u_{\mathrm{ant}},v_{\mathrm{ant}})\in\mathbb R$.

Finally, if $u_{\mathrm{ant}}$ satisfies
\eqref{eq:exact-linear-real-antisymmetric-problem} and
$v_{\mathrm{ant}}\in H^1_{\mathrm{ant}}(D;\mathbb C)$, write
\[
    v_{\mathrm{ant}}=v_1+\mathrm i v_2,
    \qquad
    v_1,v_2\in H^1_{\mathrm{ant}}(D;\mathbb R).
\]
Since the form is conjugate-linear in the test variable,
\[
    a^{\rm lin}_{\omega,\delta}(u_{\mathrm{ant}},v_{\mathrm{ant}})
    =
    a^{\rm lin}_{\omega,\delta}(u_{\mathrm{ant}},v_1)
    -
    \mathrm i\,a^{\rm lin}_{\omega,\delta}(u_{\mathrm{ant}},v_2)
    =
    0.
\]
This proves the complexification statement. The assertion for the full linear
resonance problem follows from \cref{lem:exact-linear-reflection-decomposition}.
\end{proof}

The preceding lemma and proposition separate the exact symmetry mechanism from
the small-contrast construction. They show that, for real frequencies, a real
antisymmetric solution of the restricted linear resonance problem is already a
solution of the full linear resonance problem. We now return to the reduced subwavelength
equation and introduce the scaling
$\delta=\varepsilon^2$, $\omega=c_b\varepsilon\hat\omega$. The next theorem
uses a real Lyapunov--Schmidt argument in the antisymmetric subspace to construct
such solutions, and then applies the restricted-to-full lifting results to identify them as
linear BICs.

\begin{theorem}[Symmetry classification of linear subwavelength branches]
\label{thm:linear-bic-symmetry-classification}
Assume \cref{ass:periodic-reflection-symmetry} and work in
\cref{ass:subwavelength-low-frequency} with $a=0$. By
\cref{prop:reduced-reflection-decomposition}, the linear subwavelength
branches in \cref{thm:linear-simple-resonance-branch} split into symmetric and
antisymmetric classes. Assume also that the modes considered below are simple
in their respective reflection subspaces.

\textbf{Symmetric class:} There are
$n_{\pi}^{\mathrm f}+n_{\pi}^{\mathrm p}$ symmetric branches such that, for
$1\le j\le n_{\pi}^{\mathrm f}+n_{\pi}^{\mathrm p}$,
\[
    (\hat\omega_{j,\mathrm{sym}}^{\rm lin},q_{j,\mathrm{sym}}^{\rm lin})
    \in
    \mathbb C\times\mathbb C^N_{\mathrm{sym}},
    \qquad
    (\hat\omega_{j,\mathrm{sym}}^{\rm lin},q_{j,\mathrm{sym}}^{\rm lin})
    \quad\text{near}\quad
    (\sqrt{\lambda_j^{\mathrm{sym}}},p_j^{\mathrm{sym}}),
    \qquad
    u_{j,\mathrm{sym}}^{\rm lin}\in H^1_{\mathrm{sym}}(D;\mathbb C).
\]
The corresponding frequency and interior-field expansions are those in
\eqref{eq:linear-resonance-expansion} and
\eqref{eq:linear-physical-field-expansion}.

\textbf{Antisymmetric class:} There are $n_{\pi}^{\mathrm p}$ antisymmetric
branches such that, for $1\le j\le n_{\pi}^{\mathrm p}$,
\[
    (\hat\omega_{j,\mathrm{ant}}^{\rm lin},q_{j,\mathrm{ant}}^{\rm lin})
    \in
    \mathbb R\times\mathbb R^N_{\mathrm{ant}},
    \qquad
    (\hat\omega_{j,\mathrm{ant}}^{\rm lin},q_{j,\mathrm{ant}}^{\rm lin})
    \quad\text{near}\quad
    (\sqrt{\lambda_j^{\mathrm{ant}}},p_j^{\mathrm{ant}}),
    \qquad
    u_{j,\mathrm{ant}}^{\rm lin}\in H^1_{\mathrm{ant}}(D;\mathbb R).
\]
For these antisymmetric branches, with $\delta=\varepsilon^2$,
\[
    \omega_{j,\mathrm{ant}}^{\rm lin}(\delta)
    =
    c_b\sqrt{\lambda_j^{\mathrm{ant}}}\sqrt\delta
    +
    \mathcal O(\delta^{3/2})
    \in\mathbb R,
    \qquad
    u_{j,\mathrm{ant}}^{\rm lin}(\delta)
    =
    u_{p_j^{\mathrm{ant}}}
    +
    \mathcal O(\delta)
    \quad\text{in }H^1(D).
\]
Consequently, for all sufficiently small $\delta>0$, each antisymmetric
branch yields an exact linear BIC. Its propagating Rayleigh coefficients vanish,
so the outgoing continuation has no propagating Rayleigh--Bloch mode and is
exponentially localized; the frequency
$\omega_{j,\mathrm{ant}}^{\rm lin}(\delta)$ is real and embedded in the
radiation continuum.
\end{theorem}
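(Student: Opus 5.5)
The plan is to treat the two classes separately. Both classes rest on the reflection covariance of the reduced problem. The symmetric class comes from a restricted version of the Lyapunov--Schmidt argument of \cref{thm:linear-simple-resonance-branch}. The antisymmetric class comes from a \emph{real} version of the same argument, carried out inside $\mathbb R^N_{\mathrm{ant}}$ and $H^1_{\mathrm{ant}}(D;\mathbb R)$, followed by the lifting results \cref{lem:exact-linear-reflection-decomposition} and \cref{prop:real-antisymmetric-linear-resonance-problem}.

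\emph{Equivariance of the reduction.} By \eqref{eq:DtN-reflection-covariance}, the scaled form $a^{\rm lin}_{\hat\omega,\varepsilon}$ is $\mathcal R_\ell$-invariant, and $\mathcal R_\ell$ preserves $\mathcal Z(D)$. Uniqueness in \cref{prop:linear-projected-reduction} together with \eqref{eq:reflection-uq} then gives $z^{\rm lin}(\Pi q,\hat\omega,\varepsilon)=\mathcal R_\ell z^{\rm lin}(q,\hat\omega,\varepsilon)$. Since the boundary pairings against $\chi_{\partial D_i}$ permute accordingly, $\Pi\mathcal F^{\rm lin}(q,\cdot)=\mathcal F^{\rm lin}(\Pi q,\cdot)$. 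Hence $\mathcal F^{\rm lin}$ maps $\mathbb C^N_{\mathrm{sym}}$ into itself and $\mathbb C^N_{\mathrm{ant}}$ into itself.

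\emph{Symmetric class.} Rerun the implicit function argument of \cref{thm:linear-simple-resonance-branch} with $q=p_j^{\mathrm{sym}}+\xi$, where $\xi$ ranges over the $V$-complement inside $\mathbb C^N_{\mathrm{sym}}$. Simplicity within the symmetric sector makes $L_j$ invertible there, by \cref{prop:reduced-reflection-decomposition}. This yields $n_\pi^{\mathrm f}+n_\pi^{\mathrm p}$ branches whose expansions are identical to \eqref{eq:linear-resonance-expansion}--\eqref{eq:linear-physical-field-expansion}. The computation of $\omega_1$ is unchanged.

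\emph{Antisymmetric class.} Fix $\varepsilon>0$ small and real $\hat\omega$ near $\sqrt{\lambda_j^{\mathrm{ant}}}$. Solve the $\mathcal Z(D)$-equation by Lax--Milgram in the real antisymmetric space $\mathcal Z(D)\cap H^1_{\mathrm{ant}}(D;\mathbb R)$; this is legitimate because \cref{prop:real-antisymmetric-linear-resonance-problem} shows the form is real there. Complex uniqueness identifies this solution with $z^{\rm lin}$, so $z^{\rm lin}$ is real antisymmetric. Next, the first term of $R^{\rm lin}$ is $\mathcal R_D^{0,k_m}[u_q]$, which equals $\omega^{-2}(\mathcal T_D-\mathcal T_0-\omega\mathcal T_1)[u_q]$. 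Here $\mathcal T_D[u_q]$ is real by the proposition and $\mathcal T_1[u_q]$ pairs to $-C^1q=0$ by \cref{lem:antisymmetric-C1-vanishing}. Hence $\mathcal F^{\rm lin}$ restricts to a real-analytic map
\[
\mathbb R^N_{\mathrm{ant}}\times\mathbb R\times[0,\varepsilon_0)\to\mathbb R^N_{\mathrm{ant}}.
\]
Because $C^1q=0$, the restricted map is $\hat\omega^2Vq-C^0q+\varepsilon^2\widetilde R^{\rm lin}$ with real $\widetilde R^{\rm lin}$. The real implicit function theorem, applied with the normalization $(p_j^{\mathrm{ant}})^\top Vq=1$ on $E_j^\perp\cap\mathbb R^N_{\mathrm{ant}}$ and real $\hat\omega$, gives a real branch. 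Its equations contain no $\varepsilon$-linear term, so the branch satisfies $\hat\omega=\sqrt{\lambda_j^{\mathrm{ant}}}+\mathcal O(\varepsilon^2)$ and $q=p_j^{\mathrm{ant}}+\mathcal O(\varepsilon^2)$. Together with $\|z^{\rm lin}\|\le C\varepsilon^2$, this gives the stated expansions after multiplying by $c_b\varepsilon$. The local uniqueness in \cref{thm:linear-simple-resonance-branch} shows it coincides with the complex branch. The field $u=u_q+z^{\rm lin}$ solves the real restricted problem \eqref{eq:exact-linear-real-antisymmetric-problem}, hence the full problem by \cref{prop:real-antisymmetric-linear-resonance-problem}.

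\emph{BIC property.} Extend $u$ outward by the outgoing single-layer representation, $u=\mathcal S_D^{0,k_m}[\varphi]$ with $\varphi=(\mathcal S_D^{0,k_m})^{-1}[u|_{\partial D}]$. The density $\varphi$ is antisymmetric by reflection covariance. The $\eta=0$ Rayleigh coefficient is proportional to $\int_{\partial D}e^{\mp\mathrm i k_my_d}\varphi\,d\sigma$, and this vanishes because the kernel is $R_\ell$-even while $\varphi$ is odd. All other orders decay exponentially since $\omega<\omega_{\rm sw}$. I expect the main obstacle to be the reality step: justifying that the remainder $R^{\rm lin}$ is real on antisymmetric real inputs. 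The cleanest route is to avoid the $\varepsilon$-expansion and work directly with the exact real form, using the fact that $\mathcal T_D$ pairs real-valuedly on antisymmetric data. The needed identity $\langle\mathcal T_D[z],1\rangle_{\partial D_i}$ real follows since $\chi_{\partial D_i}-\chi_{\partial D_{\pi(i)}}$ is real antisymmetric.
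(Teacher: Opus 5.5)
Your proposal is essentially the paper's proof. You restrict the reduced residual to $\mathbb R^N_{\mathrm{ant}}\times\mathbb R$, use $C^1q=0$ together with the $\varepsilon^2$-factorization of $R^{\rm lin}$, apply the real implicit function theorem on $E_j^\perp\cap\mathbb R^N_{\mathrm{ant}}$, lift via \cref{prop:real-antisymmetric-linear-resonance-problem}, and kill the $\eta=0$ Rayleigh coefficient by antisymmetry. The differences are only in presentation: you read that coefficient off the antisymmetric single-layer density rather than averaging the antisymmetric exterior field, and you make explicit the equivariance and the reality of $R^{\rm lin}$, which the paper only asserts.

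Two minor cautions, neither affecting the BIC conclusion. First, identifying your real branch with the branch of \cref{thm:linear-simple-resonance-branch} needs $\lambda_j^{\mathrm{ant}}$ to be simple in the whole spectrum of $C^0$, not just within its sector; the identification is not needed anyway. Second, in the symmetric class the implicit function argument breaks down for the always-present zero mode $\lambda^{\mathrm{sym}}=0$. There $\hat\omega_j^0=0$ removes the frequency direction of $L_j$, so that branch must come from the rescaling of \cref{rem:zero-capacitance-mode} rather than from \eqref{eq:linear-resonance-expansion}. The paper's omitted symmetric argument glosses over this point as well.
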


\begin{proof}
We first indicate the strategy. The symmetric class is obtained by the same
finite-dimensional reduction restricted to $\mathbb C^N_{\mathrm{sym}}$, so
we omit the details. We prove the antisymmetric embedded branches by restricting
the reduced problem to $\mathbb R^N_{\mathrm{ant}}$. On this real subspace, the
residual is real, the first radiative term is absent, and the branch follows
from the real implicit function theorem. Fix an antisymmetric mode and set, for
this proof,
\[
    p_j:=p_j^{\mathrm{ant}},
    \qquad
    \lambda_j:=\lambda_j^{\mathrm{ant}},
    \qquad
    \hat\omega_j^0:=\sqrt{\lambda_j}.
\]
We work with real variables in $\mathbb R^N_{\mathrm{ant}}$ and define
\[
    E_{j,\mathrm{ant}}^\perp
    :=
    \{\xi\in\mathbb R^N_{\mathrm{ant}}:p_j^\top V\xi=0\}.
\]
For $q\in\mathbb R^N_{\mathrm{ant}}$, real $\hat\omega$, and
$\alpha=0$, the linear $\mathcal Z(D)$-equation in
\cref{prop:linear-projected-reduction} is invariant under reflection and
complex conjugation. By uniqueness, its solution
$z^{\rm lin}(q,\hat\omega,\varepsilon)$ belongs to
$\mathcal Z(D)\cap H^1_{\mathrm{ant}}(D;\mathbb R)$. Thus, the real
antisymmetric subspace is invariant for the reduced residual, and
$\mathcal F^{\rm lin}$ restricts to a real map
\[
    \mathcal F_{\mathrm{ant}}^{\rm lin}
    :
    \mathbb R^N_{\mathrm{ant}}\times\mathbb R\times[0,\varepsilon_0)
    \longrightarrow
    \mathbb R^N_{\mathrm{ant}},
    \qquad
    \mathcal F_{\mathrm{ant}}^{\rm lin}
    :=
    \mathcal F^{\rm lin}\big|_{\mathbb R^N_{\mathrm{ant}}\times\mathbb R}.
\]
By \cref{lem:antisymmetric-C1-vanishing}, $C^1q=0$ for every
$q\in\mathbb R^N_{\mathrm{ant}}$. Thus, the
$c_b\varepsilon\hat\omega C^1q$ term in
\eqref{eq:reduced-linear-resonance-equation} is absent. Combining this with
\eqref{eq:linear-R-factorization}, we may write
\[
    \mathcal F_{\mathrm{ant}}^{\rm lin}(q,\hat\omega,\varepsilon)
    =
    \hat\omega^2Vq-C^0q
    +
    \varepsilon^2
    \widetilde R_{\mathrm{ant}}^{\rm lin}(q,\hat\omega,\varepsilon),
\]
where $\widetilde R_{\mathrm{ant}}^{\rm lin}$ is real-smooth and linear in
$q$ for fixed $(\hat\omega,\varepsilon)$.

We now solve the restricted finite-dimensional equation near
$(p_j,\hat\omega_j^0,0)$. Define
\[
    \mathcal G_j(\xi,\hat\omega,\varepsilon)
    :=
    \mathcal F_{\mathrm{ant}}^{\rm lin}(p_j+\xi,\hat\omega,\varepsilon),
    \qquad
    \xi\in E_{j,\mathrm{ant}}^\perp .
\]
Then $\mathcal G_j(0,\hat\omega_j^0,0)=0$. Its derivative with respect to
$(\xi,\hat\omega)$ at this point is
\[
    L_{\mathrm{ant}}[\widetilde\xi,\widetilde{\hat\omega}]
    =
    (\lambda_jV-C^0)\widetilde\xi
    +
    2\hat\omega_j^0\widetilde{\hat\omega}Vp_j .
\]
Consider this linear map from
$E_{j,\mathrm{ant}}^\perp\times\mathbb R$ into
$\mathbb R^N_{\mathrm{ant}}$. If
$L_{\mathrm{ant}}[\widetilde\xi,\widetilde{\hat\omega}]=0$, then
$p_j^\top(\lambda_jV-C^0)=0$, because
$C^0p_j=\lambda_jVp_j$ and $C^0$ is symmetric. Using also
$p_j^\top Vp_j=1$, we obtain
\[
\begin{aligned}
    0
    =
    p_j^\top
    L_{\mathrm{ant}}[\widetilde\xi,\widetilde{\hat\omega}]
    =
    p_j^\top(\lambda_jV-C^0)\widetilde\xi
    +
    2\hat\omega_j^0\widetilde{\hat\omega}\,p_j^\top Vp_j
    =
    2\hat\omega_j^0\widetilde{\hat\omega}.
\end{aligned}
\]
Hence, $\widetilde{\hat\omega}=0$, since $\hat\omega_j^0>0$. Then
$(\lambda_jV-C^0)\widetilde\xi=0$. Since $\lambda_j$ is simple in the
antisymmetric subspace, $\widetilde\xi\in\operatorname{span}\{p_j\}$, and
the constraint $p_j^\top V\widetilde\xi=0$ gives $\widetilde\xi=0$.
Since the domain and codomain have the same finite dimension, injectivity
implies that $L_{\mathrm{ant}}$ is an isomorphism. The real implicit function
theorem gives real-smooth functions
\[
    \xi_j(\varepsilon)\in E_{j,\mathrm{ant}}^\perp,
    \qquad
    \hat\omega_{j,\mathrm{ant}}^{\rm lin}(\varepsilon)\in\mathbb R
\]
with $\xi_j(0)=0$ and
$\hat\omega_{j,\mathrm{ant}}^{\rm lin}(0)=\hat\omega_j^0$, such that
\[
    \mathcal F_{\mathrm{ant}}^{\rm lin}
    \bigl(
        p_j+\xi_j(\varepsilon),
        \hat\omega_{j,\mathrm{ant}}^{\rm lin}(\varepsilon),
        \varepsilon
    \bigr)
    =0.
\]
Because the residual differs from $\hat\omega^2Vq-C^0q$ by
$\varepsilon^2$ times a smooth term, differentiating this identity at
$\varepsilon=0$ gives
$L_{\mathrm{ant}}[\xi_j'(0),(\hat\omega_{j,\mathrm{ant}}^{\rm lin})'(0)]=0$.
Thus,
\[
    \xi_j(\varepsilon)=\mathcal O(\varepsilon^2),
    \qquad
    \hat\omega_{j,\mathrm{ant}}^{\rm lin}(\varepsilon)
    =
    \hat\omega_j^0+\mathcal O(\varepsilon^2).
\]
Setting
$q_{j,\mathrm{ant}}^{\rm lin}(\varepsilon):=p_j+\xi_j(\varepsilon)$,
$\omega_{j,\mathrm{ant}}^{\rm lin}(\varepsilon)
:=c_b\varepsilon\hat\omega_{j,\mathrm{ant}}^{\rm lin}(\varepsilon)$, and
$\delta=\varepsilon^2$, we obtain
\[
    \omega_{j,\mathrm{ant}}^{\rm lin}(\delta)
    =
    c_b\sqrt{\lambda_j^{\mathrm{ant}}}\sqrt\delta
    +
    \mathcal O(\delta^{3/2}).
\]

Finally, we reconstruct the field and verify the BIC property. Define the
interior field by
\[
    u_{j,\mathrm{ant}}^{\rm lin}
    :=
    u_{q_{j,\mathrm{ant}}^{\rm lin}}
    +
    z^{\rm lin}
    \bigl(
        q_{j,\mathrm{ant}}^{\rm lin},
        \hat\omega_{j,\mathrm{ant}}^{\rm lin},
        \varepsilon
    \bigr).
\]
The boundedness of the piecewise-constant lift, the estimate
\eqref{eq:linear-basic-z-estimate}, and
$q_{j,\mathrm{ant}}^{\rm lin}=p_j+\mathcal O(\varepsilon^2)$ give
\[
    u_{j,\mathrm{ant}}^{\rm lin}
    =
    u_{p_j^{\mathrm{ant}}}
    +
    \mathcal O(\varepsilon^2)
    =
    u_{p_j^{\mathrm{ant}}}
    +
    \mathcal O(\delta)
    \quad\text{in }H^1(D).
\]
The $\mathcal Z(D)$-equation together with
$\mathcal F_{\mathrm{ant}}^{\rm lin}=0$ gives the real antisymmetric
variational equation. By \cref{prop:real-antisymmetric-linear-resonance-problem}, the same interior
field solves the full linear resonance problem
\eqref{eq:exact-linear-full-problem}.
Let the same symbol denote its outgoing exterior continuation. Reflection
covariance of the exterior Dirichlet problem gives
$\mathcal R_\ell[u_{j,\mathrm{ant}}^{\rm lin}]
=-u_{j,\mathrm{ant}}^{\rm lin}$. Therefore,
\[
    \bigl(u_{j,\mathrm{ant}}^{\rm lin}\bigr)^0_0(\pm h)
    =
    \frac1{|Y|}\int_Y
    u_{j,\mathrm{ant}}^{\rm lin}(x_\ell,\pm h)\,dx_\ell
    =
    -\bigl(u_{j,\mathrm{ant}}^{\rm lin}\bigr)^0_0(\pm h).
\]
Thus, the propagating $\eta=0$ Rayleigh--Bloch coefficient vanishes. In the
subwavelength regime, this is the only propagating channel, so no propagating
mode remains. Set
$k_{m,j}^{\rm lin}(\delta):=
\omega_{j,\mathrm{ant}}^{\rm lin}(\delta)/c_m$. The outgoing exterior
continuation is therefore
\[
    u_{j,\mathrm{ant}}^{\rm lin}(x_\ell,x_d)
    =
    \begin{cases}
        \displaystyle
        \sum_{\eta\in\Lambda^*\setminus\{0\}}
        \bigl(u_{j,\mathrm{ant}}^{\rm lin}\bigr)^0_\eta(+h)
        e^{\mathrm i\eta\cdot x_\ell}
        e^{-\sqrt{|\eta|^2-\bigl(k_{m,j}^{\rm lin}(\delta)\bigr)^2}\,(x_d-h)},
        & x_d\ge h,\\[2mm]
        \displaystyle
        \sum_{\eta\in\Lambda^*\setminus\{0\}}
        \bigl(u_{j,\mathrm{ant}}^{\rm lin}\bigr)^0_\eta(-h)
        e^{\mathrm i\eta\cdot x_\ell}
        e^{+\sqrt{|\eta|^2-\bigl(k_{m,j}^{\rm lin}(\delta)\bigr)^2}\,(x_d+h)},
        & x_d\le -h .
    \end{cases}
\]
Since $k_{m,j}^{\rm lin}(\delta)<|\eta|$ for every
$\eta\in\Lambda^*\setminus\{0\}$ in the subwavelength regime, the exterior
field decays exponentially as $|x_d|\to\infty$. Also
$\omega_{j,\mathrm{ant}}^{\rm lin}(\varepsilon)>0$ for all sufficiently small
$\varepsilon>0$, so the frequency is embedded in the continuum. The
construction applies to each simple antisymmetric mode, giving the stated
$n_\pi^{\mathrm p}$ antisymmetric branches.
\end{proof}

\subsection{Nonlinear bound states in the continuum}

We now construct nonlinear BICs. The only additional symmetry
assumption is that the Kerr coefficient respects the reflection symmetry. Under this
assumption, real antisymmetric solutions of the restricted nonlinear resonance problem
solve the full nonlinear resonance problem, and the modal continuation theorem can be applied
inside $\mathbb R^N_{\mathrm{ant}}$.

\begin{assumption}[Reflection-symmetric Kerr coefficient]
\label{ass:reflection-symmetric-kerr}
In addition to \cref{ass:periodic-reflection-symmetry}, assume that the Kerr
coefficient is invariant under the component permutation:
\[
    \sigma_{\pi(i)}=\sigma_i,
    \qquad
    i=1,\ldots,N.
\]
Equivalently, $\sigma_D\circ R_\ell=\sigma_D$.
\end{assumption}

\begin{proposition}[Antisymmetric lifting for the nonlinear resonance problem]
\label{prop:nonlinear-antisymmetric-lifting}
Under \cref{ass:periodic-reflection-symmetry,ass:reflection-symmetric-kerr},
let $a=0$ and $\omega\in(0,\omega_0)$. If
$(\omega,u_{\mathrm{ant}})\in
\mathbb R\times H^1_{\mathrm{ant}}(D;\mathbb R)$ satisfies
\begin{equation}
\label{eq:exact-nonlinear-real-antisymmetric-problem}
    a^{\mathrm{non}}_{\omega,\delta}
    (u_{\mathrm{ant}};v_{\mathrm{ant}})
    =0,
    \qquad
    v_{\mathrm{ant}}\in H^1_{\mathrm{ant}}(D;\mathbb R),
\end{equation}
then the same equality holds for all
$v_{\mathrm{ant}}\in H^1_{\mathrm{ant}}(D;\mathbb C)$, and the full nonlinear resonance identity
\begin{equation}
\label{eq:exact-nonlinear-full-problem}
    a^{\mathrm{non}}_{\omega,\delta}
    (u_{\mathrm{ant}};v)
    =0,
    \qquad
    v\in H^1(D;\mathbb C).
\end{equation}
\end{proposition}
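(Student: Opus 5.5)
The argument will mirror the linear case (\cref{lem:exact-linear-reflection-decomposition} and \cref{prop:real-antisymmetric-linear-resonance-problem}). The only new ingredient is the Kerr term, which we handle by pointwise computation.

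\emph{Step 1: the residual functional splits into a real linear part plus a Kerr term.} Write
\[
a^{\mathrm{non}}_{\omega,\delta}(u;v)=a^{\mathrm{lin}}_{\omega,\delta}(u,v)-k_b^2(\mathcal N_\sigma[u],v)_D .
\]
Fix $u=u_{\mathrm{ant}}$ real and antisymmetric, with $\omega$ real. By \cref{prop:real-antisymmetric-linear-resonance-problem}, $\mathcal T_D^{0,k_m}[u_{\mathrm{ant}}]$ is real and antisymmetric. Moreover, $\mathcal N_\sigma[u_{\mathrm{ant}}]=\sigma_D u_{\mathrm{ant}}^3$ is real-valued, since $\sigma_j\in\mathbb R$. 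It is also antisymmetric, because
\[
\mathcal R_\ell[\sigma_D u^3]=(\sigma_D\circ R_\ell)(u\circ R_\ell)^3=\sigma_D(-u)^3=-\sigma_D u^3,
\]
using \cref{ass:reflection-symmetric-kerr}. By \cref{lem:kerr-local-estimate} it lies in $L^2(D)$. Hence the map $v\mapsto a^{\mathrm{non}}_{\omega,\delta}(u_{\mathrm{ant}};v)$ has the form
\[
v\mapsto (\nabla u,\nabla v)_D-(f,v)_D-\langle g,v\rangle_{\partial D},
\]
where $u$, $f:=k_b^2(u+\sigma_Du^3)$ and $g:=\delta\,\mathcal T_D^{0,k_m}[u]$ are all real and antisymmetric. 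This residual is conjugate-linear in $v$.

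\emph{Step 2: complexification.} For $v_{\mathrm{ant}}=v_1+\mathrm i v_2$ with $v_1,v_2\in H^1_{\mathrm{ant}}(D;\mathbb R)$, conjugate-linearity gives
\[
a^{\mathrm{non}}(u;v_1)-\mathrm i\,a^{\mathrm{non}}(u;v_2).
\]
Both terms vanish by hypothesis \eqref{eq:exact-nonlinear-real-antisymmetric-problem}. This proves the first claim.

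\emph{Step 3: symmetric test functions.} Let $v_{\mathrm{sym}}\in H^1_{\mathrm{sym}}(D;\mathbb C)$. We use the change of variables $x\mapsto R_\ell x$ together with $R_\ell D=D$ and $\nabla\mathcal R_\ell[w]=R_\ell^\top(\nabla w)\circ R_\ell$. This gives
\[
(\nabla u,\nabla v_{\mathrm{sym}})_D=(\nabla\mathcal R_\ell u,\nabla\mathcal R_\ell v_{\mathrm{sym}})_D=-(\nabla u,\nabla v_{\mathrm{sym}})_D,
\]
so this pairing vanishes. By the same computation, $(f,v_{\mathrm{sym}})_D=0$.

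For the boundary pairing, $R_\ell$ maps $\partial D$ onto itself and preserves surface measure. Combined with \eqref{eq:DtN-reflection-covariance}, this yields $\langle g,v_{\mathrm{sym}}\rangle_{\partial D}=-\langle g,v_{\mathrm{sym}}\rangle_{\partial D}=0$. Alternatively, the linear part is covered directly by the block-diagonal statement in \cref{lem:exact-linear-reflection-decomposition}, and only the Kerr term needs the separate computation above.

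Finally, decompose an arbitrary $v=v_{\mathrm{sym}}+v_{\mathrm{ant}}$ and combine Steps 2 and 3 to obtain \eqref{eq:exact-nonlinear-full-problem}.

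The main obstacle is that the nonlinear form is not sesquilinear, so reflection invariance cannot simply be quoted from the linear lemma. The remedy is to freeze $u=u_{\mathrm{ant}}$ and treat $\mathcal N_\sigma[u_{\mathrm{ant}}]$ as a fixed source term. Its reality and antisymmetry both rest on $u_{\mathrm{ant}}$ being real, which makes $|u|^2u=u^3$ pointwise, together with the symmetry of $\sigma_D$.
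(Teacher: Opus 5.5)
Your proposal is correct and follows essentially the same route as the paper. You complexify the antisymmetric test functions using conjugate-linearity in $v$, then show that the residual vanishes against symmetric test functions because $\mathcal N_\sigma[u_{\mathrm{ant}}]$ is antisymmetric (via $\sigma_D\circ R_\ell=\sigma_D$) and the linear part is block diagonal, and finally decompose $v=v_{\mathrm{sym}}+v_{\mathrm{ant}}$. The only difference is cosmetic: the paper first records reflection invariance of the whole nonlinear form, $a^{\mathrm{non}}_{\omega,\delta}(\mathcal R_\ell[u];\mathcal R_\ell[v])=a^{\mathrm{non}}_{\omega,\delta}(u;v)$, whereas you freeze $u=u_{\mathrm{ant}}$ and treat the Kerr term as a fixed real antisymmetric source, which suffices here.
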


\begin{proof}
We first prove reflection invariance of the nonlinear residual. The linear part
is reflection invariant by \cref{lem:exact-linear-reflection-decomposition}. The
Kerr coefficient satisfies $\sigma_D\circ R_\ell=\sigma_D$, and hence
$\mathcal N_\sigma\bigl[\mathcal R_\ell[u]\bigr]
    =
    \mathcal R_\ell\bigl[\mathcal N_\sigma[u]\bigr]$.
The change of variables $x\mapsto R_\ell x$ gives
\[
    \bigl(
        \mathcal N_\sigma\bigl[\mathcal R_\ell[u]\bigr],
        \mathcal R_\ell[v]
    \bigr)_D
    =
    \bigl(
        \mathcal N_\sigma[u],v
    \bigr)_D.
\]
Together with the linear invariance, this proves that
\[
    a^{\mathrm{non}}_{\omega,\delta}
    (\mathcal R_\ell[u];\mathcal R_\ell[v])
    =
    a^{\mathrm{non}}_{\omega,\delta}(u;v).
\]

Next, we derive the mixed cancellation. Let
$u_{\mathrm{ant}}\in H^1_{\mathrm{ant}}(D;\mathbb C)$. Since
$\mathcal R_\ell[u_{\mathrm{ant}}]=-u_{\mathrm{ant}}$, the first equality below
follows from $\sigma_D\circ R_\ell=\sigma_D$, while the last follows from
the oddness of the cubic Kerr map:
\[
    \mathcal R_\ell\bigl[\mathcal N_\sigma[u_{\mathrm{ant}}]\bigr]
    =
    \mathcal N_\sigma\bigl[\mathcal R_\ell[u_{\mathrm{ant}}]\bigr]
    =
    \mathcal N_\sigma[-u_{\mathrm{ant}}]
    =
    -\mathcal N_\sigma[u_{\mathrm{ant}}].
\]
Thus, $\mathcal N_\sigma[u_{\mathrm{ant}}]$ is antisymmetric. Hence, for every
$v_{\mathrm{sym}}\in H^1_{\mathrm{sym}}(D;\mathbb C)$,
\[
    \bigl(
        \mathcal N_\sigma[u_{\mathrm{ant}}],
        v_{\mathrm{sym}}
    \bigr)_D
    =0,
\]
because the integrand is odd under $R_\ell$. The linear mixed term vanishes by
the block diagonal property in \cref{lem:exact-linear-reflection-decomposition}.
Thus,
\[
    a^{\mathrm{non}}_{\omega,\delta}
    (u_{\mathrm{ant}};v_{\mathrm{sym}})
    =
    a^{\rm lin}_{\omega,\delta}
    (u_{\mathrm{ant}},v_{\mathrm{sym}})
    -
    k_b^2
    \bigl(
        \mathcal N_\sigma[u_{\mathrm{ant}}],
        v_{\mathrm{sym}}
    \bigr)_D
    =
    0.
\]

We now complexify the restricted nonlinear resonance equation. Suppose that
$(\omega,u_{\mathrm{ant}})\in
\mathbb R\times H^1_{\mathrm{ant}}(D;\mathbb R)$ satisfies
\eqref{eq:exact-nonlinear-real-antisymmetric-problem}. The linear part
complexifies by \cref{prop:real-antisymmetric-linear-resonance-problem}. The nonlinear term
complexifies in the same way because
$\mathcal N_\sigma[u_{\mathrm{ant}}]$ is real antisymmetric. Thus, writing
$v_{\mathrm{ant}}=v_1+\mathrm i v_2$, with
$v_1,v_2\in H^1_{\mathrm{ant}}(D;\mathbb R)$, gives
\[
    a^{\mathrm{non}}_{\omega,\delta}
    (u_{\mathrm{ant}};v_{\mathrm{ant}})
    =
    a^{\mathrm{non}}_{\omega,\delta}(u_{\mathrm{ant}};v_1)
    -
    \mathrm i\,
    a^{\mathrm{non}}_{\omega,\delta}(u_{\mathrm{ant}};v_2)
    =
    0.
\]
Hence, the same equality holds for every complex antisymmetric test function.

Finally, let $v\in H^1(D;\mathbb C)$ and write
$v=v_{\mathrm{sym}}+v_{\mathrm{ant}}$. The antisymmetric part vanishes by the
complexified restricted nonlinear resonance equation, and the symmetric part vanishes by the mixed
cancellation above. This proves
\eqref{eq:exact-nonlinear-full-problem}.
\end{proof}

We now combine this lifting result with the modal continuation theorem.
Symmetric branches are the usual nonlinear modal continuations. Antisymmetric
branches are constructed in $\mathbb R^N_{\mathrm{ant}}$ and then lifted to
full nonlinear BICs by \cref{prop:nonlinear-antisymmetric-lifting}.

\begin{theorem}[Symmetry classification of nonlinear subwavelength branches]
\label{thm:nonlinear-bic-symmetry-classification}
Assume
\cref{ass:periodic-reflection-symmetry,ass:reflection-symmetric-kerr,ass:subwavelength-low-frequency}
and set $a=0$. By the reflection decomposition and
\cref{thm:nonlinear-modal-continuation}, the nonlinear subwavelength branches
at $\alpha=0$ split into symmetric and antisymmetric classes. Assume that the
modes considered below are simple in their respective reflection subspaces.

All branches below are locally parametrized by the amplitude $t$ and the
scale $\varepsilon$: for each indicated index $j$, there are
$t_j,\varepsilon_j>0$ such that the branch is defined for
$0<|t|<t_j$ and $0<\varepsilon<\varepsilon_j$.

\textbf{Symmetric class:} There are
$n_{\pi}^{\mathrm f}+n_{\pi}^{\mathrm p}$ symmetric branches such that, for
$1\le j\le n_{\pi}^{\mathrm f}+n_{\pi}^{\mathrm p}$,
\[
    (\hat\omega_{j,\mathrm{sym}}^{\rm non},q_{j,\mathrm{sym}}^{\rm non})
    \in\mathbb C\times\mathbb C^N_{\mathrm{sym}},
    \qquad
    (\hat\omega_{j,\mathrm{sym}}^{\rm non},q_{j,\mathrm{sym}}^{\rm non})
    \quad\text{near}\quad
    (\sqrt{\lambda_j^{\mathrm{sym}}},0),
    \qquad
    u_{j,\mathrm{sym}}^{\rm non}\in H^1_{\mathrm{sym}}(D;\mathbb C).
\]
These are the nonlinear modal resonance branches from
\cref{thm:nonlinear-modal-continuation}. For the positive symmetric branches,
the scaled-frequency, reduced-vector, and interior-field expansions are those in
\eqref{eq:modal-scaled-frequency-expansion}, \eqref{eq:modal-q-expansion},
and \eqref{eq:modal-nonlinear-field-expansion}.

\textbf{Antisymmetric class:} There are $n_{\pi}^{\mathrm p}$ real
antisymmetric branches such that, for $1\le j\le n_{\pi}^{\mathrm p}$,
\[
    (\hat\omega_{j,\mathrm{ant}}^{\rm non},q_{j,\mathrm{ant}}^{\rm non})
    \in\mathbb R\times\mathbb R^N_{\mathrm{ant}},
    \qquad
    (\hat\omega_{j,\mathrm{ant}}^{\rm non},q_{j,\mathrm{ant}}^{\rm non})
    \quad\text{near}\quad
    (\sqrt{\lambda_j^{\mathrm{ant}}},0),
    \qquad
    u_{j,\mathrm{ant}}^{\rm non}\in H^1_{\mathrm{ant}}(D;\mathbb R).
\]
Equivalently, with $\delta=\varepsilon^2$, as $(t,\delta)\to(0,0)$,
\begin{align}
\label{eq:nonlinear-bic-frequency}
    \omega_{j,\mathrm{ant}}^{\rm non}(t,\delta)
    &=
    c_b\sqrt{\lambda_j^{\mathrm{ant}}}\sqrt\delta
    \left(
        1-
        \frac12\beta_j^{\mathrm{ant}}t^2
    \right)
    +
    \mathcal O\left(
        |t|^4\sqrt\delta+\delta^{3/2}
    \right),
    \\
\label{eq:nonlinear-bic-field-expansion}
    u_{j,\mathrm{ant}}^{\rm non}(t,\delta)
    &=
    tu_{p_j^{\mathrm{ant}}}
    +
    t^3U_{j,\mathrm{ant}}^3
    +
    \mathcal O\left(
        |t|^5+|t|\delta
    \right)
    \quad\text{in }H^1(D).
\end{align}
Here, $\beta_j^{\mathrm{ant}}$ and $U_{j,\mathrm{ant}}^3$ are defined as in
\cref{thm:nonlinear-modal-continuation}, with $p_j=p_j^{\mathrm{ant}}$.
The outgoing exterior continuation has no propagating $\eta=0$
Rayleigh--Bloch mode and therefore decays exponentially as
$|x_d|\to\infty$. Hence, each antisymmetric branch is a nonlinear BIC:
$\omega_{j,\mathrm{ant}}^{\rm non}(t,\delta)$ is a real embedded
eigenfrequency, and $u_{j,\mathrm{ant}}^{\rm non}(t,\delta)$ is the
corresponding real antisymmetric embedded eigenfunction.
\end{theorem}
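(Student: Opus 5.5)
The plan mirrors the linear classification in \cref{thm:linear-bic-symmetry-classification}, with \cref{thm:nonlinear-modal-continuation} replacing \cref{thm:linear-simple-resonance-branch} and \cref{prop:nonlinear-antisymmetric-lifting} replacing \cref{prop:real-antisymmetric-linear-resonance-problem}.

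\emph{Symmetric class.} Under \cref{ass:reflection-symmetric-kerr}, the nonlinear $\mathcal Z(D)$-equation \eqref{eq:projected-Z-nonlinear-form} is reflection covariant. Indeed, the linear part commutes with $\mathcal R_\ell$ by \cref{lem:exact-linear-reflection-decomposition}, and $\mathcal N_\sigma$ commutes with $\mathcal R_\ell$ because $\sigma_D\circ R_\ell=\sigma_D$. Hence, by uniqueness in the contraction ball $B_{q,\varepsilon}$, we get $z^{\rm non}(\Pi q,\hat\omega,\varepsilon)=\mathcal R_\ell[z^{\rm non}(q,\hat\omega,\varepsilon)]$. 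It follows that $\mathcal F^{\rm non}$ maps $\mathbb C^N_{\rm sym}\times\mathbb C\times[0,\varepsilon_0)$ into $\mathbb C^N_{\rm sym}$. We then repeat Steps 1--5 of the proof of \cref{thm:nonlinear-modal-continuation} with $E_j^\perp$ replaced by $E_j^\perp\cap\mathbb C^N_{\rm sym}$. Simplicity of $\lambda_j^{\rm sym}$ within the symmetric sector is enough for invertibility of the transverse derivative $\mathcal P_j^\perp(\lambda_jV-C^0)$ on that sector. This gives the branches and expansions.

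\emph{Antisymmetric class: the real reduced equation.} Fix $p_j=p_j^{\rm ant}$. For $q\in\mathbb R^N_{\rm ant}$, real $\hat\omega$ and $a=0$, the $\mathcal Z(D)$-equation is invariant under $z\mapsto-\mathcal R_\ell[z]$, since $\mathcal N_\sigma$ is odd. It is also invariant under complex conjugation: the DtN map preserves real antisymmetric traces by \cref{prop:real-antisymmetric-linear-resonance-problem}, and $\mathcal N_\sigma$ has real coefficients. Uniqueness in the contraction ball then gives $z^{\rm non}\in\mathcal Z(D)\cap H^1_{\rm ant}(D;\mathbb R)$. Consequently $R^{\rm non}$, and hence $\mathcal F^{\rm non}$, restricts to a real map $\mathcal F^{\rm non}_{\rm ant}$ from $\mathbb R^N_{\rm ant}\times\mathbb R\times[0,\varepsilon_0)$ to $\mathbb R^N_{\rm ant}$. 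The term $V_\sigma(q)$ is real antisymmetric, because $\sigma_{\pi(i)}=\sigma_i$ and $|D_{\pi(i)}|=|D_i|$. By \cref{lem:antisymmetric-C1-vanishing} the $C^1$ term drops out, so
\[
\mathcal F^{\rm non}_{\rm ant}(q,\hat\omega,\varepsilon)=\hat\omega^2\bigl(Vq+V_\sigma(q)\bigr)-C^0q+R^{\rm non}(q,\hat\omega,\varepsilon).
\]
We now run Steps 2--5 of \cref{thm:nonlinear-modal-continuation} entirely over the reals with $q=tp_j+\xi$ and $\xi\in E^\perp_{j,\rm ant}$. The real implicit function theorem yields real $\hat\omega^{\rm non}_{j,\rm ant}(t,\varepsilon)$ and real $\xi_j(t,\varepsilon)$. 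The zero-amplitude limit is identified via \cref{thm:linear-bic-symmetry-classification}, so $\hat\omega^{\rm non}_{j,\rm ant}(0,\varepsilon)=\hat\omega_j^0+\mathcal O(\varepsilon^2)$ and $\partial_tq(0,\varepsilon)=p_j+\mathcal O(\varepsilon^2)$, with no $\mathcal O(\varepsilon)$ term. Evenness in $t$ together with the $t^2$ and $t^3$ coefficients from Step 4 gives \eqref{eq:nonlinear-bic-frequency}. The mixed $|t|^2\sqrt\delta\cdot\varepsilon$ error disappears because $\partial_\varepsilon$ of the whole real reduced map vanishes at $\varepsilon=0$. The field expansion \eqref{eq:nonlinear-bic-field-expansion} then follows from \eqref{eq:nonlinear-basic-z-estimate}.

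\emph{Lifting and the BIC property.} The reconstructed field $u=u_q+z^{\rm non}$ satisfies the real antisymmetric restricted problem \eqref{eq:exact-nonlinear-real-antisymmetric-problem}. For test functions in $\mathcal Z(D)$ this is the $\mathcal Z(D)$-projection; for test functions in $\mathcal X(D)$, testing against $u_{\tilde q}$ with $\tilde q\in\mathbb R^N_{\rm ant}$ gives $\tilde q^\top\mathcal F^{\rm non}_{\rm ant}=0$. By \cref{prop:nonlinear-antisymmetric-lifting}, $u$ solves the full problem \eqref{eq:exact-nonlinear-full-problem}, with real frequency $\omega=c_b\varepsilon\hat\omega>0$ embedded in the continuum. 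The exterior outgoing continuation is antisymmetric by reflection covariance of the exterior problem (linear, since $\sigma=0$ outside $D$). So its $\eta=0$ Rayleigh coefficient equals minus itself and vanishes, and the remaining orders decay exponentially, exactly as in the linear proof.

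\emph{Main obstacle.} The delicate step is showing that the reduced problem genuinely closes on the real antisymmetric subspace. This requires that the contraction fixed point $z^{\rm non}$ inherits both reflection-oddness and reality, which means the maps $z\mapsto-\mathcal R_\ell z$ and $z\mapsto\overline z$ must preserve the ball $B_{q,\varepsilon}$ and commute with $\Phi_{q,\hat\omega,\varepsilon}$. The ball is norm-defined, so it is preserved. Commuting with $\Phi$ requires the DtN realness from \cref{prop:real-antisymmetric-linear-resonance-problem}, which only holds on antisymmetric traces; the argument must therefore check that $\Phi$ maps real antisymmetric $z$ to real antisymmetric $z$, rather than rely on a conjugation symmetry on all of $\mathcal Z(D)$.
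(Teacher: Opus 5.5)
Your proposal is correct and follows essentially the same route as the paper. You restrict the reduced residual to $\mathbb R^N_{\mathrm{ant}}$, where $C^1$ drops out and $z^{\rm non}$ is real antisymmetric by uniqueness of the contraction fixed point; you run the modal implicit-function argument over the reals; and you then lift via \cref{prop:nonlinear-antisymmetric-lifting} and kill the $\eta=0$ Rayleigh coefficient by antisymmetry. You spell out two points the paper only sketches: restricting the contraction to the closed real antisymmetric part of $B_{q,\varepsilon}$, and the $\partial_\varepsilon$-vanishing argument for the absent $|t|^2\delta$ term. Exactly as in the paper, however, your symmetric-class argument covers only the positive modes: $\partial_{\hat\omega}G_j^{\rm non}=2\hat\omega_j^0$ vanishes for the constant mode $\mathbf 1\in\mathbb R^N_{\mathrm{sym}}$, so no branch from $\lambda=0$ is constructed, although the count $n_{\pi}^{\mathrm f}+n_{\pi}^{\mathrm p}$ includes it.
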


\begin{proof}
The symmetric class follows by applying
\cref{thm:nonlinear-modal-continuation} in
$\mathbb C^N_{\mathrm{sym}}$. For the antisymmetric class, we run the same
modal argument in $\mathbb R^N_{\mathrm{ant}}$ and indicate the changes specific
to this subspace.

\smallskip
\noindent\emph{Step 1: The real antisymmetric reduced residual.}
Recall $\mathcal F^{\rm non}$ from
\eqref{eq:nonlinear-reduced-equation}. If
$q\in\mathbb R^N_{\mathrm{ant}}$ and $\hat\omega\in\mathbb R$, then at
$a=0$ the fixed-point problem defining $z^{\rm non}$ has real
antisymmetric data. The uniqueness in
\cref{prop:nonlinear-projected-reduction} therefore gives
$z^{\rm non}(q,\hat\omega,\varepsilon)\in
\mathcal Z(D)\cap H^1_{\mathrm{ant}}(D;\mathbb R)$.
The Kerr map preserves the antisymmetric subspace:
\eqref{eq:def-Vsigma-q}, $q_{\pi(i)}=-q_i$,
$\sigma_{\pi(i)}=\sigma_i$, and $|D_{\pi(i)}|=|D_i|$ imply
$\bigl(V_\sigma(q)\bigr)_{\pi(i)}=-\bigl(V_\sigma(q)\bigr)_i$.
Together with $V\Pi=\Pi V$, $\Pi C^0=C^0\Pi$, and the reflection
covariance of the terms defining $R^{\rm non}$, these facts show that
$\mathcal F^{\rm non}$ restricts to a real-smooth map
\[
    \mathcal F_{\mathrm{ant}}^{\rm non}
    :
    \mathbb R^N_{\mathrm{ant}}\times\mathbb R\times[0,\varepsilon_0)
    \longrightarrow
    \mathbb R^N_{\mathrm{ant}} .
\]
The restriction of $R^{\rm non}$ follows from the sector property of
$z^{\rm non}$. By \cref{lem:antisymmetric-C1-vanishing},
$C^1q=0$ for all $q\in\mathbb R^N_{\mathrm{ant}}$.
Hence, on this restricted space,
\[
    \mathcal F_{\mathrm{ant}}^{\rm non}(q,\hat\omega,\varepsilon)
    =
    \hat\omega^2\bigl(Vq+V_\sigma(q)\bigr)
    -
    C^0q
    +
    R_{\mathrm{ant}}^{\rm non}(q,\hat\omega,\varepsilon),
\]
where $R_{\mathrm{ant}}^{\rm non}=R^{\rm non}|_{\mathbb R^N_{\mathrm{ant}}}$.
By \eqref{eq:nonlinear-R-estimate}, uniformly for $\hat\omega$ in bounded
sets,
\[
    R_{\mathrm{ant}}^{\rm non}(q,\hat\omega,\varepsilon)
    =
    \mathcal O(\varepsilon^2\|q\|)
    +
    \mathcal O(\varepsilon^4\|q\|^3).
\]
In particular, the order-$\varepsilon$ radiative term present in the general
modal theorem is absent in the antisymmetric subspace.

\smallskip
\noindent\emph{Step 2: Modal coordinates in the antisymmetric subspace.}
Fix a simple antisymmetric eigenpair and, in this proof, write
\[
    p_j:=p_j^{\mathrm{ant}},
    \qquad
    \lambda_j:=\lambda_j^{\mathrm{ant}},
    \qquad
    \hat\omega_j^0:=\sqrt{\lambda_j}.
\]
We use the real modal decomposition
\[
    q=tp_j+\xi,
    \qquad
    \xi\in E_{j,\mathrm{ant}}^\perp,
    \qquad
    E_{j,\mathrm{ant}}^\perp
    :=
    \{\xi\in\mathbb R^N_{\mathrm{ant}}:p_j^\top V\xi=0\}.
\]
Set
\[
    \beta_j^{\mathrm{ant}}
    :=
    p_j^\top V_\sigma(p_j).
\]
The cubic correction is the unique
$\xi_{j,\mathrm{ant}}^3\in E_{j,\mathrm{ant}}^\perp$ that satisfies
\[
    (\lambda_jV-C^0)\xi_{j,\mathrm{ant}}^3
    +
    \lambda_j
    \bigl(
        V_\sigma(p_j)
        -
        \beta_j^{\mathrm{ant}}Vp_j
    \bigr)
    =
    0,
    \qquad
    p_j^\top V\xi_{j,\mathrm{ant}}^3=0,
\]
and we set $U_{j,\mathrm{ant}}^3:=u_{\xi_{j,\mathrm{ant}}^3}$. The forcing
term satisfies $p_j^\top(V_\sigma(p_j)-\beta_j^{\mathrm{ant}}Vp_j)=0$. Moreover,
$\lambda_jV-C^0$ is invertible on $E_{j,\mathrm{ant}}^\perp$ because
$\lambda_j$ is simple in $\mathbb R^N_{\mathrm{ant}}$.

The transverse equation and the divided scalar equation are exactly those in
\cref{thm:nonlinear-modal-continuation}, with
$\mathbb C^N$ replaced by $\mathbb R^N_{\mathrm{ant}}$. Thus, the real
implicit function theorem gives a real-smooth normalized branch
\[
    q_{j,\mathrm{ant}}^{\rm non}(t,\varepsilon)
    =
    tp_j+\xi_j(t,\varepsilon),
    \qquad
    \hat\omega_{j,\mathrm{ant}}^{\rm non}(t,\varepsilon),
    \qquad
    p_j^\top Vq_{j,\mathrm{ant}}^{\rm non}=t,
\]
for $|t|$ and $\varepsilon$ sufficiently small. The derivative of the
divided scalar equation with respect to $\hat\omega$ at
$(t,\xi,\hat\omega,\varepsilon)=(0,0,\hat\omega_j^0,0)$ is
$2\hat\omega_j^0\ne0$, as in the general theorem.

\smallskip
\noindent\emph{Step 3: Sector expansion.}
The coefficient computation at $\varepsilon=0$ is the same as in
\cref{thm:nonlinear-modal-continuation}, now carried out inside
$\mathbb R^N_{\mathrm{ant}}$. It gives
\[
    \hat\omega_{j,\mathrm{ant}}^{\rm non}(t,0)
    =
    \hat\omega_j^0
    -
    \frac12\hat\omega_j^0\beta_j^{\mathrm{ant}}t^2
    +
    \mathcal O(|t|^4),
    \qquad
    q_{j,\mathrm{ant}}^{\rm non}(t,0)
    =
    tp_j
    +
    t^3\xi_{j,\mathrm{ant}}^3
    +
    \mathcal O(|t|^5).
\]
The cancellation $C^1q=0$ removes the order-$\varepsilon$ reduced term.
Since $R_{\mathrm{ant}}^{\rm non}=\mathcal O(\varepsilon^2\|q\|)
+\mathcal O(\varepsilon^4\|q\|^3)$, the remaining $\varepsilon$-dependent
terms start at order $\varepsilon^2$. Thus, the terms corresponding to
$-\mathrm i\,\hat\omega_j^1\varepsilon$ and
$-\mathrm i\,t\varepsilon q_j^1$ in
\cref{thm:nonlinear-modal-continuation} vanish in this subspace. Hence,
\begin{align*}
    \hat\omega_{j,\mathrm{ant}}^{\rm non}(t,\varepsilon)
    =
    \hat\omega_j^0
    -
    \frac12\hat\omega_j^0\beta_j^{\mathrm{ant}}t^2
    +
    \mathcal O(|t|^4+\varepsilon^2),\qquad
    q_{j,\mathrm{ant}}^{\rm non}(t,\varepsilon)
    =
    tp_j
    +
    t^3\xi_{j,\mathrm{ant}}^3
    +
    \mathcal O(|t|^5+|t|\varepsilon^2).
\end{align*}
With $\delta=\varepsilon^2$ and
$\omega_{j,\mathrm{ant}}^{\rm non}
=c_b\varepsilon\hat\omega_{j,\mathrm{ant}}^{\rm non}$, this proves
\eqref{eq:nonlinear-bic-frequency}. The field is reconstructed as
\[
    u_{j,\mathrm{ant}}^{\rm non}
    =
    u_{q_{j,\mathrm{ant}}^{\rm non}}
    +
    z^{\rm non}
    \bigl(
        q_{j,\mathrm{ant}}^{\rm non},
        \hat\omega_{j,\mathrm{ant}}^{\rm non},
        \varepsilon
    \bigr).
\]
By \eqref{eq:nonlinear-basic-z-estimate},
$z^{\rm non}=\mathcal O(|t|\varepsilon^2)=\mathcal O(|t|\delta)$ in
$H^1(D)$. The linearity of $q\mapsto u_q$ then gives
\eqref{eq:nonlinear-bic-field-expansion}; in particular, the field term
corresponding to $-\mathrm i\,tU_j^1\sqrt\delta$ is absent.

\smallskip
\noindent\emph{Step 4: Exact nonlinear BIC property.}
It remains to verify the exact BIC property. The reconstructed interior field is
real antisymmetric and solves the real restricted nonlinear resonance problem. By
\cref{prop:nonlinear-antisymmetric-lifting}, it solves the full nonlinear
resonance problem. Let the same symbol denote its outgoing exterior
continuation. Reflection covariance of the exterior Dirichlet problem gives
$\mathcal R_\ell[u_{j,\mathrm{ant}}^{\rm non}]
=-u_{j,\mathrm{ant}}^{\rm non}$. Therefore
\[
    \bigl(u_{j,\mathrm{ant}}^{\rm non}\bigr)^0_0(\pm h)
    =
    \frac1{|Y|}
    \int_Y
    u_{j,\mathrm{ant}}^{\rm non}(x_\ell,\pm h)\,dx_\ell
    =
    -\bigl(u_{j,\mathrm{ant}}^{\rm non}\bigr)^0_0(\pm h).
\]
Thus, the propagating $\eta=0$ Rayleigh--Bloch coefficient vanishes. In the
subwavelength regime this is the only propagating channel, so the outgoing field
contains only evanescent modes and decays exponentially. Applying the argument
to every simple antisymmetric mode gives the $n_{\pi}^{\mathrm p}$ nonlinear
BIC families.
\end{proof}

\section{Numerical illustrations}
\label{section:numerical-experiments}

We now numerically illustrate the four theoretical conclusions established above: the linear
capacitance expansion in \cref{thm:linear-simple-resonance-branch}, the
reflection-protected linear BIC mechanism in
\cref{thm:linear-bic-symmetry-classification}, the nonlinear modal continuation
in \cref{thm:nonlinear-modal-continuation}, and the persistence of
antisymmetric nonlinear BIC branches in
\cref{thm:nonlinear-bic-symmetry-classification}. For simplicity, all examples are two-dimensional. Unless otherwise stated, we use
\begin{itemize}
    \item incident direction $(0,-1)$ and $\alpha=0$;
    \item period cell $Y=[-L/2,L/2]$ with $L=20$;
    \item nonlinear parameters: $\sigma_j=1$, $1\leq j\leq N$;
    \item material parameters $c_m=c_b=1$ and contrast $\delta=10^{-3}$.
\end{itemize}

\subsection{Linear medium}

For linear experiments, both the capacitance quantities and the reference
resonances are computed by periodic boundary-integral methods. The latter are
obtained from the frequency-dependent boundary-integral formulation
\cite{ammari2021bound}, which gives a nonlinear eigenvalue problem in the
spectral parameter $\omega$. We locate characteristic values by a contour integral method and then refine
them by Newton iteration (see, e.g.
\cite{guttel2017nonlinear}). For each nonzero reduced mode, we use the following notation:
\begin{enumerate}
    \item $\omega_{j,{\rm exact}}^{\rm lin}(\delta)$ denotes the exact
    resonance branch near $c_b\sqrt{\delta\lambda_j}$;
    \item $\omega_{j,{\rm approx}}^{\rm lin}(\delta)$ denotes the capacitance
    approximation from \eqref{eq:linear-resonance-expansion}:
    \[
        \omega_{j,{\rm approx}}^{\rm lin}(\delta)
        =
        c_b\hat\omega_j^0\sqrt\delta
        -
        \mathrm i\,c_b\hat\omega_j^1\delta .
    \]
\end{enumerate}

\subsubsection{Linear subwavelength resonances}

We begin with the randomly generated six-particle geometry in
\Cref{fig:paper-linear6}. At the reference contrast, the five nonzero exact resonances are visually
indistinguishable from their capacitance approximations at the scale shown. To test the
asymptotic order, we then vary
$\delta\in\{0.3,0.5,0.7,1,1.5,2\}\times10^{-3}$. The resulting log--log slopes
agree with the $\mathcal O(\delta^{3/2})$ remainder in
\cref{thm:linear-simple-resonance-branch} and
\eqref{eq:linear-resonance-expansion}.

\begin{figure}[!htbp]
\centering
\begin{minipage}[b]{0.325\textwidth}
\centering
\includegraphics[width=\linewidth]{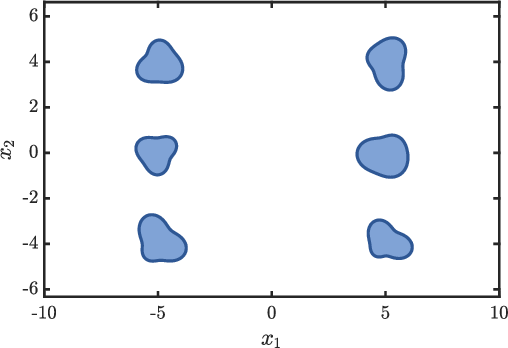}
\end{minipage}
\hfill
\begin{minipage}[b]{0.325\textwidth}
\centering
\includegraphics[width=\linewidth]{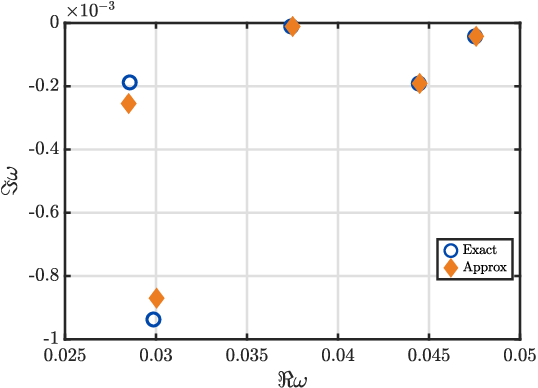}
\end{minipage}
\hfill
\begin{minipage}[b]{0.325\textwidth}
\centering
\includegraphics[width=\linewidth]{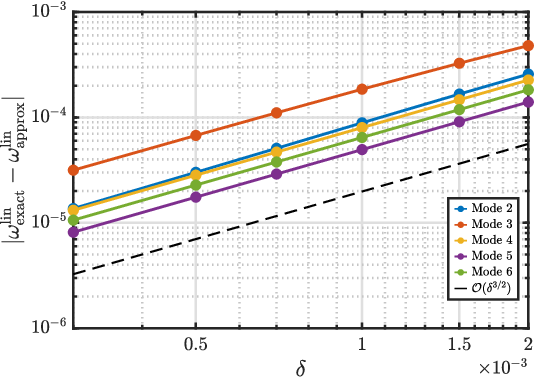}
\end{minipage}
\caption{Linear subwavelength resonances for a six-particle configuration.
Left: particle geometry in one period. Middle: exact resonances and capacitance
approximations in the complex plane. Right: log--log error plot for the five
nonzero modes, with the $\mathcal O(\delta^{3/2})$ reference slope from
\eqref{eq:linear-resonance-expansion}.}
\label{fig:paper-linear6}
\end{figure}
\FloatBarrier

\subsubsection{Reflection-protected linear BICs}

We next turn to the reflection-protected mechanism. The symmetric structure
contains seven particles and is invariant under $\mathcal R_\ell$; the broken
structure is obtained by a small rigid rotation, as shown in
\Cref{fig:paper-bic7-symmetry-breaking}. This pair of geometries allows us to
compare the protected case with a nearby configuration in which the reflection
constraint is removed.

\begin{figure}[!htbp]
\centering
\begin{tabular}{cc}
\includegraphics[width=0.5\textwidth]{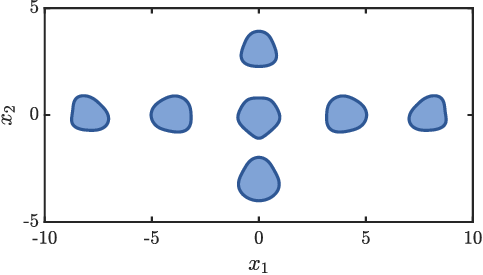}
&
\includegraphics[width=0.5\textwidth]{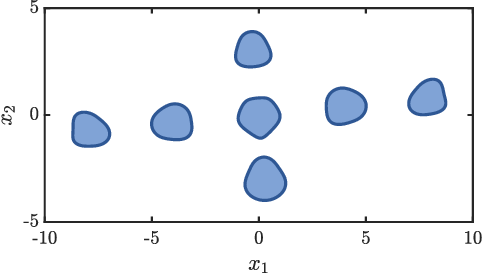}
\end{tabular}
\caption{Seven-particle geometries for the linear BIC. Left:
reflection-symmetric structure. Right: symmetry-broken structure obtained by a
small rigid rotation.}
\label{fig:paper-bic7-symmetry-breaking}
\end{figure}

For both geometries, we compute the exact resonances and the capacitance
approximations. In the symmetric case, two exact resonances have imaginary parts at the
numerical precision level, about $10^{-15}$. This agrees with
\cref{thm:linear-bic-symmetry-classification}: there are two reflected particle
pairs, so $n_{\pi}^{\mathrm p}=2$, and the antisymmetric sector supports two
BIC branches. After symmetry breaking, the corresponding frequencies move into
the lower half-plane as ordinary resonances; see
\Cref{fig:paper-bic7-resonances}.

\begin{figure}[!htbp]
\centering
\begin{tabular}{cc}
\includegraphics[width=0.5\textwidth]{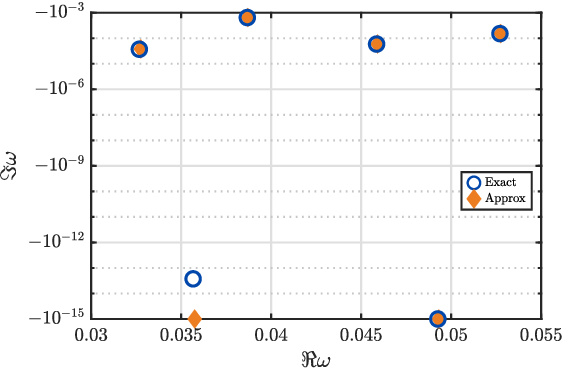}
&
\includegraphics[width=0.5\textwidth]{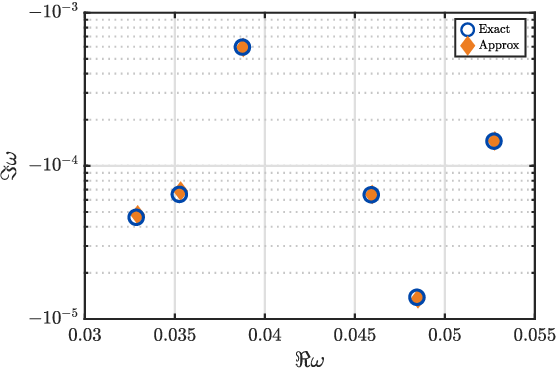}
\end{tabular}
\caption{Exact resonances and capacitance approximations for the seven-particle
BIC test. Left: reflection-symmetric structure, where two antisymmetric modes
lie at the numerical BIC floor. Right: symmetry-broken structure, where the
matched modes become resonances with negative imaginary parts.}
\label{fig:paper-bic7-resonances}
\end{figure}

The transmission calculation gives the complementary scattering signature. In
the symmetric case, the BIC modes do not couple to the incident field. Once the
reflection symmetry is broken, the corresponding quasi-BICs produce Fano-type
features, consistent with the standard BIC-to-Fano picture; see
\cite{ammari2021bound} and \Cref{fig:paper-bic7-transmission}. The associated
mode profiles are displayed in \Cref{fig:paper-bic7-mode-fields}.

\begin{figure}[!htbp]
\centering
\begin{tabular}{cc}
\includegraphics[width=0.5\textwidth]{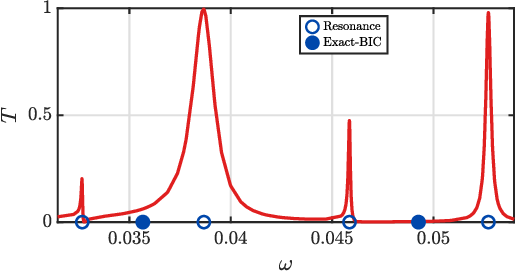}
&
\includegraphics[width=0.5\textwidth]{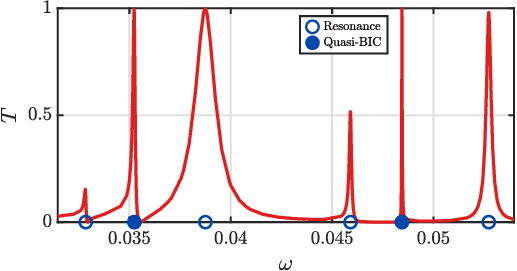}
\end{tabular}
\caption{Transmission coefficient $T$ for the seven-particle BIC test. Left:
reflection-symmetric structure, with ordinary resonances and exact BIC
frequencies marked. Right: symmetry-broken structure, where the matched
quasi-BIC frequencies generate Fano-type transmission features.}
\label{fig:paper-bic7-transmission}
\end{figure}

\begin{figure}[!htbp]
\centering
\setlength{\tabcolsep}{2pt}
\renewcommand{\arraystretch}{0.96}
\begin{tabular}{@{}cc@{}}
\includegraphics[width=0.5\textwidth]{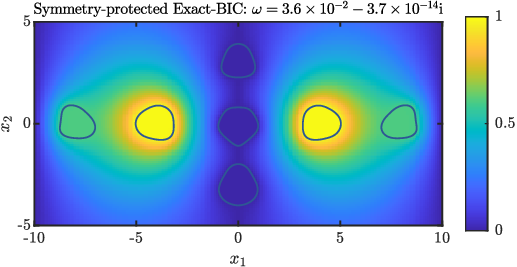}
&
\includegraphics[width=0.5\textwidth]{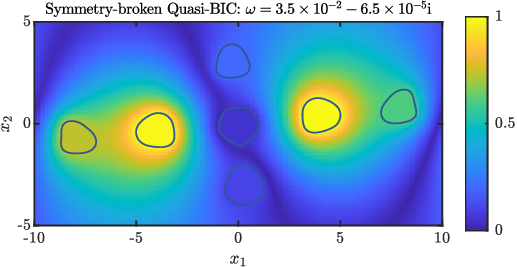}
\\[-0.1em]
\includegraphics[width=0.5\textwidth]{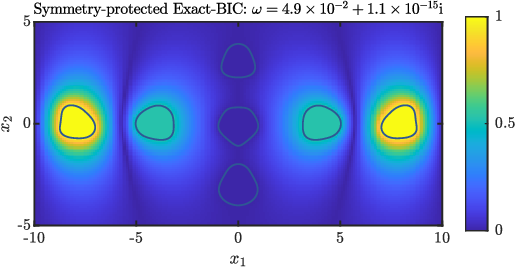}
&
\includegraphics[width=0.5\textwidth]{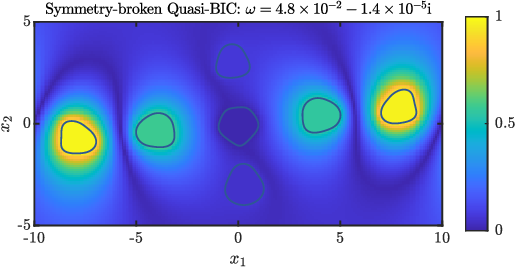}
\end{tabular}
\caption{Mode fields for the two antisymmetric branches identified in
\Cref{fig:paper-bic7-resonances}. Left column: symmetry-protected exact BIC fields. Right column: matched
quasi-BIC fields after symmetry breaking.}
\label{fig:paper-bic7-mode-fields}
\end{figure}
\FloatBarrier

\subsection{Nonlinear medium}

We now turn to the nonlinear reduced model. The full nonlinear resonance or scattering problem is not directly accessible through the linear boundary-integral formulation above, because the Kerr term is
volumetric. A direct computation would require
a volume-integral formulation or a nonlinear variational/FEM solver. Here, we
therefore solve \eqref{eq:nonlinear-reduced-equation} with the remainder
$R^{\rm non}$ dropped. Starting from the corresponding linear reduced resonance, we continue each
branch by Newton iteration in the real amplitude parameter $t=p_j^\top Vq$, with
Kerr strength $\sigma=1$. Set:
\begin{enumerate}
    \item $\omega_{j,{\rm exact}}^{\rm non}(t,\delta)$ denotes the
    solution of the reduced finite-dimensional equation near
    $c_b\sqrt{\delta\lambda_j}$;
    \item $\omega_{j,{\rm approx}}^{\rm non}(t,\delta)$ denotes the asymptotic
    approximation
    \[
        \omega_{j,{\rm approx}}^{\rm non}(t,\delta)
        =
        c_b\hat\omega_j^0\sqrt\delta
        -
        \frac12 c_b\hat\omega_j^0\beta_jt^2\sqrt\delta
        -
        \mathrm i\,c_b\hat\omega_j^1\delta
        =
        \omega_{j,{\rm approx}}^{\rm lin}(\delta)
        -
        \frac12 c_b\hat\omega_j^0\beta_jt^2\sqrt\delta .
    \]
\end{enumerate}

\subsubsection{Nonlinear subwavelength branches}

We first test \cref{thm:nonlinear-modal-continuation} with the four-particle
horizontal configuration in \Cref{fig:paper-nonlinear4-branch}. After ordering
the four reduced linear resonances by modulus, we discard the near-zero branch
and continue the remaining three simple modes. The branch plot shows that
$\Re\omega_j^{\rm non}(t,\delta)$ decreases with $t$, as predicted by the
negative quadratic shift in \eqref{eq:modal-physical-frequency-expansion}.

\begin{figure}[!htbp]
\centering
\begin{tabular}{cc}
\includegraphics[width=0.5\textwidth]{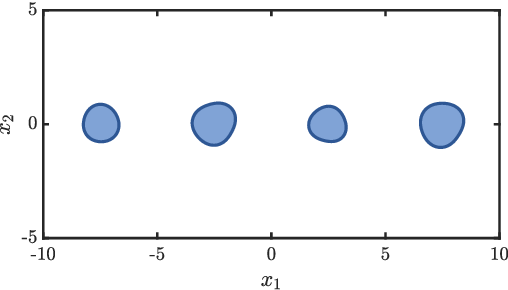} &
\includegraphics[width=0.5\textwidth]{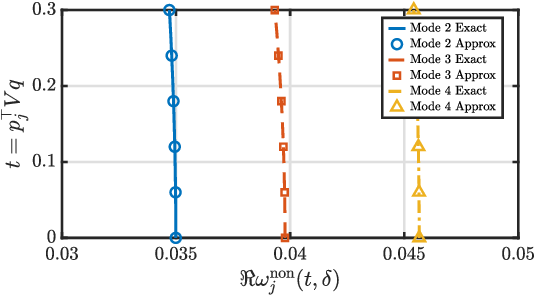}
\end{tabular}
\caption{Reduced nonlinear subwavelength branches for the four-particle
configuration. Left: particle geometry. Right: amplitude $t=p_j^\top Vq$ versus
$\Re\omega_j^{\rm non}(t,\delta)$.}
\label{fig:paper-nonlinear4-branch}
\end{figure}

The next check isolates the leading nonlinear frequency shift in
\eqref{eq:modal-physical-frequency-expansion}. For fixed $\delta$, the shift
from the linear approximate branch scales as $t^2$; for fixed representative
$t$, it scales as $\sqrt\delta$.

\begin{figure}[!htbp]
\centering
\begin{tabular}{cc}
\includegraphics[width=0.5\textwidth]{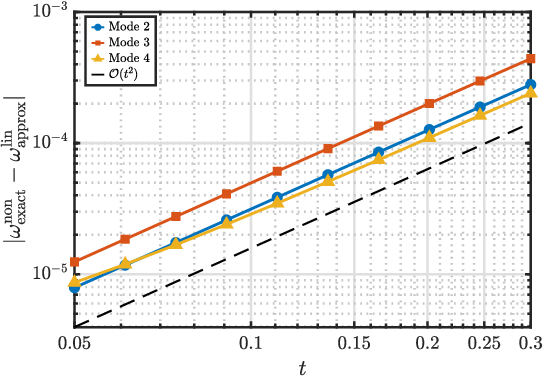} &
\includegraphics[width=0.5\textwidth]{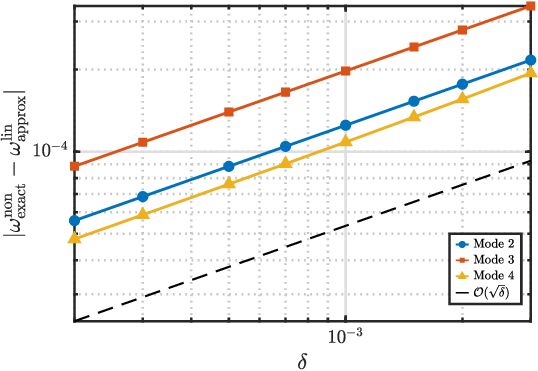}
\end{tabular}
\caption{Order checks for the nonlinear frequency shift from the linear
approximate branch. Left: fixed $\delta=10^{-3}$, showing the
$\mathcal O(t^2)$ rate. Right: fixed amplitude $t=0.20$, showing the
$\mathcal O(\sqrt\delta)$ rate.}
\label{fig:paper-nonlinear4-orders}
\end{figure}
\FloatBarrier

\subsubsection{Nonlinear BIC branches}

Finally, we return to the reflection-symmetric seven-particle geometry in
\Cref{fig:paper-bic7-symmetry-breaking}. The two antisymmetric linear BIC modes
from \cref{thm:linear-bic-symmetry-classification} serve as starting points for
the reduced nonlinear continuation. According to \eqref{eq:nonlinear-bic-frequency},
the Exact--Approx error scales as $\mathcal O(t^4)$ for fixed
$\delta_0=10^{-3}$ and as $\mathcal O(\delta^{3/2})$ under the coupled scaling
$t(\delta)=c\delta^{1/4}$. The fitted slopes in
\Cref{fig:paper-nonlinear-bic7-reduced-orders} are consistent with these rates.

\begin{figure}[!htbp]
\centering
\begin{tabular}{cc}
\includegraphics[width=0.5\textwidth]{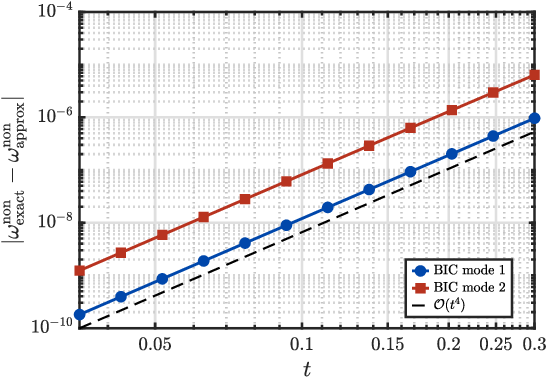} &
\includegraphics[width=0.5\textwidth]{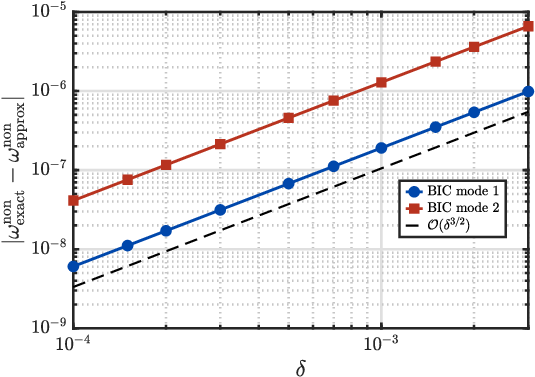}
\end{tabular}
\caption{Reduced nonlinear BIC convergence for the seven-particle symmetric
structure. Left: fixed-$\delta$ Exact--Approx error versus $t$, showing the
$\mathcal O(t^4)$ rate. Right: coupled-$\delta$ Exact--Approx error with
$t(\delta)=c\delta^{1/4}$, showing the $\mathcal O(\delta^{3/2})$ rate. Both
panels include BIC mode $1$ and BIC mode $2$.}
\label{fig:paper-nonlinear-bic7-reduced-orders}
\end{figure}
\FloatBarrier
\section{Concluding remarks}
\label{section:conclusions}

We have developed a mathematical framework for linear and Kerr nonlinear
subwavelength resonances in open periodic arrays of acoustic high-contrast
resonators. The quasiperiodic Dirichlet-to-Neumann map reduces the outgoing
resonance problem to an interior variational problem. Using the decomposition
of the function space in which the variational problem is posed into the direct sum of two function spaces, we separate the resonator amplitudes
from the zero-average correction and derive a finite-dimensional nonlinear
resonance equation. This yields the linear resonance expansion with its leading
radiative correction and proves the small-amplitude nonlinear continuation,
including the nonlinear frequency shift.

We have also proved a symmetry mechanism for exact BICs. In
reflection-symmetric configurations at the center of the Brillouin zone, antisymmetric modes
decouple from the only open Rayleigh--Bloch channel. Hence, the corresponding
branches have real frequencies and zero propagating Rayleigh coefficient. This
gives a classification into symmetric resonances and antisymmetric BICs. Under
the simplicity assumptions, the number of subwavelength BIC branches is the number of reflected pairs. The numerical experiments
confirm the reduced resonance formulas, the nonlinear shift, and the
reflection-protected BIC mechanism.

The reduction developed here suggests two natural directions for future work.
The first is a nonlinear reduced scattering theory for open periodic
high-contrast resonator arrays. In the linear case, the scattering coefficients
depend only on the frequency. In the nonlinear case, the amplitude equation is
nonlinear and the scattering coefficients may also depend on the incident
intensity. This would extend the linear modal decomposition for
subwavelength resonators \cite{feppon2022modal} and provide a reduced framework
for studying nonlinear frequency shifts, amplitude-dependent transmission, and
multiple steady-state scattering responses near subwavelength resonances. The second direction is the study of quasi-BIC and topological BIC phenomena in
acoustic high-contrast resonator arrays. If the symmetry protecting a BIC is
weakly broken, for instance, by a geometric perturbation or by detuning the
quasiperiodicity from the symmetry point, the exact BIC is expected to become a
high-$Q$ quasi-BIC and to produce a narrow Fano-type scattering response
\cite{ammari2021bound,ammari2024fano}. A related question is whether
topological radiation-cancellation mechanisms can be formulated in the present
capacitance-based setting, and whether they provide additional robustness for
embedded non-radiating states under admissible perturbations. Topological BICs
have been studied mainly in photonic systems
\cite{zhen2014topological,bulgakov2017topological,yoda2020generation}, while
related acoustic realizations have also been reported
\cite{chen2019corner,fan2025acoustic}. Combining these ideas with nonlinear
subwavelength reductions may lead to a mathematical framework for nonlinear
BICs and quasi-BIC scattering in acoustic metamaterials.

\appendix

\section{Auxiliary proofs}
\label{app:auxiliary-proofs}

This section gives the deferred proofs of \cref{lem:H0-isomorphism} and
\cref{prop:truncated-single-layer-inverse}.

\begin{proof}[Proof of \cref{lem:H0-isomorphism}]
We prove that $\mathcal H$ is bounded, injective, and Fredholm of index zero.
Since an injective Fredholm operator of index zero has closed range of
codimension zero, it is surjective. The open mapping theorem then gives the
boundedness of the inverse, and hence $\mathcal H$ is an isomorphism.

\smallskip
\noindent\emph{Step 1: Boundedness.}
Let $\mathcal S_D$ denote the free-space single-layer operator. The difference
\[
    \mathcal S_D^{0,0}-\mathcal S_D:
    H^{-1/2}(\partial D)\to H^{1/2}(\partial D)
\]
has a smooth kernel and is therefore compact. Since $\mathcal S_D$ is bounded
\cite[Lemma~6.11]{mclean2000strongly}, so is $\mathcal S_D^{0,0}$. The
functional $\mathfrak m$ is bounded, and the embedding $\mathbb C\ni s\longmapsto s\,1_{\partial D}
    \in H^{1/2}(\partial D)$ is bounded because $\partial D$ is compact. Hence $\mathcal H$ is bounded.

\smallskip
\noindent\emph{Step 2: Injectivity.}
Suppose that $\mathcal H[\psi,s]=(0,0)$. Let $u(x):=\mathcal S_D^{0,0}[\psi](x)$, then
\[
    u+s=\mathcal S_D^{0,0}[\psi]+s=0
    \quad\text{on }\partial D,
    \qquad
    \mathfrak m[\psi]=0 .
\]
From the Fourier representation of
$G^{0,0}$ in \eqref{eq:def-G00} and the condition $\mathfrak m[\psi]=0$, one obtains,
\[
    u(x)
    =
    u_{\pm\infty}
    +
    \sum_{\eta\in\Lambda^*\setminus\{0\}}
        u_{\pm\eta}
        e^{\mathrm i\eta\cdot x_\ell}
        e^{-|\eta||x_d|},\qquad |x_d|\gg1,
\]
where the sign is chosen according to $\pm x_d>0$. In particular, $\nabla u$
decays exponentially as $|x_d|\to\infty$. Since $u=-s$ on $\partial D$, Green's
identity on a truncated period cell, followed by the limit as the height tends
to infinity, gives
\[
    \int_\Omega |\nabla u|^2\,dx
     =
    -\left\langle
        \partial_\nu u|_+
        -
        \partial_\nu u|_-,
        u
    \right\rangle_{\partial D}
    =
    -\langle\psi,u\rangle_{\partial D}
     =
    \overline{s}\,\mathfrak m[\psi]
    =
    0.
\]
Thus, $\nabla u\equiv0$ in $D$ and in $\Omega\setminus\overline D$. The jump
relation then gives $\psi=[\partial_\nu u]=0$. Hence, $u=\mathcal S_D^{0,0}[0]=0$, and the boundary condition yields $s=0$.
Therefore, $\mathcal H$ is injective.

\smallskip
\noindent\emph{Step 3: Fredholm property.}
Introduce
\[
    \mathcal H^{\mathrm{aux}}[\psi,s]
    :=
    \bigl(
        \mathcal S_D[\psi]+s,\,
        \mathfrak m[\psi]
    \bigr).
\]
Then $\mathcal H-\mathcal H^{\mathrm{aux}}$ is compact, since
$\mathcal S_D^{0,0}-\mathcal S_D$ is compact. By
\cite[Theorem~2.26]{ammari2007polarization},
$\mathcal H^{\mathrm{aux}}$ is boundedly invertible and hence Fredholm of index
zero. Therefore, $\mathcal H$ is Fredholm of index zero as well. Together with
injectivity, this proves that $\mathcal H$ is an isomorphism.

The statement for $\mathcal H_0$ follows by restriction. Indeed, for any
$f\in H^{1/2}(\partial D)$, let $ \mathcal H^{-1}[f,0]=(\psi,s)$.
Then $\mathfrak m[\psi]=0$, so
$(\psi,s)\in H^{-1/2}_0(\partial D)\times\mathbb C$, and
$\mathcal H_0[\psi,s]=f$. Thus, $\mathcal H_0$ is surjective. Its injectivity
follows from the injectivity of $\mathcal H$. Hence, $\mathcal H_0$ is an
isomorphism.
\end{proof}

\begin{proof}[Proof of \cref{prop:truncated-single-layer-inverse}]
We isolate the singular $\omega$-dependence of
$\widehat{\mathcal S}_D^{\alpha,k}$, invert the resulting regular leading
operator, and then sum the corresponding Neumann series.

\smallskip
\noindent\emph{Step 1: Decomposition and reduction.}
Every $\psi\in H^{-1/2}(\partial D)$ can be written uniquely as
\[
    \psi
    =
    \psi_0+\mathfrak m[\psi]\,\chi_{\partial D}^{\mathrm{av}},
    \qquad
    \psi_0\in H^{-1/2}_0(\partial D).
\]
To absorb the $\omega^{-1}$ term in \eqref{eq:hatSak-formula}, define
\[
    \Psi_\omega[\psi_0,s]
    :=
    \psi_0+\omega s\,\chi_{\partial D}^{\mathrm{av}},
    \qquad
    (\psi_0,s)\in H^{-1/2}_0(\partial D)\times\mathbb C .
\]
For $\omega\neq0$, $\Psi_\omega$ is bijective onto
$H^{-1/2}(\partial D)$. Substituting
$\psi=\Psi_\omega[\psi_0,s]$ into \eqref{eq:hatSak-formula} gives
\begin{equation}\label{eq:hatS-expansion}
    \widehat{\mathcal S}_D^{\alpha,k}\Psi_\omega
    =
    A_0+\omega A_1,
\end{equation}
where $A_0[\psi_0,s]$ and $A_1[\psi_0,s]$ are defined by
\[
    A_0[\psi_0,s]
    :=
    \mathcal S_D^{0,0}[\psi_0]
    -
    \frac{
        a\cdot\mathfrak m_\ell[\psi_0]
        +
        \mathrm i s
    }{2\tau|Y|},
    \qquad
    A_1[\psi_0,s]
    :=
    s\,f_D .
\]
Thus, both $A_0$ and $A_1$ are independent of $\omega$ and regular at
$\omega=0$.

\smallskip
\noindent\emph{Step 2: Inversion of the leading operator.}
Define
\[
    \mathcal L[\psi_0,s]
    :=
    \left(
        \psi_0,\,
        -\frac{
            a\cdot\mathfrak m_\ell[\psi_0]
            +
            \mathrm i s
        }{2\tau|Y|}
    \right).
\]
Then $\mathcal L$ is a bounded isomorphism on
$H^{-1/2}_0(\partial D)\times\mathbb C$, with inverse
\[
    \mathcal L^{-1}[\psi_0,s]
    =
    \bigl(
        \psi_0,\,
        \mathrm i\,a\cdot\mathfrak m_\ell[\psi_0]
        +
        2\mathrm i\,\tau|Y|\,s
    \bigr).
\]
Since $A_0=\mathcal H_0\mathcal L$, \cref{lem:H0-isomorphism} implies that
$A_0$ is boundedly invertible. If
$(\psi_f^0,s_f^0):=\mathcal H_0^{-1}[f]$, then
\begin{equation}\label{eq:S0-inverse-concrete}
    A_0^{-1}[f]
    =
    \bigl(\psi_f^0,s_f^1\bigr),
    \qquad
    s_f^1
    =
    \mathrm i
    \bigl(
        a\cdot\mathfrak m_\ell[\psi_f^0]
        +
        2\tau|Y|\,s_f^0
    \bigr).
\end{equation}

\smallskip
\noindent\emph{Step 3: Neumann series.}
For $|\omega|$ sufficiently small,
\begin{equation}\label{eq:neumann-series}
    (A_0+\omega A_1)^{-1}
    =
    \sum_{n=0}^{\infty}
        (-\omega)^n
        (A_0^{-1}A_1)^n
        A_0^{-1}
\end{equation}
in operator norm. Hence, for $\omega\neq0$,
\begin{equation}\label{eq:inverse-representation}
    \bigl(\widehat{\mathcal S}_D^{\alpha,k}\bigr)^{-1}
    =
    \Psi_\omega\circ(A_0+\omega A_1)^{-1}.
\end{equation}
The right-hand side extends holomorphically to $\omega=0$: indeed, if
\[
    (\psi_0(\omega),s(\omega))
    :=
    (A_0+\omega A_1)^{-1}[f],
\]
then
\[
    \bigl(\widehat{\mathcal S}_D^{\alpha,k}\bigr)^{-1}[f]
    =
    \psi_0(\omega)
    +
    \omega s(\omega)\chi_{\partial D}^{\mathrm{av}},
\]
which is holomorphic at $\omega=0$. Moreover, from \eqref{eq:S0-inverse-concrete},
\[
    A_1A_0^{-1}[f]
    =
    s_f^1 f_D,
    \qquad
    A_0^{-1}A_1A_0^{-1}[f]
    =
    s_f^1(\psi_D^0,s_D^1).
\]
Iterating gives, for $n\ge1$,
\begin{equation}\label{eq:iterated-formula}
    (A_0^{-1}A_1)^nA_0^{-1}[f]
    =
    s_f^1(s_D^1)^{n-1}
    (\psi_D^0,s_D^1).
\end{equation}

\smallskip
\noindent\emph{Step 4: Closed form.}
Substituting \eqref{eq:iterated-formula} into
\eqref{eq:inverse-representation} and summing the resulting geometric series
yields, for $|\omega s_D^1|<1$,
\[
    \bigl(\widehat{\mathcal S}_D^{\alpha,k}\bigr)^{-1}[f]
    =
    \psi_f^0
    +
    \frac{\omega s_f^1}{1+\omega s_D^1}
    \bigl(
        \chi_{\partial D}^{\mathrm{av}}
        -
        \psi_D^0
    \bigr).
\]
This proves \eqref{eq:closed-form-inverse-hatSak} and it is holomorphic in disk $|\omega |<|s_D^1|^{-1}$. Expanding at $\omega=0$ gives
\eqref{eq:expansion-of-inverse-hatSak}.
\end{proof}

\section{Auxiliary lemmas}
\label{app:auxiliary-lemmas}

This section collects auxiliary lemmas used in the Lyapunov--Schmidt reductions
and reflection arguments: reflection covariance, uniform coercivity on
$\mathcal Z(D)$, a generalized H\"older inequality, and Kerr mapping estimates.

\begin{lemma}[Reflection of boundary integral operators]
\label{lem:reflection-covariance-kernel}
Assume $R_\ell D=D$, and let
\[
    \mathcal Q[\psi](x)
    :=
    \int_{\partial D} Q(x,y)\psi(y)\,d\sigma(y),
    \qquad x\in\partial D .
\]
If $Q(R_\ell x,y)=Q(x,R_\ell y)$ for $x,y\in\partial D$, then $\mathcal R_\ell\mathcal Q
    =
    \mathcal Q\mathcal R_\ell $,
where $\mathcal R_\ell[\psi](x):=\psi(R_\ell x)$ on $\partial D$.
\end{lemma}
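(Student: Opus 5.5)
The identity $\mathcal R_\ell\mathcal Q=\mathcal Q\mathcal R_\ell$ is a change of variables in the boundary integral. We evaluate both sides at a fixed $x\in\partial D$ and compare them directly.

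\emph{Step 1.} Write out the left-hand side. By definition,
\[
(\mathcal R_\ell\mathcal Q[\psi])(x)=\mathcal Q[\psi](R_\ell x)=\int_{\partial D}Q(R_\ell x,y)\psi(y)\,d\sigma(y),
\]
which is meaningful because $R_\ell x\in\partial D$. This holds since $R_\ell D=D$ and $R_\ell$ is a homeomorphism, so $R_\ell\partial D=\partial D$. Applying the hypothesis turns the integrand into $Q(x,R_\ell y)\psi(y)$.

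\emph{Step 2.} Change variables by $y=R_\ell y'$. Since $R_\ell$ is a linear isometry mapping $\partial D$ onto itself, it preserves the surface measure, so $d\sigma(y)=d\sigma(y')$. Using $R_\ell^2=I$, the integral becomes
\[
\int_{\partial D}Q(x,y')\psi(R_\ell y')\,d\sigma(y')=\mathcal Q[\mathcal R_\ell\psi](x).
\]
This proves the identity for smooth densities $\psi$.

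\emph{Step 3.} Extend to $H^{-1/2}(\partial D)$ by density. The operator $\mathcal R_\ell$ is an isomorphism on every $H^s(\partial D)$, because $R_\ell$ is a smooth ($C^2$) diffeomorphism of $\partial D$. The operator $\mathcal Q$ is bounded in the cases used in the paper. Both sides of the identity are therefore bounded operators that agree on a dense set, so they agree on all of $H^{-1/2}(\partial D)$.

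The only delicate point is the principal-value kernel $Q_K$. Here we would note that the truncation $\{|x-y|>\epsilon\}$ is invariant under the isometry, so the same change of variables holds for each $\epsilon>0$. Passing to the limit $\epsilon\to0$ then gives the identity for the principal-value operator.
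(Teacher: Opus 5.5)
Your proposal is correct and follows the paper's proof: apply the kernel identity $Q(R_\ell x,y)=Q(x,R_\ell y)$, then change variables $z=R_\ell y$ using $R_\ell\partial D=\partial D$ and the invariance of surface measure. Your density extension to $H^{-1/2}(\partial D)$ and your check that the $\epsilon$-truncation is preserved by the isometry are extra care that the paper leaves implicit.
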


\begin{proof}
For $x\in\partial D$, the change of variables $z=R_\ell y$, together with
$R_\ell\partial D=\partial D$ and surface-measure invariance, gives
\[
\begin{aligned}
    \mathcal R_\ell\mathcal Q[\psi](x)
    =
    \int_{\partial D} Q(R_\ell x,y)\psi(y)\,d\sigma(y)
    =
    \int_{\partial D} Q(x,z)\psi(R_\ell z)\,d\sigma(z)
    =
    \mathcal Q\mathcal R_\ell[\psi](x).
\end{aligned}
\]
\end{proof}

\begin{lemma}[Uniform linear coercivity on the zero-average space]
\label{lem:linear-Z-lax-milgram}
Fix $M_{\hat\omega}>0$. There exist
$\varepsilon_0=\varepsilon_0(M_{\hat\omega})>0$, $c_*>0$, and
$C_*>0$ such that, for all $|\hat\omega|\le M_{\hat\omega}$ and
$0<\varepsilon<\varepsilon_0$, the scaled linear form
$a^{\mathrm{lin}}_{\hat\omega,\varepsilon}$ is bounded on
$\mathcal Z(D)\times\mathcal Z(D)$ and satisfies
\begin{equation}
\label{eq:linear-Z-boundedness}
    \left|
        a^{\mathrm{lin}}_{\hat\omega,\varepsilon}(u,v)
    \right|
    \le
    C_*
    \|u\|_{H^1(D)}
    \|v\|_{H^1(D)},
    \qquad
    u,v\in\mathcal Z(D),
\end{equation}
and
\begin{equation}
\label{eq:linear-Z-coercivity}
    \operatorname{Re}
    a^{\mathrm{lin}}_{\hat\omega,\varepsilon}(u,u)
    \ge
    c_*
    \|u\|_{H^1(D)}^2,
    \qquad
    u\in\mathcal Z(D).
\end{equation}
Consequently, for every $F\in\mathcal Z(D)'$, the variational problem
\begin{equation}
\label{eq:property-of-linear-variational-form}
    a^{\mathrm{lin}}_{\hat\omega,\varepsilon}(u,v)=F(v),
    \qquad
    v\in\mathcal Z(D),
\end{equation}
admits a unique solution $u\in\mathcal Z(D)$, and
\begin{equation}
\label{eq:lax-milgram-estimate}
    \|u\|_{H^1(D)}
    \le
    c_*^{-1}\|F\|_{\mathcal Z(D)'} .
\end{equation}
\end{lemma}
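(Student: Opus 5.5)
We will establish the bound \eqref{eq:linear-Z-boundedness} and the coercivity \eqref{eq:linear-Z-coercivity} by treating the scaled form as a small perturbation of the gradient form on $\mathcal Z(D)$, and then conclude by the Lax--Milgram lemma. Under the scaling \eqref{eq:low-frequency-scaling} we have
\[
    a^{\mathrm{lin}}_{\hat\omega,\varepsilon}(u,v)
    =
    (\nabla u,\nabla v)_D
    -
    \varepsilon^2\hat\omega^2(u,v)_D
    -
    \varepsilon^2
    \langle
        \mathcal T_D^{\alpha,k_m}[u],v
    \rangle_{\partial D},
\]
where $\alpha=\varepsilon c_b\hat\omega a$ and $k_m=\varepsilon c_b\hat\omega/c_m$. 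We first choose $\varepsilon_0$ so that $c_b\varepsilon_0M_{\hat\omega}<\omega_0$. Then \cref{lem:DtN-low-frequency-expansion} applies at $\omega=c_b\varepsilon\hat\omega$, and its holomorphic expansion \eqref{eq:DtN-operator-expansion} gives a uniform bound $\|\mathcal T_D^{\alpha,k_m}\|_{\mathcal L(H^{1/2},H^{-1/2})}\le C_T$ for all $|\hat\omega|\le M_{\hat\omega}$ and $0<\varepsilon<\varepsilon_0$. Here $C_T$ bounds $\|\mathcal T_0^{a,c_m}\|+\omega_0\|\mathcal T_1^{a,c_m}\|+\omega_0^2\sup\|\mathcal R_D\|$.

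\emph{Boundedness.} We estimate each term separately. The gradient term is bounded by Cauchy--Schwarz, and the $L^2$ term by $M_{\hat\omega}^2\varepsilon_0^2\|u\|\|v\|$. For the DtN term we combine the trace theorem $\|u\|_{H^{1/2}(\partial D)}\le C_{\rm tr}\|u\|_{H^1(D)}$ with the uniform bound on $\mathcal T$. Together these give \eqref{eq:linear-Z-boundedness} with $C_*=1+\varepsilon_0^2(M_{\hat\omega}^2+C_TC_{\rm tr}^2)$.

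\emph{Coercivity.} This is the only point needing care, because the form is not Hermitian and $\hat\omega$ may be complex. We handle it by crude absolute-value bounds rather than by any sign information on $\mathcal T$. We apply the componentwise Poincar\'e inequality on each connected $D_j$ to the zero-average functions in $\mathcal Z(D)$, which gives $\|u\|_{H^1(D)}^2\le C_P\|\nabla u\|_{L^2(D)}^2$. Then
\[
    \operatorname{Re}a^{\mathrm{lin}}_{\hat\omega,\varepsilon}(u,u)
    \ge
    \|\nabla u\|_{L^2(D)}^2
    -
    \varepsilon^2\bigl(M_{\hat\omega}^2+C_TC_{\rm tr}^2\bigr)\|u\|_{H^1(D)}^2
    \ge
    \bigl(C_P^{-1}-\varepsilon^2(M_{\hat\omega}^2+C_TC_{\rm tr}^2)\bigr)\|u\|_{H^1(D)}^2 .
\]
We shrink $\varepsilon_0$ further so that $\varepsilon_0^2(M_{\hat\omega}^2+C_TC_{\rm tr}^2)\le (2C_P)^{-1}$, and set $c_*=(2C_P)^{-1}$. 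All constants depend only on $M_{\hat\omega}$, the geometry, and the material parameters.

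\emph{Conclusion.} Since $\mathcal Z(D)$ is a closed subspace of $H^1(D)$, it is a Hilbert space. The complex Lax--Milgram lemma, which requires only coercivity of the real part, applies to the bounded sesquilinear form $a^{\mathrm{lin}}_{\hat\omega,\varepsilon}$ on $\mathcal Z(D)$. It yields a unique solution of \eqref{eq:property-of-linear-variational-form} for every $F\in\mathcal Z(D)'$. The estimate \eqref{eq:lax-milgram-estimate} then follows by testing with $v=u$:
\[
    c_*\|u\|_{H^1(D)}^2
    \le
    \operatorname{Re}a^{\mathrm{lin}}_{\hat\omega,\varepsilon}(u,u)
    \le
    |F(u)|
    \le
    \|F\|_{\mathcal Z(D)'}\|u\|_{H^1(D)} .
\]

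The main obstacle is the uniformity of the DtN bound as $\omega\to0$ across the whole parameter range. It is resolved precisely by the holomorphy and uniform remainder bound of \cref{lem:DtN-low-frequency-expansion}, which rest on \cref{cor:single-layer-inverse-expansion}.
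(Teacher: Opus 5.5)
Your proof is correct and follows essentially the same route as the paper. You choose $\varepsilon_0$ with $c_b\varepsilon_0M_{\hat\omega}<\omega_0$ to get a uniform DtN bound from the low-frequency expansion, bound the form by Cauchy--Schwarz and the trace theorem, get coercivity from the componentwise Poincar\'e--Wirtinger inequality with the $\mathcal O(\varepsilon^2)$ mass and DtN terms absorbed by absolute-value bounds (so $c_*$ is half the Poincar\'e constant), and conclude with the complex Lax--Milgram lemma, just as the paper does.
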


\begin{proof}
The space $\mathcal Z(D)$ is a closed subspace of $H^1(D)$ and is a
Hilbert space. Choose $\varepsilon_0>0$ such that
$\varepsilon_0c_bM_{\hat\omega}<\omega_0$. Then, for
$|\hat\omega|\le M_{\hat\omega}$ and $0<\varepsilon<\varepsilon_0$, the
frequency $\omega=\varepsilon c_b\hat\omega$ lies in the low-frequency regime
where the DtN operator $\mathcal T_D^{\alpha,k_m}$ is uniformly bounded by
\cref{lem:DtN-low-frequency-expansion}. Here
$\alpha=\varepsilon c_b\hat\omega a$ and
$k_m=\varepsilon c_b\hat\omega/c_m$. The gradient and mass terms are bounded by
the Cauchy--Schwarz inequality, while the DtN term is bounded by the trace
theorem and the uniform boundedness of $\mathcal T_D^{\alpha,k_m}$. This gives
\eqref{eq:linear-Z-boundedness}.

Next, we prove coercivity. Since the functions in $\mathcal Z(D)$ have zero average
on each connected component, the Poincar\'e--Wirtinger inequality
(see, e.g. \cite[Chapter~5, Section~5.8.1]{evans2010partial}) gives a constant
$\gamma_Z>0$, depending only on $D$, such that
\[
    \|\nabla u\|_{L^2(D)}^2
    \ge
    \gamma_Z\|u\|_{H^1(D)}^2,
    \qquad
    u\in\mathcal Z(D).
\]
Moreover,
\[
    \left|
        \varepsilon^2\hat\omega^2(u,u)_D
        +
        \varepsilon^2
        \langle
            \mathcal T_D^{\alpha,k_m}[u],
            u
        \rangle_{\partial D}
    \right|
    \le
    C\varepsilon^2(1+M_{\hat\omega}^2)\|u\|_{H^1(D)}^2,
\]
where $C$ is independent of $u$, $\hat\omega$, and $\varepsilon$. Hence,
\[
    \operatorname{Re}
    a^{\mathrm{lin}}_{\hat\omega,\varepsilon}(u,u)
    \ge
    \bigl(\gamma_Z-C\varepsilon^2(1+M_{\hat\omega}^2)\bigr)
    \|u\|_{H^1(D)}^2.
\]
After decreasing $\varepsilon_0$ if necessary, we may assume
$C\varepsilon_0^2(1+M_{\hat\omega}^2)\le\gamma_Z/2$. Then
\eqref{eq:linear-Z-coercivity} holds with $c_*=\gamma_Z/2$. The complex Lax--Milgram lemma (see, for instance,
\cite[Theorem~2.32]{mclean2000strongly}) applied on the Hilbert space
$\mathcal Z(D)$ gives \eqref{eq:property-of-linear-variational-form} and
\eqref{eq:lax-milgram-estimate}.
\end{proof}

The following lemma is a convenient form of H\"older's inequality \cite[Appendix~B.2]{evans2010partial}. The
$L^r$ estimate follows from the standard H\"older inequality applied to
$|u_j|^r$ with exponents $p_j/r$, with the convention $p_j/r=\infty$ if
$p_j=\infty$.

\begin{lemma}[Generalized H\"older inequality]
\label{lem:generalized-holder}
Let $(X,\mu)$ be a measure space, and use the convention $1/\infty=0$.
Suppose $r\in(0,\infty)$ and $p_1,\ldots,p_n\in[1,\infty]$ satisfy
\begin{equation}
\label{eq:generalized-holder-estimate-exponents}
    \frac1{p_1}+\cdots+\frac1{p_n}=\frac1r .
\end{equation}
If $u_j\in L^{p_j}(X)$ for $j=1,\ldots,n$, then
$\prod_{j=1}^n u_j\in L^r(X)$ and
\begin{equation}
\label{eq:generalized-holder-estimate}
    \left\|
        \prod_{j=1}^n u_j
    \right\|_{L^r(X)}
    \le
    \prod_{j=1}^n\|u_j\|_{L^{p_j}(X)} .
\end{equation}
\end{lemma}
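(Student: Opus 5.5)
The argument reduces \eqref{eq:generalized-holder-estimate} to the classical two-function H\"older inequality. We proceed by induction on $n$, after dealing with the exponent $r$ through a power substitution.

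\emph{Case $n=1$.} Here $p_1=r$, so the claim is an identity.

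\emph{Case where some $p_j=\infty$.} Bound $|u_j|\le\|u_j\|_{L^\infty(X)}$ $\mu$-a.e. and pull this constant out of the norm. What remains is the same inequality for the other factors, and their exponents still satisfy \eqref{eq:generalized-holder-estimate-exponents} because $1/\infty=0$. If every $p_j=\infty$, then $1/r=0$, which is impossible since $r<\infty$. Hence, after removing such factors, we may assume all $p_j<\infty$.

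\emph{Reduction to $r=1$.} Assume all $p_j\in[1,\infty)$ and set $f_j:=|u_j|^r$ and $q_j:=p_j/r$. Multiplying \eqref{eq:generalized-holder-estimate-exponents} by $r$ gives $\sum_j 1/q_j=1$, so each $q_j\ge1$. We also have $\|f_j\|_{L^{q_j}(X)}=\|u_j\|_{L^{p_j}(X)}^r$. Since $\|\prod_j u_j\|_{L^r(X)}^r=\int_X\prod_j f_j\,d\mu$, it suffices to prove
\[
    \int_X\prod_{j=1}^n f_j\,d\mu\le\prod_{j=1}^n\|f_j\|_{L^{q_j}(X)}
    \qquad\text{whenever }\sum_{j=1}^n\frac1{q_j}=1 .
\]
Taking $r$-th roots then gives \eqref{eq:generalized-holder-estimate}, and in particular shows $\prod_j u_j\in L^r(X)$. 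This substitution is the one step that needs care, because $r$ may be less than $1$, where $\|\cdot\|_{L^r}$ is not a norm. Working with $\int|\cdot|^r$ instead of the norm avoids that issue.

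\emph{Induction on $n$ for $r=1$.} The case $n=2$ is the standard H\"older inequality. For $n\ge3$, let $s$ be defined by $1/s:=\sum_{j\ge2}1/q_j=1-1/q_1$, so that $s\in[1,\infty]$ is the conjugate exponent of $q_1$. Apply H\"older to the two functions $f_1$ and $\prod_{j\ge2}f_j$:
\[
    \int_X\prod_{j=1}^n f_j\,d\mu\le\|f_1\|_{L^{q_1}(X)}\,\Bigl\|\prod_{j\ge2}f_j\Bigr\|_{L^s(X)} .
\]
To bound the last factor, apply the induction hypothesis for $n-1$ functions in its general form (exponent $r=s$) to $f_2,\ldots,f_n$. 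This works because $\sum_{j\ge2}1/q_j=1/s$. The resulting bound is $\prod_{j\ge2}\|f_j\|_{L^{q_j}(X)}$, which completes the induction.
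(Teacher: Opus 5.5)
Your proof is correct and follows the paper's route: the paper's argument is exactly your substitution $f_j=|u_j|^r$ with exponents $p_j/r$ (using the convention $p_j/r=\infty$ when $p_j=\infty$), after which it simply cites the multi-factor H\"older inequality from Evans rather than deriving it. Your separate treatment of infinite exponents and your induction on $n$ only fill in details the paper leaves to the reference. In the induction, the $r=1$ case for $n-1$ factors gives the general-exponent case by the same substitution, since all exponents there are finite.
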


\begin{lemma}[Kerr mapping and fluctuation estimates]
\label{lem:kerr-local-estimate}
Assume $d\le3$, and set
\[
    \sigma_*:=\|\sigma_D\|_{L^\infty(D)}
    =
    \max_{1\le j\le N}|\sigma_j|.
\]
Then the Kerr map $\mathcal N_\sigma:H^1(D)\to L^2(D)$ is well defined. There
is a constant $C$, depending only on $d$, $D$, and $N$, such that
\begin{equation}
\label{eq:kerr-H1-L2-bound}
    \|\mathcal N_\sigma[u]\|_{L^2(D)}
    \le
    C\sigma_*\|u\|_{H^1(D)}^3,
    \qquad
    u\in H^1(D),
\end{equation}
and
\begin{equation}
\label{eq:kerr-H1-L2-lipschitz}
    \|\mathcal N_\sigma[u]-\mathcal N_\sigma[w]\|_{L^2(D)}
    \le
    C\sigma_*
    \bigl(
        \|u\|_{H^1(D)}^2+\|w\|_{H^1(D)}^2
    \bigr)
    \|u-w\|_{H^1(D)},
    \qquad
    u,w\in H^1(D).
\end{equation}
As a map between real Banach spaces, $\mathcal N_\sigma$ is real analytic, and
\begin{equation}
\label{eq:kerr-real-derivative}
    D\mathcal N_\sigma[u]\,h
    =
    \sigma_D
    \bigl(
        2|u|^2h+u^2\overline h
    \bigr),
    \qquad
    u,h\in H^1(D).
\end{equation}

Let $\mathcal H_q[z]\in\mathcal Z(D)'$ be the functional defined in
\eqref{eq:def-z-projection-functionals}. There is a constant $C$, depending
only on $d$, $D$, $N$, and $\sigma_*$, such that, for all
$q\in\mathbb C^N$ and $z\in\mathcal Z(D)$,
\begin{equation}
\label{eq:pc-nonlinear-Hq-growth}
    \|\mathcal H_q[z]\|_{\mathcal Z(D)'}
    \le
    C
    \bigl(
        \|q\|^2
        +
        \|q\|\|z\|_{H^1(D)}
        +
        \|z\|_{H^1(D)}^2
    \bigr)
    \|z\|_{H^1(D)} .
\end{equation}
Moreover, for all $q\in\mathbb C^N$ and
$z_1,z_2\in\mathcal Z(D)$,
\begin{equation}
\label{eq:pc-nonlinear-Hq-lipschitz}
\begin{aligned}
    &\|\mathcal H_q[z_1]-\mathcal H_q[z_2]\|_{\mathcal Z(D)'}
    \\
    &\le
    C\Big(
        \|q\|^2
        +
        \|q\|
        \bigl(
            \|z_1\|_{H^1(D)}
            +
            \|z_2\|_{H^1(D)}
        \bigr)
        +
        \|z_1\|_{H^1(D)}^2
        +
        \|z_2\|_{H^1(D)}^2
    \Big)
    \|z_1-z_2\|_{H^1(D)} .
\end{aligned}
\end{equation}
\end{lemma}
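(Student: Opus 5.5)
The argument is Sobolev embedding combined with the generalized H\"older inequality (\cref{lem:generalized-holder}) and the algebraic expansion of the cubic term. Since $d\le3$ and each $D_j$ is a bounded $C^2$ domain, we have the continuous embedding $H^1(D)\hookrightarrow L^6(D)$, with $\|u\|_{L^6(D)}\le C_S\|u\|_{H^1(D)}$. Here $C_S$ depends only on $d$, $D$, and $N$, obtained by summing over the components.

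For \eqref{eq:kerr-H1-L2-bound}, we apply \cref{lem:generalized-holder} with $r=2$ and $p_1=p_2=p_3=6$. This gives $\|\sigma_D|u|^2u\|_{L^2}\le\sigma_*\|u\|_{L^6}^3\le C\sigma_*\|u\|_{H^1}^3$, which shows both that $\mathcal N_\sigma:H^1(D)\to L^2(D)$ is well defined and that the bound holds.

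For \eqref{eq:kerr-H1-L2-lipschitz}, we write the difference pointwise as
\[
|u|^2u-|w|^2w=|u|^2(u-w)+w\bigl(\overline{u}(u-w)+w\,\overline{(u-w)}\bigr),
\]
so that $\bigl||u|^2u-|w|^2w\bigr|\le C(|u|^2+|w|^2)|u-w|$. The same $(6,6,6)$ H\"older estimate together with $2|u||w|\le|u|^2+|w|^2$ then gives the Lipschitz bound.

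For real analyticity, we identify $\mathcal N_\sigma[u]=\sigma_D u^2\overline u$ as the restriction to the diagonal of the bounded real-trilinear map $(u_1,u_2,u_3)\mapsto\sigma_D u_1u_2\overline{u_3}$ from $H^1(D)^3$ to $L^2(D)$. Boundedness of this map follows from the same H\"older estimate. A continuous real-polynomial map is real analytic, and differentiating the trilinear form gives \eqref{eq:kerr-real-derivative}: $D\mathcal N_\sigma[u]h=\sigma_D(2|u|^2h+u^2\overline h)$.

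For the $\mathcal H_q$ estimates, we use that on each $D_i$ the function $u_q$ equals the constant $q_i$, with $|q_i|\le\|q\|$. The paper's expansion
\[
|q_i+z|^2(q_i+z)-|q_i|^2q_i=2|q_i|^2z+q_i^2\overline z+2q_i|z|^2+\overline q_iz^2+|z|^2z
\]
then shows that $|\widetilde{\mathcal N}_\sigma[q,z]|\le C\sigma_*(\|q\|^2|z|+\|q\||z|^2+|z|^3)$ pointwise. Testing against $v\in\mathcal Z(D)$ and using the Cauchy--Schwarz inequality, we bound each term in $L^2$. The first term satisfies $\||z|\|_{L^2}\le\|z\|_{H^1}$. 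The second term, via H\"older with exponents $(6,3)$ and the embedding $L^6\subset L^3$ on a bounded domain, satisfies $\||z|^2\|_{L^2}\le\|z\|_{L^4}^2\le C\|z\|_{H^1}^2$. The third term satisfies $\||z|^3\|_{L^2}=\|z\|_{L^6}^3\le C\|z\|_{H^1}^3$. Since $\|v\|_{L^2}\le\|v\|_{H^1}$, this yields \eqref{eq:pc-nonlinear-Hq-growth}.

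For \eqref{eq:pc-nonlinear-Hq-lipschitz}, the constant term $\mathcal N_\sigma[u_q]$ cancels in $\mathcal H_q[z_1]-\mathcal H_q[z_2]$. The difference is therefore $\mathcal N_\sigma[u_q+z_1]-\mathcal N_\sigma[u_q+z_2]$, and we take differences term by term in the expansion above. The linear terms give $\|q\|^2|z_1-z_2|$. The quadratic terms, using for example $z_1^2-z_2^2=(z_1+z_2)(z_1-z_2)$, give $\|q\|(|z_1|+|z_2|)|z_1-z_2|$. The cubic terms give $(|z_1|^2+|z_2|^2)|z_1-z_2|$, by the same pointwise inequality used for the Lipschitz bound. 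Finally, we apply H\"older with $L^2$, $(L^4,L^4)$, and $(L^6,L^6,L^6)$ exponents respectively, then Sobolev.

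No step is a genuine obstacle. The only point requiring care is ensuring that all constants depend only on $d$, $D$, $N$, and $\sigma_*$. This holds because the Sobolev constant on the finite union of $C^2$ components is fixed, and because $\|u_q\|_{L^\infty}\le\|q\|$ lets us treat the $q$-terms without any Sobolev embedding.
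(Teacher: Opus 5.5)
Your proposal is correct and follows the paper's proof essentially step by step. Both arguments use the $H^1(D)\hookrightarrow L^6(D)$ embedding with the generalized H\"older inequality for the $L^2$ and Lipschitz bounds, the bounded real-trilinear map $\sigma_D u_1u_2\overline{u_3}$ for analyticity and the derivative formula, and the pointwise cubic expansion with $\|u_q\|_{L^\infty(D)}\le\|q\|$ for the $\mathcal H_q$ estimates. The remaining differences are immaterial choices of H\"older exponents: for example, you bound the $|z|^3|v|$ term by $\|z\|_{L^6}^3\|v\|_{L^2}$, whereas the paper uses $\|z\|_{L^4}^3\|v\|_{L^4}$. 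Also note that $\||z|^2\|_{L^2}=\|z\|_{L^4}^2$ is simply an identity, so the $(6,3)$ H\"older step you cite there is unnecessary.
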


\begin{proof}
Throughout the proof, $C$ may change from line to line and depends only on
$d$, $D$, and $N$, unless the factor $\sigma_*$ is absorbed into it. Since
$d\le3$, the Sobolev embedding (see, e.g.
\cite[Chapter~5, Section~5.6]{evans2010partial}) gives
\begin{equation}
\label{eq:kerr-sobolev-embedding}
    \|u\|_{L^p(D)}
    \le
    C\|u\|_{H^1(D)},
    \qquad
    2\le p\le6 .
\end{equation}
We shall also use the following pointwise inequalities. For all
$x,y\in\mathbb C$,
\begin{equation}
\label{eq:kerr-pointwise-cubic-lipschitz}
    \bigl||x|^2x-|y|^2y\bigr|
    \le
    C\bigl(|x|^2+|y|^2\bigr)|x-y|.
\end{equation}
Consequently, for all $a,b,b_1,b_2\in\mathbb C$,
\begin{align}
\label{eq:kerr-pointwise-cubic-growth}
    \bigl||a+b|^2(a+b)-|a|^2a\bigr|
    &\le
    C\bigl(
        |a|^2|b|
        +
        |a||b|^2
        +
        |b|^3
    \bigr),
    \\
\label{eq:kerr-pointwise-cubic-fluctuation-lipschitz}
    \bigl||a+b_1|^2(a+b_1)-|a+b_2|^2(a+b_2)\bigr|
    &\le
    C\bigl(
        |a|^2
        +
        |a|(|b_1|+|b_2|)
        +
        |b_1|^2+|b_2|^2
    \bigr)|b_1-b_2|.
\end{align}

Since $|\mathcal N_\sigma[u]|\le\sigma_*|u|^3$, applying
\eqref{eq:kerr-sobolev-embedding} with $p=6$ gives
\[
    \|\mathcal N_\sigma[u]\|_{L^2(D)}
    \le
    \sigma_*\|u\|_{L^6(D)}^3
    \le
    C\sigma_*\|u\|_{H^1(D)}^3.
\]
Thus, $\mathcal N_\sigma:H^1(D)\to L^2(D)$ is well defined and
\eqref{eq:kerr-H1-L2-bound} holds. For the difference estimate,
\eqref{eq:kerr-pointwise-cubic-lipschitz} gives
\[
    |\mathcal N_\sigma[u]-\mathcal N_\sigma[w]|
    \le
    C\sigma_*
    \bigl(|u|^2+|w|^2\bigr)|u-w|.
\]
By \cref{lem:generalized-holder} with $1/2=1/6+1/6+1/6$,
\[
    \bigl\||u|^2|u-w|\bigr\|_{L^2(D)}
    \le
    \|u\|_{L^6(D)}^2
    \|u-w\|_{L^6(D)}.
\]
The same estimate with $w$ in place of $u$, together with
\eqref{eq:kerr-sobolev-embedding}, proves
\eqref{eq:kerr-H1-L2-lipschitz}.

We next prove real analyticity. Write
$\mathcal N_\sigma[u]=T(u,u,u)$, where
$T(u_1,u_2,u_3):=\sigma_Du_1u_2\overline{u_3}$. By
\cref{lem:generalized-holder} and \eqref{eq:kerr-sobolev-embedding},
\[
    \|T(u_1,u_2,u_3)\|_{L^2(D)}
    \le
    \sigma_*
    \|u_1\|_{L^6(D)}
    \|u_2\|_{L^6(D)}
    \|u_3\|_{L^6(D)}
    \le
    C\sigma_*
    \prod_{j=1}^3\|u_j\|_{H^1(D)}.
\]
Therefore, $T:H^1(D)^3\to L^2(D)$ is a bounded real trilinear map. Its diagonal
restriction $u\mapsto T(u,u,u)$ is real analytic as a map between real Banach
spaces. Differentiating this real polynomial map gives
\eqref{eq:kerr-real-derivative}, since
\[
    D\mathcal N_\sigma[u]\,h
    =
    T(h,u,u)+T(u,h,u)+T(u,u,h)
    =
    \sigma_D
    \bigl(
        2|u|^2h+u^2\overline h
    \bigr).
\]

It remains to estimate $\mathcal H_q$. By \eqref{eq:def-tilde-Nsigma},
$\widetilde{\mathcal N}_\sigma[q,0]=0$. Applying
\eqref{eq:kerr-pointwise-cubic-growth} with $a=u_q$ and $b=z$ gives
\[
    |\widetilde{\mathcal N}_\sigma[q,z]|
    \le
    C\sigma_*
    \bigl(
        |u_q|^2|z|
        +
        |u_q||z|^2
        +
        |z|^3
    \bigr).
\]
Because $u_q$ is componentwise constant, $\|u_q\|_{L^\infty(D)}\le C\|q\|$.
Thus, for $v\in\mathcal Z(D)$,
\[
\begin{aligned}
    |\mathcal H_q[z](v)|
    &\le
    C
    \int_D
    \bigl(
        |u_q|^2|z|
        +
        |u_q||z|^2
        +
        |z|^3
    \bigr)|v|\,dx
    \\
    &\le
    C\|q\|^2\|z\|_{L^2(D)}\|v\|_{L^2(D)}
    +
    C\|q\|\|z\|_{L^4(D)}^2\|v\|_{L^2(D)}
    +
    C\|z\|_{L^4(D)}^3\|v\|_{L^4(D)}.
\end{aligned}
\]
Using \eqref{eq:kerr-sobolev-embedding} with $p=2,4$ and then taking the
supremum over $\|v\|_{H^1(D)}\le1$ proves
\eqref{eq:pc-nonlinear-Hq-growth}.

Finally, let $w=z_1-z_2$. By
\eqref{eq:kerr-pointwise-cubic-fluctuation-lipschitz}, for
$v\in\mathcal Z(D)$,
\[
\begin{aligned}
    |\mathcal H_q[z_1](v)-\mathcal H_q[z_2](v)|
    \le
    C
    \int_D
    \bigl(
        |u_q|^2
        +
        |u_q|(|z_1|+|z_2|)
        +
        |z_1|^2+|z_2|^2
    \bigr)|w||v|\,dx .
\end{aligned}
\]
The three contributions are bounded by
\[
\begin{aligned}
    \int_D |u_q|^2|w||v|\,dx
    &\le
    C\|q\|^2
    \|w\|_{L^2(D)}
    \|v\|_{L^2(D)},
    \\
    \int_D |u_q||z_i||w||v|\,dx
    &\le
    C\|q\|
    \|z_i\|_{L^4(D)}
    \|w\|_{L^4(D)}
    \|v\|_{L^2(D)},
    \qquad i=1,2,
    \\
    \int_D |z_i|^2|w||v|\,dx
    &\le
    \|z_i\|_{L^4(D)}^2
    \|w\|_{L^4(D)}
    \|v\|_{L^4(D)},
    \qquad i=1,2.
\end{aligned}
\]
Using \eqref{eq:kerr-sobolev-embedding} again yields
\[
\begin{aligned}
    &|\mathcal H_q[z_1](v)-\mathcal H_q[z_2](v)|
    \\
    &\le
    C\Big(
        \|q\|^2
        +
        \|q\|
        \bigl(
            \|z_1\|_{H^1(D)}
            +
            \|z_2\|_{H^1(D)}
        \bigr)
        +
        \|z_1\|_{H^1(D)}^2
        +
        \|z_2\|_{H^1(D)}^2
    \Big)
    \|z_1-z_2\|_{H^1(D)}
    \|v\|_{H^1(D)}.
\end{aligned}
\]
Taking the supremum over all $v\in\mathcal Z(D)$ with
$\|v\|_{H^1(D)}\le1$ gives \eqref{eq:pc-nonlinear-Hq-lipschitz}.
\end{proof}

%%~~~~~~~~~ Bibliography
%\bibliographystyle{unsrt}
\bibliographystyle{siam}
\bibliography{ref}

@article{ammari2017mathematical,
  title={A mathematical and numerical framework for bubble meta-screens},
  author={Ammari, Habib and Fitzpatrick, Brian and Gontier, David and Lee, Hyundae and Zhang, Hai},
  journal={SIAM Journal on Applied Mathematics},
  volume={77},
  number={5},
  pages={1827--1850},
  year={2017},
  publisher={SIAM}
}

@book{ammari2018mathematical,
  title={Mathematical and computational methods in photonics and phononics},
  author={Ammari, Habib and Fitzpatrick, Brian and Kang, Hyeonbae and Ruiz, Matias and Yu, Sanghyeon and Zhang, Hai},
  volume={235},
  year={2018},
  publisher={American Mathematical Soc.}
}

@article{feppon2022modal,
  title={Modal decompositions and point scatterer approximations near the Minnaert resonance frequencies},
  author={Feppon, Florian and Ammari, Habib},
  journal={Studies in Applied Mathematics},
  volume={149},
  number={1},
  pages={164--229},
  year={2022},
  publisher={Wiley Online Library}
}

@article{ammari2018minnaert,
  title={Minnaert resonances for acoustic waves in bubbly media},
  author={Ammari, Habib and Fitzpatrick, Brian and Gontier, David and Lee, Hyundae and Zhang, Hai},
  journal={Annales de l'Institut Henri Poincar{\'e} C, Analyse non lin{\'e}aire},
  volume={35},
  number={7},
  pages={1975--1998},
  year={2018},
  organization={Elsevier}
}

@article{minnaert1933musical,
  title={XVI. On musical air-bubbles and the sounds of running water},
  author={Minnaert, M.},
  journal={The London, Edinburgh, and Dublin Philosophical Magazine and Journal of Science},
  volume={16},
  number={104},
  pages={235--248},
  year={1933},
  doi={10.1080/14786443309462277}
}

@article{ammari2017effective,
  title={Effective medium theory for acoustic waves in bubbly fluids near {M}innaert resonant frequency},
  author={Ammari, Habib and Zhang, Hai},
  journal={SIAM Journal on Mathematical Analysis},
  volume={49},
  number={4},
  pages={3252--3276},
  year={2017}
}

@article{ammari2019fully,
  title={A fully coupled subwavelength resonance approach to filtering auditory signals},
  author={Ammari, Habib and Davies, Bryn},
  journal={Proceedings of the Royal Society A: Mathematical, Physical and Engineering Sciences},
  volume={475},
  number={2227},
  pages={20190049},
  year={2019},
  doi={10.1098/rspa.2019.0049}
}

@article{guttel2017nonlinear,
  title={The nonlinear eigenvalue problem},
  author={G{\"u}ttel, Stefan and Tisseur, Fran{\c{c}}oise},
  journal={Acta Numerica},
  volume={26},
  pages={1--94},
  year={2017},
  publisher={Cambridge University Press}
}

@book{mclean2000strongly,
  title={Strongly elliptic systems and boundary integral equations},
  author={McLean, William Charles Hector},
  year={2000},
  publisher={Cambridge university press}
}

@book{evans2010partial,
  title={Partial Differential Equations},
  author={Evans, Lawrence C.},
  edition={2},
  series={Graduate Studies in Mathematics},
  volume={19},
  year={2010},
  publisher={American Mathematical Society}
}

@article{ammari2021bound,
  title={Bound states in the continuum and Fano resonances in subwavelength resonator arrays},
  author={Ammari, Habib and Davies, Bryn and Hiltunen, Erik Orvehed and Lee, Hyundae and Yu, Sanghyeon},
  journal={Journal of Mathematical Physics},
  volume={62},
  number={10},
  year={2021},
  publisher={AIP Publishing}
}

@book{ammari2007polarization,
  title={Polarization and moment tensors: with applications to inverse problems and effective medium theory},
  author={Ammari, Habib and Kang, Hyeonbae},
  volume={162},
  year={2007},
  publisher={Springer Science \& Business Media}
}

@article{ammari2022exceptional,
  title={Exceptional Points in Parity--Time-Symmetric Subwavelength Metamaterials},
  author={Ammari, Habib and Davies, Bryn and Hiltunen, Erik Orvehed and Lee, Hyundae and Yu, Sanghyeon},
  journal={SIAM Journal on Mathematical Analysis},
  volume={54},
  number={6},
  pages={6223--6253},
  year={2022},
  publisher={SIAM}
}

@article{ammari2024functional,
  title={Functional analytic methods for discrete approximations of subwavelength resonator systems},
  author={Ammari, Habib and Davies, Bryn and Hiltunen, Erik Orvehed},
  journal={Pure and Applied Analysis},
  volume={6},
  number={3},
  pages={873--939},
  year={2024},
  doi={10.2140/paa.2024.6.873}
}

@article {bryn2019,
    AUTHOR = {Ammari, Habib and Davies, Bryn},
     TITLE = {Mimicking the active cochlea with a fluid-coupled array of
              subwavelength {H}opf resonators},
   JOURNAL = {Proc. A},
  FJOURNAL = {Proceedings A},
    VOLUME = {476},
      YEAR = {2020},
    NUMBER = {2234},
     PAGES = {20190870, 18},
       DOI = {10.1098/rspa.2019.0870},
       URL = {https://doi.org/10.1098/rspa.2019.0870},
}

@article{ammari2025nonlinear,
  title={Nonlinear subwavelength resonances in three dimensions},
  author={Ammari, Habib and Kosche, Thea},
  journal={Studies in Applied Mathematics},
  volume={154},
  number={3},
  pages={e70036},
  year={2025},
  publisher={Wiley Online Library}
}

@article{evlyukhin2012demonstration,
  title={Demonstration of magnetic dipole resonances of dielectric nanospheres in the visible region},
  author={Evlyukhin, Andrey B. and Novikov, Sergey M. and Zywietz, Urs and Eriksen, Rene Lynge and Reinhardt, Carsten and Bozhevolnyi, Sergey I. and Chichkov, Boris N.},
  journal={Nano Letters},
  volume={12},
  number={7},
  pages={3749--3755},
  year={2012},
  publisher={American Chemical Society}
}

@article{tzarouchis2018light,
  title={Light scattering by a dielectric sphere: Perspectives on the {Mie} resonances},
  author={Tzarouchis, Dimitrios and Sihvola, Ari},
  journal={Applied Sciences},
  volume={8},
  number={2},
  pages={184},
  year={2018}
}

@article{kuznetsov2016optically,
  title={Optically resonant dielectric nanostructures},
  author={Kuznetsov, Arseniy I. and Miroshnichenko, Andrey E. and Brongersma, Mark L. and Kivshar, Yuri S. and Luk'yanchuk, Boris},
  journal={Science},
  volume={354},
  number={6314},
  pages={aag2472},
  year={2016},
  publisher={American Association for the Advancement of Science},
  doi={10.1126/science.aag2472}
}

@article{ammari2025dielectric,
    AUTHOR = {Ammari, Habib and Li, Bowen},
     TITLE = {Dielectric scattering resonances for high-refractive
              resonators with cubic nonlinearity},
   JOURNAL = {J. Differential Equations},
  FJOURNAL = {Journal of Differential Equations},
    VOLUME = {475},
      YEAR = {2026},
     PAGES = {Paper No. 114459, 56},
       DOI = {10.1016/j.jde.2026.114459},
       URL = {https://doi.org/10.1016/j.jde.2026.114459},
}

@article{ammari2025resonator,
  title={Analysis of nonlinear resonances in resonator crystals: Tight-binding approximation and existence of subwavelength soliton-like solutions},
  author={Ammari, Habib and Qiu, Jiayu},
  journal={arXiv preprint arXiv:2509.04184},
  year={2025}
}

@book{cbms, 
title={Mathematical Theories for Metamaterials: From Condensed Matter Theory to Subwavelength Physics}, 
author = {Ammari, Habib and Davies, Bryn and Hiltunen, Erik Orvehed},
SERIES = {CBMS Regional Conference Series in Mathematics},
    VOLUME = {136},
 PUBLISHER = {American Mathematical Society, Providence, RI},
year={2026},
}

@article{clemens2026,
  title={Perturbative approach to nonlinear capacitance matrix formulations},
  author={Ammari, Habib and Thalhammer, Clemens},
  journal={arXiv preprint arXiv:2606.20821},
  year={2026}
}

@article{meklachi2018asymptotic,
  title={Asymptotic analysis of resonances of small volume high contrast linear and nonlinear scatterers},
  author={Meklachi, Taoufik and Moskow, Shari and Schotland, John C.},
  journal={Journal of Mathematical Physics},
  volume={59},
  number={8},
  year={2018},
  publisher={AIP Publishing}
}

@article{kivshar2018all,
  title={All-dielectric meta-optics and non-linear nanophotonics},
  author={Kivshar, Yuri},
  journal={National Science Review},
  volume={5},
  number={2},
  pages={144--158},
  year={2018},
  publisher={Oxford University Press}
}

@incollection{boyd2008nonlinear,
  title={Nonlinear optics},
  author={Boyd, Robert W. and Gaeta, Alexander L. and Giese, Enno},
  booktitle={Springer Handbook of Atomic, Molecular, and Optical Physics},
  pages={1097--1110},
  year={2008},
  publisher={Springer}
}

@article{koshelev2020subwavelength,
  title={Subwavelength dielectric resonators for nonlinear nanophotonics},
  author={Koshelev, Kirill and Kruk, Sergey and Melik-Gaykazyan, Elizaveta and Choi, Jae-Hyuck and Bogdanov, Andrey and Park, Hong-Gyu and Kivshar, Yuri},
  journal={Science},
  volume={367},
  number={6475},
  pages={288--292},
  year={2020}
}

@article{rev-acoustics,
  title={Dynamics of phononic materials and structures: Historical origins, recent progress, and future outlook},
  author={Hussein, Mahmoud I. and Leamy, Michael J. and Ruzzene, Massimo},
  journal={Applied Mechanics Reviews},
  volume={66},
  pages={040802},
  year={2014}
}

@article{hsu2016bound,
  title={Bound states in the continuum},
  author={Hsu, Chia Wei and Zhen, Bo and Stone, A. Douglas and Joannopoulos, John D. and Solja{\v{c}}i{\'c}, Marin},
  journal={Nature Reviews Materials},
  volume={1},
  number={9},
  pages={16048},
  year={2016},
  publisher={Nature Publishing Group}
}

@article{friedrich1985interfering,
  title={Interfering resonances and bound states in the continuum},
  author={Friedrich, H. and Wintgen, D.},
  journal={Physical Review A},
  volume={32},
  number={6},
  pages={3231--3242},
  year={1985},
  publisher={American Physical Society}
}

@article{liu2000locally,
  title={Locally resonant sonic materials},
  author={Liu, Zhengyou and Zhang, Xixiang and Mao, Yiwei and Zhu, Y. Y. and Yang, Zhiyu and Chan, C. T. and Sheng, Ping},
  journal={Science},
  volume={289},
  number={5485},
  pages={1734--1736},
  year={2000},
  publisher={American Association for the Advancement of Science}
}

@article{cummer2016controlling,
  title={Controlling sound with acoustic metamaterials},
  author={Cummer, Steven A. and Christensen, Johan and Al{\`u}, Andrea},
  journal={Nature Reviews Materials},
  volume={1},
  number={3},
  pages={16001},
  year={2016},
  publisher={Nature Publishing Group}
}

@article{ma2016acoustic,
  title={Acoustic metamaterials: From local resonances to broad horizons},
  author={Ma, Guancong and Sheng, Ping},
  journal={Science Advances},
  volume={2},
  number={2},
  pages={e1501595},
  year={2016},
  publisher={American Association for the Advancement of Science}
}

@article{bonnet1994guided,
  title={Guided waves by electromagnetic gratings and non-uniqueness examples for the diffraction problem},
  author={Bonnet-Bendhia, Anne-Sophie and Starling, Felipe},
  journal={Mathematical Methods in the Applied Sciences},
  volume={17},
  number={5},
  pages={305--338},
  year={1994},
  publisher={Wiley Online Library}
}

@article{shipman2005resonant,
  title={Resonant transmission near nonrobust periodic slab modes},
  author={Shipman, Stephen P. and Venakides, Stephanos},
  journal={Physical Review E},
  volume={71},
  number={2},
  pages={026611},
  year={2005},
  publisher={American Physical Society}
}

@article{shipman2007guided,
  title={Guided modes in periodic slabs: existence and nonexistence},
  author={Shipman, Stephen P. and Volkov, Darko},
  journal={SIAM Journal on Applied Mathematics},
  volume={67},
  number={3},
  pages={687--713},
  year={2007},
  publisher={SIAM}
}

@article{shipman2012total,
  title={Total resonant transmission and reflection by periodic structures},
  author={Shipman, Stephen P. and Tu, Hairui},
  journal={SIAM Journal on Applied Mathematics},
  volume={72},
  number={1},
  pages={216--239},
  year={2012},
  publisher={SIAM}
}

@article{lin2020mathematical,
  title={A mathematical theory for {F}ano resonance in a periodic array of narrow slits},
  author={Lin, Junshan and Shipman, Stephen P. and Zhang, Hai},
  journal={SIAM Journal on Applied Mathematics},
  volume={80},
  number={5},
  pages={2045--2070},
  year={2020},
  publisher={SIAM}
}

@article{ammari2024fano,
  title={Fano resonances in all-dielectric electromagnetic metasurfaces},
  author={Ammari, Habib and Li, Bowen and Li, Hongjie and Zou, Jun},
  journal={Multiscale Modeling \& Simulation},
  volume={22},
  number={1},
  pages={476--526},
  year={2024},
  publisher={SIAM}
}

@incollection{shipman2010resonant,
  title={Resonant scattering by open periodic waveguides},
  author={Shipman, Stephen P.},
  booktitle={Wave Propagation in Periodic Media: Analysis, Numerical Techniques and Practical Applications},
  editor={Ehrhardt, Matthias},
  series={Progress in Computational Physics (PiCP)},
  volume={1},
  pages={7--49},
  year={2010},
  publisher={Bentham Science Publishers}
}

@article{fano1961effects,
  title={Effects of configuration interaction on intensities and phase shifts},
  author={Fano, Ugo},
  journal={Physical Review},
  volume={124},
  number={6},
  pages={1866--1878},
  year={1961},
  publisher={American Physical Society}
}

@article{lin2021fano,
  title={Fano resonance in metallic grating via strongly coupled subwavelength resonators},
  author={Lin, Junshan and Zhang, Hai},
  journal={European Journal of Applied Mathematics},
  volume={32},
  number={2},
  pages={370--394},
  year={2021},
  publisher={Cambridge University Press}
}

@article{zhen2014topological,
  title={Topological nature of optical bound states in the continuum},
  author={Zhen, Bo and Hsu, Chia Wei and Lu, Ling and Stone, A. Douglas and Solja{\v{c}}i{\'c}, Marin},
  journal={Physical Review Letters},
  volume={113},
  number={25},
  pages={257401},
  year={2014},
  doi={10.1103/PhysRevLett.113.257401}
}

@article{bulgakov2017topological,
  title={Topological bound states in the continuum in arrays of dielectric spheres},
  author={Bulgakov, Evgeny N. and Maksimov, Dmitrii N.},
  journal={Physical Review Letters},
  volume={118},
  number={26},
  pages={267401},
  year={2017},
  doi={10.1103/PhysRevLett.118.267401}
}

@article{yoda2020generation,
  title={Generation and annihilation of topologically protected bound states in the continuum and circularly polarized states by symmetry breaking},
  author={Yoda, Taiki and Notomi, Masaya},
  journal={Physical Review Letters},
  volume={125},
  number={5},
  pages={053902},
  year={2020},
  doi={10.1103/PhysRevLett.125.053902}
}

@article{chen2019corner,
  title={Corner states in a second-order acoustic topological insulator as bound states in the continuum},
  author={Chen, Ze-Guo and Xu, Changqing and Al Jahdali, Rasha and Mei, Jun and Wu, Ying},
  journal={Physical Review B},
  volume={100},
  number={7},
  pages={075120},
  year={2019},
  doi={10.1103/PhysRevB.100.075120}
}

@article{fan2025acoustic,
  title={Acoustic non-Hermitian higher-order topological bound states in the continuum},
  author={Fan, Haiyan and Gao, He and Liu, Tuo and An, Shuowei and Zhu, Yifan and Zhang, Hui and Zhu, Jie and Su, Zhongqing},
  journal={Applied Physics Letters},
  volume={126},
  number={7},
  pages={071702},
  year={2025},
  doi={10.1063/5.0249792}
}

\end{document}